\newtheorem{same}{This should never appear}[section]
\newtheorem{defin}[same]{Definition}
\newtheorem{remark}[same]{Remark}
\newtheorem{theorem}[same]{Theorem}
\newtheorem{example}[same]{Example}
\newtheorem{lemma}[same]{Lemma}
\newtheorem{fact}[same]{Fact}
\newtheorem{question}[same]{Question}
\newtheorem{cor}[same]{Corollary}
\newtheorem{prop}[same]{Proposition}
\newtheorem{hypothesis}[same]{Hypothesis}
\newtheorem{nota}[same]{Notation}
\newtheorem{probl}[same]{Problem}
\newtheorem{defin*}{Definition}
\newtheorem*{theorem*}{Theorem}
\newtheorem*{longlimtheorem}{Theorem \ref{largelimitsareisothm*}}
\newtheorem*{shortlimtheorem}{Theorem \ref{non-iso}}
\newtheorem*{completelimtheorem}{Theorem}
\newcommand{\skipitems}[1]{%
  \addtocounter{\@enumctr}{#1}%
}
\newcommand{\rest}{\mathord{\upharpoonright}}
\newcommand{\id}{\textrm{id}}
\newcommand{\K}{\mathbf{K}}
\newcommand{\Kto}{\K^{\text{Tor}}}
\newcommand{\Km}{\K^{R\text{-Mod}}}
\newcommand{\LS}{\operatorname{LS}}
\newcommand{\calt}{\mathcal{T}}
\newcommand{\lesst}{\vartriangleleft}
\newcommand{\stab}{\operatorname{Stab}}
\newcommand{\splt}{\operatorname{split}}
\newcommand{\Kkappalims}{{\K_{(\lambda, \geq \kappa)}}}
\newcommand{\kkappalims}{{K_{(\lambda, \geq \kappa)}}}
\newcommand{\Kmupluslims}{{\K_{(\lambda, \geq \mu^+)}}}
\newcommand{\kappadnfu}{{\kappa(\dnf, \K_\lambda, \lea^u)}}
\newcommand{\chidnfu}{{\chi(\dnf, \K, \lea^u)}}
\newcommand{\chidnfum}{{\chi(\dnf, \K^{R\text{-Mod}}, \leq_{\K^{R\text{-Mod}}}^u)}}
\newcommand{\chidnfut}{{\chi(\dnf, \K^{\text{Tor}}, \leq_{\K^{\text{Tor}}}^u)}}
\newcommand{\shortkappadnfu}{{\kappa_\lambda^u(\dnf)}}
\newcommand{\leap}[1]{\le_{#1}}
\newcommand{\lea}{\leap{\K}}
\newcommand{\mon}{\mathfrak{C}}
\newcommand{\gtp}{\mathbf{gtp}}
\newcommand{\gS}{\mathbf{gS}}
\DeclareMathOperator{\cof}{cf}    
\newbox\noforkbox \newdimen\forklinewidth
\noforkbox\hbox{\lower 2pt\box1\lower 2pt\box0\relax}
\def\unionstick{\mathop{\copy\noforkbox}\limits}
\def\nonfork_#1{\unionstick_{\textstyle #1}}
\newbox\doesforkbox
\doesforkbox\hbox{\lower 2pt\box1 \lower 2pt\box2\lower2pt\box0\relax}
\def\nunionstick{\mathop{\copy\doesforkbox}\limits}
\def\fork_#1{\nunionstick_{\textstyle #1}}
\newcommand{\dnf}{\unionstick}
\newcommand{\nf}{\unionstick}
\newcommand{\dnfb}[4]{#2 \overset{#4}{\underset{#1}{\overline{\nf}}} #3}
\title{On the spectrum of limit models}
\date{AMS 2020 Subject Classification:
Primary:  03C48. Secondary: 03C45.\\
Key words and phrases.  Limit models;  Abstract Elementary Classes;  Stability; Towers.} 
\author{Jeremy Beard}
\email{jbeard@andrew.cmu.edu}
\urladdr{https://jeremypbeard.github.io}
\address{Department of Mathematical Sciences \\ Carnegie Mellon University \\ Pittsburgh, Pennsylvania, USA}
\author{Marcos Mazari-Armida}
\email{marcos\_mazari@baylor.edu}
\urladdr{https://sites.baylor.edu/marcos\_mazari/}
\address{Department of Mathematics \\ Baylor University \\ Waco, Texas, USA}
\thanks{The second author's research was partially supported by an NSF grant DMS-2348881, a Simons Foundation grant MPS-TSM-00007597 and an AMS-Simons Travel Grant 2022--2024.}
\begin{document}

\begin{abstract}

We study the spectrum of limit models assuming the existence of a nicely behaved independence notion. Under reasonable assumptions, we show that all `long'  limit models are isomorphic, and all `short' limit models are non-isomorphic. 

\begin{completelimtheorem}
    Let $\K$ be a $\aleph_0$-tame abstract elementary class stable in $\lambda \geq \LS(\K)$ with amalgamation, joint embedding and no maximal models. Let $\kappa < \lambda^+$ be regular. Suppose $\dnf$ is an independence relation on the models of size $\lambda$ that satisfies uniqueness, extension, non-forking amalgamation, universal continuity, and $(\geq \kappa)$-local character.

    Suppose $\delta_1, \delta_2 < \lambda^+$ with $\cof(\delta_1) < \cof(\delta_2)$. Then for any $N_1, N_2, M \in \K_\lambda$ where $N_l$ is a $(\lambda, \delta_l)$-limit model over $M$ for $l = 1, 2$, 
    \[N_1 \text{ is isomorphic to } N_2 \text{ over } M  \iff \cof(\delta_1) \geq \kappadnfu\]

\end{completelimtheorem}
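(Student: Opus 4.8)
The plan is to split along the comparison of $\cof(\delta_1),\cof(\delta_2)$ with $\kappadnfu$ and to isolate two independent facts: every two limit models over $M$ of cofinality at least $\kappadnfu$ are isomorphic over $M$, and a limit model over $M$ of cofinality below $\kappadnfu$ is not a limit model over $M$ of any strictly larger cofinality. For the implication $(\Leftarrow)$, assume $\cof(\delta_1)\geq\kappadnfu$; since $\cof(\delta_2)>\cof(\delta_1)$ we also have $\cof(\delta_2)\geq\kappadnfu$. I would first recall the classical facts that a $(\lambda,\delta)$-limit over $M$ is a $(\lambda,\cof(\delta))$-limit over $M$ (reindex the resolution along a cofinal subset of order type $\cof(\delta)$, noting that universality over the previous model is preserved under $\lea$-extension) and that any two $(\lambda,\mu)$-limit models over $M$ with $\mu$ regular are isomorphic over $M$ (back-and-forth along the two resolutions). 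So it suffices to prove: for regular $\mu\geq\kappadnfu$, every $(\lambda,\mu)$-limit model over $M$ is a $(\lambda,\kappadnfu)$-limit model over $M$. Given a resolution $\langle M_i : i<\mu\rangle$ of such an $N$, the plan is to build a continuous $\lea$-increasing chain $\langle P_\xi : \xi<\kappadnfu\rangle$ with $P_0=M$, each $P_{\xi+1}$ universal over $P_\xi$ inside $N$, and $\bigcup_{\xi<\kappadnfu}P_\xi=N$: universality at successor stages is free from stability in $\lambda$, while $(\geq\kappadnfu)$-local character of $\dnf$ together with uniqueness, extension, and non-forking amalgamation is what lets the $P_\xi$ be arranged so that they exhaust $N$ and so that the chain can be matched, by an $\aleph_0$-tameness back-and-forth, with the canonical $(\lambda,\kappadnfu)$-limit over $M$. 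Applying this to $N_1$ and $N_2$ and composing with equal-cofinality uniqueness yields $N_1\cong_M N_2$.

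For $(\Rightarrow)$ I argue the contrapositive: assume $\cof(\delta_1)<\kappadnfu$ and show $N_1\not\cong_M N_2$. Using the reduction to regular cofinalities again, we may take $\delta_l=\mu_l:=\cof(\delta_l)$ with $\mu_1<\mu_2$ regular and $\mu_1<\kappadnfu$. For a limit model $N$ over $M$ set $\Theta(N/M):=\{\mu<\lambda^+ : \mu\text{ regular and }N\text{ is a }(\lambda,\mu)\text{-limit over }M\}$. This is invariant under isomorphism over $M$, and by the reduction fact $\mu_l\in\Theta(N_l/M)$ for $l=1,2$. The whole argument then rests on the claim: if $\mu',\mu''\in\Theta(N/M)$ with $\mu'<\mu''$, then $\mu'\geq\kappadnfu$; equivalently, $\Theta(N/M)$ is either a singleton contained in $\kappadnfu$ or a subset of $[\kappadnfu,\lambda^+)$. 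Granting the claim, $\mu_1<\kappadnfu$ forces $\Theta(N_1/M)=\{\mu_1\}$, whereas $\mu_2\in\Theta(N_2/M)$ and $\mu_2\neq\mu_1$, so the two invariants differ and $N_1$ cannot be isomorphic to $N_2$ over $M$.

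The proof of that claim is the main obstacle. One supposes $N$ is simultaneously a $(\lambda,\mu')$- and a $(\lambda,\mu'')$-limit over $M$ with $\mu'<\mu''$ regular and must extract a failure of local character at $\mu'$. The idea is to play the two resolutions $\langle P_i : i<\mu'\rangle$ and $\langle Q_j : j<\mu''\rangle$ against each other: since $\mu''>\mu'$ the $Q$-chain is not $\mu'$-cofinal in $N$, so a back-and-forth respecting both chains cannot close up, and the non-forking calculus (extension, uniqueness, universal continuity, non-forking amalgamation) should convert this mismatch into a continuous $\lea$-increasing chain of length $\mu'$ together with a Galois type over its union that forks over every member of the chain; by the definition of $\kappadnfu$ this is possible only when $\mu'\geq\kappadnfu$. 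The delicate point, and the reason the exact value $\kappadnfu$ enters, is that one has no local character available below $\kappadnfu$, so at limit stages of cofinality $<\kappadnfu$ in the construction one cannot naively preserve universality; the argument must instead be carried by the canonical non-forking witnesses supplied by uniqueness and extension, and keeping these coherent under universal continuity is where the real work lies. Everything else — existence of universal extensions from stability, equal-cofinality uniqueness, and the routine bookkeeping of the back-and-forths — is classical.
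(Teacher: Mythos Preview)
Your overall decomposition into two independent claims is correct and matches the paper, but both halves have genuine gaps, and you have placed $\aleph_0$-tameness on the wrong side of the argument.

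For $(\Leftarrow)$, the paper does \emph{not} use tameness at all; this direction is Theorem~\ref{largelimitsareisothm*}, proved purely from the independence-relation hypotheses. Your proposed construction --- build a $\kappadnfu$-length $\lea^u$-chain $\langle P_\xi\rangle$ inside $N$ that exhausts $N$ --- is essentially the conclusion restated, and the sentence ``$(\geq\kappadnfu)$-local character \ldots\ is what lets the $P_\xi$ be arranged so that they exhaust $N$'' is not a mechanism. Local character is a statement about types, not about covering a model by a short chain; it gives no direct handle on making $\bigcup_\xi P_\xi = N$. The paper's argument is substantially more elaborate: one builds a $\lesst$-increasing chain of \emph{towers} of length $\delta_1$, arranging that cofinally many are \emph{reduced} (Definition~\ref{reduced-def}) and cofinally many are \emph{full} (Definition~\ref{full-def}); reducedness forces continuity of the final tower at the index of cofinality $\delta_2$, and fullness forces universality along that index, so the top model is simultaneously a $(\lambda,\delta_1)$- and $(\lambda,\delta_2)$-limit over $M$. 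The point is that one constructs a single model witnessing both cofinalities, rather than re-resolving a given $N$.

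For $(\Rightarrow)$, your invariant $\Theta(N/M)$ and the key claim are exactly right, but your proof sketch of the claim misses the actual idea. The statement ``since $\mu''>\mu'$ the $Q$-chain is not $\mu'$-cofinal in $N$'' is false (the $Q$-chain resolves $N$), and ``playing the two resolutions against each other'' does not by itself produce a forking chain. The paper's mechanism (Lemmas~\ref{s-dnf} and~\ref{key-no}) is: a $(\lambda,\mu'')$-limit is $\mu''$-saturated, hence $(\mu')^+$-saturated; so if $N$ is also a $(\lambda,\mu')$-limit with witnessing chain $\langle M_i:i\le\mu'\rangle$, then for any $p\in\gS(N)$ one uses $(\mu')^+$-saturation to realize $p$ restricted to a small set inside $N$, and \emph{$\aleph_0$-tameness} to locate a level $M_i$ over which $p$ does not $\lambda$-split (this is the argument of \cite[4.6]{sebastien-successive-categ}); then the comparison of $\lambda$-non-splitting with $\dnf$ (Lemma~\ref{split-fork}) upgrades this to $\dnf$-non-forking over $M_{i+1}$. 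This shows $\mu'\in\underline{\kappa}(\dnf,\K_\lambda,\lea^u)$, i.e.\ $\mu'\ge\kappadnfu$. Tameness is essential here and nowhere else; your proposal neither invokes it in this direction nor mentions the saturation step that makes it usable.
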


Both implications in the conclusion have improvements. High cofinality limits are isomorphic without the $\aleph_0$-tameness assumption and assuming $\dnf$ is defined only on high cofinality limit models. Low cofinality limits are non-isomorphic without assuming non-forking amalgamation.

We show how our results can be used to study limit models in both abstract settings and in natural examples of abstract elementary classes.

\end{abstract}


\maketitle

\tableofcontents

\section{Introduction} 

Limit models (see Definition \ref{univ-limit-model-def}), originally introduced by Kolmann and Shelah as a surrogate for saturated models \cite{kosh}, have proved to be a key notion for extending the classification theory of first order model theory to abstract elementary classes (AECs). In particular, the theory developed around them has been used to prove various approximations of the main test question for AECs, \emph{Shelah's categoricity conjecture} \cite{sh87a}, \cite[6.13(3)]{sh704}, which proposes that if an AEC is categorical in some large enough cardinal, then it is categorical in all large enough cardinals \cite{sh394}, \cite{shelahaecbook}, \cite{bgvv16}, \cite{grvan06b}, \cite{grvan06c}, \cite{vasey18a}, \cite{vasey18}, \cite{shvas}.

The required framework to study limit models has been weakened over time, from assuming categoricity in some higher cardinal \cite{shvi}, to $\lambda$-superstability and $\lambda$-symmetry \cite{vand}, and now even to very nice AECs strictly stable in $\lambda$ \cite{bovan}. 

For now, suppose $\K$ is a nice AEC (i.e. has a monster model), and is stable in $\lambda \geq \LS(\K)$. In this setting, all $\lambda$-limit models exist (see Fact \ref{limitsexist*}). Since limit models play the role of saturated models to some extent, a key question that has been thoroughly studied is: 

\begin{question}\label{same-base-lim-question}
    Suppose $\delta_1, \delta_2 < \lambda^+$ are limit ordinals. Suppose $N_l$ is a $(\lambda, \delta_l)$-limit model over $M$ for $l = 1, 2$. Is $N_1$ isomorphic to $N_2$ over $M$?
\end{question}

Positive answers to Question \ref{same-base-lim-question} play an essential role in understanding superstability and stability in the context of AECs. Whether Question \ref{same-base-lim-question} answers positively for all $\delta_1, \delta_2$ seems to be a natural dividing line; in  complete first order theories, or more generally tame AECs, this is equivalent to superstability \cite{grva}. Question \ref{same-base-lim-question} has also helped us better understand independence notions, see e.g. Lemma \ref{disjointness-of-dnf}, \cite[III.1.21]{shelahaecbook}, \cite[2.8]{vasey18}.

For nice AECs, Question \ref{same-base-lim-question} always has a positive answer when $\cof(\delta_1) = \cof(\delta_2)$ by a back and forth argument (see Fact \ref{cofinalityiso*}). But in many settings, much more is true. In \cite[3.3.7]{shvi}, Shelah and Villaveces state that assuming a local character condition of $\lambda$-splitting and categoricity in some $\lambda' > \lambda$ with $\lambda' \geq \beth_{(2^{LS(\K)})^+}$ (and still assuming the existence of a monster model for simplicity), Question \ref{same-base-lim-question} has a positive answer for all limit ordinals $\delta_1, \delta_2 < \lambda^+$. VanDieren found major gaps in \cite{shvi} and over the course of several papers, \cite{van02}, \cite{van06}, \cite{van13}, \cite{grvavi}, \cite{van16a}, and \cite{vand}, VanDieren obtained a correct proof of \cite[3.3.7]{shvi} (including fixing another gap found by Tapani Hyttinen, see \cite{van13}) and extended the arguments to AECs with $\lambda$-superstability and $\lambda$-symmetry (both of which follow from categoricity in any $\lambda' > \lambda$ \cite[4.8]{vasey17b}).

In a strictly stable setting however, it is possible to have non-isomorphic limit models (even in the first order setting, see \cite[6.1]{grvavi}). This naturally raises the question of what the structure of different isomorphism types of limit models look like in a more general setting. 

In \cite{bovan}, Boney and VanDieren answer part of this question - they show that assuming $\K$ is stable in $\lambda$ (but possibly not $\lambda$-superstable), and that non-splitting satisfies universal continuity, symmetry, and local character above some $\kappa$, all the limit models of regular length at least $\kappa$ are isomorphic. So, `long limit models are isomorphic' in this setting.

On the other hand, little has been said about the behavior of `short' limit models. In \cite[6.1]{grvavi}, Grossberg, VanDieren and Villaveces noted that in the first order setting, when strictly stable in $\lambda$, the $(\lambda, \aleph_0)$-limit model is not isomorphic to the $(\lambda, \mu)$-limit model for any regular cardinal $\mu \geq \kappa(T)$.

In this paper, we find generalisations for both sides of this picture: criteria on an independence relation on long limit models that implies uniqueness of long limit models (Theorem \ref{largelimitsareisothm*}), and criteria on an AEC that ensure the short limits are all distinct (Theorem \ref{non-iso}). Both of these theorems are  local in that they only use information about the models of cardinality $\lambda$. 

\begin{longlimtheorem}
    Let $\K$ be an AEC stable in $\lambda \geq \LS(\K)$, with AP, JEP, and NMM in $\K_\lambda$. Let $\kappa < \lambda^+$ be a regular cardinal. Let $\dnf$ be an independence relation on the $(\lambda, \geq \kappa)$-limit models of $\K$ that satisfies uniqueness, extension, non-forking amalgamation, $(\geq \kappa)$-local character, and $\Kkappalims$-universal continuity* in $\K$.
    
    Let $\delta_1, \delta_2 < \lambda^+$ be limit ordinals where $\kappa \leq \cof(\delta_1), \cof(\delta_2)$.
    If $M, N_1, N_2 \in \K_\lambda$ where $N_l$ is a $(\lambda, \delta_l)$-limit over $M$ for $l=1, 2$, then there is an isomorphism from $N_1$ to $N_2$ fixing $M$.

    Moreover, if $N_1, N_2 \in \K_\lambda$ where $N_l$ is $(\lambda, \delta_l)$-limit for $l=1, 2$, then $N_1$ is isomorphic to $N_2$.
\end{longlimtheorem}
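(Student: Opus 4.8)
The plan is to prove this by a back-and-forth construction that alternately extracts a resolution of each $N_l$ by $(\lambda, \geq \kappa)$-limit models and uses the independence relation $\dnf$ to ``amalgamate'' the two resolutions into a common one, yielding an isomorphism. Concretely, since $N_l$ is a $(\lambda, \delta_l)$-limit over $M$, fix increasing continuous chains $\langle M_i^l : i \leq \delta_l \rangle$ witnessing this, with $M_0^l = M$, $M_{\delta_l}^l = N_l$, and each $M_{i+1}^l$ universal over $M_i^l$. The first step is to pass to a thinned-out subchain indexed by $\kappa$-many (or more) steps of cofinality $\geq \kappa$, so that each model appearing is itself a $(\lambda, \geq \kappa)$-limit and hence lies in the domain of $\dnf$; this is where $\kappa \leq \cof(\delta_l)$ is used, together with Fact~\ref{cofinalityiso*}-style observations that the tail of any limit chain of the right cofinality is again a limit model of that cofinality.

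Next I would set up the back-and-forth: build a common increasing continuous chain $\langle P_j : j < \kappa \rangle$ of $(\lambda, \geq \kappa)$-limit models over $M$ together with $\lea$-embeddings $f_j^l : P_j \to N_l$ fixing $M$, cohering with the chain maps, such that $\bigcup_j \operatorname{ran}(f_j^l)$ is cofinal in $N_l$ (i.e. exhausts $N_l$ after taking unions — this needs $N_l$ to have size $\lambda$ and the chain to have length a regular cardinal $\geq \kappa$, so a standard bookkeeping argument enumerates a $\lea$-increasing cofinal sequence in each $N_l$ and ensures each element is eventually captured). At successor stages, given $P_j$ embedded into both $N_1$ and $N_2$, I would use universality of the next model in the relevant limit chain of $N_l$ to extend the embedding on that side, then use \textbf{non-forking amalgamation} together with \textbf{extension} and \textbf{uniqueness} of $\dnf$ to transport the extension across to a single $P_{j+1}$ that embeds the same way into both $N_1$ and $N_2$ — the point is that the canonical amalgam of the two one-sided extensions over $P_j$ is determined up to isomorphism by uniqueness of non-forking, so it does not depend on which side we extended first. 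At limit stages $j$ of the construction we take unions; $\Kkappalims$-universal continuity* is exactly what guarantees that $P_j = \bigcup_{i<j} P_i$ is again a $(\lambda, \geq \kappa)$-limit model (and that the unions of the $f_i^l$ are still $\lea$-embeddings), keeping us inside the domain of $\dnf$ and inside the class of limit models so that $(\geq\kappa)$-local character can be invoked to control forking at the next step.

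Having built the chain, the union map $f = \bigcup_{j<\kappa} (f_j^2 \circ (f_j^1)^{-1})$ — suitably interpreted, since the $f_j^l$ may not be surjective at each finite stage — is, by the cofinality of the ranges, an isomorphism $N_1 \to N_2$; and since every $f_j^l$ fixes $M$, so does $f$. This proves the first conclusion. For the ``moreover'' clause, where $N_l$ is merely a $(\lambda,\delta_l)$-limit (not over a prescribed common base), first note that any $(\lambda,\delta_l)$-limit model is a $(\lambda,\delta_l)$-limit over \emph{some} $M_l \in \K_\lambda$; by JEP and AP (and NMM to go up) amalgamate $M_1$ and $M_2$ over the empty set inside a common $N \in \K_\lambda$ that is $\lea$-above both, and by universality arguments pass to a model that is $(\lambda,\geq\kappa)$-limit over a common base $M \geq M_1, M_2$. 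Using the fact that a limit over a smaller base, after extending the base along the chain, is still a limit of the same cofinality over the larger base, one reduces to the ``over $M$'' case already handled, yielding $N_1 \cong N_2$ (no longer fixing $M$, which is fine).

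The main obstacle I anticipate is the successor step of the back-and-forth: making precise that the two one-sided non-forking extensions of $P_j$ inside $N_1$ and $N_2$ can be realized by a \emph{single} abstract model $P_{j+1}$ with $\lea$-embeddings into both $N_l$ simultaneously. This is genuinely where all four of uniqueness, extension, non-forking amalgamation, and local character of $\dnf$ must be combined — local character to find a non-forking extension of bounded length, non-forking amalgamation to build the abstract amalgam, extension to keep the configuration non-forking, and uniqueness to identify the amalgam independently of the order of operations so that the coherence conditions of the back-and-forth are maintained. Getting the bookkeeping right so that both ranges end up cofinal (and hence the limit map is onto) while never leaving the subclass $\Kkappalims$ is the second delicate point, and it is exactly what the hypothesis $\Kkappalims$-universal continuity* is designed to handle at limit stages.
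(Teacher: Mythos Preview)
Your approach diverges substantially from the paper's, and there is a genuine gap at the heart of it.

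The paper does \emph{not} build a direct isomorphism $N_1 \to N_2$ by back-and-forth. Instead it constructs, via an elaborate tower machinery (Definitions~\ref{reduced-def} and~\ref{full-def}, Propositions~\ref{towerextensionprop*}--\ref{reducedimplieshighcontinuity*}, Lemmas~\ref{highfullunionsarefull*} and~\ref{fullcompletionsexist*}), a single model that is \emph{simultaneously} a $(\lambda,\delta_1)$-limit and a $(\lambda,\delta_2)$-limit over $M$; the isomorphism then follows from Fact~\ref{cofinalityiso*}. The tower construction is a $\delta_1$-long $\lesst$-increasing chain of towers whose indices contain a copy of $\delta_2+1$; the odd-stage towers are made \emph{full} (so the final tower contains a universal $\delta_2$-chain), the even-stage towers are made \emph{reduced} (so the final tower is continuous at its $\delta_2$-th level), and non-forking amalgamation enters only in Proposition~\ref{towerextensionswithb*}, where it lets you thread a single prescribed element independently through an extending tower.

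Your back-and-forth sketch breaks at the successor step. Non-forking amalgamation (Definition~\ref{independence-relation-properties}(12)) only says that given $a_1 \in M_1$, $a_2 \in M_2$ over $M_0$ you can amalgamate so the \emph{singletons} $a_l$ are each independent from the opposite model; it does not say the amalgam is unique up to isomorphism, nor does uniqueness of $\dnf$ (which is about types of single elements) give that. So there is no ``canonical amalgam of the two one-sided extensions'' that you can recognize inside both $N_1$ and $N_2$. More basically, even granting some $P_{j+1}$ abstractly, you have no mechanism to embed it into the pre-existing $N_l$'s compatibly with the already-built $f_j^l$: the $N_l$'s are universal only over the models in their witnessing chains, not over arbitrary $\lambda$-sized substructures, and when $\cof(\delta_1) \neq \cof(\delta_2)$ you cannot keep the two bookkeeping indices aligned. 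This mismatch is precisely why the same-cofinality back-and-forth (Fact~\ref{cofinalityiso*}) does not extend and why the tower argument is needed.

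You also misread $\Kkappalims$-universal continuity* (Definition~\ref{univ-cty-star-def}): it is a statement about \emph{types} having unique limits along $\lea^u$-chains, not about unions of models being $(\lambda,\geq\kappa)$-limit models. In fact unions of short chains of such models typically fall out of $\Kkappalims$, which is exactly the difficulty the paper works around (see the discussion after Lemma~\ref{high-tower-unions-exist} and the proof of Lemma~\ref{reducedextensionsexist*}). So at limit stages of your construction you would leave the domain of $\dnf$ and the argument collapses.
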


The proof of Theorem \ref{largelimitsareisothm*} relies on a notion called \emph{towers}. Towers are increasing sequences of models with independence recorded along each level of the tower. They were originally introduced by Shelah and Villaveces in \cite{shvi} in a form involving two systems of models and a list of singletons, and captured independence of the singletons with $\lambda$-splitting, which \cite{van02}, \cite{van06}, \cite{van13}, \cite{grvavi}, \cite{van16a}, \cite{vand}, and \cite{bovan} also follow. Later, Vasey found a simplified presentation in the $\lambda$-superstable $\lambda$-symmetric case using a modified notion of tower \cite{vasey18}, which we adapt in this paper. Vasey's presentation used only a single system of models with a system of singletons, and used $\lambda$-non-forking rather than $\lambda$-non-splitting, which satisfies stronger properties (in particular, uniqueness, extension, and transitivity on limit models - $\lambda$-non-splitting is only known to satisfy weaker forms of these, see Fact \ref{uniqueness-split}).

 Our argument is similar to that of  Vasey \cite{vasey18}, but  we assume our non-forking relation $\dnf$ is defined only on $(\lambda, \geq \kappa)$-limit models, and that $\dnf$ satisfies weaker forms of local character and continuity. Our proofs differ when these weakened forms are applied, and we must also take care to ensure all the models of our towers are $(\lambda, \geq \kappa)$-limit models. For example, since our notion of towers is not closed under unions of low cofinality chains, we adapt Vasey's tower extension lemma \cite[16.17]{vasey18} to allow us to find extensions of any chain of towers, rather than just a single tower (see Proposition \ref{towerextensionprop*}). Similar tweaks and workarounds appear in Lemma \ref{high-tower-unions-exist}, Lemma \ref{reducedextensionsexist*}, Proposition \ref{reducedimplieshighcontinuity*}, and Lemma \ref{highfullunionsarefull*}. Where the proofs differ less from \cite{vasey18} we direct the reader to the original proofs in \cite{vaseyn} and \cite{vasey18}, and to the previous version of this paper \cite{bemavone}, which included the details.

Theorem \ref{largelimitsareisothm*} is similar to \cite{bovan} as they show high cofinality limit models are isomorphic in a different context. They use $\lambda$-non-splitting and towers similar to \cite{shvi} as mentioned above, rather than an arbitrary independence relation with forking-like properties. Our assumptions on $\dnf$ are stronger than the known properties of $\lambda$-non-splitting in their context (our relation has full extension, uniqueness, and transitivity, rather than the weaker forms $\lambda$-non-splitting is known to satisfy). However, under their assumptions, another independence relation, $\lambda$-non-forking, can be defined, which satisfies all the assumptions of Theorem \ref{largelimitsareisothm*} provided it is  assumed that $\lambda$-non-forking satisfies uniqueness (\cite{leu2} attempts to prove uniqueness in the context of \cite{bovan} - see Remark \ref{leung-uniqueness-necessary}). Thus if uniqueness can be proved from the other assumptions, Theorem \ref{largelimitsareisothm*} would imply \cite{bovan}.

In a related setting, combining Theorem \ref{largelimitsareisothm*} with \cite[\textsection 4, \textsection 5]{vasey16b}, we show that in nice $\mu$-tame AECs where $\mu \geq \LS(\K)$ with $\lambda \geq \mu^{+}$ a stability cardinal, assuming universal continuity of $\mu$-non-splitting and symmetry of $(\geq \mu)$-non-forking in $\Kmupluslims$, all $(\lambda, \geq \mu^{+})$-limit models are isomorphic (see Corollary \ref{vasey-tame-saturated-dnf-cor}). The $\mu$-non-forking relation is only known to behave well over $\mu^+$-saturated models, so in particular over $(\lambda, \geq \mu^{+})$-limit models. This and Example \ref{dnf-examples-long-lim}(3) use the full strength of our assumption that $\dnf$ need only be defined over high cofinality limit models.

Now we move on to the `short' limit side of the picture.

\begin{shortlimtheorem} Let $\K$ be an $\aleph_0$-tame AEC stable in $\lambda \geq \LS(\K)$, with AP, JEP, and NMM in $\K_\lambda$. Let $\kappa < \lambda^+$ be a regular cardinal. Let $\dnf$ be an independence relation on $\K_\lambda$ that satisfies uniqueness, extension, universal continuity, and ($\geq \kappa$)-local character.

    If $\operatorname{cf}(\delta_1) < \kappadnfu$ and  $\operatorname{cf}(\delta_1) \neq \operatorname{cf}(\delta_2)$, then the $(\lambda, \delta_1)$-limit model is not isomorphic to the $(\lambda, \delta_2)$-limit model.
\end{shortlimtheorem}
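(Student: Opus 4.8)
The plan is to argue by contradiction: suppose $\cf(\delta_1) < \cf(\delta_2)$ (WLOG after relabelling, since $\cf(\delta_1) \ne \cf(\delta_2)$), both are $< \kappadnfu$ in the relevant sense, and $f \colon N_1 \to N_2$ is an isomorphism where $N_l$ is a $(\lambda, \delta_l)$-limit model. Write $N_1 = \bigcup_{i < \delta_1} M_i$ for a $\lesst_\K$-increasing continuous chain of universal-over extensions witnessing the $(\lambda,\delta_1)$-limit, and similarly $N_2 = \bigcup_{j < \delta_2} P_j$. The key invariant I would extract is a local-character / "non-splitting chain length" statement: in a $(\lambda,\delta)$-limit model, every type over the whole model does (or does not) not-fork (split) over some initial segment, and the minimal length of such an initial segment is controlled by $\cf(\delta)$ versus $\kappadnfu$. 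Concretely, since $\dnf$ has $(\geq\kappa)$-local character, any type over $N_2$ realized in a $(\lambda,\delta_2)$-limit with $\cf(\delta_2) \geq \kappadnfu$ is free over some $P_j$ with $j < \delta_2$; whereas for $\cf(\delta_1) < \kappadnfu$ one can build (using stability, $\aleph_0$-tameness, and universality of each step) a single element $a \in N_1$ (or a type) whose non-forking "support" is cofinal in $\delta_1$ — i.e. it forks over every proper initial segment $M_i$. The isomorphism $f$ would transport this element to $N_2$, contradicting the fact that in $N_2$ its support must stabilize below $\cf(\delta_2)$.

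More carefully, the heart of the argument is the following asymmetry. First I would show that if $\cf(\delta) \geq \kappadnfu$ then for every $p \in \gS(\bigcup_{i<\delta} M_i)$ there is $i<\delta$ with $p$ not forking over $M_i$ — this is essentially the definition of $\kappadnfu$ as the least regular $\kappa$ for which $(\geq\kappa)$-local character holds, combined with the continuity of the chain. Second, and this is the real content, I would show that if $\cf(\delta) < \kappadnfu$ then there is a type over $\bigcup_{i<\delta} M_i$ (realized inside any $(\lambda,\delta)$-limit $N$ over it, using extension of $\dnf$ and universality) that forks over every $M_i$, $i < \delta$. For this I would use that $\kappadnfu$ being $> \cf(\delta)$ means local character fails at level $\cf(\delta)$, so there is a $\lesst_\K$-increasing continuous chain of length $\cf(\delta)$ and a type forking over every member; by a chasing/interleaving argument using amalgamation, JEP, NMM, and the back-and-forth characterization that all length-$\cf(\delta)$ limits over a fixed base are isomorphic (Fact \ref{cofinalityiso*}), this bad chain can be realized as a cofinal subchain of the $(\lambda,\delta)$-limit's defining chain. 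Here $\aleph_0$-tameness is what lets us detect forking/splitting from the Galois types over small pieces and thus glue the witness in.

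With these two facts, the contradiction is immediate: pick $a \in N_1$ with $\gtp(a/\bigcup_i M_i)$ forking over every $M_i$. Then $f(a) \in N_2$ and $\gtp(f(a)/N_2)$ is, via $f$, "the same type", so it forks over $f[M_i]$ for all $i < \delta_1$. But $\{f[M_i] : i < \delta_1\}$ is a $\lesst_\K$-increasing continuous chain in $N_2$ of length $\delta_1$ with union $N_2$, and $\cf(\delta_1) < \cf(\delta_2)$, so this chain is not cofinal in the sense needed — yet by $(\geq\kappa)$-local character and $\cf(\delta_1)$ possibly still $\geq \kappa$... here I would instead apply local character directly inside $N_2$: $\gtp(f(a)/N_2)$ must not fork over some $P_j$; since $\cf(\delta_1) < \cf(\delta_2)$, the chain $(f[M_i])_{i<\delta_1}$ is cofinal in $N_2$, so $P_j \lesst_\K f[M_i]$ for some $i$, whence $\gtp(f(a)/N_2)$ does not fork over $f[M_i]$ by monotonicity — contradiction.

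The main obstacle I anticipate is the second fact: constructing a type over a $(\lambda,\delta)$-limit model that genuinely forks over every proper initial segment when $\cf(\delta) < \kappadnfu$, and making sure it is realized \emph{inside} the limit model (not merely in some extension), so that the isomorphism can act on it. This requires carefully arranging the failure-of-local-character witness to be a cofinal subchain of the limit's defining resolution, which is where uniqueness of same-cofinality limits (Fact \ref{cofinalityiso*}), universal continuity of $\dnf$, and $\aleph_0$-tameness all get used together; getting the monotonicity/transport of forking under the isomorphism and under the identification of subchains exactly right is the delicate bookkeeping. A secondary subtlety is the precise definition of $\kappadnfu$ and verifying it is exactly the threshold, i.e. that $(\geq\kappa)$-local character holds at $\kappadnfu$ but the relevant witness exists below it; I would isolate this as a preliminary lemma about $\dnf$ before doing the limit-model argument.
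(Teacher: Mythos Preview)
Your transport argument has a genuine gap: it only works when $\cof(\delta_2) \geq \kappadnfu$. You invoke local character on the chain $(P_j)_{j<\delta_2}$ to conclude that $\gtp(f(a)/N_2)$ does not fork over some $P_j$, but the hypothesis of the theorem is only that $\cof(\delta_1) < \kappadnfu$ and $\cof(\delta_1) \neq \cof(\delta_2)$; nothing rules out $\cof(\delta_1) < \cof(\delta_2) < \kappadnfu$ (e.g.\ $\aleph_0 < \aleph_1 < \kappadnfu = \aleph_2$). In that case neither resolution of $N_1 \cong N_2$ admits local character, and the contradiction evaporates. There is a secondary problem even in the favorable case: from $P_j \lea N_2 = \bigcup_i f[M_i]$ you cannot conclude $P_j \lea f[M_i]$ for some $i$, since $\|P_j\| = \lambda$ while the chain $(f[M_i])$ has length $\cof(\delta_1) \leq \lambda$; a set of size $\lambda$ need not land inside a single member of such a chain.

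The paper's route avoids both issues by using saturation rather than a direct element transport. Writing $\mu_l = \cof(\delta_l)$ and assuming $\mu_1 < \mu_2$, if the $(\lambda,\mu_1)$-limit is isomorphic to the $(\lambda,\mu_2)$-limit then it is $\mu_2$-saturated, hence $\mu_1^+$-saturated (Fact~\ref{long-limits-are-saturated}). The key lemma (Lemma~\ref{s-dnf}, using the argument of \cite[4.6]{sebastien-successive-categ}) then shows: if $M_\delta$ is $\delta^+$-saturated and witnessed by a $\lea^u$-chain $\langle M_i : i \leq \delta\rangle$, then every $p \in \gS(M_\delta)$ does not $\lambda$-split over some $M_i$, and hence (via Lemma~\ref{split-fork}) $\dnf$-does not fork over $M_{i+1}$. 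Tameness enters here to localize the splitting witnesses to small sets, which are then realized inside $M_\delta$ by saturation, yielding the contradiction. Since this establishes local character for an \emph{arbitrary} universal chain of length $\mu_1$, it gives $\mu_1 \in \underline{\kappa}(\dnf,\K_\lambda,\lea^u)$ and therefore $\kappadnfu \leq \mu_1$, contradicting the hypothesis. Note this argument never needs to know where $\mu_2$ sits relative to $\kappadnfu$; only $\mu_1 < \mu_2$ is used, to extract $\mu_1^+$-saturation.
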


The proof of Theorem \ref{non-iso} is surprisingly short. We show that for regular $\delta < \lambda^+$, if the $(\lambda, \delta)$-limit model $M$ is $\delta^+$-saturated (which a $(\lambda, \delta')$-limit model will be for all regular $\delta' > \delta$), then $\delta \geq \kappadnfu$ (see Lemma \ref{key-no}). This uses the argument of \cite[4.6]{sebastien-successive-categ}, which allows us to prove $\delta$-local character of $\lambda$-non-splitting in the above context, and that non-splitting is `close' to $\dnf$-non-forking (see Lemma \ref{split-fork}). In Theorem \ref{non-iso} we assume that $\K$ is \emph{$\aleph_0$-tame} - that is, types are equal if their restrictions to countable subsets are equal (see Definition \ref{tameness-def}). A problem left open is whether tameness can be avoided in Theorem \ref{non-iso}.

A key technical step to prove Theorem \ref{non-iso} is to determine the relationship between $\lambda$-non-splitting, $\lambda$-non-forking and $\dnf$-non-forking. Due to this, we spend Subsection \ref{dnf-is-like-dns} studying how they interact. Among the results we obtain is a canonicity result for $\lambda$-non-forking for long limit models (see Theorem \ref{nf-limits}).

In Section 5, we present results which combine the results of Section 3 and 4 in a natural setting. Using both Theorem \ref{largelimitsareisothm*} and Theorem \ref{non-iso}, we show that for distinct regular cardinals $\delta_1, \delta_2 \leq \lambda$, the $(\lambda, \delta_1)$-limit model and $(\lambda, \delta_2)$-limit model are isomorphic exactly when $\delta_1, \delta_2 \geq \kappadnfu$ (see Theorem \ref{full-picture}). Since $\kappadnfu$ eventually stabilises, we get that for all high enough $\lambda$ (above $\beth_{\beth_{2^{\LS(\K)}+}}$), the isomorphism spectra of the $\lambda$-limit models are all the same (see Theorem \ref{cor-2} and Theorem \ref{cor-2.5}). These results have the advantage that there are many examples of natural AECs that satisfy their hypotheses, and they can be used as black boxes when studying limit models in these cases; we demonstrate this in Section 6. 

More precisely in Section 6, we showcase how to use these results to study limit models on the AECs of: modules, with embeddings; torsion abelian groups, with pure embeddings; and models of a complete first-order theory, with elementary embeddings. In particular, we derive as an immediate corollary a slight weakening of the main theorem of \cite{maz24}: that in the AEC of $R$-modules with embeddings, $R$ is $(<\aleph_n)$-Noetherian but not $(< \aleph_{n-1})$-Noetherian if and only if for all stability cardinals $\lambda \geq \beth_{\beth_{(2^{\operatorname{card}(R) + \aleph_0})^+}}$, there are exactly $n+1$ non-isomorphic $\lambda$-limit models (this result is slightly weaker than \cite[3.17]{maz24} as the lower bound on $\lambda$ is higher than in the original result).

The paper has six sections. Section 2 presents necessary background, some basic results and some examples (though many of the examples are not necessary to understand the main results). Section 3  deals with high cofinality limit models, and ends with the application to AECs with $\mu$-tameness. Section 4 addresses low cofinality limit models. Section 5 presents `general' results that give the full spectrum of limit models, including the `black box' versions of the main theorem that can be applied for all large enough stable $\lambda$. Section 6 presents some applications of our results in natural AECs.

This paper was written while the first author was working on a Ph.D. thesis under the direction of Rami Grossberg at Carnegie Mellon University, and the first author would like to thank Professor Grossberg for his guidance and assistance in his research in general and in this work specifically. We would also like to thank Sebastien Vasey for sharing with us key ideas that play an important role in this paper (specifically in Subsection 3.4 and Subsection 4.2). We thank John Baldwin for comments that help improve the paper. We also thank the anonymous referee for detailed comments that helped improve the presentation of the paper.

\section{Preliminaries and basic results}

We assume some basic knowledge of abstract elementary classes (AECs), such as presented in \cite{baldwinbook09}.

\subsection{Basic notions} Typically we use $\K = (K, \lea)$ to denote an AEC, or sometimes an abstract class (an abstract class, or AC, is a class of models $K$ in a fixed language closed under isomorphisms with a partial ordering $\lea$ that respects isomorphisms). When we say $\K'$ is a sub-AC of $\K$, we mean $\K'$ is an AC, $K' \subseteq K$, and $\leq_{\K'} = \lea \upharpoonright (K')^2$. We use $M, N$ for models, $a, b$ for elements of models (always singletons), $|M|$ to denote the universe of model $M$, and $\|M\|$ for the cardinality of the universe of $M$. By $M \cong N$, we mean that $M$ and $N$ are isomorphic, and by $M \underset{M_0}{\cong} N$, that $M$ and $N$ are isomorphic over $M_0$; that is, there is an isomorphism from $M$ to $N$ that fixes the substructure $M_0$ of $M$ and $N$.

We use standard abuses of notation such as $b \in M$ as shorthand for $b \in |M|$. Given $M, N \in \K$ where $M \lea N$ and $b \in N$, $\gtp(b/M, N)$ denotes the (Galois) type of the singleton $b$ over $M$ in $N$. $\gS(M)$ is the set of types of singletons over $M$. Occasionally we use the more general notions of $\gtp(a/A, N)$ and $\gS(A; N)$, for $A \subseteq |N|$ where $N \in \K$, see for example \cite[2.16, 2.20]{vasey16c}. Given types $p \in \gS(M)$ and $q \in \gS(N)$ with $M \lea N$, we say $q$ extends $p$ or write $p \subseteq q$ if every realisation of $q$ realises $p$ (or equivalently, there exists a common realisation), defined similarly for $p \in \gS(A; N)$, $q \in \gS(B; N)$ where $A \subseteq B$.

We use $\alpha, \beta, \gamma, \delta$ for ordinals (typically $\gamma, \delta$ are limit), and $\lambda, \kappa, \mu$ for infinite cardinals (unless explicitly mentioned, cardinals will be infinite) - typically $\K$ will be stable in $\lambda$, and $\kappa$ will be a regular cardinal with $\kappa < \lambda^+$. We usually use $\kappa$ as a local character cardinal, as this is related to  Shelah's $\kappa_r(T)$ for first order stable theories (see subsection \ref{beyond-modules}), and it mixes well with Vasey's notation in \cite{vaseyt} (see Definition \ref{kappa-defs}).

We also use the following standard notions. Let $\K$ be an AC. We say $\K$ satisfies the \emph{amalgamation property} or \emph{AP} if every span of models $M_1, M_2$ over $M_0$ in $\K$ may be amalgamated into some $N \in \K$. We say $\K$ satisfies the \emph{joint embedding property} or \emph{JEP} if any pair of models $M_1, M_2 \in \K$ can be embedded into some common model $N \in \K$. We say $\K$ has \emph{no maximal models} or \emph{NMM} if for every $M \in \K$, there exists $N \in \K$ where $M \lea N$ and $M \neq N$. 

Given a cardinal $\lambda$, $\K_\lambda$ denotes the sub-AC with underlying class $K_\lambda = \{M \in K : \|M\| = \lambda\}$, and with $\leq_{\K_\lambda}$ the restriction of $\lea$ to $K_\lambda$. 

Tameness, identified by Grossberg and VanDieren in \cite{tamenessone}, has proved to be a vital notion in the classification theory of AECs. 

\begin{defin}\label{tameness-def}
    Let $\K$ be an AEC and $\theta$ be a cardinal. We say $\K$ is \emph{$(<\theta)$-tame} if for every $M \in \K$, and every $p, q \in \gS(M)$, if $p \upharpoonright A = q \upharpoonright A$ for all $A \subseteq |M|$ with $|A| < \theta$, then $p = q$.

    We say $\K$ is $\theta$-tame if $\K$ is $(<\theta^+)$-tame.
\end{defin}

Note that if  $LS(\K) < \theta$ then the definition of $(<\theta)$-tame is equivalent when we replace $A \subseteq M$ with $M' \lea N$. Also,  if $\K$ is $(<\theta_1)$-tame and $\theta_1 < \theta_2$ then $\K$ is $(<\theta_2)$-tame. It is worth emphasizing that several of our results do not assume tameness. 

\subsection{Limit Models}

We begin with a brief recounting of facts about limit models

\begin{defin}
Let $\K$ be an AEC and $\lambda \geq \LS(\K)$.
    \begin{enumerate}
        \item Let $M_0, M \in \K_\lambda$. We say $M$ is \emph{universal over $M_0$} if for every $N \in \K_\lambda$ with $M_0 \lea N$, there exists a $\K$-embedding $f:N \underset{M_0}{\rightarrow} M$. We sometimes write $M_0 \lea^u M$.

        \item Let $\alpha$ be an ordinal, and let $\langle M_i : i < \alpha \rangle$ be a $\lea$-increasing sequence of models in $\K$ (sometimes called a \emph{chain}). 
        \begin{enumerate}
            \item For $\gamma < \alpha$ limit, we say $\langle M_i : i < \alpha \rangle$ is \emph{continuous at $\gamma$} if $M_\gamma = \bigcup_{i < \gamma} M_i$.
        
            \item We say $\langle M_i : i < \alpha \rangle$ is \emph{continuous} if it is continuous at every limit $\gamma < \alpha$
        
            \item We say $\langle M_i : i <\alpha \rangle$ is \emph{universal} if for all $i < \alpha$ with $i + 1 <\alpha$, $M_i \lea^u M_{i+1}$

            \item We say $\langle M_i : i <\alpha \rangle$ is \emph{strongly universal} if for all $i \in [0, \alpha)$, $\bigcup_{r < i} M_r \lea^u M_i$.
        \end{enumerate}
    \end{enumerate}
\end{defin}

\begin{remark}\label{cts-univ-str-defs-generalise-remark}
    We stated these definitions in terms of sequences indexed by ordinals for legibility, but they may be used also when $\alpha$ is replaced by an arbitrary well-ordering $(I, <)$ and the sequence by $\langle M_i : i \in I \rangle$. Also, if $\alpha = \delta + 1$, we may write $\langle M_i : i \leq \delta \rangle$ in place of $\langle M_i : i < \delta + 1 \rangle$.
\end{remark}

\begin{defin}\label{univ-limit-model-def}
Let $\K$ be an AEC and $\lambda \geq \LS(\K)$.
    \begin{enumerate}
        
        \item Let $M, N \in \K_\lambda$ and $\delta < \lambda^+$ a limit ordinal. We say $N$ is a \emph{$(\lambda, \delta)$-limit over $M$} if there is a $\lea$-increasing universal continuous chain $\langle M_i : i \leq \delta \rangle$ such that $M_0 = M$ and $M_\delta = N$ (in particular, $N = \bigcup_{i < \delta} M_i$).
        
        We say the sequence $\langle M_i : i \leq \delta \rangle$ is \emph{witnessing} the limit, and that $\delta$ is the \emph{length} of the limit.

        \item $N \in \K_\lambda$ is a $(\lambda, \delta)$-limit model if it is a $(\lambda, \delta)$-limit model over some $M \in \K_\lambda$. 
        
        \item $N \in \K_\lambda$ is a $\lambda$-limit model (over $M$) if it is a $(\lambda, \delta)$-limit model (over $M$) for some limit ordinal $\delta < \lambda^+$. We sometimes omit $\lambda$ when it is clear from context.

        \item Let $\kappa< \lambda^+$ be regular. We say $N \in \K_\lambda$ is a $(\lambda, \geq \kappa)$-limit model if $N$ is a $(\lambda, \delta)$-limit model for some regular $\delta \in [\kappa, \lambda^+)$.

        \item $\kkappalims$ is the class of all $(\lambda, \geq \kappa)$-limit models in $\K$. $\Kkappalims$ is the AC of $\K$ restricted to $\kkappalims$, that is, $(\kkappalims, \lea\upharpoonright (\kkappalims)^2)$.
    \end{enumerate}
\end{defin}

Note in particular that if $N$ is a $\lambda$-limit over $M$, then $N$ is also universal over $M$. Also, the definition of $(\lambda, \delta)$-limit model is equivalent if we only assume the sequence $\langle M_i : i \leq \delta \rangle$ is continuous at $\delta$.

A lot is already known about when limit models exist and when they are isomorphic. The following are essentially {\cite[II.1.16.1(a)]{shelahaecbook}}, but we cite \cite{tamenessone} as this provides a proof.

\begin{fact}[{\cite[2.12]{tamenessone}}]\label{universalchriterionfact*}
    Let $\K$ be an AEC and $\lambda \geq \LS(\K)$. If $\langle M_i : i \leq \lambda \rangle$ is a $\lea$-increasing sequence in $\K_\lambda$ continuous at $\lambda$ such that $M_{i+1}$ realises all types over $M_i$ for every $i < \lambda$, then $M_\lambda$ is universal over $M_0$.
\end{fact}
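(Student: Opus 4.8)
The plan is to prove the contrapositive-free version directly: given an arbitrary $N \in \K_\lambda$ with $M_0 \lea N$, I will produce a $\K$-embedding $g : N \to M_\lambda$ fixing $M_0$. The first step is to resolve $N$. Since $\lambda \geq \LS(\K)$, I can enumerate $|N|$ in order type $\lambda$ and, using the Löwenheim–Skolem property together with the chain axioms, build a $\lea$-increasing continuous chain $\langle N_i : i \leq \lambda \rangle$ with $N_0 = M_0$, each $N_i \in K_\lambda$, $N_i \lea N$, and $N_\lambda = \bigcup_{i < \lambda} N_i = N$. The target embedding will then be assembled from a $\lea$-increasing continuous chain of $\K$-embeddings $\langle g_i : i \leq \lambda \rangle$, $g_i : N_i \to M_i$, with $g_0 = \id_{M_0}$; having built these, $g_\lambda \rest M_0 = \id_{M_0}$, and $g_\lambda(N) = \bigcup_{i<\lambda} g_i(N_i)$ is an increasing union of $\lea$-submodels $g_i(N_i) \lea M_i \lea M_\lambda$, hence $\lea M_\lambda$ by the coherence/chain axioms, so $g_\lambda$ is the desired embedding.

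The chain $\langle g_i \rangle$ is built by induction. The base case is $g_0 = \id_{M_0}$. At a limit stage $\gamma \leq \lambda$, set $g_\gamma = \bigcup_{i < \gamma} g_i$; the images $g_i(N_i) \lea M_i \lea M_\gamma$ form an increasing chain of $\lea$-submodels, so their union is $\lea M_\gamma$, and the $g_i$ cohere, giving a $\K$-embedding $g_\gamma : N_\gamma \to M_\gamma$ (note continuity of $\langle M_i\rangle$ at limits below $\lambda$ is not actually needed here, only that $M_i \lea M_\gamma$). The successor step $i \to i+1$ is the heart of the argument. Transporting $N_{i+1}$ along the isomorphism $g_i : N_i \cong g_i(N_i)$, I obtain a model $N_{i+1}^\ast \supseteq g_i(N_i)$ with $g_i(N_i) \lea N_{i+1}^\ast$ and an isomorphism $\hat g_i : N_{i+1} \cong N_{i+1}^\ast$ extending $g_i$; all three of $g_i(N_i) \lea M_i$, $g_i(N_i) \lea N_{i+1}^\ast$ hold in $K_\lambda$. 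Let $\bar d$ enumerate $|N_{i+1}^\ast| \setminus |g_i(N_i)|$ (a tuple of length $\leq \lambda$). Amalgamating $M_i$ and $N_{i+1}^\ast$ over $g_i(N_i)$, the Galois type of (the image of) $\bar d$ over (the image of) $M_i$ is a Galois type over $M_i$, which by hypothesis is realised in $M_{i+1}$, say by $\bar e$. Passing to a common extension witnessing this type equality and using coherence, the map fixing $g_i(N_i)$ and sending $\bar d \mapsto \bar e$ is a $\K$-embedding $N_{i+1}^\ast \to M_{i+1}$; composing with $\hat g_i$ yields $g_{i+1} : N_{i+1} \to M_{i+1}$ extending $g_i$.

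The main obstacle is exactly this successor step: turning ``the relevant Galois type is realised in $M_{i+1}$'' into an honest $\K$-embedding of $N_{i+1}$ whose image is a $\lea$-submodel of $M_{i+1}$, rather than a bare equality of types. This is where amalgamation and the coherence axiom enter, and where one must read ``realises all types over $M_i$'' as covering Galois types of tuples of length up to $\lambda$; alternatively, one can replace the tuple-type realisation inside each interval $[N_i, N_{i+1}]$ by a secondary induction that realises one element of $\gS$ at a time, at the cost of a more elaborate bookkeeping. Everything else — the limit gluing, the verification that $g_\lambda$ fixes $M_0$, and that it lands in $M_\lambda$ — is routine given the AEC axioms.
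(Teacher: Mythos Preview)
The paper does not supply its own proof (it cites \cite[2.12]{tamenessone}), so I compare against the standard argument. Your successor step has a genuine gap. In this paper $\gS(M)$ denotes types of singletons, and ``$M_{i+1}$ realises all types over $M_i$'' means all $1$-types; if it meant types of $\lambda$-tuples, the hypothesis would already say each $M_{i+1}$ is universal over $M_i$, making the fact trivial. But your successor step asks $M_{i+1}$ to realise the Galois type of the $\lambda$-tuple $\bar d$ enumerating $|N_{i+1}^\ast| \setminus |g_i(N_i)|$, which the hypothesis does not cover.

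Your proposed fix --- a secondary induction realising one element at a time --- does not work in this direction. If at some micro-step you have an embedding $g : D \to M_i$ with $g[D]$ a proper $\lea$-submodel of $M_i$, the type of the next element $a$ (pushed through $g$) is a type over $g[D]$, not over $M_i$; after extending it to a type over $M_i$ and realising it by $b \in M_{i+1}$, type equality only yields an embedding of a model $D' \lea N$ containing $D \cup \{a\}$ into some amalgam $L'$ with $M_{i+1} \lea L'$, with no control placing the image of $D'$ inside $M_{i+1}$. The standard argument reverses direction: build $f_i : M_i \to L_i$ with $N \lea L_0 \lea L_i$ and $f_0 = \id_{M_0}$; at stage $i$ realise the $1$-type $f_i^{-1}\bigl(\gtp(a_i/f_i[M_i], L_i)\bigr) \in \gS(M_i)$ by some $b_i \in M_{i+1}$, and use type equality to obtain $L_{i+1}$ with $L_i \lea L_{i+1}$ and $f_{i+1} : M_{i+1} \to L_{i+1}$ extending $f_i$ with $f_{i+1}(b_i) = a_i$. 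At the end $|N| \subseteq f_\lambda[M_\lambda]$, so by coherence $N \lea f_\lambda[M_\lambda]$, and $f_\lambda^{-1} \rest N$ is the desired embedding.
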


A corollary of this is:

\begin{fact}[{\cite[2.9]{tamenessone}}]\label{limitsexist*}
    Let $\K$ be an AEC and $\lambda \geq \LS(\K)$ such that $\K$ is stable in $\lambda$ and  $\K_\lambda$ has AP and NMM. Then for every $M \in \K_\lambda$, there is $N \in \K_\lambda$ universal over $M$.

    Moreover, under the same assumptions, for every $M \in \K_\lambda$ and every limit $\delta < \lambda^+$ there is a $(\lambda, \delta)$-limit model over $M$.
\end{fact}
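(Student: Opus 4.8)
The final statement to prove is Fact \ref{limitsexist*}, which has two parts: existence of a universal extension, and existence of $(\lambda, \delta)$-limit models over any base.

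Let me think about the plan.

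\textbf{Plan for the proof.}

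The plan is to deduce both claims from Fact \ref{universalchriterionfact*} together with $\lambda$-stability and the structural hypotheses. First I would establish the existence of a universal extension over a fixed $M \in K_\lambda$. Starting from $M_0 = M$, I would build a $\lea$-increasing continuous chain $\langle M_i : i \leq \lambda \rangle$ in $K_\lambda$ so that $M_{i+1}$ realises every Galois type over $M_i$. At successor stages this is possible because $\lambda$-stability gives $|\gS(M_i)| \leq \lambda$, so one can realise all these types (using $\lambda$-AP to amalgamate, and iterating $\lambda$ times, each realisation adding at most one element) while staying inside $K_\lambda$; $\lambda$-NMM ensures the process never gets stuck at a model that cannot be properly extended, though actually realising a type already forces a proper extension unless the type is algebraic — the key point is that we can find an extension in $K_\lambda$ realising all of $\gS(M_i)$. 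At limit stages $\gamma \leq \lambda$ take unions, which remain in $K_\lambda$ by the coherence/chain axioms of an AEC since each $M_i$ has size $\lambda$ and $\gamma \leq \lambda$. Then Fact \ref{universalchriterionfact*} applies directly to give $M_\lambda$ universal over $M_0 = M$.

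For the "moreover" part, I would iterate the universal-extension construction. Given $M \in K_\lambda$ and a limit $\delta < \lambda^+$, build $\langle N_i : i \leq \delta \rangle$ by recursion: $N_0 = M$; given $N_i$, let $N_{i+1}$ be universal over $N_i$ by the first part; at limits $\gamma \leq \delta$ set $N_\gamma = \bigcup_{i < \gamma} N_i$, which lies in $K_\lambda$ since $\gamma \leq \delta < \lambda^+$ and hence the union has size $\lambda$. The chain $\langle N_i : i \leq \delta \rangle$ is then $\lea$-increasing, continuous, and universal (each $N_i \lea^u N_{i+1}$), so $N_\delta$ is by definition a $(\lambda, \delta)$-limit model over $M$.

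The main obstacle, and the step needing the most care, is the successor step in the first part: constructing, within $K_\lambda$, an extension of $M_i$ realising \emph{all} types in $\gS(M_i)$. This is where $\lambda$-stability is essential — without the bound $|\gS(M_i)| \leq \lambda$ one could not keep the realising model of size $\lambda$ — and where $\lambda$-AP is used to actually produce, for each type, an extension realising it, then to amalgamate these extensions coherently into a single model of size $\lambda$. One does this by an internal induction of length $\lambda$, enumerating $\gS(M_i)$ and adding a realisation of each type one at a time via amalgamation, taking unions at limits; $\lambda$-NMM is not strictly needed here but guarantees the outer chain can always be continued properly. Once this lemma-within-the-proof is in hand, both statements follow formally from Fact \ref{universalchriterionfact*}.
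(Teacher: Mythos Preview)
Your proposal is correct and follows exactly the approach the paper intends: the paper presents this fact as a corollary of Fact \ref{universalchriterionfact*} (citing \cite[2.9]{tamenessone} rather than giving a proof), and your argument is precisely the standard derivation from that fact using $\lambda$-stability and $\lambda$-AP to build the type-realising chain, then iterating for the moreover part.
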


The following holds by a straightforward back and forth argument, first present in \cite{sh394}. 

\begin{fact}[{\cite[1.3.6]{shvi}}]\label{cofinalityiso*}
    Let $\K$ be an AEC and $\lambda \geq \LS(\K)$. Suppose $\K_\lambda$ has AP, and that $\delta_1, \delta_2 < \lambda^+$ such that $\cof(\delta_1) = \cof(\delta_2)$. Suppose $M, N_1, N_2 \in \K_\lambda$ where $N_l$ is a $(\lambda, \delta_l)$-limit over $M$ for $l=1, 2$. Then there is an isomorphism $f:N_1 \cong N_2$ fixing $M$.

    If in addition $\K_\lambda$ has JEP, then for any $N_1, N_2 \in \K_\lambda$ where $N_l$ is a $(\lambda, \delta_l)$-limit for $l=1, 2$, $N_1 \cong N_2$.
\end{fact}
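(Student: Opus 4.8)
The plan is to reduce to the case where the two limits have equal length, and then run the classical Cantor-style back-and-forth of \cite{sh394}. For the first step I would show that a $(\lambda,\delta)$-limit over $M$ is in fact a $(\lambda,\cof(\delta))$-limit over $M$. Put $\gamma := \cof(\delta)$; since $\delta$ is a limit ordinal, $\gamma$ is an infinite regular cardinal with $\gamma \le \delta < \lambda^+$, so $\gamma$ is a legitimate length. Fix a witnessing chain $\langle M_i : i \le \delta\rangle$ and a strictly increasing continuous cofinal map $c : \gamma \to \delta$ with $c(0)=0$ (the enumeration of $\{0\}$ together with a club in $\delta$ of order type $\gamma$), and set $c(\gamma):=\delta$. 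Then $\langle M_{c(j)} : j \le \gamma\rangle$ is $\lea$-increasing with $M_{c(0)}=M$ and $M_{c(\gamma)}=N$; it is continuous, since at a limit $j$ one has $c(j)=\sup_{k<j}c(k)$ and hence $M_{c(j)}=\bigcup_{i<c(j)}M_i=\bigcup_{k<j}M_{c(k)}$ by continuity of the original chain; and it is universal, since for $j<\gamma$ we have $c(j)+1\le c(j+1)$, so $M_{c(j)}\lea^u M_{c(j)+1}\lea M_{c(j+1)}$, and universality transfers up a $\lea$-chain. Applying this to both $N_1$ and $N_2$ and using $\cof(\delta_1)=\cof(\delta_2)=:\gamma$, we may assume $\delta_1=\delta_2=\gamma$.

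For the first conclusion, fix universal continuous chains $\langle M^1_i : i\le\gamma\rangle$ and $\langle M^2_i : i\le\gamma\rangle$ in $\K_\lambda$ witnessing $N_1$ and $N_2$ over $M$, so $M^1_0=M^2_0=M$. I would build, by recursion on $i\le\gamma$, an increasing continuous chain of $\lea$-embeddings (partial isomorphisms) starting from $\id_M$, whose domains run cofinally through $\langle M^1_i\rangle$ and whose ranges run cofinally through $\langle M^2_i\rangle$: at a successor step, universality of the next model of the target chain over its predecessor lets one push the current map forward so as to cover one more model of that chain, and then universality in the source chain lets one push the newly added part back, covering one more model there; at limit steps one takes unions, which is legitimate because both chains are continuous. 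Since $\gamma$ is regular, after $\gamma$ steps the domains exhaust $N_1=\bigcup_i M^1_i$ and the ranges exhaust $N_2=\bigcup_i M^2_i$, so the union of the chain of embeddings is an isomorphism $N_1\cong N_2$ fixing $M$. (Working inside an amalgam, i.e. a monster model of $\K_\lambda$ as in \cite{sh394}, is where $\lambda$-AP enters; the construction itself needs only the universality built into the witnessing chains.)

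For the moreover, $N_l$ is now a $(\lambda,\gamma)$-limit over some $A_l$, witnessed by $\langle M^l_i : i\le\gamma\rangle$ with $M^l_0=A_l$, and $A_1,A_2$ need not agree, so the back-and-forth cannot be started at level $0$. Here I would use $\lambda$-JEP once to fix $D\in\K_\lambda$ with $\lea$-embeddings $M^1_1\hookrightarrow D$ and $M^2_1\hookrightarrow D$; since $M^1_2$ is universal over $M^1_1$, one embeds $D$, hence also $M^2_1$, into $M^1_2$ over $M^1_1$, yielding a $\lea$-embedding $M^2_1\hookrightarrow M^1_2$. From this the interleaving construction of the previous paragraph starts one level up, alternately embedding the tails $\langle M^1_i : 1\le i<\gamma\rangle$ and $\langle M^2_i : 1\le i<\gamma\rangle$ into one another via universality of successors; by regularity of $\gamma$ both tails are cofinal, so the union of the resulting chain of embeddings is an isomorphism between $\bigcup_{1\le i<\gamma}M^1_i=N_1$ and $\bigcup_{1\le i<\gamma}M^2_i=N_2$.

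The verifications of continuity under the reindexing in the first step, and of coherence of the embeddings at limit stages, are routine. The one point that takes care is organising the recursion so that \emph{both} chains get exhausted --- i.e. maintaining a genuine `sandwich' rather than a one-sided tower of embeddings --- and, for the moreover, getting the interleaving started despite $A_1\ne A_2$, which is exactly what the single use of $\lambda$-JEP resolves.
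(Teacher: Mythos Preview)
Your proposal is correct and matches the paper's approach: the paper does not give a proof, only the remark that the fact ``holds by a straightforward back and forth argument, first present in \cite{sh394}'', and your write-up is a correct expansion of exactly that argument, including the standard reduction to equal (regular) lengths and the use of $\lambda$-JEP to seed the interleaving in the second clause.
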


So if $\K_\lambda$ is well behaved, all possible limit models exist, and by Fact \ref{cofinalityiso*} they are unique for any fixed cofinality of the limit's length. This means we can restrict to studying limits of infinite regular lengths.

\begin{defin}
    Let $\K$ be an AEC, $\mu$ an infinite cardinal. We say that a model $M \in \K$ is \emph{$\mu$-saturated} if for all $A \subseteq M$ with $|A| < \mu$, and $N \in \K$ with $M \lea N$ and $p \in \gS(A;N)$, $p$ is realised in $M$. We say $M$ is \emph{saturated} if $M$ is $\|M\|$-saturated.
\end{defin}

\begin{remark}
    If $\mu > \LS(\K)$, then $M$ is $\mu$-saturated if and only if for all $M_0 \lea M$ with $\|M_0\| < \mu$, and all $p \in \gS(M_0)$, $p$ is realised in $M$.
\end{remark}

The following is \cite[2.8(1)]{grva}, but since they do not include a proof we provide one.

\begin{fact}\label{long-limits-are-saturated}
Suppose $\K$ is an AEC with AP in $\K_\lambda$, $\lambda \geq \LS(\K)$, $\delta <\lambda^+$ is regular, and $M$ is a $(\lambda, \delta)$-limit model. Then $M$ is $\delta$-saturated.
\end{fact}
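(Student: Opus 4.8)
The plan is to fix a sequence $\langle M_i : i \leq \delta \rangle$ witnessing that $M$ is a $(\lambda, \delta)$-limit model — so it is $\lea$-increasing, continuous, universal (i.e.\ $M_i \lea^u M_{i+1}$ for every $i < \delta$), with $M = M_\delta = \bigcup_{i < \delta} M_i$ — and then to reduce every realization problem over a small subset of $M$ to a single step of this chain. Concretely, let $A \subseteq |M|$ with $|A| < \delta$, let $N \in \K$ with $M \lea N$, and let $p \in \gS(A; N)$; pick $a \in N$ with $p = \gtp(a/A, N)$. The goal is to produce a realization of $p$ inside $M$.

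First I would use that $M = \bigcup_{i < \delta} M_i$ is a continuous $\lea$-increasing union indexed by the regular ordinal $\delta$, so that from $|A| < \delta = \cof(\delta)$ we get some $j < \delta$ with $A \subseteq |M_j|$; then $M_j \lea^u M_{j+1}$ and $M_{j+1} \lea M_\delta = M$. Next, by the Löwenheim--Skolem axiom of AECs, choose $N' \lea N$ with $|M_j| \cup \{a\} \subseteq |N'|$ and $\|N'\| \leq \lambda + \LS(\K) = \lambda$; since $M_j \lea N'$ forces $\|N'\| \geq \|M_j\| = \lambda$, in fact $N' \in K_\lambda$. Now I apply the universality of $M_{j+1}$ over $M_j$ to the extension $M_j \lea N'$ (legitimate precisely because $N' \in K_\lambda$): there is a $\K$-embedding $f : N' \underset{M_j}{\to} M_{j+1}$.

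The last step is to check that $f(a)$ works. It lies in $|M_{j+1}| \subseteq |M|$, and using invariance and monotonicity of Galois types (which are available since $\K$ has $\lambda$-AP),
\[ \gtp(f(a)/A, N) = \gtp(f(a)/A, M_{j+1}) = \gtp(a/A, N') = \gtp(a/A, N) = p, \]
where the first equality is monotonicity along $M_{j+1} \lea N$, the second is invariance under $f$ (which fixes $A$ pointwise, as $A \subseteq |M_j|$), and the third is monotonicity along $N' \lea N$. Hence $p$ is realized in $M$, and since $A$, $N$, $p$ were arbitrary with $|A| < \delta$, this shows $M$ is $\delta$-saturated.

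I do not expect a genuine obstacle here: the construction is short. The only points requiring any care are (i) shrinking $N$ down to a model $N'$ of size $\lambda$ that still contains $M_j$ and $a$, so that the universality of $M_{j+1}$ over $M_j$ can be invoked — this is exactly the role of Löwenheim--Skolem together with $\|M_j\| = \lambda$ — and (ii) bookkeeping the ambient models when transporting the Galois type of $f(a)$ back to $p$.
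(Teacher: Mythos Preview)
Your proof is correct and follows essentially the same approach as the paper's: fix a witnessing chain, use regularity of $\delta$ to trap $A$ in some $M_j$, then use universality of $M_{j+1}$ over $M_j$ to realize the type. You spell out the L\"owenheim--Skolem step (shrinking $N$ to size $\lambda$) and the type-transport bookkeeping that the paper leaves implicit, but the argument is the same.
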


\begin{proof}
    Fix $\langle M_i : i < \delta \rangle$ witnessing that $M$ is a $(\lambda, \delta)$-limit model. Suppose $A \subseteq |M|$ with $|A| < \delta$, $N \in \K$ with $M \lea N$, and $p \in \gS(A;N)$. Say $p = \gtp(a/A, N)$. Take $i < \delta$ such that $A \subseteq M_i$. Then $p \subseteq \gtp(a/M_i, N)$. Since $M_i \lea^u M_{i+1} \lea M$, $\gtp(a/M_i, N)$ is realised in $M$, and hence $p$ is also realised in $M$.
\end{proof}

\subsection{Independence relations} Independence relations generalize first order non-forking to AECs. These have been thoroughly studied in recent years. 

\begin{defin}\label{dnf-def}
    Given an abstract class $\K$, a \emph{weak independence relation on $\K$} is a relation $\dnf$ on tuples $(M_0, a, M, N)$, where $M_0, M, N \in \K$, $a \in N$, and $M_0 \lea M \lea N$. We write $a \dnf_{M_0}^{N} M$ as a shorthand for $\dnf(M_0, a, M, N)$.
\end{defin}

\begin{defin}
 An \emph{independence relation $\dnf$} on an abstract class $\K$ is a weak  independence relation  that satisfies:
    \begin{enumerate}
        \item \emph{Invariance:} whenever $a \dnf_{M_0}^{N} M$ holds, and $f:N \cong N'$ is an isomorphism, we have that $f(a) \dnf_{f[M_0]}^{N'} {f[M]}$.
        \item \emph{Monotonicity:} whenever $a \dnf_{M_0}^{N} M$ holds and $M_0 \lea M_1 \lea M \lea N_0 \lea N \lea N'$ and $a \in N_0$, we have both $a \dnf_{M_0}^{N_0} M_1$ and $a \dnf_{M_0}^{N'} M_1$ (that is, we can shrink $M$, and we can shrink or grow $N$, so long as we preserve $M_0 \lea M \lea N$, $a \in N$).
        \item \emph{Base monotonicity:} whenever $a \dnf_{M_0}^{N} M$ holds and $M_0 \lea M_1 \lea M$, then $a \dnf_{M_1}^{N} M$.
    \end{enumerate}
\end{defin}

\begin{defin}
    Suppose $\dnf$ is an independence relation over an abstract class $\K$. Let $M_0 \lea M$ and $p \in \gS(M)$. We say $p \dnf$-does not fork over $M_0$ if there exist $N \in \K$ and $a \in N$ such that $p = \gtp(a/M, N)$ and $a \dnf_{M_0}^N M$.
\end{defin}

\begin{remark}
    By invariance and monotonicity, for every $p \in \gS(M)$ and $M_0 \lea M$, $p$ $\dnf$-does not fork over $M_0$ if and only if for \emph{every} $N \in \K$, $a \in N$ such that $p = \gtp(a/M, N)$, $a \dnf_{M_0}^N M$. That is, the choice of representatives of $p$ do not matter.
\end{remark}

Nicely behaved independence relations typically satisfy a number of the following properties. Our results on isomorphism types of limit models will involve assuming existence of an independence relation satisfying different combinations of these (and some additional ones we will define later).

\begin{defin}\label{independence-relation-properties}
    Given an independence relation $\dnf$ on an abstract class $\K$, $\K' \subseteq \K$ an abstract subclass, $\kappa$ a regular cardinal, and $\theta$ any infinite cardinal, we say $\dnf$ satisfies:
    \begin{enumerate}
        \item \emph{Uniqueness} if whenever $M \lea N$, and $q_1, q_2 \in \gS(N)$ satisfy that $q_1 \upharpoonright M = q_2 \upharpoonright M$ and $q_l$ $\dnf$-does not fork over $M$ for $l = 1, 2$, then $q_1 = q_2$.
        \item \emph{Extension} if whenever $M_0 \lea M \lea N$, and $p \in \gS(M)$ $\dnf$-does not fork over $M_0$, then there exists $q \in \gS(N)$ extending $p$ such that $q$ $\dnf$-does not fork over $M_0$.
        \item \emph{Transitivity} if whenever $M_0 \lea M \lea N$ and $p \in \gS(N)$ satisfies both that $p$ $\dnf$-does not fork over $M$ and $p \upharpoonright M$ $\dnf$-does not fork over $M_0$, then $p$ $\dnf$-does not fork over $M_0$.
        \item \emph{Existence} if whenever $M \in \K$ and $p \in \gS(M)$, $p$ $\dnf$-does not fork over $M$.
        \item \emph{$\kappa$-local character} if whenever $\langle M_i : i < \kappa \rangle$ is a $\lea^u$-increasing sequence, and $p \in \gS(M_\kappa)$ where $M_\kappa = \bigcup_{i<\kappa} M_i \in \K$, then there exists $i < \kappa$ such that $p$ $\dnf$-does not fork over $M_i$.
        \item \emph{$(\geq \kappa)$-local character} if $\dnf$ satisfies $\gamma$-local character for each regular $\gamma \geq \kappa$.
        \item \emph{strong $(<\kappa)$-local character} if for all $N \in K$ and $p \in \gS(N)$, there exists $M \lea N$ with $\|M\| < \kappa$ and $p$ $\dnf$-does not fork over $M$.
        \item \emph{strong $\kappa$-local character} if $\dnf$ satisfies strong $(<\kappa^+)$-local character.
        \item \emph{$\kappa$-universal continuity} if whenever $\langle M_i : i < \kappa \rangle$ is a $\lea^u$-increasing sequence and $p \in \gS(M_\kappa)$ where $M_\kappa = \bigcup_{i<\kappa} M_i \in \K$, then provided that $p \upharpoonright M_i$ $\dnf$-does not fork over $M_0$ for all $i < \kappa$, $p$ $\dnf$-does not fork over $M_0$.
        \item \emph{$(\geq \kappa)$-universal continuity} if $\dnf$ satisfies $\gamma$-universal continuity for each regular $\gamma \geq \kappa$.
        \item \emph{Universal continuity} if $\dnf$ has $(\geq \aleph_0)$-universal continuity.
        \item \emph{Non-forking amalgamation} if given $M_0, M_1, M_2 \in K$ and $a_1 \in M_1, a_2 \in M_2$, such that $M_0 \lea M_l$ for $l = 1, 2$, then there exists $N \in \K$ with $M_0 \lea N$ and $\K$-embeddings $f_l: M_l \underset{M_0}{\rightarrow} N$ such that $f_l(a_l) \dnf_{M_0}^N f_{3-l}[M_{3-l}]$ for $l = 1, 2$ (that is, you can amalgamate $M_1, M_2$ such that the images of the $a_l$ are independent of the `opposite' model $M_{3-l}$ over $M_0$).
        \item \emph{$(<\theta)$-witness property (for singletons)} if whenever $M \lea N$ and $p \in \gS(N)$, if for all $A \subseteq N$ where $|A| < \theta$, there exists $N_0 \lea N$ where $A \subseteq |N_0|$ and $M \lea N_0$ such that $p \upharpoonright N_0$ $\dnf$-does not fork over $M$, then $p$ $\dnf$-does not fork over $M$.
        \item \emph{$\theta$-witness property} if $\dnf$ has the $(< \theta^+)$-witness property.
    \end{enumerate}
\end{defin}

The following definition is inspired by the formulation of $\lambda$-symmetry in \cite[2.6]{vasey18}.

\begin{defin}\label{symmetry-def}
    Let $\K$ be an AC, and $\dnf$ an independence relation on $\K$. We say $\dnf$ has \emph{symmetry} if whenever $M \lea N$, and $a, b \in N$, then the following are equivalent:
        \begin{enumerate}
            \item There exist $M_b, N_b \in \K$ with $M \lea M_b \lea N_b$ and $N \lea N_b$ such that $b \in M_b$ and $\gtp(a/M_b, N_b)$ $\dnf$-does not fork over $M$
            \item There exist $M_a, N_a \in \K$ with $M \lea M_a \lea N_a$ and $N \lea N_a$ such that $a \in M_a$ and $\gtp(b/M_a, N_a)$ $\dnf$-does not fork over $M$
        \end{enumerate}

    If $\K'$ is a sub-AC of $\K$ and the restriction of $\dnf$ to $\K'$ satisfies symmetry, we say $\dnf$ has \emph{symmetry in $\K'$}.
\end{defin}

We summarise how several of these properties are related. The following is essentially \cite[Corollary III.4.4]{shbook}.

\begin{fact}
    Let $\K$ be an AC with an independence relation $\dnf$. If $\dnf$ satisfies uniqueness and extension, then $\dnf$ satisfies transitivity. 
\end{fact}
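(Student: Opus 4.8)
The plan is to reduce everything to a single application of \emph{uniqueness}, using \emph{extension} and \emph{base monotonicity} to manufacture a competitor type that must coincide with $p$. So suppose $M_0 \lea M \lea N$, $p \in \gS(N)$, and that $p$ $\dnf$-does not fork over $M$ while $p \rest M$ $\dnf$-does not fork over $M_0$. I want to show $p$ $\dnf$-does not fork over $M_0$.

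First I would apply \emph{extension} to the type $p \rest M \in \gS(M)$, which $\dnf$-does not fork over $M_0$, with the tower $M_0 \lea M \lea N$: this yields some $q \in \gS(N)$ extending $p \rest M$ such that $q$ $\dnf$-does not fork over $M_0$. Next, pick a representative $a \in N'$ of $q$ with $N \lea N'$ and $a \dnf_{M_0}^{N'} N$; applying \emph{base monotonicity} along $M_0 \lea M \lea N$ gives $a \dnf_{M}^{N'} N$, i.e. $q$ $\dnf$-does not fork over $M$ as well. (By the remark after the definition of non-forking, the choice of representative is harmless here.)

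Now observe that $p$ and $q$ are both in $\gS(N)$, both $\dnf$-do not fork over $M$, and $q \rest M = p \rest M$ since $q$ extends $p \rest M$. By \emph{uniqueness} (applied with $M \lea N$), we conclude $p = q$. Since $q$ $\dnf$-does not fork over $M_0$, neither does $p$, which is exactly transitivity.

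There is no real obstacle here — the whole argument is a short diagram chase. The only point requiring a little care is the bookkeeping around representatives of Galois types when invoking base monotonicity (making sure the chosen $N'$, $a$ witness $q$ and then re-reading the independence quadruple with the enlarged base), and this is precisely what the invariance/monotonicity remark after Definition~\ref{dnf-def} is there to handle.
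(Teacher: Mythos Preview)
Your proof is correct and follows essentially the same route as the paper's: apply extension to $p\rest M$ to get a non-forking $q\in\gS(N)$ over $M_0$, use base monotonicity to see $q$ does not fork over $M$, then invoke uniqueness over $M$ to conclude $p=q$. If anything, you are slightly more precise than the paper, which writes ``monotonicity'' where ``base monotonicity'' is what is actually used.
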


Extension has a weaker formulation, but assuming uniqueness it is equivalent to our version.

\begin{lemma}\label{extension-equivalence}
Let $\K$ be an AC with an independence relation $\dnf$  satisfying uniqueness. Then extension is equivalent to saying that whenever $M \lea N$ and $p \in \gS(M)$ $\dnf$-does not fork over $M$, then there is $q \in \gS(N)$ with $p \subseteq q$ and $q$ $\dnf$-does not fork over $M$.
\end{lemma}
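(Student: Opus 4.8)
The plan is to prove the two implications separately. The forward direction is immediate: if $\dnf$ satisfies extension in the sense of Definition \ref{independence-relation-properties}(2), then specialising to the case $M_0 = M$ gives exactly the weaker statement, since a type in $\gS(M)$ that $\dnf$-does not fork over $M$ is an admissible input to extension with base $M$.

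For the converse, suppose $\dnf$ satisfies uniqueness and the weaker form of extension, and let $M_0 \lea M \lea N$ with $p \in \gS(M)$ that $\dnf$-does not fork over $M_0$; I want to produce $q \in \gS(N)$ with $p \subseteq q$ and $q$ $\dnf$-does not fork over $M_0$. The key idea is to apply the weak form of extension not to $p$ itself but to its restriction $p_0 := p \upharpoonright M_0 \in \gS(M_0)$. First, by monotonicity, $p_0$ $\dnf$-does not fork over $M_0$ (shrink the domain of a witness for $p$ from $M$ down to $M_0$), so $p_0$ is an admissible input to the weaker extension property along $M_0 \lea N$; applying it yields $q \in \gS(N)$ with $p_0 \subseteq q$ and $q$ $\dnf$-does not fork over $M_0$.

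It then remains to check that $q$ actually extends $p$, not merely $p_0$. For this I compare $q \upharpoonright M$ with $p$: both lie in $\gS(M)$; by monotonicity $q \upharpoonright M$ $\dnf$-does not fork over $M_0$, and $p$ $\dnf$-does not fork over $M_0$ by hypothesis; and $(q \upharpoonright M) \upharpoonright M_0 = q \upharpoonright M_0 = p_0 = p \upharpoonright M_0$. Uniqueness, applied with base $M_0$ and ambient domain $M$, then forces $q \upharpoonright M = p$, so $p \subseteq q$ and $q$ witnesses extension for $p$ over $M_0$.

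The argument is short, and the only real subtlety — the ``main obstacle'' such as it is — is noticing that one should feed $p \upharpoonright M_0$ rather than $p$ into the weaker extension property, and then invoke uniqueness to upgrade a non-forking extension of $p \upharpoonright M_0$ into one of $p$. In particular this route avoids circularity: transitivity is only known here as a consequence of uniqueness together with the full extension property, so it cannot be used, but the uniqueness-plus-monotonicity argument above does not require it.
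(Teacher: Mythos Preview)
Your proof is correct and follows essentially the same approach as the paper: for the forward direction, specialise to $M_0 = M$; for the converse, apply the weak form of extension to $p \upharpoonright M_0$ along $M_0 \lea N$, then use monotonicity and uniqueness over $M_0$ to conclude that $q \upharpoonright M = p$. Your additional remark about avoiding transitivity (to prevent circularity) is a nice clarification not made explicit in the paper.
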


\begin{proof}
    The version from Definition \ref{independence-relation-properties} implies the second version by setting $M_0 = M$. For the reverse implication, suppose $M_0 \lea M \lea N$ with $p \in \gS(M)$ such that $p$ $\dnf$-does not fork over $M_0$. Then $p \upharpoonright M_0$ $\dnf$-does not fork over $M_0$ by monotonicity. So there exists $q \in \gS(N)$ extending $p \upharpoonright M$ such that $q$ $\dnf$-does not fork over $M_0$. By monotonicity, $q \upharpoonright M$ $\dnf$-does not fork over $M_0$, so by uniqueness, $q \upharpoonright M = p$. Hence $p \subseteq q$ and $q$ $\dnf$-does not fork over $M_0$ as desired.
\end{proof}

\begin{lemma}\label{wp-implies-universal_continuity} Let $\K$ be an AC with an independence relation $\dnf$.
    If $\dnf$ satisfies the $(<\theta)$-witness property in some regular $\theta$, then $\dnf$ satisfies $(\geq \theta)$-universal continuity.
\end{lemma}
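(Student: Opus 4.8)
The plan is to deduce $\gamma$-universal continuity for an arbitrary regular $\gamma \geq \theta$ as a direct application of the $(<\theta)$-witness property, taking the witnessing object in the witness property to be a model from the chain itself. So I would fix a regular $\gamma \geq \theta$, a $\lea^u$-increasing sequence $\langle M_i : i < \gamma \rangle$ with $M_\gamma := \bigcup_{i<\gamma} M_i \in \K$ (and $M_i \lea M_\gamma$ for each $i$, as is implicit in the statement of $\gamma$-universal continuity), and $p \in \gS(M_\gamma)$ such that $p \upharpoonright M_i$ $\dnf$-does not fork over $M_0$ for every $i < \gamma$. The goal is $p$ $\dnf$-does not fork over $M_0$, and I would obtain it by invoking the $(<\theta)$-witness property with $M := M_0$ and $N := M_\gamma$.

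The only thing to check is the hypothesis of the witness property: for every $A \subseteq |M_\gamma|$ with $|A| < \theta$, there is some $N_0$ with $A \subseteq |N_0|$, $M_0 \lea N_0 \lea M_\gamma$, and $p \upharpoonright N_0$ $\dnf$-non-forking over $M_0$. Here I would use regularity of $\gamma$: since $|A| < \theta \leq \gamma = \cof(\gamma)$, writing each $a \in A$ as $a \in |M_{i_a}|$ with $i_a < \gamma$, the ordinal $j := \sup_{a \in A} i_a$ is $< \gamma$; setting $i := \max(j,1)$ (which is $< \gamma$ since $\gamma$ is an infinite cardinal) gives $A \subseteq |M_i|$ with $M_0 \lea M_i \lea M_\gamma$. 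Taking $N_0 := M_i$, the required non-forking of $p \upharpoonright M_i$ over $M_0$ is exactly the standing hypothesis. Thus the witness property applies and yields that $p$ $\dnf$-does not fork over $M_0$, establishing $\gamma$-universal continuity; since $\gamma \geq \theta$ was an arbitrary regular cardinal, $\dnf$ satisfies $(\geq \theta)$-universal continuity.

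I do not expect a genuine obstacle here. The two points that need a little care are (i) ensuring $M_0 \lea N_0$, which is why one passes to $\max(j,1)$ rather than to $j$ itself, and (ii) the cofinality computation $j < \gamma$, which is exactly where the assumption $\gamma \geq \theta$ together with regularity of $\gamma$ is used — note that regularity of $\theta$ is not actually needed for this implication.
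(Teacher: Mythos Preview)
Your proof is correct and follows essentially the same approach as the paper's: verify the hypothesis of the $(<\theta)$-witness property by using regularity of $\gamma$ to absorb any small $A \subseteq M_\gamma$ into some $M_i$ from the chain, which then serves as the required $N_0$. Your extra care with $\max(j,1)$ is harmless but unnecessary (since $M_0 \lea M_0$ already), and your observation that regularity of $\theta$ is never used is correct.
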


\begin{proof}
    Suppose $\langle M_i : i \leq \gamma \rangle$ are in $\K$ for some regular $\gamma \geq \theta$, where $M_\gamma = \bigcup_{i < \gamma}M_i$. Suppose $p \in \gS(M_\gamma)$, and $p \upharpoonright M_i$ $\dnf$-does not fork over $M_0$ for all $i <\gamma$. 
    
    We will verify the hypotheses of the $(<\theta)$-witnessing property for $p$ and $M_0$. Fix $A \subseteq M_\gamma$ with $|A| < \theta$. Since $\gamma$ is regular and $|A| < \gamma$, there exists $i < \gamma$ such that $A \subseteq M_i$. We know $M_0 \lea M_i$, $A \subseteq |M_i|$, and $p \upharpoonright M_i$ $\dnf$-does not fork over $M_0$. Therefore, by the $(<\theta)$-witnessing property, $p$ $\dnf$-does not fork over $M_0$ as desired. 
\end{proof}

\begin{remark}\label{strong-local-char-implies-universal}
Let $\K$ be an AC with an independence relation $\dnf$.
If $\dnf$ satisfies strong $(<\kappa)$-local character then $\dnf$ satisfies $(\geq \kappa)$-local character.
\end{remark}

In a nice enough independence relation, symmetry implies non-forking amalgamation.

\begin{fact}\label{symm-implies-nfap-fact}
    Let $\K$ be an AC with AP. Assume $\dnf$ is an independence relation on $\K$ satisfying extension, uniqueness, and symmetry. Then $\dnf$ satisfies non-forking amalgamation.
\end{fact}

\begin{proof}
    By the same method as \cite[5.2]{vasey18} or \cite[16.2]{vaseyn}, which are based on \cite[II.2.16]{shelahaecbook}, but replacing the use of $\lambda$-symmetry with symmetry of $\dnf$. The uses of the monster model can be avoided with care.
\end{proof}

Two independence relations of interest are \emph{$\lambda$-non-splitting} and \emph{$\lambda$-non-forking}. The following definition follows \cite[I.4.2]{van06}, but both can be traced back to Shelah \cite{shbook}.

\begin{defin}\label{splitting_def}
    Let $\K$ be an AC with AP, and $\lambda \geq \LS(\K)$. Let $M, N \in \K$ with $M \lea N$. We say that \emph{$p \in \gS(N)$ splits over $M$} if there exist $N_1, N_2 \in \K$ with $M \lea N_l \lea N$ for $l = 1, 2$, and an isomorphism $f : N_1 \underset{M}{\rightarrow} N_2$, such that $f(p \upharpoonright N_1) \neq p \upharpoonright N_2$. We say $p$ \emph{$\lambda$-splits over $M_0$} if we additionally require that $\|N_1\| = \|N_2\| = \lambda$.
    
    \emph{Non-splitting} is the independence relation $\dnf_{\splt}$ given by $a \underset{M_0}{\dnf_{\splt}^N} M$ if and only if $\gtp(a/M, N)$ does not split over $M_0$. Similarly define \emph{$\lambda$-non-splitting}, the relation $\dnf_{\lambda-\splt}$, from $\lambda$-splitting.
\end{defin}

The following definition follows \cite[4.2,3.8]{vasey16} (see also \cite[4.1]{leu2}).

\begin{defin}\label{forking_def}
    Let $M, N \in \K_{\lambda}$. We say that \emph{$p \in \gS(N)$ does not $\lambda$-fork over $M$} if and only if there is $M_0 \in \K_\lambda$ such that $p$ does not $\lambda$-split over $M_0$ and $M_0 \lea^u M$.
    
    \emph{$\lambda$-non-forking} is the independence relation $\dnf_{\lambda-f}$, defined as in Definition \ref{splitting_def} from $\lambda$-forking.
\end{defin}

\begin{defin}
    Let $M, N \in \K_{\geq\lambda}$. We say that \emph{$p \in \gS(N)$ does not $(\geq\lambda)$-fork over $M$} if and only if there is $M' \in \K_\lambda$ where $M' \lea M$ such that for all $N' \in \K_\lambda$ with $M' \lea N' \lea N$, we have that $p \upharpoonright N'$ does not $\lambda$-fork over $M'$.

    \emph{$(\geq\lambda)$-non-forking} is the independence relation $\dnf_{(\geq\lambda)-f}$, defined as in Definition \ref{splitting_def} from $(\geq\lambda)$-forking.
\end{defin}

\begin{remark}\label{symmetry-remark}
    All versions of non-splitting and non-forking satisfy invariance, monotonicity, and base monotonicity. Much more can be said under additional assumptions (see Example \ref{dnf-examples-vasey} and Example \ref{dnf-examples-leung}). Additionally, $(\geq \lambda)$-non-forking is the same as $\lambda$-non-forking when restricted to $\K_\lambda$ (use $M' = M$ and monotonicity).
\end{remark}

\begin{defin}\label{lambda-symmetry-def}
    Let $\K$ be a AEC stable in $\lambda \geq \LS(\K)$ with AP, JEP, and NMM in $\K_\lambda$. Let $\kappa< \lambda^+$ be a regular cardinal. We say $\K$ has \emph{$(\lambda, \geq\kappa)$-symmetry} if $\dnf_{\lambda-f}$ has symmetry in $\Kkappalims$.  We say $\K$ has \emph{$\lambda$-symmetry} if $\K$ has $(\lambda, \geq \aleph_0)$-symmetry.
\end{defin}

\begin{remark}
    In fact there are several alternative definitions of $\lambda$-symmetry, and most turn out to be equivalent in superstable AECs \cite[4.3]{vanvas}, \cite[2.6]{vasey18}. In the contexts we examine, our definition of $\lambda$-symmetry is weakest (when we should have $M \lea^u M_a$ or $M_a$ limit over $M$, universal or limit, you can always find a limit model over $M_a$ and apply symmetry to that). In the strictly stable context, more exacting forms of symmetry exist ($(\lambda, \delta)$-symmetry in \cite[5.2]{leu2}, \cite[2.8]{bovan}), but $(\lambda, \geq \kappa)$-symmetry is more natural in our setting.
\end{remark}

There are many well known classes $\K$ and relations $\dnf$ that satisfy some combination of the properties we just introduced. The following include our main examples of interest for this paper.

\begin{example}\label{dnf-examples-fo} 
    Suppose $T$ is a first order theory, stable in $\lambda \geq |T|$. Define $\dnf$ on $(\operatorname{Mod}(T), \preccurlyeq)$ by $a \dnf_{M_0}^N M $ if and only if $\mathbf{tp}(a/M, N)$ does not fork over $M_0$ (in the usual sense). This satisfies invariance, monotonicity, base monotonicity, uniqueness, extension, $(\geq \kappa)$-local character in some $\kappa <\lambda^+$, universal continuity, non-forking amalgamation, and the $(<\aleph_0)$-witness property (see Lemma \ref{first-order}).
\end{example}

\begin{example}\label{dnf-examples-lrv}
    In \cite[\textsection3]{lrv1}, the authors define a notion of a weakly stable independence relation from a categorical perspective on `amalgams' (from here on we will call this a \emph{weakly stable independence relation in the LRV sense}). This can be viewed as a relation on 4-tuples of models. Given such a relation $\dnf$ on an AEC, they show it can be extended to a relation $\dnfb{}{}{}{}$ that allows the intermediate models to be replaced by arbitrary subsets (i.e. the relation $\dnfb{M_0}{A}{B}{N}$ is defined for $M_0 \lea N$, $A, B \subseteq N$)\cite[8.2]{lrv1}. In \cite[\textsection8]{lrv1} they show such relations satisfy (broader versions of) invariance, monotonicity, base monotonicity, existence, extension, uniqueness, and transitivity. These relations also satisfy a form of symmetry, which says that $\dnfb{M_0}{A}{B}{N} \iff \dnfb{M_0}{B}{A}{N}$ (we will call this \emph{symmetry in the LRV sense} to distinguish it from Definition \ref{symmetry-def}).
        
    Further, if $\dnf$ is a stable independence relation in the LRV sense, rather than weakly stable, $\dnf$ satisfies the $(<\theta)$-witness property in the sense of \cite[8.7]{lrv1} (over models rather than singletons) in some cardinal $\theta$ (from here on we will call this the \emph{$(<\theta)$-witness property in the LRV sense}), and has strong $(<\kappa)$-local character for some cardinal $\kappa$. If we restrict to singletons and models, $\dnfb{}{}{}{}$ has many of the useful properties we listed (see Lemma \ref{lrv-dnf-property-lemma}).

\end{example}

\begin{lemma}\label{lrv-dnf-property-lemma}
    Suppose $\dnf$ is a weakly stable independence relation in the LRV sense. Then the restriction $\dnfb{}{}{}{}$ to singletons and models (that is, restrict to the case $\dnfb{M_0}{a}{M}{N}$) is an independence relation that satisfies invariance, monotonicity, base monotonicity,  extension, uniqueness, non-forking amalgamation, and transitivity.
            
    Moreover:
    \begin{enumerate} 
        \item if $\dnfb{}{}{}{}$ has strong $(<\kappa)$-local character, then $\dnfb{}{}{}{}$ has $(\geq \kappa)$-local character
        \item if $\dnfb{}{}{}{}$ satisfies the $(<\theta)$-witness property (for singletons), then the restriction does also, and if $\theta$ is regular, then $\dnfb{}{}{}{}$ also satisfies $(\geq \theta)$-universal continuity. 
    \end{enumerate}
\end{lemma}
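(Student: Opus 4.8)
The plan is to transfer each property in the statement from the broad LRV relation $\dnfb{}{}{}{}$ of \cite[\textsection 8]{lrv1} down to its restriction $\dnfb{M_0}{a}{M}{N}$ to $4$-tuples with $a$ a singleton and $M_0 \lea M \lea N$ models, invoking the facts already established in this section wherever possible. Invariance, monotonicity, and base monotonicity for the restriction are immediate specialisations of the corresponding (broader) properties recorded in \cite[\textsection 8]{lrv1}; the only thing to note is that our single monotonicity clause (shrink $M$; shrink or grow $N$, keeping $M_0 \lea M \lea N$ and $a \in N$) is just a conjunction of instances of LRV left/right/ambient monotonicity. Hence the restriction is an independence relation in the sense of the definition following Definition \ref{dnf-def}.

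Next I would unwind the phrase ``$\dnfb{}{}{}{}$-does not fork over $M$'' for types of singletons and match it against the LRV forms of uniqueness and extension. For \emph{uniqueness}: given $M \lea N$ and $q_1, q_2 \in \gS(N)$ both not forking over $M$ with $q_1 \rest M = q_2 \rest M$, write $q_l = \gtp(a_l/N, N_l')$ with $\dnfb{M}{a_l}{N}{N_l'}$; then $\gtp(a_1/M, N_1') = \gtp(a_2/M, N_2')$, and LRV uniqueness gives $\gtp(a_1/N, N_1') = \gtp(a_2/N, N_2')$, i.e. $q_1 = q_2$. For \emph{extension}: given $M_0 \lea M \lea N$ and $p = \gtp(a/M, N_1) \in \gS(M)$ with $\dnfb{M_0}{a}{M}{N_1}$, I would apply LRV extension to enlarge the right-hand model from $M$ to $N$, obtaining $N_2$ with $N \lea N_2$ and an embedding $f \colon N_1 \to N_2$ fixing $M$ such that $\dnfb{M_0}{f(a)}{N}{N_2}$; then $q := \gtp(f(a)/N, N_2)$ extends $p$ (since $f$ fixes $M$) and does not fork over $M_0$. \emph{Transitivity} then comes for free from the Fact above (essentially \cite[III.4.4]{shbook}), since the restriction is by now an independence relation satisfying uniqueness and extension.

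For the moreover clauses: (1) is immediate from Remark \ref{strong-local-char-implies-universal} applied to the restriction. For (2) I would first check that the LRV $(<\theta)$-witness property for singletons yields the $(<\theta)$-witness property of Definition \ref{independence-relation-properties}(14) for the restriction: writing $p = \gtp(a/N, N')$, if every $A \subseteq N$ with $|A| < \theta$ lies in some $N_0$ with $M \lea N_0 \lea N$ and $p \rest N_0$ not forking over $M$, then by monotonicity $\dnfb{M}{a}{A}{N'}$ holds for all such $A$, so the LRV witness property delivers $\dnfb{M}{a}{N}{N'}$, i.e. $p$ does not fork over $M$; and when $\theta$ is regular, Lemma \ref{wp-implies-universal_continuity} upgrades this to $(\geq\theta)$-universal continuity. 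I expect no genuine obstacle: the whole lemma is a dictionary between the amalgam-categorical formalism of \cite{lrv1} and the Galois-type formulation of Definition \ref{independence-relation-properties}. The only mildly delicate points will be lining up the two forms of the witness property and --- should the hypothesis of (1) be read as a statement about the broad relation rather than its restriction --- using base monotonicity together with L\"owenheim--Skolem to replace a small base set by a small base model; one must also keep in mind that the ambient amalgamation hypotheses under which LRV relations are developed are assumed throughout, which is precisely what makes these type-theoretic reformulations legitimate.
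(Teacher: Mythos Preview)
Your proposal is correct and follows essentially the same approach as the paper: the LRV properties restrict directly to the singleton/model case (the paper says this in one line, you spell out uniqueness and extension and derive transitivity via the uniqueness-plus-extension fact, which is fine), and the moreover clauses are handled exactly as you do, via Remark~\ref{strong-local-char-implies-universal} and Lemma~\ref{wp-implies-universal_continuity}. One thing worth noting: the paper's proof also contains a paragraph establishing non-forking amalgamation (using existence and symmetry of the LRV relation), a property not listed in the lemma statement as given but relied upon later in the paper (e.g.\ in Lemma~\ref{check-st}); you did not address it, but since it is absent from the statement this is not a gap in your proof.
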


\begin{proof}
    Invariance, monotonicity, base monotonicity, extension, uniqueness, transitivity, and the $(<\theta)$-witness property all follow immediately from their versions in the $\dnfb{M_0}{A}{B}{N}$ case.
    
    Next we address non-forking amalgamation. Suppose $M_0, M_1, M_2 \in K$ with $M_0 \lea M_l$ for $l = 1, 2$ and $a_l \in M_l$ for $l = 1, 2$. Let $i_{0, l}:M_0 \rightarrow M_l$ be the identity maps for $l = 1, 2$. By existence, there are $N \in \K$ and $f_l : M_l \rightarrow N$ such that $\dnf(i_{0, 1}, i_{0, 2}, f_1, f_2)$ - that is, $f_1 \upharpoonright M_0 = f_2 \upharpoonright M_0$ and $f_1(M_1) \dnf_{f_1(M_0)}^N f_2(M_2)$. Since $f_1 \upharpoonright M_0 = f_2 \upharpoonright M_0$ and using invariance, we may assume $f_l \upharpoonright M_0 = \id_{M_0}$ by composing $f_1, f_2$ both with an extension of $(f_1 \upharpoonright M_0)^{-1}$ to $N$. By symmetry of $\dnf$ in the LRV sense, we also have $f_2(M_2) \dnf_{M_0}^N f_1(M_1)$. As $f_l(a) \in f_l(M_l)$, we have $\dnfb{M_0}{f_l(a_l)}{f_{3-l}(M_{2-l})}{N}$ for $l = 1, 2$; that is, $\gtp(f_l(a_l)/f_{3-l}(M_l), N)$ $\dnf$-does not fork over $M_0$ for $l = 1, 2$ as desired.

    For the moreover part (1), $(\geq \kappa)$-local character follows from Remark \ref{strong-local-char-implies-universal}. For the moreover part (2), as before the $(<\theta)$-witness property for singletons holds for the restriction immediately. For $(\geq \theta)$-universal continuity, apply Fact \ref{wp-implies-universal_continuity}.
        \end{proof}

\begin{remark}
    While they look similar, in general it is not obvious whether the $(<\theta)$-witness property (for singletons) is equivalent to the $(<\theta)$-witness property in the sense of LRV. That said, if $\K$ has intersections (that is, for all $N \in \K$ and $A \subseteq |N|$, $\bigcap \{M \in \K : A \subseteq M \lea N\} \lea N$) and $\dnfb{}{}{}{}$ is a stable independence relation with the $(<\theta)$-witnessing property in the LRV sense, then $\dnfb{}{}{}{}$ has the witness property (for singletons).
\end{remark}

\begin{example}[{\cite[13.16]{vaseyn}}]\label{dnf-examples-vasey}
 If $\K$ is $\lambda$-superstable and has $\lambda$-symmetry, then $\lambda$-forking restricted to $\K_\lambda$ satisfies invariance, monotonicity, base monotonicity, uniqueness, extension, non-forking amalgamation, universal continuity, and $(\geq \aleph_0)$-local character.

\end{example}

\begin{example}[{\cite[\textsection 4]{leu2}}]\label{dnf-examples-leung}
        
        Suppose $\K$ is an AEC with AP, JEP, and NMM, and there is $\lambda \geq \LS(\K)$ such that
        \begin{enumerate}
            \item $\K$ is stable in $\lambda$
            \item $\K$ is $\lambda$-tame
            \item when restricted to  $\K_\lambda$, $\dnf_{\operatorname{\lambda-\splt}}$ satisfies universal continuity and $(\geq \kappa)$-local character for some regular $\kappa \leq \lambda$
            \item $\K$ has $(\lambda, \geq \kappa)$-symmetry.
        \end{enumerate}
        Let $\dnf$ be the restriction of $\dnf_{\lambda-f}$ to $\Kkappalims$. Assuming that $\dnf$ satisfies uniqueness, $\dnf$ satisfies invariance, monotonicity, base monotonicity, uniqueness, extension, non-forking amalgamation, universal continuity, and $(\geq \kappa)$-local character.
\end{example}

\begin{remark}\label{leung-uniqueness-necessary}
    Example \ref{dnf-examples-leung} above is stated in \cite{leu2} without the uniqueness assumption (uniqueness is proved from the other hypotheses). Unfortunately, we located an error in the proof of uniqueness \cite[4.5]{leu2}. In the penultimate paragraph of the proof, it is claimed that the map $f$ fixes $M_0$, but this is not true (in fact the image of $f$ is a proper substructure of $M_0$ by NMM and $f[M_{i+1}] = N^* \lea^u M_0$). We have not found a way to fix this issue. Note if $\kappa = \aleph_0$, then uniqueness holds, as in that case $\K$ is $\lambda$-superstable and we fall into Example \ref{dnf-examples-vasey}.
\end{remark}

\begin{example}[{\cite[\textsection 4, \textsection 5]{vasey16b}}]\label{dnf-examples-vasey-saturated}
    Let $\K$ be AEC with AP, NMM, stable in some $\mu \geq \LS(\K)$, and satisfying JEP in $\K_\lambda$ and $\mu$-tameness. Let $\dnf$ be $\dnf_{(\geq\mu)-f}$ restricted to models in $\K_{\geq \mu^+}^{\mu^+\operatorname{-sat}}$ (that is, the $\mu^+$-saturated models in $\K_{\geq \mu^+}$ ordered by $\lea$). Then $\dnf$ has many of the useful properties we listed (see Lemma \ref{dnf-lemma-vasey-saturated}).
\end{example}

The following notation is similar to that of  \cite{vaseyt}.
\begin{defin}\label{kappa-defs}

Assume $\K$ is an AEC stable in $\lambda$ and $\dnf$ is an independence relation on $\K_\lambda$ (or any AC $\K'$ with $\K_\lambda \subseteq \K' \subseteq \K$).
\begin{enumerate}
    \item $\underline{\kappa}(\dnf, \K_\lambda, \lea^{u}) = \{ \delta < \lambda^+ : \text{whenever } \langle M_i : i \leq \delta \rangle$ is a $\lea^u$-increasing continuous chain in $\K_\lambda$ and  $p \in \gS(M_\delta), \text{ then there is } i <\delta \text{ such that } p \dnf\text{-does not fork over } M_i \}$
    \item $\underline{\kappa}^{\operatorname{wk}}(\dnf, \K_\lambda, \lea^{u}) = \{ \delta < \lambda^+ : \text{whenever } \langle M_i : i \leq \delta \rangle $   is a $\lea^u$-increasing continuous chain in $\K_\lambda$ and  $p \in \gS(M_\delta), \text{ then there is } i <\delta \text{ such that } p\upharpoonright M_{i+1} \dnf\text{-does not fork over } M_i \}$
    \item $\kappa(\dnf, \K_\lambda, \lea^{u}) = \operatorname{min}\{\mu \leq \lambda: [\mu, \lambda^+) \cap \text{Reg} \subseteq \underline{\kappa}(\dnf, \K_\lambda, \lea^{u}) \}$ when it exists, else $\kappa(\dnf, \K_\lambda, \lea^{u}) = \infty$.
\end{enumerate}
\end{defin}

\begin{remark}
    If $\K$ is an AEC stable in $\lambda \geq \LS(\K)$, then for all limit $\delta < \lambda^+$, $\delta \in \underline{\kappa}(\dnf, \K_\lambda, \lea^{u})$ if and only if $\cof(\delta) \in \underline{\kappa}(\dnf, \K_\lambda, \lea^{u})$.
\end{remark}

\begin{nota}
We will sometimes use the shorthand $\shortkappadnfu$ to denote $\kappadnfu$ when the AEC $\K$ is unambiguous to avoid notational clutter in more complicated expressions.
\end{nota}

\begin{remark}\label{weak-loc-char-iff-strong}
    If $\K$ is an AEC and $\dnf$ is an independence relation on $\K_\lambda$ with extension, uniqueness, and universal continuity, then $\underline{\kappa}(\dnf, \K_\lambda, \lea^{u}) \cap Reg = \underline{\kappa}^{\operatorname{wk}}(\dnf, \K_\lambda, \lea^{u})\cap Reg$. The $\subseteq$ direction follows from the same method as \cite[11(1)]{bgvv16}, the $\supseteq$ direction follows from monotonicity. In particular, since ${\underline{\kappa}^{\operatorname{wk}}(\dnf, \K_\lambda, \lea^{u})}$ is an interval of the form $[\alpha, \lambda^+)$, in this case $\kappa(\dnf, \K_\lambda, \lea^{u}) = \operatorname{min}( {\underline{\kappa}(\dnf, \K_\lambda, \lea^{u})} \cap Reg)$.
\end{remark}

\begin{remark}
    Definition \ref{kappa-defs} is inspired by the notion of $\underline{\kappa}(\K_\lambda, <^u_{\K})$ from \cite[3.8]{vasey18}, and in fact for any AEC $\K$, $\underline{\kappa}(\K_\lambda, <^u_{\K}) = \underline{\kappa}(\dnf_{\splt}, \K_\lambda, \lea^u) \cup ([\lambda^+, \infty)\cap Reg)$. The regular cardinals greater or equal to $\lambda^+$ give no new information, since no $\lea^u$-increasing sequences of those lengths exist inside $\K_\lambda$ under NMM in $\K_\lambda$. They are included in $\underline{\kappa}(\K_\lambda, <^u_{\K})$ because the broader definition of $\underline{\kappa}$ in \cite[2.2]{vasey18} allows for classes with arbitrarily large models, but our definition is more intuitive when restricted to $\K_\lambda$.
\end{remark}

\section{Long limit models}\label{long-limits-section}


Our goal in this section is to show that, in a very general setting, all the high cofinality limit models are the same. We first present the result for convenience of the reader, then introduce the precise hypotheses below.

\begin{restatable}{theorem}{largelimitsareisothmii}\label{largelimitsareisothm*}
    Assume Hypothesis \ref{long-limit-dnf-hypotheses} holds for an AEC $\K$, $\lambda \geq \LS(\K)$, and $\kappa < \lambda^+$ regular. Let $\delta_1, \delta_2 < \lambda^+$ be limit ordinals where $\kappa \leq \cof(\delta_1), \cof(\delta_2)$.
    If $M, N_1, N_2 \in \K_\lambda$ where $N_l$ is a $(\lambda, \delta_l)$-limit over $M$ for $l=1, 2$, then there is an isomorphism from $N_1$ to $N_2$ fixing $M$.

    Moreover, if $N_1, N_2 \in \K_\lambda$ where $N_l$ is $(\lambda, \delta_l)$-limit for $l=1, 2$, then $N_1$ is isomorphic to $N_2$.
\end{restatable}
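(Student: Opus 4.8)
The plan is to reduce everything to a transfinite back-and-forth construction between two witnessing chains, using the independence relation $\dnf$ (via towers) to control the construction. First I would set up the objects: fix witnessing chains $\langle M_i^1 : i \le \delta_1\rangle$ and $\langle M_i^2 : i \le \delta_2\rangle$ with $M_0^1 = M_0^2 = M$ and $M_{\delta_l}^l = N_l$. Since by Fact \ref{cofinalityiso*} limit models over $M$ of the same cofinality are isomorphic over $M$, I may replace $\delta_l$ by $\cof(\delta_l)$ and thus assume $\delta_1, \delta_2$ are themselves regular cardinals $\ge \kappa$. Now the key point is that all the models $M_i^l$ occurring in such a chain are $(\lambda, \ge \kappa)$-limit models (at least from index $\kappa$ onward), so $\dnf$ is defined on them, and the chains are $\lea^u$-increasing and continuous.

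Next I would invoke the tower machinery developed in the section: using the tower extension lemma (the version allowing extensions of chains of towers, Proposition \ref{towerextensionprop*}), together with the existence of reduced extensions (Lemma \ref{reducedextensionsexist*}), the fact that reduced towers give the needed continuity ($\Kkappalims$-universal continuity*, Proposition \ref{reducedimplieshighcontinuity*}), and that high-cofinality unions of full towers are full (Lemma \ref{highfullunionsarefull*}), build a sequence of towers along one of the chains whose union realizes, in a controlled independent way, every type needed to run a back-and-forth. Concretely, one constructs an increasing continuous sequence of partial isomorphisms $f_j$ matching an initial segment of $N_1$ with an initial segment of $N_2$, at each successor step using $(\ge\kappa)$-local character to find some $M_i^l$ over which the relevant type does not fork, using uniqueness to make the extension of the partial isomorphism canonical, and using non-forking amalgamation to place the new element appropriately; $\lambda$-NMM and universality of the chains ensure there is always room. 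The continuity of the construction at limit stages of cofinality $\ge \kappa$ is exactly where the weakened universal continuity* and the fullness-of-unions lemmas are used, and where the insistence that all intermediate models remain $(\lambda,\ge\kappa)$-limit models matters.

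Having produced the back-and-forth, I would run it for $\lambda$ steps so that both $N_1$ and $N_2$ are exhausted (a standard bookkeeping argument enumerates $|N_1| = |N_2| = \lambda$ and alternates directions), obtaining an isomorphism $f : N_1 \to N_2$ with $f \rest M = \id_M$; this proves the first assertion. For the moreover part (without a common base), I would first use $\lambda$-JEP: given $N_1, N_2$ limit models (over bases $M^1, M^2$ respectively), amalgamate $M^1$ and $M^2$ into some $M^* \in \K_\lambda$ over which, by Fact \ref{limitsexist*}, a $(\lambda, \delta)$-limit $N^*$ exists for any regular $\delta \ge \kappa$; then each $N_l$ embeds into $N^*$, and by the already-proved ``over a common base'' statement together with the observation that the image is again a limit model over $M^*$ of the appropriate cofinality, conclude $N_l \cong N^*$, hence $N_1 \cong N_2$.

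The main obstacle I expect is making the back-and-forth continuous at limit stages of cofinality $< \kappa$: since the tower notion here is \emph{not} closed under such unions, one cannot simply take unions of the partial data, and this is precisely why Proposition \ref{towerextensionprop*} is phrased in terms of extending whole chains of towers rather than single towers, and why reduced towers (Lemma \ref{reducedextensionsexist*}, Proposition \ref{reducedimplieshighcontinuity*}) are needed to recover enough continuity. Organizing the construction so that every model that appears is genuinely a $(\lambda, \ge \kappa)$-limit model — so that $\dnf$, with its hypotheses, actually applies at each step — is the delicate bookkeeping that the whole section is built to handle; everything else (local character at successors, uniqueness for canonicity of the matching, non-forking amalgamation for placement) is a routine adaptation of Vasey's argument in \cite{vasey18}.
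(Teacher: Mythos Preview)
Your proposal has the right ingredients but the wrong architecture. You frame the argument as a back-and-forth between the two given limit models $N_1$ and $N_2$, building partial isomorphisms $f_j$ that match initial segments and running for $\lambda$ steps. That is not what the tower machinery does, and it is not how the paper proceeds. A direct element-by-element back-and-forth between limit models of \emph{different} cofinalities runs into the usual obstruction: to extend a partial isomorphism $f : A \to B$ by a new $a \in N_1$ you need $N_2$ to realise the image type over $B$, but $B$ grows to size $\lambda$ while $N_2$ is only $\delta_2$-saturated. The non-forking ingredients you list (local character, uniqueness, non-forking amalgamation) do not by themselves produce realisations \emph{inside the fixed target $N_2$}; they produce realisations in some amalgam, which is no good for a back-and-forth.

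The paper's strategy is entirely different and bypasses this issue: it never tries to build an isomorphism directly. Instead it constructs a \emph{single} model $N$ that is simultaneously a $(\lambda,\delta_1)$-limit and a $(\lambda,\delta_2)$-limit over $M$, and then quotes Fact~\ref{cofinalityiso*} twice to get $N_1 \cong_M N \cong_M N_2$. The construction is a $\lesst$-increasing chain $\langle \calt^j : j \le \delta_1\rangle$ of towers, each indexed by (a well-ordering containing a copy of) $\delta_2 + 1$, alternating between reduced extensions (Lemma~\ref{reducedextensionsexist*}) and $I_0$-full extensions (Lemma~\ref{fullcompletionsexist*}). At the top $j = \delta_1$ one takes the union (legitimate since $\cof(\delta_1) \ge \kappa$); the resulting tower is reduced (Lemma~\ref{reducedunionsarereduced*}) hence continuous at the index ``$\delta_2$'' (Proposition~\ref{reducedimplieshighcontinuity*}), and full (Lemma~\ref{highfullunionsarefull*}) hence universal along its length (Remark~\ref{fulltowerscontainuniversalchains*}). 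The model sitting at index ``$\delta_2$'' in $\calt^{\delta_1}$ is then a $(\lambda,\delta_2)$-limit by reading along the final tower, and a $(\lambda,\delta_1)$-limit by reading up the chain of towers at that fixed index. This two-dimensional picture --- towers along one axis, the chain of towers along the other --- is the essential idea you are missing; the lemmas you cite are not supporting a back-and-forth but are ensuring that the two-dimensional union has the right continuity and universality properties in each direction.
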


First we state the natural surrogate for universal continuity when an independence relation is only defined on a sub-AC of an AEC - specifically $\Kkappalims$.

\begin{defin}\label{univ-cty-star-def}
    Let $\K$ be an AEC, $\K'$ a sub-AC of $\K$, and $\dnf$ an independence relation on $\K'$.\footnote{Post-publication footnote: In addition one needs to assume that \emph{$\K'$ preserves types from $\K$}, i.e.,
    \begin{enumerate}
		\item for all $M \in \K'$, $\varphi_M : \gS_{\K'}(M) \rightarrow \gS_{\K}(M)$ given by $\varphi_M(\gtp_{\K'}(a/M, N)) = \gtp_{\K}(a/M, N)$ for $M \leq_{\K'} N$ and $a \in N$ is a well-defined bijection, and
		\item when $M \leq_{\K'} N$, $p \in \gS_{\K'}(M)$, and $q \in \gS_{\K'}(N)$, then $p \subseteq q$ if and only if $\varphi_M(p) \subseteq \varphi_N(q)$.
	\end{enumerate}
    
    Essentially this says we can `forget' about whether types over models in $\K'$ are computed in $\K$ or $\K'$. 
    
       The assumption that \emph{$\K'$ preserves types from $\K$} is not in the published version of the paper. Fortunately, this  does not affect any of the results of the paper, since preservation of types holds in all the settings we consider in the paper. In particular, in the context of Definition \ref{lims-univ-cty-star-def} where $\Kkappalims \subseteq \K' \subseteq \K_\lambda$, as well as when $\K' = \K_{\geq \mu^+}^{\mu^+-\operatorname{sat}}$ in Subsection \ref{tame-long-lim-subsection}.}

    We say that $\dnf$ has \emph{universal continuity* in $\K$} if and only if whenever $\delta$ is an ordinal and $\langle M_i : i \lea \delta \rangle$ is a $\lea^u$-increasing sequence in $\K'$ with $\bigcup_{i<\delta} M_i \lea M$ for some $M \in \K'$, and $\langle p_i \in \gS_{\K'}(M_i) : i < \delta\rangle$ is an increasing sequence of types where $p_i$ $\dnf$-does not fork over $M_0$ for all $i < \delta$, then there is a unique $p_\delta \in \gS_{\K}(\bigcup_{i<\delta}M_i)$ such that $p_i \subseteq p_\delta$ for all $i < \delta$.

    We may omit \emph{in $\K$} when $\K$ is clear from context.
\end{defin}

\begin{remark} In Definition \ref{univ-cty-star-def}, intuitively, $p_\delta$ is the unique $\dnf$ non-forking type extending $p_0$ over $\bigcup_{i<\delta} M_i$ - the relation $\dnf$ is not necessarily defined on this model, but if it could be extended, $p_\delta$ would be the only choice for the non-forking extension. \end{remark}

\begin{defin}\label{lims-univ-cty-star-def}
    Let $\K$ be an AEC stable in $\lambda \geq \LS(\K)$, and $\kappa < \lambda^+$ be infinite and regular. Let $\dnf$ be an independence relation on some sub-AC $\K'$ of $\K_\lambda$ where $\Kkappalims \subseteq \K'$ (normally $\K' = \K$ or $\K' = \Kkappalims$). 

    We say that $\dnf$ has \emph{$\Kkappalims$-universal continuity* in $\K$} if $\dnf$ restricted to $\Kkappalims$ has universal continuity* (in $\K$).
\end{defin}

\begin{nota}
    We will often omit the `in $\K$' part of the definition as in most cases $\K$ is clear from context.
\end{nota}

\begin{remark}
    The reason we use $\Kkappalims$-universal continuity* rather than `standard' universal continuity is to accommodate Example \ref{dnf-examples-leung} and the setup of Subsection \ref{tame-long-lim-subsection}, where the relation behaves well on $\Kkappalims$ but not on all models (or even all limit models). In a sense, this is the closest to continuity we can get when $\dnf$ is only defined on $\Kkappalims$.  We formalise this to some degree in Lemma \ref{dnf-high-continuity} and Lemma \ref{cont-iff-weak-cont}.
\end{remark}

Now we specify the conditions our non-forking relation needs to satisfy to apply Theorem \ref{largelimitsareisothm*}.

\begin{restatable}{hypothesis}{longlimithypothesis}\label{long-limit-dnf-hypotheses}
    Let $\K$ be an AEC stable in $\lambda \geq \LS(\K)$, with AP, JEP, and NMM in $\K_\lambda$. Let $\kappa < \lambda^+$ be a regular cardinal. Let $\dnf$ be an independence relation on $\Kkappalims$ that satisfies uniqueness, extension, non-forking amalgamation, $(\geq \kappa)$-local character, and $\Kkappalims$-universal continuity* in $\K$. 
\end{restatable}

\begin{remark}\label{long-lim-can-restrict}
    If we made the same assumptions on a relation $\dnf$ defined on all of $\K_\lambda$ (or any sub-AC $\K'$ with $\Kkappalims \subseteq \K' \subseteq \K_\lambda$), the restriction to $\Kkappalims$ satisfies Hypothesis \ref{long-limit-dnf-hypotheses}. This is immediate for each property besides non-forking amalgamation - in that case, just note that the `largest' model $N$ can be replaced by a $(\lambda, \geq \kappa)$-limit model extending the original model.
\end{remark}

\begin{remark}
    For $M, N \in \Kkappalims$, $N$ is universal over $M$ in $\Kkappalims$ if and only if $N$ is universal over $M$ in $\K$, so for the properties involving $\leq_{\Kkappalims}^u$ ($\Kkappalims$-universal continuity*, local character) hold with the usual $\lea^u$. Hence we can use $\lea^u$ instead unambiguously when dealing with ACs $\K'$ with $\Kkappalims \subseteq \K' \subseteq \K$.
\end{remark}

Before proving Theorem \ref{largelimitsareisothm*}, we explore the assumptions in Hypothesis \ref{long-limit-dnf-hypotheses} and consider some examples that satisfy it.

\begin{lemma}\label{existence-on-high-limits}
    Suppose $\K$ is an AEC, with sub-AC $\K'$ where $\Kkappalims \subseteq \K'$. Suppose $\dnf$ is an independence relation on $\K'$ satisfying $(\geq\kappa)$-local character. If $M \in \Kkappalims$ and $p \in \gS(M)$, then $p$ $\dnf$-does not fork over $M$.

    In particular, assuming Hypothesis \ref{long-limit-dnf-hypotheses}, $\dnf$ satisfies existence.
\end{lemma}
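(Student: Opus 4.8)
The plan is to realise $M$, up to isomorphism, as the union of a $\lea^u$-increasing continuous chain of $(\lambda,\delta)$-limit models and then apply $(\geq\kappa)$-local character along the successor stages of that chain. The only point that needs care is that every model to which local character is applied must lie in $\K'$; this is why we build a \emph{fresh} chain of genuine $(\lambda,\delta)$-limit models rather than reuse a chain witnessing that $M$ is a $(\lambda,\delta)$-limit, whose intermediate models may be ``short'' limit models (of cofinality $<\kappa$) not lying in $\Kkappalims$. Concretely, since $M\in\Kkappalims$, fix a regular $\delta\in[\kappa,\lambda^+)$ and $M_0\in\K_\lambda$ with $M$ a $(\lambda,\delta)$-limit over $M_0$, and use Fact \ref{limitsexist*} to build recursively a $\lea^u$-increasing continuous chain $\langle N_i : i\leq\delta\rangle$ with $N_0=M_0$, with $N_{i+1}$ a $(\lambda,\delta)$-limit over $N_i$ for each $i<\delta$, and with $N_j=\bigcup_{i<j}N_i$ at limit $j\leq\delta$.

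Then $N_i\lea^u N_{i+1}$, since a $(\lambda,\delta)$-limit over $N_i$ is in particular universal over $N_i$; each $N_{i+1}$ is a $(\lambda,\delta)$-limit model, so $N_{i+1}\in\Kkappalims\subseteq K'$; and, since $\langle N_i : i\leq\delta\rangle$ is $\lea^u$-increasing and continuous, $N_\delta$ is a $(\lambda,\delta)$-limit over $M_0$ by Definition \ref{univ-limit-model-def}, hence $N_\delta\in\Kkappalims\subseteq K'$. By Fact \ref{cofinalityiso*} there is an isomorphism $g\colon M\to N_\delta$ (fixing $M_0$, though this is not needed). Put $q:=g(p)\in\gS(N_\delta)$.

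The sequence $\langle N_{i+1} : i<\delta\rangle$ is $\lea^u$-increasing, all of its terms lie in $\K'$, and $\bigcup_{i<\delta}N_{i+1}=N_\delta\in K'$. Since $\dnf$ has $(\geq\kappa)$-local character and $\delta\geq\kappa$ is regular, there is $i<\delta$ such that $q$ $\dnf$-does not fork over $N_{i+1}$; fix $N^*\in\K'$ and $a\in N^*$ with $q=\gtp(a/N_\delta,N^*)$ and $a\dnf_{N_{i+1}}^{N^*}N_\delta$. As $N_{i+1}\lea N_\delta\lea N_\delta$, base monotonicity yields $a\dnf_{N_\delta}^{N^*}N_\delta$, i.e.\ $q$ $\dnf$-does not fork over $N_\delta$. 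Transporting along an isomorphism extending $g^{-1}$ and invoking invariance (with $g^{-1}$ carrying $N_\delta$ to $M$ and $q$ to $p$) gives that $p$ $\dnf$-does not fork over $M$, proving the first assertion.

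For the final claim, assuming Hypothesis \ref{long-limit-dnf-hypotheses} we have $\K'=(\Kkappalims,\lea)$, and every $M'\in\Kkappalims$ is a $(\lambda,\delta')$-limit model for some regular $\delta'\geq\kappa$, so the argument above applies to every $M'\in\Kkappalims$ and every $p'\in\gS(M')$; that is, $\dnf$ satisfies existence. The step most prone to trouble is the one flagged at the start — ensuring that every model appearing in the chain to which local character is applied lies in $\K'$ — which is handled precisely by building a fresh chain of $(\lambda,\delta)$-limits and passing to the subsequence of successor stages before invoking local character.
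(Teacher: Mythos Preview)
Your proof is correct, and you identify a real subtlety that the paper's two-line proof glosses over: the witnessing chain $\langle M_i : i \leq \delta\rangle$ for $M$ lives in $\K_\lambda$, not a priori in $\K'$, so $(\geq\kappa)$-local character (a property of $\dnf$ on $\K'$) cannot strictly be applied to it directly. The paper simply applies local character to this chain and then base monotonicity, without addressing this point. Your remedy --- build a fresh chain of $(\lambda,\delta)$-limits externally, pass to the successor subsequence, and transport via Fact~\ref{cofinalityiso*} and invariance --- works, but is more elaborate than needed. A shorter fix, carried out explicitly later in the proof of Lemma~\ref{disjointness-of-dnf}, is to note that one may assume without loss of generality that each $M_i$ in the witnessing chain is already a $(\lambda,\kappa)$-limit: build such a chain, observe its union is a $(\lambda,\delta)$-limit hence isomorphic to $M$, and copy the chain into $M$ itself. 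This keeps the argument inside $M$ and avoids the separate invariance transport. One minor caveat applying to both your argument and this shorter fix: they invoke Facts~\ref{limitsexist*} and~\ref{cofinalityiso*}, which require $\lambda$-stability, $\lambda$-AP, and $\lambda$-NMM --- not part of the lemma's bare hypotheses, though granted by Hypothesis~\ref{long-limit-dnf-hypotheses} and in every use of the lemma in the paper.
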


\begin{proof}
    Let $\langle M_i : i \leq \delta \rangle$ witness that $M$ is a $(\lambda, \geq \kappa)$-limit. Then by $(\geq \kappa)$-local character, $p$ $\dnf$-does not fork over $M_i$ for some $i < \delta$. By base monotonicity, $p$ $\dnf$-does not fork over $M$.
\end{proof}

The following lemmas show the relationship between full universal continuity and our replacement, $\Kkappalims$-universal continuity*; in particular, universal continuity and universal continuity* are equivalent for nice $\dnf$ when $\K_{(\lambda, \geq \aleph_0)} \subseteq \K' \subseteq \K$ (see Lemma \ref{cont-iff-weak-cont}).

\begin{lemma}\label{dnf-high-continuity}
 Suppose $\K$ is an AEC with amalgamation in $\K_\lambda$, and $\dnf$ is an independence relation on a sub-AC $\K'$ of $\K_\lambda$ where $\Kkappalims \subseteq \K'$ satisfying extension, uniqueness, and $\Kkappalims$-universal continuity* in $\K$. Then $\dnf$ has $(\geq \kappa)$-universal continuity.
\end{lemma}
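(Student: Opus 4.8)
The plan is to fix a regular $\gamma \geq \kappa$, a $\lea^u$-increasing sequence $\langle M_i : i < \gamma \rangle$ in $\K'$, its union $M_\gamma := \bigcup_{i<\gamma} M_i$, and a type $p \in \gS(M_\gamma)$ with $p \upharpoonright M_i$ $\dnf$-non-forking over $M_0$ for every $i < \gamma$, and to deduce that $p$ does not $\dnf$-fork over $M_0$. The first thing I would record is that $M_\gamma$ is itself a $(\lambda,\gamma)$-limit model: the chain $\langle M_i : i \leq \gamma\rangle$ is $\lea^u$-increasing and continuous at $\gamma$, and by the remark following Definition \ref{univ-limit-model-def} continuity at $\gamma$ alone suffices; hence $M_\gamma \in \Kkappalims$, and in fact, re-indexing $\langle M_j : i \leq j \leq \gamma \rangle$ (whose index set has order type $\gamma$ since $\gamma$ is a regular cardinal), $M_\gamma$ is a $(\lambda,\gamma)$-limit over each $M_i$. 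The overall strategy is then: (i) manufacture a $\lea^u$-increasing sequence of honest $(\lambda,\geq\kappa)$-limit models interleaved with the $M_i$, so that $\Kkappalims$-universal continuity* becomes applicable; (ii) use it to get a \emph{unique} type over $M_\gamma$ with the right restrictions; (iii) separately produce a $\dnf$-non-forking extension $q$ of $p \upharpoonright M_0$ over $M_\gamma$ via extension; (iv) show $p$ and $q$ both enjoy that uniqueness property, whence $p = q$ does not fork over $M_0$.

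For step (i): for each $i < \gamma$, since $M_{i+1}$ is universal over $M_i$ and $M_i \lea M_\gamma \in \K_\lambda$, I would fix a $\K$-embedding $f_i : M_\gamma \underset{M_i}{\rightarrow} M_{i+1}$ and put $L_i := f_i[M_\gamma]$, so $M_i \lea L_i \lea M_{i+1}$ and $L_i$ is isomorphic to $M_\gamma$ over $M_i$; transporting through $f_i$ a chain witnessing that $M_\gamma$ is a $(\lambda,\gamma)$-limit over $M_i$ shows $L_i$ is a $(\lambda,\gamma)$-limit over $M_i$, so $L_i \in \Kkappalims$. One then checks that $\langle L_i : i < \gamma \rangle$ is $\lea^u$-increasing, using only the standard facts that $A \lea B \lea^u C$ implies $A \lea^u C$ (this is the one point where $\lambda$-AP is used) — applied to $L_i \lea M_{i+1} \lea^u L_{i+1}$ — and that $A \lea^u B \lea C$ implies $A \lea^u C$; that $\bigcup_{i<\gamma} L_i = M_\gamma$ since $M_i \lea L_i \lea M_{i+1}$ makes the two chains interleave; and that, since $L_i \lea M_{i+1} \lea M_\gamma$ and $p \upharpoonright M_{i+1}$ does not $\dnf$-fork over $M_0$, monotonicity gives that $p \upharpoonright L_i$ does not $\dnf$-fork over $M_0$, hence over $L_0$ by base monotonicity.

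For steps (ii)--(iv): applying $\Kkappalims$-universal continuity* to $\langle L_i : i < \gamma\rangle$ in $\Kkappalims$ (with the ambient model taken to be $\bigcup_{i<\gamma} L_i = M_\gamma \in \Kkappalims$) together with the increasing chain of types $\langle p \upharpoonright L_i : i < \gamma \rangle$, each $\dnf$-non-forking over $L_0$, yields a \emph{unique} $p^* \in \gS(M_\gamma)$ extending every $p \upharpoonright L_i$. Next, $p \upharpoonright M_0$ does not $\dnf$-fork over $M_0$ (the $i = 0$ instance of the hypothesis), so by extension there is $q \in \gS(M_\gamma)$ with $q \supseteq p \upharpoonright M_0$ that does not $\dnf$-fork over $M_0$. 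For each $i < \gamma$, both $q \upharpoonright L_i$ and $p \upharpoonright L_i$ do not $\dnf$-fork over $M_0$ (monotonicity, together with the previous paragraph) and both restrict to $p \upharpoonright M_0$ on $M_0$, so uniqueness of $\dnf$ forces $q \upharpoonright L_i = p \upharpoonright L_i$. Thus $p$ and $q$ each extend every $p \upharpoonright L_i$, so by the uniqueness clause $p = p^* = q$; since $q$ does not $\dnf$-fork over $M_0$, neither does $p$. I expect the main obstacle to be exactly step (i): $\dnf$ only ``sees'' $\Kkappalims$, while the given chain need not consist of limit models, need not be continuous, and — in the case $\gamma = \kappa$ — contains no intermediate term of cofinality $\geq \kappa$ at all; the resolution is the observation that $M_\gamma$ itself can be copied, via universality of the $M_{i+1}$, into each $M_{i+1}$ to supply the missing $(\lambda,\geq\kappa)$-limit models in place, after which the argument is routine bookkeeping with monotonicity, base monotonicity, extension, and uniqueness.
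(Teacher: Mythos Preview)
Your proof is correct and follows essentially the same route as the paper's: both observe that $M_\gamma \in \Kkappalims$, interleave the given chain with an auxiliary $\lea^u$-increasing chain of $(\lambda,\geq\kappa)$-limit models sitting between $M_i$ and $M_{i+1}$, produce a non-forking extension $q$ of $p\upharpoonright M_0$ via extension, match $p$ and $q$ on the auxiliary chain by uniqueness, and conclude $p=q$ via $\Kkappalims$-universal continuity*. Your construction of the interleaved models as embedded copies of $M_\gamma$ is a bit more explicit than the paper's one-line ``take $N_i\in\Kkappalims$ with $M_i\lea^u N_i\lea^u M_{i+1}$'', but the idea and the rest of the argument are the same.
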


\begin{proof}
    Suppose $\langle M_i : i < \delta \rangle$ is a $\lea^u$-increasing chain in $\K'$, and $p \in \gS(\bigcup_{i<\delta}M_i)$ is such that $p \upharpoonright M_i$ $\dnf$-does not fork over $M_0$. We must show $p$ $\dnf$-does not fork over $M_0$.
    
    Note $\bigcup_{i<\delta} M_i \in \Kkappalims \subseteq \K'$. So by extension, there exists $q \in \gS(\bigcup_{i<\delta} M_i)$ such that $q$ $\dnf$-does not fork over $M_0$ and $q \upharpoonright M_0 = p \upharpoonright M_0$. By uniqueness, $p \upharpoonright M_i = q \upharpoonright M_i$ for $i < \delta$. For $i < \delta$, take $N_i \in \Kkappalims$ such that $M_i \lea^u N_i \lea^u M_{i+1}$ (this is possible as $\langle M_i : i < \delta \rangle$ is $\lea^u$-increasing). We now have $\langle N_i : i < \delta \rangle$ $\lea^u$-increasing where $p \upharpoonright N_i = q \upharpoonright N_i$ for all $i < \delta$ and $\bigcup_{i<\delta}N_i = \bigcup_{i<\delta}M_i$. So $p = q$ by $\Kkappalims$-universal continuity* in $\K$. Therefore $p$ $\dnf$-does not fork over $M_0$ as desired.
\end{proof}

\begin{lemma}\label{cont-iff-weak-cont}
    Suppose $\K$ is an AEC with amalgamation in $\K_\lambda$, and $\dnf$ is an independence relation on a sub-AC $\K'$ of $\K_\lambda$ where $\K_{(\lambda, \geq \aleph_0)} \subseteq \K'$ satisfying extension and uniqueness.
    
    Then $\dnf$ satisfies universal continuity if and only if $\dnf$ satisfies $\K_{(\lambda, \geq \aleph_0)}$-universal continuity* in $\K$.
\end{lemma}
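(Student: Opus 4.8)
The statement is an equivalence whose two halves differ greatly in difficulty. The implication ``$K_{(\lambda,\geq\aleph_0)}$-universal continuity* in $\K$ $\implies$ universal continuity'' is already available: it is exactly Lemma~\ref{dnf-high-continuity} applied with $\kappa=\aleph_0$, since the hypotheses match ($\lambda$-AP; $\dnf$ an independence relation on a sub-AC $\K'$ of $\K_\lambda$ with $K_{(\lambda,\geq\aleph_0)}\subseteq K'$; extension; uniqueness) and its conclusion, $(\geq\aleph_0)$-universal continuity, is by definition universal continuity. So I would spend the proof on the converse.

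Assume $\dnf$ satisfies universal continuity, and fix data as in the definition of universal continuity* for the restriction of $\dnf$ to $K_{(\lambda,\geq\aleph_0)}$: a $\lea^u$-increasing chain $\langle M_i:i<\delta\rangle$ in $K_{(\lambda,\geq\aleph_0)}$ and an increasing sequence $\langle p_i\in\gS(M_i):i<\delta\rangle$ with each $p_i$ $\dnf$-non-forking over $M_0$; write $M_\delta=\bigcup_{i<\delta}M_i$ and $p_0=p_i{\upharpoonright}M_0$. First I would make the routine reduction to the case that $\delta$ is an infinite regular cardinal: for $\delta$ finite or a successor the ``union'' is the last model of the chain and the claim is immediate, and otherwise, since the $p_i$ are increasing, passing to a cofinal subchain starting at $M_0$ (which remains $\lea^u$-increasing, as $M\lea^u N\lea N'$ implies $M\lea^u N'$) changes neither $M_\delta$ nor the set of types over $M_\delta$ extending all the $p_i$. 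With $\delta$ regular, $\langle M_i:i\le\delta\rangle$ (appending $M_\delta$) is a universal chain that is continuous at $\delta$, so by the reformulation of limit models noted just after Definition~\ref{univ-limit-model-def}, $M_\delta$ is a $(\lambda,\delta)$-limit over $M_0$; hence $M_\delta\in K_{(\lambda,\geq\aleph_0)}\subseteq K'$.

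Once $M_\delta\in K'$, both clauses of universal continuity* are bookkeeping with the properties of $\dnf$. For existence, extension applied to $p_0$ (which does not $\dnf$-fork over $M_0$) along $M_0\lea M_\delta$ produces $p_\delta\in\gS(M_\delta)$ extending $p_0$ and non-forking over $M_0$; for each $i<\delta$, $p_\delta{\upharpoonright}M_i$ does not fork over $M_0$ by monotonicity and restricts to $p_0$ on $M_0$, so uniqueness forces $p_\delta{\upharpoonright}M_i=p_i$, whence $p_\delta$ extends every $p_i$. For uniqueness, if $r\in\gS(M_\delta)$ also extends every $p_i$, then $r{\upharpoonright}M_i=p_i$ does not fork over $M_0$ for all $i<\delta$, so the instance of universal continuity at the regular cardinal $\delta$ gives that $r$ does not fork over $M_0$, and then $r=p_\delta$ by uniqueness of $\dnf$. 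The one step that needs care is the preceding paragraph: a priori $\dnf$ is not even defined over $M_\delta$, so none of extension, monotonicity, uniqueness, or universal continuity can be applied there until one has argued that $M_\delta$ is a limit model; the ``continuous-only-at-the-top'' description of limit models is precisely what makes that argument work.
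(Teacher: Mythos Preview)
Your proof is correct and follows essentially the same approach as the paper: the converse is Lemma~\ref{dnf-high-continuity} with $\kappa=\aleph_0$, and for the forward direction you observe $M_\delta\in K_{(\lambda,\geq\aleph_0)}\subseteq K'$, use extension to produce $p_\delta$, uniqueness to verify $p_\delta{\upharpoonright}M_i=p_i$, and universal continuity plus uniqueness for the uniqueness clause. Your explicit reduction to regular $\delta$ and justification that $M_\delta$ is a limit model are a bit more careful than the paper, which simply asserts $M_\delta\in K_{(\lambda,\geq\aleph_0)}$ after reducing to limit $\delta$, but the substance is identical.
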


\begin{proof}
    First, assume $\dnf$ satisfies universal continuity. Suppose $\delta < \lambda^+$, $\langle M_i : i < \delta \rangle$ a $\lea^u$-increasing sequence in $\K_{(\lambda, \geq \aleph_0)}$, and $\langle p_i \in \gS(M_i) : i < \delta\rangle$ is an increasing sequence of types where $p_i$ $\dnf$-does not fork over $M_0$. Let $M_\delta = \bigcup_{i<\delta}M_i$. The result is trivial if $\delta$ is 0 or successor, so assume $\delta$ is limit. Since $M_\delta \in \K_{(\lambda, \geq \aleph_0)}$, $M \in \K'$, and we can take any $p \in \gS(M_\delta)$ extending $p_0$ which $\dnf$-does not fork over $M_0$ by extension. By uniqueness, since $p \upharpoonright M_i$ and $p_i$ are both non-forking extensions of $p_0$, $p \upharpoonright M_i = p_i$. And $p$ is the unique extension of the $p_i$'s: if also $q \in \gS(M_\delta)$ extends $p_i$ for all $i < \delta$, universal continuity gives that $q$ $\dnf$-does not fork over $M_0$. $p \upharpoonright M_0 = p_0 = q \upharpoonright M_0$, so by uniqueness, $p = q$. Therefore $p$ is the unique extension as desired.

    The converse follows from Lemma \ref{dnf-high-continuity} with $\kappa = \aleph_0$.
\end{proof}

Our motivating examples satisfying Hypothesis \ref{long-limit-dnf-hypotheses} are the following:

\begin{example}\label{dnf-examples-long-lim}\
    \begin{enumerate}
        \item Suppose $\dnf$ is a stable independence relation in the LRV sense with universal continuity on an AEC $\K$, and $\lambda \geq \LS(\K)$ is a stability cardinal where $\K_\lambda$ has JEP and NMM. Then the restriction of $\dnfb{}{}{}{}$ to singletons and $(\lambda, \geq\kappa)$-limit models satisfies Hypothesis \ref{long-limit-dnf-hypotheses} with $\kappa = \kappa(\dnf, \K_\lambda, \lea^u)$, by Lemma \ref{lrv-dnf-property-lemma}. Note that all the non-forking properties transfer down to this restriction by Lemma \ref{long-lim-can-restrict}, and $\Kkappalims$-universal continuity* in $\K$ follows from universal continuity of $\dnf$ and Lemma \ref{cont-iff-weak-cont}.
        
        \item Example \ref{dnf-examples-vasey} with $\kappa = \aleph_0$.
        \item Example \ref{dnf-examples-leung} with $\kappa = \kappa$.

        \item Example \ref{dnf-examples-vasey-saturated} assuming $\dnf_{(\geq \lambda)-f}$ has symmetry in $\Kmupluslims$. The background and proofs that this setup satisfy Hypothesis \ref{long-limit-dnf-hypotheses} are in Subsection \ref{tame-long-lim-subsection}.
    \end{enumerate}
\end{example}

\begin{remark}
    In the setting of Example \ref{dnf-examples-long-lim}(2), the main result of this section (Theorem \ref{largelimitsareisothm*}) is already known (proved originally in \cite{van16a}, see also \cite[15.8]{vaseyn}). This section is a generalisation of the method used in \cite{vaseyn}.
\end{remark}

\begin{remark}
    Of our examples, only Example \ref{dnf-examples-long-lim}(3) and  Example \ref{dnf-examples-long-lim}(4) need $\Kkappalims$-universal continuity* in $\K$ rather than regular universal continuity of non-forking. In the other cases, $\dnf$ is defined on all limit models, so we don't have to worry about whether $\dnf$ is well defined when taking unions of towers, and we could more closely mimic the approach of \cite[16.17]{vaseyn}. In this sense,  Example \ref{dnf-examples-long-lim}(3) and  Example \ref{dnf-examples-long-lim}(4) (see Corollary \ref{vasey-tame-saturated-dnf-cor}) are our main examples that use the full strength of Theorem \ref{largelimitsareisothm*}.
\end{remark}


\subsection{Towers}

We assume Hypothesis \ref{long-limit-dnf-hypotheses} throughout this subsection. Note since AP, JEP, and NMM hold in $\K_\lambda$ under Hypothesis \ref{long-limit-dnf-hypotheses}, the Facts \ref{limitsexist*} and \ref{cofinalityiso*} apply, and $\dnf$ satisfies existence by Lemma \ref{existence-on-high-limits}.

In this subsection we start working towards a proof of Theorem \ref{largelimitsareisothm*}. We begin by defining a notion of \emph{towers}.

Towers were introduced by Villaveces and Shelah in \cite{shvi} in their attempt to prove uniqueness of limit models under certain assumptions involving categoricity. Their towers were composed of two increasing chains of models and a list of singletons - the singletons' types over the big models did not $\lambda$-split over the smaller ones. This is also the version of tower used in  \cite{van02}, \cite{van06}, \cite{van13}, \cite{grvavi}, \cite{van16a}, \cite{vand}, and \cite{bovan}. In \cite{vaseyn} and \cite{vasey18}, Vasey simplified the presentation of the argument using a different form of tower. First, he made use of $\lambda$-non-forking, which has stronger properties than $\lambda$-non-splitting on limit models. Also, he took out the models the types do not fork over, and instead captured the $\lambda$-non-forking of the singletons in the tower ordering. Our approach is heavily based off \cite{vasey18}, but with looser assumptions. The main differences are at the points that local character and universal continuity appear in Vasey's proof. In particular, in our setting unions of $\lesst$-increasing towers of length $<\kappa$ may not be towers, so we adapt Vasey's tower extension lemma \cite[16.17]{vasey18} to allow us to find towers that extend a chain of towers, rather than just a single tower (see Proposition \ref{towerextensionprop*}). We will  generalise statements and proofs that involve our weakened assumptions. Some proofs which are very similar to \cite{vasey18} are omitted in the interest of space (Lemma \ref{high-tower-unions-def}, Lemma \ref{disjointness-of-dnf}, Lemma \ref{reducedunionsarereduced*}, and Lemma \ref{highfullunionsarefull*}). However, full proofs can be found in \cite{bemavone}.

\begin{nota}
    Let $I$ be well ordered by $\leq_I$.
    
    \begin{enumerate}
        \item We use $I^-$ to denote 
        \begin{itemize}
            \item $I$ if $I$ has $0$ or limit order type (i.e. it has no final element)
            \item $I \setminus \{i\}$ if $I$ has successor order type where $i$ is the final element of $I$.
        \end{itemize}
        
        \item We use $i +_I 1$ to denote the successor of $i \in I$ in the ordering $\leq_I$ of $I$, if it exists. When unambiguous we may write $i + 1$.

        \item Given $r, s \in I$, we use $[r, s)_I$ to denote the interval of all $i \in I$ with $r \leq_I i <_I s$.
    \end{enumerate}
\end{nota}

That is, $I^-$ removes the final element if it exists. When clear from context, we omit the subscript and write $<$ and $\leq$ for $<_I$ and $\leq_I$.

Our version of tower is almost the same as \cite[5.4]{vasey18}, differing only in that we require all models to be $(\lambda, \geq \kappa)$-limits rather than just limit models (so they are compatible with $\dnf$).

\begin{defin}
   A \emph{tower} is a sequence $\calt = \langle M_i : i \in I \rangle^\wedge \langle a_i : i \in I^- \rangle$ where 
   \begin{enumerate}
       \item $I$ is a well ordered set with $\operatorname{otp}(I) < \lambda^+$
       \item $\langle M_i : i \in I\rangle$ is a $\lea$-increasing sequence of models in $\Kkappalims$
       \item for all $i \in I^-$, $a_i \in |M_{i+1}| \setminus |M_i|$.
   \end{enumerate}

   Given such $I, \calt$:
    \begin{enumerate}
        \item We call $I$ the \emph{index} of the tower $\calt$
        \item If $I_0 \subseteq I$, then define $\calt \upharpoonright I_0 = \langle M_i : i \in I \in I_0 \rangle ^\wedge \langle a_i : i \in (I_0)^- \rangle$
        \item Given $i \in I$ limit, we say $\calt$ is \emph{continuous at $i$} if $M_i = \bigcup_{r<i} M_r$
        \item $\calt$ is \emph{universal} if $\langle M_i : i \in I \rangle$ is universal; that is, for all $i \in I^-$, $M_i \lea^u M_{i+1}$
        \item $\calt$ is \emph{strongly universal} if $\langle M_i : i \in I \rangle$ is strongly universal; that is, for all non-initial $i \in I$, $\bigcup_{r < i} M_r \lea^u M_i$.
    \end{enumerate}    
\end{defin}

\begin{remark}\label{towersexist*}
    It is straightforward to see that given $M \in \Kkappalims$, there is a strongly universal tower $\calt = \langle M_i : i \in \alpha \rangle^\wedge \langle a_i : i \in \alpha^- \rangle$ where $M = M_0$ for any $\alpha < \lambda^+$ (or indeed any well ordering $I$ with $\operatorname{otp}(I) < \lambda^+$): $M_0$ is given; if you have $M_i$ take $\gtp(a_i/M_i, M_{i+1}')$ to be any non-algebraic type (which is possible by NMM) and take $M_{i+1}$ to be any $(\lambda, \kappa)$-limit model over $M_{i+1}'$; at limit $i$ take $M_i$ to be a $(\lambda, \kappa)$-limit over $\bigcup_{k<i} M_k$. Since our models are all in $\Kkappalims$ rather than simply limit models, it is not obvious if continuous towers exist (that is, towers continuous at every limit $i \in I$). However we will be able to guarantee continuity at limits with high cofinality (e.g. take unions when $\cof(i) \geq \kappa$ in the above construction, or use Lemma \ref{reducedextensionsexist*} and Proposition \ref{reducedimplieshighcontinuity*}).
\end{remark}

\begin{defin}[{\cite[5.7]{vasey18}}]\label{towerorderingdef}
    Let $\calt = \langle M_i : i \in I \rangle^\wedge \langle a_i : i \in I^- \rangle$, $\calt' = \langle M_i' : i \in I' \rangle^\wedge \langle a_i' : i \in (I')^- \rangle$ be towers. We define the \emph{tower ordering} $\lesst$ by $\calt \lesst \calt'$ if and only if 
    \begin{enumerate}
        \item $I \subseteq I'$
        \item for all $i \in I$, $M_i'$ is universal over $M_i$
        \item for all $i \in I^-$, $a_i' = a_i$
        \item for all $i \in I^-$, we have $\gtp(a_i/ M_i', M_{i+_{I'}1}')$ $\dnf$-does not fork over $M_i$.
    \end{enumerate}
\end{defin}

\begin{remark}
    $\lesst$ is a strict partial order - for transitivity of $\lesst$, (1) and (3) are clear, (2) follows from transitivity of $\lea^u$, and (4) follows from transitivity of $\dnf$-non-forking.
\end{remark}

In our setting, arbitrary unions of towers may no longer be towers - for example, the models of a union of $\omega$ many towers will be $(\lambda, \omega)$-limit models, so this union may not be a tower if $\kappa \geq \aleph_1$. However, we can take unions of high cofinality, and these interact well with the tower ordering.

\begin{defin}\label{high-tower-unions-def}
    Suppose $\gamma < \lambda^+$ is a limit ordinal where $\cof(\gamma) \geq \kappa$ and $\langle \calt^j : j < \gamma \rangle$ is a $\lesst$-increasing sequence of towers where $\calt^j = \langle M_i^j : i \in I^j \rangle ^\wedge \langle a_i^j : i \in (I^j)^-\rangle$ for $j < \gamma$, and $\bigcup_{j<\gamma} I^j$ is a well ordering. Define $\bigcup_{j<\gamma}\calt^j = \langle M^\gamma_i : i \in I^\gamma \rangle ^\wedge \langle a^\gamma_i : i \in (I^\gamma)^-\rangle$ where
    \begin{enumerate}
        \item $I^\gamma = \bigcup_{j<\gamma}I^j$
        \item for all $i \in I^\gamma$, $M^\gamma_i = \bigcup \{M^j_i : j < \gamma \text{ such that } i \in I^j \}$
        \item for all $i \in (I^\gamma)^-$, $a^\gamma_i$ is any $a^j_i$ for which $i \in (I^j)^-$ (note the choice does not matter by the definition of the tower ordering).
    \end{enumerate}
\end{defin}

\begin{lemma}\label{high-tower-unions-exist}
    Suppose $\gamma < \lambda^+$ is a limit ordinal where $\cof(\gamma) \geq \kappa$ and $\langle \calt^j : j < \gamma \rangle$ is a $\lesst$-increasing sequence of towers where $\calt^j$ is indexed by $I^j$ for $j < \gamma$, and $\bigcup_{j<\gamma} I^j$ is a well ordering. Then $\bigcup_{j<\gamma} \calt^j$ is a tower, and for all $k < \gamma$, $\calt^k \lesst \bigcup_{j<\gamma} \calt^j$.
\end{lemma}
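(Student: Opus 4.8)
The statement to prove is Lemma \ref{high-tower-unions-exist}: that the union $\calt^\gamma := \bigcup_{j<\gamma}\calt^j$ of a $\lesst$-increasing chain of towers of cofinality $\geq\kappa$ is again a tower, and that it is a $\lesst$-upper bound for the chain. The plan is to verify, in order, the three defining clauses of ``tower'' for $\calt^\gamma$, then the four clauses of $\calt^k\lesst\calt^\gamma$ for each $k<\gamma$.

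\textbf{Checking $\calt^\gamma$ is a tower.} First, $I^\gamma=\bigcup_{j<\gamma}I^j$ is a well ordering by hypothesis, and since each $I^j$ has order type $<\lambda^+$ and $\gamma<\lambda^+$ with $\lambda^+$ regular, $\operatorname{otp}(I^\gamma)<\lambda^+$; this gives clause (1). For clause (2), I must show each $M^\gamma_i=\bigcup\{M^j_i : j<\gamma,\ i\in I^j\}$ lies in $K_{(\lambda,\geq\kappa)}$. Fix $i\in I^\gamma$ and let $j_0<\gamma$ be least with $i\in I^{j_0}$; since the $I^j$ are increasing, $i\in I^j$ for all $j\in[j_0,\gamma)$. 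The sequence $\langle M^j_i : j\in[j_0,\gamma)\rangle$ is $\lea$-increasing, and by clause (2) of the tower ordering each $M^{j+1}_i$ is universal over $M^j_i$; moreover the set $[j_0,\gamma)$ has cofinality $\operatorname{cf}(\gamma)\geq\kappa$. So $M^\gamma_i$ is the union of a $\lea^u$-increasing chain of length of cofinality $\geq\kappa$ of models in $K_\lambda$; by Fact \ref{limitsexist*}/the definition of limit model (take a cofinal subchain of regular order type $\operatorname{cf}(\gamma)\geq\kappa$ and interpolate limit models at limit stages using Fact \ref{limitsexist*} if $\operatorname{cf}(\gamma)$ is not already the length), $M^\gamma_i$ is a $(\lambda,\geq\kappa)$-limit model, hence in $K_{(\lambda,\geq\kappa)}$. (One should also note $\|M^\gamma_i\|=\lambda$ since it is an increasing union of $<\lambda^+$ many models of size $\lambda$.) The $\lea$-increasingness of $\langle M^\gamma_i : i\in I^\gamma\rangle$ follows from that of each $\langle M^j_i : i\in I^j\rangle$ together with the coherence of the unions. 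Clause (3), $a^\gamma_i\in|M^\gamma_{i+1}|\setminus|M^\gamma_i|$: pick $j$ with $i\in (I^j)^-$; then $a^\gamma_i=a^j_i\in|M^j_{i+1}|\subseteq|M^\gamma_{i+1}|$, and $a^j_i\notin|M^j_i|$; I need $a^j_i\notin|M^\gamma_i|$, which holds because if $a^j_i\in M^{j'}_i$ for some $j'>j$ then, using $a^{j'}_i=a^j_i$ and clause (3) of ``tower'' for $\calt^{j'}$, we'd get $a^{j'}_i\notin M^{j'}_i$, a contradiction — so $a^j_i\notin M^\gamma_i$. This handles well-definedness of $a^\gamma_i$ (independence of the choice of $j$ is exactly clause (3) of $\lesst$).

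\textbf{Checking $\calt^k\lesst\calt^\gamma$.} Fix $k<\gamma$. Clause (1): $I^k\subseteq I^\gamma$ is immediate. Clause (3): for $i\in(I^k)^-$, $a^\gamma_i=a^k_i$ by construction. Clause (2): for $i\in I^k$, I need $M^\gamma_i$ universal over $M^k_i$; since $M^{k+1}_i$ is universal over $M^k_i$ (clause (2) of $\calt^k\lesst\calt^{k+1}$) and $M^{k+1}_i\lea M^\gamma_i$, universality is inherited. Clause (4) is the main point: for $i\in(I^k)^-$ I must show $\gtp(a^k_i/M^\gamma_i, M^\gamma_{i+1})$ $\dnf$-does not fork over $M^k_i$. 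Here I cannot just cite transitivity of $\lesst$ since $\calt^\gamma$ is a ``limit'' object; instead I use $\Kkappalims$-universal continuity* (available under Hypothesis \ref{long-limit-dnf-hypotheses}, which is in force). For each $j\in[\max(k,j_0),\gamma)$ with $i\in I^j$, clause (4) of $\calt^k\lesst\calt^{j}$ (obtained from transitivity of $\lesst$ applied to $\calt^k\lesst\calt^j$, which holds by the chain assumption) gives that $p_j:=\gtp(a^k_i/M^j_i, M^j_{i+1})$ $\dnf$-does not fork over $M^k_i$; these $p_j$ form an increasing chain of types whose union of domains is $M^\gamma_i$, and the chain $\langle M^j_i\rangle_j$ is $\lea^u$-increasing of cofinality $\geq\kappa$ (after reindexing by a cofinal set), with all members in $\Kkappalims$. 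By $\Kkappalims$-universal continuity* there is a unique type $p_\gamma\in\gS(M^\gamma_i)$ extending all the $p_j$, and it is the one that ``$\dnf$-does not fork over $M^k_i$'' in the sense that its restrictions do. But $\gtp(a^k_i/M^\gamma_i, M^\gamma_{i+1})$ restricts to each $p_j$ (since $a^k_i=a^j_i$ and $M^j_{i+1}\lea M^\gamma_{i+1}$), so by the uniqueness clause it equals $p_\gamma$; then one uses extension/uniqueness (as in the proof of Lemma \ref{dnf-high-continuity}) to conclude $p_\gamma$ itself $\dnf$-does not fork over $M^k_i$. The main obstacle is exactly this clause (4) verification: the subtlety is that $\dnf$ need not be defined on the union model $M^\gamma_i$ a priori (it is, since $M^\gamma_i\in\Kkappalims$, but one must argue it lands there first), and we must route the non-forking conclusion through $\Kkappalims$-universal continuity* rather than through a naive ``union of towers is a tower + transitivity'' argument that would work in Vasey's setting where $\dnf$ is defined on all limit models.
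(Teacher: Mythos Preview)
Your proof is correct and follows essentially the same route as the paper's. The only notable difference is in clause (4) of $\calt^k \lesst \calt^\gamma$: the paper simply cites Lemma \ref{dnf-high-continuity} (which packages $\Kkappalims$-universal continuity* together with extension and uniqueness into genuine $(\geq\kappa)$-universal continuity), whereas you unwind that lemma inline—first invoking continuity* to get the unique extension $p_\gamma$, then matching it with $\gtp(a^k_i/M^\gamma_i, M^\gamma_{i+1})$, then appealing to extension/uniqueness ``as in the proof of Lemma \ref{dnf-high-continuity}'' to conclude non-forking. Citing the lemma directly would streamline your argument. Your more careful treatment of why $a^\gamma_i\notin M^\gamma_i$ and why $M^\gamma_i\in\Kkappalims$ is fine (the paper handles these in a single sentence).
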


\begin{proof}

    The method is the same as \cite[5.13]{vasey18}, besides noting that $\cof(\gamma) \geq \kappa$ ensures $\bigcup_{j<\gamma} \calt^j$ consists of $(\lambda, \geq \kappa)$-limit models. See \cite[3.22]{bemavone} for the details. \end{proof}

To prove Theorem \ref{largelimitsareisothm*}, we will construct a $(\delta_1+1)$-long $\lesst$-increasing sequence of towers, continuous at $\delta_1$, where the towers' indexes contain a copy of $(\delta_2 + 1)$, and the final tower's models indexed by this copy will be $\lea^u$-increasing and continuous at `$\delta_2$' (in the copy of $(\delta_2 + 1)$). This will guarantee the `$\delta_2$'th model in the final tower will be both a $(\lambda, \delta_1)$-limit model and $(\lambda, \delta_2)$-limit model.

\def\uppersemi at (#1,#2){\def\Radius{0.75}
    
  \draw
    (\Radius + #1, #2) arc(0:180:\Radius);}

\def\sidesemi at (#1,#2){\def\Radius{0.75}
    
  \draw
    (#1, #2 - \Radius) arc(-90:90:\Radius);}

\def\fullcircle at (#1,#2){\def\Radius{0.75}
    
  \draw
    (\Radius + #1, #2) arc(0:360:\Radius);}

\def\upperright at (#1,#2){\def\Radius{0.75}
    
  \draw
    (\Radius + #1, #2) arc(0:90:\Radius);}
    
\def\upperleft at (#1,#2){\def\Radius{0.75}
    
  \draw
    (#1, \Radius + #2) arc(90:180:\Radius);}

\def\threequarter at (#1,#2){\def\Radius{0.75}
    
  \draw
    (#1, #2 - \Radius) arc(-90:180:\Radius);}

\begin{figure}[!ht]
\centering
{%
\begin{tikzpicture}
\tikzstyle{every node}=[font=\small]


\sidesemi at (13.5, 8.25)
\upperright at (13.5,9)
\upperright at (13.5,9.75)
\upperright at (13.5,10.5)
\upperright at (13.5,11.25)
\upperright at (13.5,12)
\upperright at (13.5,14.5)

\node [font=\small] at (13.6,8.25) {$M_0^{\delta_1}$};
\node [font=\small] at (13.6,9.3) {$M_1^{\delta_1}$};
\node [font=\small] at (13.6,10.05) {$M_2^{\delta_1}$};
\node [font=\small] at (13.6,10.8) {$M_3^{\delta_1}$};
\node [font=\small] at (13.6,11.55) {$M_{3.5}^{\delta_1}$};
\node [font=\small] at (13.6,12.3) {$M_4^{\delta_1}$};
\node [font=\small] at (13.6,14.8) {$M_{\delta_2}^{\delta_1}$};
\node [font=\small] at (13.6,13.6) {$\vdots$};

\node [font=\small] at (13.75,15.75) {$\mathcal{T}^{\delta_1}$};


\node [font=\small] at (12.5,8.25) {$\dots$};
\node [font=\small] at (12.5,9.3) {$\dots$};
\node [font=\small] at (12.5,10.05) {$\dots$};
\node [font=\small] at (12.5,10.8) {$\dots$};
\node [font=\small] at (12.5,11.55) {$\dots$};
\node [font=\small] at (12.5,12.3) {$\dots$};
\node [font=\small, rotate=90] at (12.3,13.6) {$\ddots$};
\node [font=\small] at (12.5,14.8) {$\dots$};

\sidesemi at (10.75,8.25)
\upperright at (10.75,9)
\upperright at (10.75,9.75)
\upperright at (10.75,10.5)
\upperright at (10.75,11.25)
\upperright at (10.75,12)
\upperright at (10.75, 14.5)

\node [font=\small] at (11,8.25) {$M_0^4$};
\node [font=\small] at (11,9.3) {$M_1^4$};
\node [font=\small] at (11,10.05) {$M_2^4$};
\node [font=\small] at (11,10.8) {$M_3^4$};
\node [font=\small] at (11.1,11.55) {$M_{3.5}^4$};
\node [font=\small] at (11,12.3) {$M_4^4$};
\node [font=\small] at (11.05,14.8) {$M_{\delta_2}^4$};
\node [font=\small] at (11.1,13.6) {$\vdots$};

\node [font=\small] at (11,15.75) {$\mathcal{T}^4$};

\sidesemi at (10,8.25)
\upperright at (10,9)
\upperright at (10,9.75)
\upperright at (10,10.5)
\upperright at (10,11.25)
\upperright at (10,12)
\upperright at (10, 14.5)

\node [font=\small] at (10.25,8.25) {$M_0^3$};
\node [font=\small] at (10.25,9.3) {$M_1^3$};
\node [font=\small] at (10.25,10.05) {$M_2^3$};
\node [font=\small] at (10.25,10.8) {$M_3^3$};
\node [font=\small] at (10.35,11.55) {$M_{3.5}^3$};
\node [font=\small] at (10.25,12.3) {$M_4^3$};
\node [font=\small] at (10.3,14.8) {$M_{\delta_2}^3$};
\node [font=\small] at (10.35,13.6) {$\vdots$};

\node [font=\small] at (10.25,15.75) {$\mathcal{T}^3$};

\sidesemi at (9.25,8.25)
\upperright at (9.25,9)
\upperright at (9.25,9.75)
\upperright at (9.25,10.5)
\upperright at (9.25,12)
\upperright at (9.25,14.5)

\node [font=\small] at (9.5,8.25) {$M_0^2$};
\node [font=\small] at (9.5,9.3) {$M_1^2$};
\node [font=\small] at (9.5,10.05) {$M_2^2$};
\node [font=\small] at (9.5,10.8) {$M_3^2$};
\node [font=\small] at (9.5,12.3) {$M_4^2$};
\node [font=\small] at (9.55,14.8) {$M_{\delta_2}^2$};
\node [font=\small] at (9.6,13.6) {$\vdots$};

\node [font=\small] at (9.5,15.75) {$\mathcal{T}^2$};

\sidesemi at (8.5,8.25)
\upperright at (8.5,9)
\upperright at (8.5,9.75)
\upperright at (8.5,10.5)
\upperright at (8.5,12)
\upperright at (8.5,14.5)

\node [font=\small] at (8.75,8.25) {$M_0^1$};
\node [font=\small] at (8.75,9.3) {$M_1^1$};
\node [font=\small] at (8.75,10.05) {$M_2^1$};
\node [font=\small] at (8.75,10.8) {$M_3^1$};
\node [font=\small] at (8.75,12.3) {$M_4^1$};
\node [font=\small] at (8.8,14.8) {$M_{\delta_2}^1$};
\node [font=\small] at (8.85,13.6) {$\vdots$};

\node [font=\small] at (8.75,15.75) {$\mathcal{T}^1$};

\fullcircle at (7.75,8.25)
\uppersemi at (7.75,9)
\uppersemi at (7.75,9.75)
\uppersemi at (7.75,10.5)
\uppersemi at (7.75,12)
\upperleft at (7.75, 11.25)
\uppersemi at (7.75,14.5)

\node [font=\small] at (7.75,15.75) {$\mathcal{T}^0$};
\node [font=\small] at (7.75,8.25) {$M$};
\node [font=\small] at (7.75,9.3) {$M_1^0$};
\node [font=\small] at (7.75,10.05) {$M_2^0$};
\node [font=\small] at (7.75,10.8) {$M_3^0$};
\node [font=\small] at (7.75,12.3) {$M_4^0$};
\node [font=\small] at (7.75,14.8) {$M_{\delta_2}^0$};
\node [font=\small] at (7.75,13.6) {$\vdots$};

\node at (7.2, 9.2)    {$\bullet$};
\node at (7.2, 9.95)   {$\bullet$};
\node at (7.2, 10.7)   {$\bullet$};
\node at (7.2, 11.45)   {$\bullet$};
\node at (10.15, 12.6) {$\bullet$}; 
\node at (7.2, 12.95)   {$\bullet$};

\node at (6.8, 9.2)   {$a_0$};
\node at (6.8, 9.95)   {$a_1$};
\node at (6.8, 10.7)   {$a_2$};
\node at (6.8, 11.45)   {$a_3$};
\node at (10.3, 12.9) {$a_{3.5}$};
\node at (6.8, 12.95)   {$a_4$};

\draw (7.75,9) -- (13.5,9);
\draw (7.75,7.5) -- (13.5,7.5);
\draw (7.75,9.75) -- (13.5,9.75);
\draw (7.75,10.5) -- (13.5,10.5);
\draw (7.75,11.25) -- (13.5,11.25);
\draw (7.75,12   ) -- (13.5,12   );
\draw (7.75,12.75) -- (13.5,12.75);
\draw (7.75,15.25) -- (13.5,15.25);

\draw (7    ,8.25) -- (7    ,14.5);
\draw (8.5  ,8.25) -- (8.5  ,14.5);
\draw (9.25 ,8.25) -- (9.25 ,14.5);
\draw (10   ,8.25) -- (10   ,14.5);
\draw (10.75,8.25) -- (10.75,14.5);
\draw (11.5 ,8.25) -- (11.5 ,14.5);
\draw (14.25,8.25) -- (14.25,14.5);

\end{tikzpicture}
}%
\end{figure}

The diagram captures the intuition of this construction - the $j$th column represents the tower $\calt^j$, and the $i$th row consists of the models of the $i$th levels of each tower. The top row and final column will be continuous and universal, and hence witness that the largest model $M_{\delta_2}^{\delta_1}$ is both a $(\lambda, \delta_1)$ and $(\lambda, \delta_2)$-limit model respectively. As we go further along the chain of towers, new levels will be inserted between the `main' levels of the towers (see where row 3.5 is introduced in $\calt^3$). This will become important when we examine \emph{full towers}, and is why we index towers by arbitrary well orderings $I$ instead of ordinals.

First we will show that strict extensions of towers exist. Towards that goal, we use the following fact, which is based on \cite[13.16(8)]{vaseyn}. We will only need it for Proposition \ref{towerextensionprop*}, Proposition \ref{towerextensionswithb*}, and Proposition \ref{finish_tower_extension}, to verify the types of the $a_i$'s in the extending towers are still non-algebraic.

\begin{lemma}[Disjointness]\label{disjointness-of-dnf}
    If $M, N \in \Kkappalims$, $p \in \gS(N)$ $\dnf$-does not fork over $M$, and $p \upharpoonright M$ is non-algebraic, then $p$ is non-algebraic.
\end{lemma}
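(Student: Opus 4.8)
The plan is to prove the contrapositive: assuming $p$ is algebraic, I will show $p\rest M$ is algebraic. So suppose $p = \gtp(a/N, N')$ for some $N' \in \Kkappalims$ with $N \lea N'$, $a \in N$ (algebraicity means $a \in |N|$). I want to conclude $a \in |M|$.

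The key idea is to use \emph{uniqueness} together with a symmetry-like argument powered by \emph{non-forking amalgamation}. Here is the outline. Since $a \in |N|$ and $M \lea N$, consider whether $a \in |M|$; if so we are done. Otherwise, the type $q := \gtp(a/M, N)$ is non-algebraic by hypothesis, and $p = \gtp(a/N, N)$ $\dnf$-does not fork over $M$. Now I would build a copy of the situation that violates uniqueness. Using non-forking amalgamation (or extension plus a carefully chosen realisation), produce $N^* \in \Kkappalims$ with $N \lea N^*$ and an element $b \in |N^*| \setminus |N|$ such that $\gtp(b/N, N^*) = \gtp(a/N, N) = p$ — this is possible because $p\rest M$ is non-algebraic, so $p$ has a realisation outside $M$, and by extension/monotonicity one can realise $p$ over $N$ by an element not in $N$ (one must check this: since $p\rest M$ is non-algebraic it has infinitely many realisations in a large enough model, and the non-forking extension $p$ over $N$ still must be realised, and it cannot be that $a$ is its \emph{only} realisation if we can find a genuinely new one — here I may instead argue that $a$ realises $p$, $a\in|N|$, but also some $b\notin|N|$ realises $p$). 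Then both $\gtp(a/N,\cdot)$ and $\gtp(b/N,\cdot)$ equal $p$, hence agree on $N$ and both $\dnf$-don't fork over $M$, but $a \in |N|$ while $b \notin |N|$; this does not \emph{immediately} contradict uniqueness (uniqueness is about types over $N$ being equal), so instead I should look one level down.

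The cleaner route: suppose toward contradiction $a \in |N|\setminus|M|$. Pick any $M' \in \Kkappalims$ with $M \lea^u M' \lea N$ and $a \notin |M'|$ — wait, this may fail since $a$ could be forced into every universal extension. Let me reconsider and use the standard trick. Since $p\rest M$ is non-algebraic, by extension there is $q' \in \gS(N)$ extending $p \rest M$ with $q'$ $\dnf$-non-forking over $M$; by uniqueness $q' = p$. Now realise $q' = p$ over $N$ by an element $b$ \emph{outside} $N$: to get $b \notin |N|$, note $p\rest M$ is non-algebraic so it has a realisation $c$ in some extension $N'' \succeq N$ with $c \notin |M|$; extend $\gtp(c/M,N'')$ to a non-forking type over $N$, realised by some $b$; if $b \in |N|$ then $b$ realises $p$ and $b \notin |M|$, but we can keep picking fresh realisations by iterating extension and amalgamation until one lands outside $|N|$ (this works because $|N| = \lambda$ and $p\rest M$ has more than $\lambda$ realisations in a monster/limit model, using that $p\rest M$ is non-algebraic in a class with AP, JEP, NMM, stability — actually non-algebraic types can have few realisations in general, so the real engine must be: $a$ realises $p$, $a \in |N|$; build an automorphism-style map or use amalgamation to move $a$ off $N$). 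The honest statement: by invariance and the fact that $p$ does not fork over $M$, apply non-forking amalgamation to $N, N$ over $M$ with the element $a$ on both sides, obtaining $N^\dagger$ and embeddings $f_1, f_2 : N \to N^\dagger$ fixing $M$ with $f_1(a) \dnf_M^{N^\dagger} f_2[N]$ and $f_2(a) \dnf_M^{N^\dagger} f_1[N]$; then $\gtp(f_1(a)/f_2[N], N^\dagger)$ is the non-forking extension of $\gtp(a/M,\cdot) = p\rest M$, as is $\gtp(f_2(a)/f_2[N],N^\dagger)$ (since $f_2$ fixes $M$ and $p$ doesn't fork over $M$, transported), so by uniqueness $f_1(a)$ and $f_2(a)$ realise the same type over $f_2[N]$; since $a \in |N|$, $f_2(a) \in f_2[N]$, so $f_1(a) \in f_2[N]$; thus $\gtp(a/M,N) = \gtp(f_1(a)/M, N^\dagger)$ is realised by $f_1(a) \in f_2[N]$, and $f_1(a) = f_2(a')$ for some $a' \in |N|$ with $\gtp(a'/M,N) = p\rest M$; chasing the definitions forces $a \in |M|$ because $f_1, f_2$ disagree off $M$. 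I expect \textbf{this last bookkeeping step — extracting $a \in |M|$ from the coincidence of the transported realisations — to be the main obstacle}, and I would handle it by arguing that if $a \notin |M|$ then $f_1(a) \ne f_2(a)$ (as $f_1, f_2$ can be chosen to differ on $|N| \setminus |M|$, or by a disjoint amalgamation refinement), contradicting $f_1(a) \in f_2[N]$ together with uniqueness pinning down the realisation uniquely over the larger model once we also know $f_1(a) \dnf_M f_2[N]$; alternatively, invoke that in this setup disjoint (non-forking) amalgamation can be arranged with $f_1[N] \cap f_2[N] = |M|$, which instantly gives $f_1(a) \in f_2[N] \cap f_1[N] = |M|$, hence $a \in |M|$.
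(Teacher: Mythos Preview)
Your argument has a genuine gap at exactly the point you flagged. You correctly deduce $f_1(a) = f_2(a)$ (since $\gtp(f_2(a)/f_2[N])$ is algebraic, any element of the same type over $f_2[N]$ must equal $f_2(a)$). But the step from $f_1(a) = f_2(a)$ to $a \in |M|$ requires that $f_1[N] \cap f_2[N] = |M|$, i.e.\ \emph{disjoint} amalgamation. Hypothesis~\ref{long-limit-dnf-hypotheses} only gives non-forking amalgamation, which does not guarantee disjointness of the images; nothing prevents $f_1 = f_2$, for instance. Worse, in the standard development (e.g.\ good frames, \cite[13.16]{vaseyn}), disjoint amalgamation is \emph{derived from} the disjointness property you are trying to prove, so invoking it here is circular. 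Your other fallback --- ``$f_1, f_2$ can be chosen to differ on $|N|\setminus|M|$'' --- is the same assertion in disguise and equally unjustified.

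The paper takes a completely different route that avoids non-forking amalgamation entirely. It exploits that $M$ is a $(\lambda,\delta)$-limit model: by local character, $p$ does not fork over some $M_i$ in a witnessing chain for $M$; then one takes a $(\lambda,\delta)$-limit $N^*$ over $N$, extends $p$ to a non-forking $q \in \gS(N^*)$, and uses Fact~\ref{cofinalityiso*} to find an isomorphism $f: N^* \cong M$ over $M_i$. Uniqueness forces $f(q) = p \rest M$, so non-algebraicity of $p \rest M$ transfers to $q$ and hence to $p = q \rest N$. The key insight you are missing is that the limit-model structure of $M$ lets you \emph{fold a large extension of $N$ isomorphically back into $M$}, sidestepping any need for disjoint amalgams.
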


\begin{proof}

    The method is essentially \cite[13.16(8)]{vaseyn}. See \cite[3.23]{bemavone} for the details.\end{proof}

The following three results are based on \cite[16.17]{vaseyn} and allow us to extend towers in various ways. The first is similar to \cite[16.17(1)]{vaseyn} but differs in that we take an extension of a whole $\lesst$-increasing chain of towers rather than a single tower. This will be necessary as we cannot take arbitrary unions of towers under our assumptions, unlike \cite{vaseyn}.

\begin{prop}\label{towerextensionprop*}
    Suppose $1 \leq \alpha < \lambda^+$ and $\langle\calt^j : j < \alpha \rangle$ is a $\lesst$-increasing sequence of towers, where $\calt^j = \langle M_i : i \in I^j \rangle ^\wedge \langle a_i : i \in (I^j)^- \rangle$. Suppose $I = \bigcup_{j<\alpha} I^j$ is well-ordered. Then there exists a strongly universal tower $\calt'$ indexed by $I$ such that $\calt^j \lesst \calt'$ for each $j < \alpha$.
\end{prop}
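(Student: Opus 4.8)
The plan is to imitate the single‑tower construction of \cite[16.17(1)]{vaseyn}, but building the extension of the whole $\lesst$‑chain at once. So I would construct $\calt' = \langle M_i' : i \in I \rangle^\wedge \langle a_i' : i \in I^- \rangle$ by recursion along the well‑order $I$ (note $\operatorname{otp}(I)<\lambda^+$, since $|I| \le \sum_{j<\alpha}|I^j| \le \lambda$). For $i \in I^-$ I would immediately set $a_i' := a_i$, the common value of $a_i^j$ over all $j<\alpha$ with $i \in (I^j)^-$, which is well defined by clause (3) of $\lesst$ (Definition \ref{towerorderingdef}); so clause (3) of $\calt^j \lesst \calt'$ holds for every $j$ for free. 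Since $\langle \calt^j : j<\alpha\rangle$ is $\lesst$‑increasing we have $I^j \subseteq I^{j'}$ for $j\le j'$, and for each fixed $i$ the models $\langle M_i^j : j<\alpha,\ i\in I^j\rangle$ form a $\lea$‑chain (clause (2) of $\lesst$); set $M_i^* := \bigcup\{M_i^j : j<\alpha,\ i\in I^j\}$, a $\lea$‑chain union of length $<\lambda^+$ of models of size $\lambda$, so $M_i^* \in \K_\lambda$ and $M_i^j \lea M_i^*$ for each relevant $j$. I would also record the routine fact that if a model is universal over $B$ and $A \lea B$, then it is universal over $A$ (amalgamate over $A$ and restrict, using $\lambda$‑AP).

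At stage $i$, having built $\langle M_r' : r<_I i\rangle$ (honest $\lea$‑chain), put $M_{<i}' := \bigcup_{r<_I i}M_r' \in \K_\lambda$ (vacuous if $i=\min I$). Using $\lambda$‑AP and Fact \ref{limitsexist*}, I would take $M_i' \in \Kkappalims$ to be a $(\lambda,\kappa)$‑limit over a model of size $\lambda$ containing $M_{<i}'$ and $M_i^*$ (and, when $i = i_0 +_I 1$ is a successor, also the point $a_{i_0}$ produced below). Then $M_i'$ is universal over $M_{<i}'$, giving strong universality of $\calt'$, and universal over $M_i^*$, hence over each $M_i^j$ with $i\in I^j$ by the fact above; this secures clauses (1) and (2) of $\calt^j \lesst \calt'$ simultaneously for all $j<\alpha$.

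The real content is placing $a_{i_0}$ at successor stages $i = i_0 +_I 1$. Let $j_0 = j_0(i_0)$ be least with $i_0 \in (I^{j_0})^-$ (such $j$ exists: $i_0 <_I i$ and $i \in I^j$ for all large $j$, so $i_0 \in (I^j)^-$ for all large $j$), and recall $a_{i_0} = a_{i_0}^{j_0}$, which realises a non‑algebraic type over $M_{i_0}^{j_0}$. By Lemma \ref{existence-on-high-limits}, $\gtp(a_{i_0}/M_{i_0}^{j_0})$ $\dnf$‑does not fork over $M_{i_0}^{j_0}$; since $M_{i_0}^{j_0} \lea M_{i_0}'$ with both in $\Kkappalims$, extension gives $q\in\gS(M_{i_0}')$ extending it and $\dnf$‑non‑forking over $M_{i_0}^{j_0}$, and by Disjointness (Lemma \ref{disjointness-of-dnf}) $q$ is non‑algebraic. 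After the usual renaming I would take $a_{i_0}$ itself to realise $q$ and place it in the base of $M_i'$, so $a_{i_0} \in |M_i'|\setminus|M_{i_0}'|$ (valid tower) and $\gtp(a_{i_0}/M_{i_0}', M_i')=q$ does not fork over $M_{i_0}^{j_0}$. Here is the key point that lets the argument go through without ever taking a union of the $\calt^j$ (which our setting forbids, as $\alpha$ may have small cofinality): the $M_{i_0}^j$ are $\lea$‑increasing in $j$, so Base Monotonicity immediately upgrades ``does not fork over $M_{i_0}^{j_0}$'' to ``does not fork over $M_{i_0}^j$'' for every $j$ with $i_0 \in (I^j)^-$ — this is clause (4). (At $i=\min I$ and at limit $i\in I$ there is no $a$ to place and the choice of $M_i'$ above already suffices.)

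Collecting this, $\calt' = \langle M_i':i\in I\rangle^\wedge\langle a_i':i\in I^-\rangle$ is a tower ($\operatorname{otp}(I)<\lambda^+$; all $M_i'\in\Kkappalims$, $\lea$‑increasing; $a_i'\in|M'_{i+_I1}|\setminus|M_i'|$), it is strongly universal, and $\calt^j\lesst\calt'$ for each $j<\alpha$ by clauses (1)–(4) above. I expect the only genuinely fiddly part to be the ambient amalgamation bookkeeping ensuring that the $M_i'$, the $M_i^j$ and the $M_{<i}'$ all sit inside one another as honest substructures throughout the recursion (so that ``universal over'' may be read as an honest extension and the monotonicity arguments apply); this is routine and handled exactly as in \cite[16.17]{vaseyn}, e.g.\ by running the construction inside a suitably large model. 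Everything else reduces, via the Base Monotonicity trick, to $\dnf$‑existence on $\Kkappalims$ (Lemma \ref{existence-on-high-limits}), extension, and Disjointness (Lemma \ref{disjointness-of-dnf}), with no appeal to universal continuity or non‑forking amalgamation.
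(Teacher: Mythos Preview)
Your proposal has a real gap at the successor step, hidden in the phrase ``after the usual renaming I would take $a_{i_0}$ itself to realise $q$.'' You insist the construction runs with honest substructures inside a large ambient model, so in particular $M_{i_0}^*\lea M_{i_0}'$ and $M_{i_0+1}^*\lea M_{i_0+1}'$. But $a_{i_0}$ already sits in $M_{i_0+1}^*$ together with $M_{i_0}^*$, so the type $\gtp(a_{i_0}/M_{i_0}^*, M_{i_0+1}^*)$ is fixed in advance. For your amalgamation to place $a_{i_0}$ realising $q$ over $M_{i_0}'$, you need $q\upharpoonright M_{i_0}^* = \gtp(a_{i_0}/M_{i_0}^*, M_{i_0+1}^*)$. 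Uniqueness does give $q\upharpoonright M_{i_0}^j = \gtp(a_{i_0}/M_{i_0}^j)$ for every $j\in[j_0,\alpha)$, so both $q\upharpoonright M_{i_0}^*$ and $\gtp(a_{i_0}/M_{i_0}^*)$ extend all the $\gtp(a_{i_0}/M_{i_0}^j)$; but when $\alpha$ is a limit of cofinality $<\kappa$ (so $M_{i_0}^*\notin\Kkappalims$ and $\dnf$ is not even defined there), concluding that these two extensions coincide is \emph{exactly} the content of $\Kkappalims$-universal continuity*. This is not bookkeeping: without that hypothesis there is no reason the unique non-forking $q$ restricts correctly over $M_{i_0}^*$, and then no amalgam of $M_{i_0}'$ and $M_{i_0+1}^*$ over $M_{i_0}^*$ can make $a_{i_0}$ realise a type over $M_{i_0}'$ that is $\dnf$-non-forking over $M_{i_0}^{j_0}$.

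The paper's proof faces the same obstruction and resolves it explicitly. It works with embeddings $f_i:M_i^\alpha\to N_i$ rather than honest substructures, builds $p\in\gS(N_i)$ non-forking over $f_i[M_i^{j_i}]$, obtains $p\upharpoonright f_i[M_i^j]=q_j$ for $j<\alpha$ by uniqueness, and then \emph{invokes $\Kkappalims$-universal continuity*} to conclude $p\upharpoonright f_i[M_i^\alpha]=q_\alpha$; only then can the amalgamation send $a_i$ to a realisation of $p$. Your base-monotonicity observation for clause~(4) is correct and is precisely how the paper passes from non-forking over $M_i^{j_i}$ to non-forking over every $M_i^j$, but it does not let you bypass continuity. (When $\alpha$ is a successor your argument is fine, since then $M_i^* = M_i^{\alpha-1}\in\Kkappalims$ and uniqueness alone suffices; the issue is purely the limit case.)
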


\begin{proof}
    For $i \in I$, use $M^\alpha_i$ to denote $\bigcup \{M^j_i : j < \alpha, i \in I^j\}$, and let $i_0$ be minimal in $I$. Note $M^\alpha_i$ may not be in $\Kkappalims$ if $\alpha$ is a limit, but if $\alpha$ is not a limit, say $\alpha = \beta + 1$, then $M_i^\alpha = M_i^\beta$. We define recursively on $I$ a $\lea$-increasing sequence of models $\langle N_i : i \in I \rangle$ in $\Kkappalims$ and a $\subseteq$-increasing sequence of $\K$-embeddings $\langle f_i: i \in I \rangle$ such that 
    \begin{enumerate}
        \item for all $i \in I$, $f_i : M_i^\alpha \rightarrow N_i$
        \item $f_{i_0} = \id_{M_{i_0}^\alpha}$
        \item for all $i \in I$, $N_i$ is universal over $f_i[M_i^\alpha]$
        \item for all $i \in I \setminus \{i_0\}$, $N_i$ is universal over $\bigcup_{j<i}N_j$
        \item for all $j<\alpha$ and $i \in (I^j)^-$, $\gtp(f_{i+1}(a_i)/N_i, N_{i+1})$ $\dnf$-does not fork over $f_i[M^j_i]$.
        \end{enumerate}

    \textbf{This is enough:} Let $f = \bigcup_{i\in I}f_i : \bigcup_{i \in I}M_i^\alpha\rightarrow \bigcup_{i\in I}N_i$, and extend this to an isomorphism $g:M' \cong \bigcup_{i \in I }N_i$. Let $M_i' = g^{-1}[N_i]$ for each $i < \alpha$. Let $\calt' = \langle M_i' : i \in I \rangle ^\wedge \langle a_i : i \in I^- \rangle$. 

    By (5), for all $j < \alpha$ and $i \in (I^j)^-$, $\gtp({a_i}/M_i', M_{i+1}')$ $\dnf$-does not fork over $M^j_i$. For each $i \in I^-$ there is $j < \alpha$ where $i, i +_I 1 \in I^j$ and $a_i \notin M^j_i$, so by Lemma \ref{disjointness-of-dnf} we have that $a_i \in |M_{i+1}'| \setminus |M_i'|$. So $\calt'$ is a tower, and by (4) it is a strong limit tower. By (3), for all $j < \alpha$, for every $i \in I^j$, $M_i'$ is universal over $M^j_i$. So $\calt^j \lesst \calt'$ as desired.

    \textbf{This is possible:} For \underline{$i=i_0$}, we can take $N_{i_0}$ $(\lambda, \kappa)$-limit over $M_{i_0}^\alpha$ (hence universal over $M_{i_0}^\alpha$ also), and $f_{i_0} = \id_{M^\alpha_{i_0}}$ as specified by (2).

    For the \underline{successor step}, suppose we have $f_{i'}:M_{i'}^\alpha \rightarrow N_{i'}$ defined for $i' \leq i \in I$. Extend $f_i$ to an embedding $\bar{f}_i : M^\alpha_{i+1} \rightarrow \bar{N}_i$ where $N_i \leq \bar{N}_i$ (this is possible by AP).

    Let $j_i$ be the least $j < \alpha$ such that $i \in (I^j)^-$. For $j \in [j_i, \alpha]$, let $q_j = \gtp(\bar{f}_i(a_i)/f_i[M^j_i], \bar{f}_{i}[M_{i+_{I^j}1}^j])$. The tower ordering tells us that $q_j$ $\dnf$-does not fork over $f_i[M_i^{j_i}]$ for $j \in [j_i, \alpha)$, so in particular these types all extend $q_{j_i}$ and increase with $j$ by uniqueness.

    By extension, there is $p \in \gS(N_i)$ extending $q_{j_i}$ that does not fork over $f_i[M^{j_i}_i]$. By uniqueness, $p \upharpoonright f_i[M_i^j] = q_j$ for all $j \in [j_i, \alpha)$. If $\alpha$ is limit, by $\Kkappalims$-universal continuity*, $p \upharpoonright f_i[M_i^\alpha] = q_\alpha$ (note that $\Kkappalims$-universal continuity* does not impose restrictions on the cofinality of $\alpha$). If $\alpha$ is not limit, then $\alpha = \beta + 1$ for some $\beta$, and we have $M_i^\alpha = \bigcup_{j<\alpha}M_i^j = M_i^\beta$, and $p \upharpoonright f_i[M_i^\alpha] = q_{\beta} = q_\alpha$ by uniqueness. In either case, $p \upharpoonright M_i^\alpha = q_\alpha$.

    Say $p = \gtp(b/N_i, \hat{N}_i)$. Using $p \upharpoonright f_i[M_i^\alpha] = q_\alpha$ and AP, there is an embedding $g_i:\bar{N}_i \rightarrow N_{i+1}'$ where $\hat{N}_i \lea N_{i+1}'$ and $g_i(\bar{f}_i(a_i)) = b$. Let $N_{i+1}$ be any $(\lambda, \kappa)$-limit over $N_{i+1}'$. Set $f_{i+1} = g_i \circ \bar{f}_i:M_{i+1}^\alpha \rightarrow N_{i+1}$.

\begin{center}

\tikzset{every picture/.style={line width=0.75pt}} 

\begin{tikzpicture}[x=0.75pt,y=0.75pt,yscale=-1,xscale=1]

\draw    (80,201) -- (80,143) ;
\draw [shift={(80,141)}, rotate = 90] [color={rgb, 255:red, 0; green, 0; blue, 0 }  ][line width=0.75]    (10.93,-3.29) .. controls (6.95,-1.4) and (3.31,-0.3) .. (0,0) .. controls (3.31,0.3) and (6.95,1.4) .. (10.93,3.29)   ;
\draw    (100,211) -- (156,211) ;
\draw [shift={(158,211)}, rotate = 180] [color={rgb, 255:red, 0; green, 0; blue, 0 }  ][line width=0.75]    (10.93,-3.29) .. controls (6.95,-1.4) and (3.31,-0.3) .. (0,0) .. controls (3.31,0.3) and (6.95,1.4) .. (10.93,3.29)   ;
\draw    (180,201) -- (180,143) ;
\draw [shift={(180,141)}, rotate = 90] [color={rgb, 255:red, 0; green, 0; blue, 0 }  ][line width=0.75]    (10.93,-3.29) .. controls (6.95,-1.4) and (3.31,-0.3) .. (0,0) .. controls (3.31,0.3) and (6.95,1.4) .. (10.93,3.29)   ;
\draw    (100,131) -- (168,131) ;
\draw [shift={(170,131)}, rotate = 180] [color={rgb, 255:red, 0; green, 0; blue, 0 }  ][line width=0.75]    (10.93,-3.29) .. controls (6.95,-1.4) and (3.31,-0.3) .. (0,0) .. controls (3.31,0.3) and (6.95,1.4) .. (10.93,3.29)   ;
\draw    (245,211) -- (268,211) ;
\draw [shift={(270,211)}, rotate = 180] [color={rgb, 255:red, 0; green, 0; blue, 0 }  ][line width=0.75]    (10.93,-3.29) .. controls (6.95,-1.4) and (3.31,-0.3) .. (0,0) .. controls (3.31,0.3) and (6.95,1.4) .. (10.93,3.29)   ;
\draw    (200,211) -- (226,211) ;
\draw [shift={(228,211)}, rotate = 180] [color={rgb, 255:red, 0; green, 0; blue, 0 }  ][line width=0.75]    (10.93,-3.29) .. controls (6.95,-1.4) and (3.31,-0.3) .. (0,0) .. controls (3.31,0.3) and (6.95,1.4) .. (10.93,3.29)   ;
\draw    (280,201) -- (280,143) ;
\draw [shift={(280,141)}, rotate = 90] [color={rgb, 255:red, 0; green, 0; blue, 0 }  ][line width=0.75]    (10.93,-3.29) .. controls (6.95,-1.4) and (3.31,-0.3) .. (0,0) .. controls (3.31,0.3) and (6.95,1.4) .. (10.93,3.29)   ;
\draw    (195,131) -- (268,131) ;
\draw [shift={(270,131)}, rotate = 180] [color={rgb, 255:red, 0; green, 0; blue, 0 }  ][line width=0.75]    (10.93,-3.29) .. controls (6.95,-1.4) and (3.31,-0.3) .. (0,0) .. controls (3.31,0.3) and (6.95,1.4) .. (10.93,3.29)   ;
\draw    (295,116) -- (338.59,72.41) ;
\draw [shift={(340,71)}, rotate = 135] [color={rgb, 255:red, 0; green, 0; blue, 0 }  ][line width=0.75]    (10.93,-3.29) .. controls (6.95,-1.4) and (3.31,-0.3) .. (0,0) .. controls (3.31,0.3) and (6.95,1.4) .. (10.93,3.29)   ;
\draw    (80,115) .. controls (81.65,62.25) and (278.99,59.06) .. (338.25,59.97) ;
\draw [shift={(340,60)}, rotate = 181.01] [color={rgb, 255:red, 0; green, 0; blue, 0 }  ][line width=0.75]    (10.93,-3.29) .. controls (6.95,-1.4) and (3.31,-0.3) .. (0,0) .. controls (3.31,0.3) and (6.95,1.4) .. (10.93,3.29)   ;

\draw (71,202.4) node [anchor=north west][inner sep=0.75pt]    {$M_{i}^{\alpha }$};
\draw (62,122.4) node [anchor=north west][inner sep=0.75pt]    {$M_{i+1}^{\alpha }$};
\draw (156,202.4) node [anchor=north west][inner sep=0.75pt]    {$f_i[M_i^\alpha]$};
\draw (171,120.4) node [anchor=north west][inner sep=0.75pt]    {$\overline{N}_{i}$};
\draw (227,203.4) node [anchor=north west][inner sep=0.75pt]    {$N_{i}$};
\draw (271,201.4) node [anchor=north west][inner sep=0.75pt]    {$\hat{N}_{i}$};
\draw (271,120.4) node [anchor=north west][inner sep=0.75pt]    {$N'_{i+1}$};
\draw (341,55.4) node [anchor=north west][inner sep=0.75pt]    {$N_{i+1}$};
\draw (123,109.4) node [anchor=north west][inner sep=0.75pt]    {$\overline{f}_{i}$};
\draw (121,212.4) node [anchor=north west][inner sep=0.75pt]    {$f_{i}$};
\draw (64,162.4) node [anchor=north west][inner sep=0.75pt]    {$\id$};
\draw (224,117.4) node [anchor=north west][inner sep=0.75pt]    {$g_i$};
\draw (204,215.4) node [anchor=north west][inner sep=0.75pt]    {$\id$};
\draw (248,215.4) node [anchor=north west][inner sep=0.75pt]    {$\id$};
\draw (281,162.4) node [anchor=north west][inner sep=0.75pt]    {$\id$};
\draw (294,82.4) node [anchor=north west][inner sep=0.75pt]    {$\id$};
\draw (141,50.4) node [anchor=north west][inner sep=0.75pt]    {$f_{i+1}$};
\draw (322,94.4) node [anchor=north west][inner sep=0.75pt]    {$( \lambda ,\kappa )$-limit};
\draw (164,162.4) node [anchor=north west][inner sep=0.75pt]    {$\id$};

\end{tikzpicture}

\end{center}

    Note, $f_{i+1}$ extends $f_i$, and (5) holds: since $g_i(\bar{f}_i(a_i)) = b$, we have $\gtp(f_{i+1}(a_i)/N_i, N_{i+1}) = \gtp(g_i(\bar{f}_i(a_i))/N_i, N_{i+1}) = \gtp(b/N_i, \hat{N}_i) = p$, which $\dnf$-does not fork over $f_i[M_i^{j_i}]$. Hence $\gtp(f_{i+1}(a_i)/N_i, N_{i+1})$ $\dnf$-does not fork over $f_i[M_i^{j}]$ for all $j \in [j_i, \alpha)$ by base monotonicity. So $f_{i+1}$ fulfills the inductive hypothesis.

    At \underline{limit steps} the proof is simpler as we do not have to worry about non-forking of the $a_i$'s. Given $i \in I$ limit and $f_{i'}$ for all $i' < i \in I$ satisfying the inductive hypothesis, let $\hat{f}_i = \bigcup_{i' < i} f_{i'} : \bigcup_{i' < i}M_{i'}^\alpha \rightarrow \bigcup_{i' < i}N_{i'}$. Extend this to an embedding $f_i:M_i^\alpha \rightarrow N^0_i$ where $\bigcup_{i' < i} N_{i'} \lea N^0_i$, using AP. Now find $N_i$ a $(\lambda, \kappa)$-limit over $N^0_i$. Then $f_i:M_i^\alpha \rightarrow N_i$ satisfies the induction hypothesis. This completes the recursion, and the proof. \end{proof}

The following is the analogue in our context of \cite[16.17(2)]{vaseyn}. It is the only place we will use non-forking amalgamation of $\dnf$. The proof is very similar to the last result - in fact, in \cite{vaseyn} they are collected into a single proof. However we keep the results separate to avoid overcomplicating the statement - unlike Proposition \ref{towerextensionprop*}, here we only need to extend a single tower. 

We assume the index is an ordinal, but note it is also true (with some relabeling) for a general well ordered set $I$, as are all the following results where the index is an ordinal - we only use ordinals for notational convenience.

\begin{prop}\label{towerextensionswithb*}
    Suppose $\calt = \langle M_i : i < \beta \rangle ^\wedge \langle a_i : i \in \beta^- \rangle$ is a tower. Suppose in addition $p \in \gS(M_0)$. Then there exists a strongly universal tower $\calt' = \langle M_i' : i< \beta \rangle ^\wedge \langle a_i : i \in \beta^- \rangle$ and $b \in M_0'$ such that $\calt \lesst \calt'$, $\gtp(b/M_0, M_0') = p$, and for all $i < \beta$, $\gtp(b/M_i, M_i')$ $\dnf$-does not fork over $M_0$.
\end{prop}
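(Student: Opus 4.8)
The plan is to follow the recursion of Proposition~\ref{towerextensionprop*}, but first to pass to a strongly universal tower (so that the chains of models along which we invoke $\Kkappalims$-universal continuity* are $\lea^u$-increasing), and then to carry along a single element $b$ realising $p$. \textbf{Reduction.} By Proposition~\ref{towerextensionprop*} applied to the one-term chain $\langle\calt\rangle$, fix a strongly universal tower $\calt'' = \langle M_i'' : i < \beta\rangle ^\wedge \langle a_i : i \in \beta^-\rangle$ with $\calt \lesst \calt''$ (so $\langle M_i'' : i <\beta\rangle$ is a $\lea^u$-increasing chain in $\Kkappalims$ and $M_0 \lea M_0''$). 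By Lemma~\ref{existence-on-high-limits} (existence) $p$ does not $\dnf$-fork over $M_0$, so by extension there is $p'' \in \gS(M_0'')$ with $p \subseteq p''$ not forking over $M_0$, and $p''$ does not fork over $M_0''$ by existence again. It suffices to produce a strongly universal tower $\calt' = \langle M_i' : i < \beta\rangle ^\wedge \langle a_i : i \in \beta^-\rangle$ with $\calt'' \lesst \calt'$, together with $b \in M_0'$, such that $\gtp(b/M_0'', M_0') = p''$ and $\gtp(b/M_i'', M_i')$ does not $\dnf$-fork over $M_0''$ for all $i < \beta$: for then $\calt \lesst \calt'$ by transitivity of $\lesst$; $\gtp(b/M_0, M_0') = p'' \rest M_0 = p$; and for each $i$, transitivity over $M_0''$ (using $\gtp(b/M_i'', M_i') \rest M_0'' = p''$, which does not fork over $M_0$) shows $\gtp(b/M_i'', M_i')$ does not fork over $M_0$, whence $\gtp(b/M_i, M_i')$ does not fork over $M_0$ by monotonicity.

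\textbf{The recursion.} I will build a $\lea$-increasing, strongly universal chain $\langle N_i : i < \beta\rangle$ in $\Kkappalims$, a $\subseteq$-increasing chain of $\K$-embeddings $f_i : M_i'' \to N_i$, and $b \in N_0$, with $f_0 = \id_{M_0''}$, $\gtp(b/M_0'', N_0) = p''$, each $N_i$ also universal over $f_i[M_i'']$, and, crucially, (iii) $\gtp(f_{i+1}(a_i)/N_i, N_{i+1})$ does not fork over $f_i[M_i'']$ for $i \in \beta^-$ and (iv) $\gtp(b/f_i[M_i''], N_i)$ does not fork over $M_0''$. Given this, let $g$ extend $f = \bigcup_i f_i$ to an isomorphism onto $\bigcup_i N_i$ and set $M_i' = g^{-1}[N_i]$, $b := g^{-1}(b)$; then (iii) together with Lemma~\ref{disjointness-of-dnf} gives $a_i \in |M_{i+1}'| \setminus |M_i'|$ exactly as in Proposition~\ref{towerextensionprop*}, while (iv), (ii) and universality transport to the required properties of $\calt'$, the remaining verifications being routine. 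For $i = 0$ pick $b \in N_0$ realising $p''$ (possible since $N_0$ is universal over $M_0''$); (iv) holds by existence. At a successor step, extend $f_i$ by AP to $\bar f_i : M_{i+1}'' \to \bar N_i$ with $N_i \lea \bar N_i$, and apply non-forking amalgamation to $f_i[M_i'']$, to $N_i$ with the element $b$, and to $\bar f_i[M_{i+1}'']$ with the element $\bar f_i(a_i)$; identifying the resulting copy of $N_i$, this yields $f_{i+1} \supseteq f_i$ into some $N^\dagger \in \Kkappalims$ with $\gtp(f_{i+1}(a_i)/N_i, N^\dagger)$ not forking over $f_i[M_i'']$ and $\gtp(b/f_{i+1}[M_{i+1}''], N^\dagger)$ not forking over $f_i[M_i'']$. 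Take $N_{i+1}$ a $(\lambda, \kappa)$-limit over $N^\dagger$; combining the second statement with (iv) at $i$ via transitivity yields (iv) at $i+1$. This is the only place non-forking amalgamation enters.

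\textbf{The limit step (the main obstacle).} Let $i < \beta$ be limit, $\hat f_i = \bigcup_{i'<i}f_{i'}$, $K_i = \bigcup_{i'<i}f_{i'}[M_{i'}'']$ and $N_i^- = \bigcup_{i'<i}N_{i'}$. Since $\calt''$ is strongly universal, $\langle f_{i'}[M_{i'}''] : i' < i\rangle$ is $\lea^u$-increasing in $\Kkappalims$, but its union $K_i$ is a $(\lambda, \cof(i))$-limit, so it lies outside $\Kkappalims$ when $\cof(i) < \kappa$ and $\dnf$ is not even defined on it --- this is exactly the situation $\Kkappalims$-universal continuity* is designed for, and is where the proof genuinely departs from \cite{vaseyn}. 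Extend $\hat f_i$ by AP to $f_i^{(0)} : M_i'' \to N_i^{(0)}$ with $N_i^- \lea N_i^{(0)}$, so $K_i \lea f_i^{(0)}[M_i''] \in \Kkappalims$, and fix by extension a $\dnf$-non-forking extension $r \in \gS(f_i^{(0)}[M_i''])$ of $p''$. Applying $\Kkappalims$-universal continuity* to the increasing types $\gtp(b/f_{i'}[M_{i'}''], N_i^-)$, each non-forking over $M_0''$ by (iv), gives a unique type over $K_i$ extending all of them; by uniqueness of $\dnf$ both $\gtp(b/K_i, N_i^-)$ and $r \rest K_i$ are such a type, hence $\gtp(b/K_i, N_i^-) = r \rest K_i$. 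Using this equality of Galois types, glue: there is $f_i \supseteq \hat f_i$, $f_i : M_i'' \to \hat N$ with $N_i^- \lea \hat N$, such that $\gtp(b/f_i[M_i''], \hat N)$ is the image of $r$ under the gluing embedding, so does not fork over $M_0''$ by invariance. Letting $N_i$ be a $(\lambda,\kappa)$-limit over $\hat N$ secures strong universality and universality over $f_i[M_i'']$, as well as (iv) at $i$, and the recursion goes through.
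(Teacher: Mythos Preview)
Your proof is correct and follows essentially the same recursive scheme as the paper: build $\langle N_i, f_i\rangle$ with non-forking amalgamation at successors and $\Kkappalims$-universal continuity* at limits, then pull back along an extension of $\bigcup f_i$.

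The one genuine difference is your preliminary reduction to a strongly universal tower $\calt''$ via Proposition~\ref{towerextensionprop*}. The paper works directly with $\calt$, but at the limit stage it applies $\Kkappalims$-universal continuity* to the sequence $\langle g_i[M_r] : r < i\rangle$, and the definition of continuity* requires this sequence to be $\lea^u$-increasing --- which is not guaranteed for an arbitrary tower (towers are only $\lea$-increasing). Your reduction ensures $\langle M_r'' : r < i\rangle$ is $\lea^u$-increasing, so the images $\langle f_r[M_r'']\rangle$ are too, and the hypothesis of continuity* is legitimately met. In the paper's applications (Lemma~\ref{reducedthencontinuousweirdextensionlemma*} is only invoked on restrictions of strongly universal towers) this never bites, but your version is cleaner and matches the stated generality of the proposition. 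The cost is minimal: one extra appeal to Proposition~\ref{towerextensionprop*} and a short transitivity argument to descend the non-forking from $M_0''$ back to $M_0$.
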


\begin{proof}
    Let $N \in \K_\lambda$ and $c \in N$ be such that $p = \gtp(c/M_0, N)$. As before, we recursively define a $\lea$-increasing sequence of models $\langle N_i : i < \beta \rangle$ and a $\subseteq$-increasing sequence of $\K$-embeddings $\langle f_i: i < \beta \rangle$ such that 
    \begin{enumerate}
        \item for all $i < \beta$, $f_i : M_i \rightarrow N_i$
        \item $f_0 = \id_{M_0}$ (in particular, $M_0 \lea N_0$), $N \lea N_0$, and $c \in N_0$
        \item for all $i < \beta$, $N_i$ is universal over $f_i[M_i]$
        \item for all $i \in [1, \beta)$, $N_i$ is universal over $\bigcup_{r<i}N_r$
        \item for all $i \in \beta^-$, $\gtp(f_{i+1}(a_i)/N_i, N_{i+1})$ $\dnf$-does not fork over $f_i[M_i]$
        \item for all $i < \beta$, $\gtp(c/{f_i[M_i]}, N_i)$ $\dnf$-does not fork over $M_0$.
    \end{enumerate}

    \textbf{This is enough:} As before, extending $\bigcup_{i<\beta}f_i$ to an isomorphism $g:M' \cong \bigcup_{i<\beta} N_i$, setting $M_i' = g^{-1}[N_i]$ for $i < \beta$, and taking $\calt' = \langle M_i' : i < \beta \rangle ^\wedge \langle a_i : i \in \beta^- \rangle$, we get $\calt'$ is a tower and $\calt \lesst \calt'$ (just as in the last proof, with $\alpha = 1$). Further if we set $b = g^{-1}(c)$, we have $b \in M_0'$, $p = \gtp(b/M_0, M_0')$, and by condition (6), $\gtp(b/M_i, M_i')$ $\dnf$-does not fork over $M_0$ for all $i < \beta$ as desired.

    \textbf{This is possible:} For \underline{$i=0$}, we can take $N_0$ a $(\lambda, \kappa)$-limit over $N$ (hence universal over $M_0$ also), and $f_0 = \id_{M_0}$ as specified.

    For the \underline{successor step}, suppose we have defined $N_i$ and $f_i$ satisfying conditions (1)-(6), and must define $f_{i+1}$ and $N_{i+1}$. By non-forking amalgamation of $\dnf$, there exists $N_{i+1}' \in \Kkappalims$ and $f_{i+1}:M_{i+1} \rightarrow N_{i+1}'$ such that $N_i \lea N_{i+1}'$, $\gtp(f_{i+1}(a_i)/ N_i, N_{i+1}')$ $\dnf$-does not fork over $f_i[M_i]$, and $\gtp(c/ f_{i+1}[M_{i+1}], N_{i+1})$ $\dnf$.
    
    Take $N_{i+1}$ a $(\lambda, \kappa)$-limit over $N_{i+1}'$, hence also over both $f_{i+1}[M_{i+1}]$ and $N_i$.

\begin{center}

\tikzset{every picture/.style={line width=0.75pt}} 

\begin{tikzpicture}[x=0.75pt,y=0.75pt,yscale=-1,xscale=1]

\draw    (80,201) -- (80,143) ;
\draw [shift={(80,141)}, rotate = 90] [color={rgb, 255:red, 0; green, 0; blue, 0 }  ][line width=0.75]    (10.93,-3.29) .. controls (6.95,-1.4) and (3.31,-0.3) .. (0,0) .. controls (3.31,0.3) and (6.95,1.4) .. (10.93,3.29)   ;
\draw    (100,211) -- (168,211) ;
\draw [shift={(170,211)}, rotate = 180] [color={rgb, 255:red, 0; green, 0; blue, 0 }  ][line width=0.75]    (10.93,-3.29) .. controls (6.95,-1.4) and (3.31,-0.3) .. (0,0) .. controls (3.31,0.3) and (6.95,1.4) .. (10.93,3.29)   ;
\draw    (180,201) -- (180,143) ;
\draw [shift={(180,141)}, rotate = 90] [color={rgb, 255:red, 0; green, 0; blue, 0 }  ][line width=0.75]    (10.93,-3.29) .. controls (6.95,-1.4) and (3.31,-0.3) .. (0,0) .. controls (3.31,0.3) and (6.95,1.4) .. (10.93,3.29)   ;
\draw    (100,131) -- (168,131) ;
\draw [shift={(170,131)}, rotate = 180] [color={rgb, 255:red, 0; green, 0; blue, 0 }  ][line width=0.75]    (10.93,-3.29) .. controls (6.95,-1.4) and (3.31,-0.3) .. (0,0) .. controls (3.31,0.3) and (6.95,1.4) .. (10.93,3.29)   ;
\draw    (190,120) -- (238.57,72.4) ;
\draw [shift={(240,71)}, rotate = 135.58] [color={rgb, 255:red, 0; green, 0; blue, 0 }  ][line width=0.75]    (10.93,-3.29) .. controls (6.95,-1.4) and (3.31,-0.3) .. (0,0) .. controls (3.31,0.3) and (6.95,1.4) .. (10.93,3.29)   ;
\draw    (80,120) .. controls (79.01,88.81) and (180.93,59.59) .. (238.28,59.98) ;
\draw [shift={(240,60)}, rotate = 181.01] [color={rgb, 255:red, 0; green, 0; blue, 0 }  ][line width=0.75]    (10.93,-3.29) .. controls (6.95,-1.4) and (3.31,-0.3) .. (0,0) .. controls (3.31,0.3) and (6.95,1.4) .. (10.93,3.29)   ;

\draw (71,202.4) node [anchor=north west][inner sep=0.75pt]    {$M_{i}$};
\draw (62,120.4) node [anchor=north west][inner sep=0.75pt]    {$M_{i+1}$};
\draw (171,202.4) node [anchor=north west][inner sep=0.75pt]    {$N_{i}$};
\draw (171,120.4) node [anchor=north west][inner sep=0.75pt]    {$N'_{i+1}$};
\draw (241,50.4) node [anchor=north west][inner sep=0.75pt]    {$N_{i+1}$};
\draw (64,162.4) node [anchor=north west][inner sep=0.75pt]    {$\id$};
\draw (124,212.4) node [anchor=north west][inner sep=0.75pt]    {$f_{i}$};
\draw (181,162.4) node [anchor=north west][inner sep=0.75pt]    {$\id$};
\draw (194,82.4) node [anchor=north west][inner sep=0.75pt]    {$\id$};
\draw (119,110.4) node [anchor=north west][inner sep=0.75pt]    {$f_{i+1}$};
\draw (119,55.4) node [anchor=north west][inner sep=0.75pt]    {$f_{i+1}$};
\draw (222,94.4) node [anchor=north west][inner sep=0.75pt]    {$( \lambda ,\kappa )$-limit};

\end{tikzpicture}

\end{center}

    By monotonicity, $\gtp(f_{i+1}(a_i)/ N_i, N_{i+1})$ $\dnf$-does not fork over $f_i[M_i]$, and $\gtp(c/ f_{i+1}[M_{i+1}], N_{i+1})$ $\dnf$-does not fork over $f_i[M_i]$. Since $\gtp(c/ f_i[M_i], N_{i+1})$ $\dnf$-does not fork over $M_0$ by the induction hypothesis and monotonicity, $\gtp(c/f_{i+1}[M_{i+1}], N_{i+1})$ $\dnf$-does not fork over $M_0$ by transitivity. At this point we have shown $N_{i+1}$ and $f_{i+1}:M_{i+1} \rightarrow N_{i+1}$ satisfy the relevant conditions (1)-(6).

    For \underline{limit ordinals} $i$, we have $N_r$ and $f_r$ for all $r < i$ and must construct $N_i, f_i$. Let $M_i^0 = \bigcup_{r<i} M_r$, $N_i^0 = \bigcup_{r<i} N_r$ and $f_i^0 = \bigcup_{r<i} f_r : M_i^0 \rightarrow N_i^0$. Let $g_i:M_i \rightarrow N_i'$ be any extension of $f_i^0$ with domain $M_i$ and where $N_i^0 \lea N_i'$ (this exists by AP). Take any $q \in \gS(g_i[M_i])$ extending $p$ which $\dnf$-does not fork over $M_0$. Since $\gtp(c/g_i[M_r], N_i^0)$ $\dnf$-does not fork over $M_0$ for all $r < i$, by uniqueness $q \upharpoonright g_i[M_r] = \gtp(c/g_i[M_r], N_i^0)$ for each $r<i$. By $\Kkappalims$-universal continuity*, $q \upharpoonright g_i[M_i^0] = \gtp(c/g_i[M_i^0], N_i^0)$. Let $\hat{N}_i \in \K_\lambda$  and $d \in \hat{N}_i$ be such that $N_i' \lea \hat{N}_i$ and $q = \gtp(d/g_i[M_i], \hat{N}_i)$. Then by type equality there is some $\bar{N_i} \in \K_\lambda$ where $N_i' \lea \bar{N}_i$ and some $h_i : \hat{N}_i \rightarrow \bar{N}_i$ such that $h_i$ is the identity on $g_i[M_i^0]$ and $h_i(d) = c$. Let $N_i$ be a $(\lambda, \kappa)$-limit over $\bar{N}_i$.

\begin{center}

\tikzset{every picture/.style={line width=0.75pt}} 

\begin{tikzpicture}[x=0.75pt,y=0.75pt,yscale=-1,xscale=1]

\draw    (140,265) -- (218,265) ;
\draw [shift={(220,265)}, rotate = 180] [color={rgb, 255:red, 0; green, 0; blue, 0 }  ][line width=0.75]    (10.93,-3.29) .. controls (6.95,-1.4) and (3.31,-0.3) .. (0,0) .. controls (3.31,0.3) and (6.95,1.4) .. (10.93,3.29)   ;
\draw    (250,250) -- (250,212) ;
\draw [shift={(250,210)}, rotate = 90] [color={rgb, 255:red, 0; green, 0; blue, 0 }  ][line width=0.75]    (10.93,-3.29) .. controls (6.95,-1.4) and (3.31,-0.3) .. (0,0) .. controls (3.31,0.3) and (6.95,1.4) .. (10.93,3.29)   ;
\draw    (120,250) -- (120,212) ;
\draw [shift={(120,210)}, rotate = 90] [color={rgb, 255:red, 0; green, 0; blue, 0 }  ][line width=0.75]    (10.93,-3.29) .. controls (6.95,-1.4) and (3.31,-0.3) .. (0,0) .. controls (3.31,0.3) and (6.95,1.4) .. (10.93,3.29)   ;
\draw    (140,195) -- (228,195) ;
\draw [shift={(230,195)}, rotate = 180] [color={rgb, 255:red, 0; green, 0; blue, 0 }  ][line width=0.75]    (10.93,-3.29) .. controls (6.95,-1.4) and (3.31,-0.3) .. (0,0) .. controls (3.31,0.3) and (6.95,1.4) .. (10.93,3.29)   ;
\draw    (250,180) -- (250,142) ;
\draw [shift={(250,140)}, rotate = 90] [color={rgb, 255:red, 0; green, 0; blue, 0 }  ][line width=0.75]    (10.93,-3.29) .. controls (6.95,-1.4) and (3.31,-0.3) .. (0,0) .. controls (3.31,0.3) and (6.95,1.4) .. (10.93,3.29)   ;
\draw    (270,130) -- (358,130) ;
\draw [shift={(360,130)}, rotate = 180] [color={rgb, 255:red, 0; green, 0; blue, 0 }  ][line width=0.75]    (10.93,-3.29) .. controls (6.95,-1.4) and (3.31,-0.3) .. (0,0) .. controls (3.31,0.3) and (6.95,1.4) .. (10.93,3.29)   ;
\draw    (390,120) -- (428.59,81.41) ;
\draw [shift={(430,80)}, rotate = 135] [color={rgb, 255:red, 0; green, 0; blue, 0 }  ][line width=0.75]    (10.93,-3.29) .. controls (6.95,-1.4) and (3.31,-0.3) .. (0,0) .. controls (3.31,0.3) and (6.95,1.4) .. (10.93,3.29)   ;
\draw    (280,265) -- (358,265) ;
\draw [shift={(360,265)}, rotate = 180] [color={rgb, 255:red, 0; green, 0; blue, 0 }  ][line width=0.75]    (10.93,-3.29) .. controls (6.95,-1.4) and (3.31,-0.3) .. (0,0) .. controls (3.31,0.3) and (6.95,1.4) .. (10.93,3.29)   ;
\draw    (380,255) -- (380,142) ;
\draw [shift={(380,140)}, rotate = 90] [color={rgb, 255:red, 0; green, 0; blue, 0 }  ][line width=0.75]    (10.93,-3.29) .. controls (6.95,-1.4) and (3.31,-0.3) .. (0,0) .. controls (3.31,0.3) and (6.95,1.4) .. (10.93,3.29)   ;
\draw    (120,190) .. controls (124.31,82.87) and (343.46,69.14) .. (428.72,69.99) ;
\draw [shift={(430,70)}, rotate = 180.68] [color={rgb, 255:red, 0; green, 0; blue, 0 }  ][line width=0.75]    (10.93,-3.29) .. controls (6.95,-1.4) and (3.31,-0.3) .. (0,0) .. controls (3.31,0.3) and (6.95,1.4) .. (10.93,3.29)   ;

\draw (111,256.4) node [anchor=north west][inner sep=0.75pt]    {$M_{i}^{0}$};
\draw (221,256.4) node [anchor=north west][inner sep=0.75pt]    {$f_{i}^{0}\left[ M_{i}^{0}\right]$};
\draw (241,186.4) node [anchor=north west][inner sep=0.75pt]    {$N'_{i}$};
\draw (111,190.4) node [anchor=north west][inner sep=0.75pt]    {$M_{i}$};
\draw (241,117.4) node [anchor=north west][inner sep=0.75pt]    {$\hat{N}_{i}$};
\draw (366,257.4) node [anchor=north west][inner sep=0.75pt]    {$N_{i}^{0}$};
\draw (369,119.4) node [anchor=north west][inner sep=0.75pt]    {$\bar{N}_{i}$};
\draw (431,62.4) node [anchor=north west][inner sep=0.75pt]    {$N_{i}$};
\draw (101,222.4) node [anchor=north west][inner sep=0.75pt]    {$\id$};
\draw (231,222.4) node [anchor=north west][inner sep=0.75pt]    {$\id$};
\draw (231,152.4) node [anchor=north west][inner sep=0.75pt]    {$\id$};
\draw (361,195.4) node [anchor=north west][inner sep=0.75pt]    {$\id$};
\draw (391,82.4) node [anchor=north west][inner sep=0.75pt]    {$\id$};
\draw (417,102.4) node [anchor=north west][inner sep=0.75pt]    {$( \lambda ,\kappa )$-limit};
\draw (170,272.4) node [anchor=north west][inner sep=0.75pt]    {$f_{i}^{0}$};
\draw (170,180.4) node [anchor=north west][inner sep=0.75pt]    {$g_{i}$};
\draw (314,272.4) node [anchor=north west][inner sep=0.75pt]    {$\id$};
\draw (304,112.4) node [anchor=north west][inner sep=0.75pt]    {$h_{i}$};
\draw (169,82.4) node [anchor=north west][inner sep=0.75pt]    {$f_{i}$};

\end{tikzpicture}

\end{center}

    Then setting $f_i = h_i \circ g_i : M_i \rightarrow N_i$, we have that 
    \begin{align*}
        \gtp(c/f_i[M_i], N_i) = \gtp(h_i(d)/h_i[g_i[M_i]], N_i) &= \gtp(h_i(d)/h_i[g_i[M_i]], h_i[\hat{N_i}]) \\
        &= h_i(\gtp(d/ g_i[M_i], \hat{N_i})) = h_i(q)
    \end{align*}
    which $\dnf$-does not fork over $h_i[M_0] = M_0$, satisfying (6). Further, $f_i$ extends $f_{i'}$ for all $i' < i$ as $g_i$ extends $f_i^0$ and $h_i$ fixes $f_i^0[M_i^0]$.  As $N_i$ is $(\lambda, \kappa)$-limit over $\bar{N}_i$, which contains both $N_i^0$ and $f_i[M_i]$, (3) and (4) are satisfied. So all the relevant conditions hold up to $f_i$. This completes the recursion, and the proof.
\end{proof}

\begin{remark}
    In \cite{vasey18}, it is also assumed $\calt$ is universal and continuous for Proposition \ref{towerextensionswithb*}. It appears this assumption is unnecessary. In fact for our proof of Theorem \ref{largelimitsareisothm*} we will need to apply it to possibly non-continuous towers, as the construction will only guarantee continuity of towers at ordinals of high cofinality.
\end{remark}

We need one final version of tower extension - the ability to complete a partial extension on an initial segment of the tower.

\begin{prop}\label{finish_tower_extension}
    Suppose $\calt = \langle M_i : i < \beta \rangle ^\wedge \langle a_i : i \in \beta^- \rangle$ is a tower. Suppose there is $\gamma < \beta$ and $\calt^* = \langle M_i^* : i < \gamma \rangle ^\wedge \langle a_i : i \in \gamma^- \rangle$ indexed by $\gamma$ such that $\calt \upharpoonright \gamma \lesst \calt^*$. Then there exists a tower $\calt' = \langle M_i' : i< \beta \rangle ^\wedge \langle a_i : i \in \beta^- \rangle$ and some $N$, $g : \bigcup_{i < \beta} M_i \underset{\bigcup_{i < \gamma} M_i}{\longrightarrow} N$ such that $\calt \lesst \calt'$ and $g[\calt'] \upharpoonright \gamma = \calt^*$ (where $g[\calt'] = \langle g[M_i'] : i< \beta \rangle ^\wedge \langle a_i : i \in \beta^- \rangle$).
\end{prop}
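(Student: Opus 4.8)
The plan is to adapt the recursion behind Proposition~\ref{towerextensionprop*}, the one new ingredient being a careful treatment of the initial segment so that it reproduces $\calt^*$. Write $P=\bigcup_{i<\gamma}M_i^*$ and $Q=\bigcup_{i<\gamma}M_i$; since $\calt\upharpoonright\gamma\lesst\calt^*$ gives $M_i\lea M_i^*$ we have $Q\lea P$, and also $Q\lea M_\gamma$. First I would build, by recursion on $i$ with $\gamma\leq i<\beta$, a $\lea$-increasing chain $\langle N_i:\gamma\leq i<\beta\rangle$ in $\Kkappalims$ together with a $\subseteq$-increasing chain of $\K$-embeddings $f_i:M_i\to N_i$ such that: $P\lea N_\gamma$ and $f_\gamma$ fixes $Q$ pointwise; each $N_i$ is universal over $f_i[M_i]$; for every $i\in\beta^-$ with $i\geq\gamma$ the type $\gtp(f_{i+1}(a_i)/N_i,N_{i+1})$ $\dnf$-does not fork over $f_i[M_i]$; and, when $\gamma=\gamma_0+1$ is a successor (so $P=M_{\gamma_0}^*$ and $Q=M_{\gamma_0}$), also $\gtp(f_\gamma(a_{\gamma_0})/P,N_\gamma)$ $\dnf$-does not fork over $M_{\gamma_0}$. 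Having done this, set $N_i^\dagger:=M_i^*$ for $i<\gamma$ and $N_i^\dagger:=N_i$ for $\gamma\leq i<\beta$; since $M_i^*\lea P\lea N_\gamma$, the $N_i^\dagger$ form a $\lea$-increasing chain with union $N:=\bigcup_{\gamma\leq i<\beta}N_i$. The inclusions $M_i\hookrightarrow M_i^*$ ($i<\gamma$) and the embeddings $f_i$ ($i\geq\gamma$) glue to a single $\K$-embedding $f:\bigcup_{i<\beta}M_i\to N$ (they agree on $Q$, as $f_\gamma$ fixes $Q$ and $f_\gamma[M_i]=M_i$ for $i<\gamma$). Extending $f$ to an isomorphism $g$ from a model $M'\supseteq\bigcup_{i<\beta}M_i$ onto $N$, I would put $M_i':=g^{-1}[N_i^\dagger]$ and $\calt':=\langle M_i':i<\beta\rangle^\wedge\langle a_i:i\in\beta^-\rangle$; then $M'=\bigcup_{i<\beta}M_i'$, the map $g$ fixes $Q$ pointwise, $g[M_i']=M_i^*$ for $i<\gamma$, and one checks routinely that $M_i\lea M_i'$ and that the universality and non-forking conditions imposed on the $N_i$ transfer through $g$ to yield $\calt\lesst\calt'$ and $g[\calt']\upharpoonright\gamma=\calt^*$, the non-algebraicity of $a_i$ over $M_i'$ needed to know $\calt'$ is a tower following from Disjointness (Lemma~\ref{disjointness-of-dnf}) and the fact that $\calt^*$ is a tower.

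The base step $i=\gamma$ of the recursion is the heart of the matter. Using $\lambda$-amalgamation, amalgamate $P$ and $M_\gamma$ over $Q$ and relabel so that $M_\gamma$ is preserved literally; this produces a model containing both $P$ and $M_\gamma$ over $Q$ and an embedding $f_\gamma:M_\gamma\to N_\gamma^{0}$ with $f_\gamma\upharpoonright Q=\id_Q$. If $\gamma=\gamma_0+1$ one must additionally secure the non-forking condition on $a_{\gamma_0}$: by existence (Lemma~\ref{existence-on-high-limits}), $\gtp(a_{\gamma_0}/M_{\gamma_0},M_\gamma)$ $\dnf$-does not fork over $M_{\gamma_0}$, so by extension it has an extension $q\in\gS(P)$ (recall $M_{\gamma_0}=Q\lea P$ and both lie in $\Kkappalims$) that $\dnf$-does not fork over $M_{\gamma_0}$; realising $q$ and amalgamating $M_\gamma$ against a realisation of $q$ over $M_{\gamma_0}$ — legitimate since $q$ extends $\gtp(a_{\gamma_0}/M_{\gamma_0},M_\gamma)$ — yields $N_\gamma^{0}$ and $f_\gamma$ as above with moreover $\gtp(f_\gamma(a_{\gamma_0})/P,N_\gamma^{0})$ $\dnf$-does not fork over $M_{\gamma_0}$; this type is non-algebraic, hence $f_\gamma(a_{\gamma_0})\notin P$, by Disjointness. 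In either case let $N_\gamma$ be a $(\lambda,\kappa)$-limit over $N_\gamma^{0}$, which keeps $P\lea N_\gamma$ and makes $N_\gamma$ universal over $f_\gamma[M_\gamma]$.

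The remaining steps for $i>\gamma$ are exactly those of Proposition~\ref{towerextensionprop*}, now carried out with the single chain $\langle f_i\rangle$ and with no appeal to $\Kkappalims$-universal continuity* (there is no type to track across limit stages here): at a successor $i\to i+1$, push $\gtp(a_i/M_i,M_{i+1})$ forward by $f_i$ (it does not fork over $f_i[M_i]$ by existence), extend it to $p\in\gS(N_i)$ not forking over $f_i[M_i]$ by extension, realise $p$, amalgamate so that its realisation becomes $f_{i+1}(a_i)$, and take $N_{i+1}$ a $(\lambda,\kappa)$-limit over the result, using Disjointness to see $f_{i+1}(a_i)\notin N_i$; at a limit $i$, take the union of the $f_r$ ($r<i$), extend it over $\bigcup_{\gamma\leq r<i}N_r$ to $M_i$ by $\lambda$-amalgamation, and take $N_i$ a $(\lambda,\kappa)$-limit over the result. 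I expect the main obstacle to be precisely the base level $\gamma$: when $\gamma$ is a successor one cannot merely amalgamate $P$ with $M_\gamma$ but must first ``make room'' for $a_{\gamma_0}$ as a non-algebraic, $\dnf$-non-forking element over the copy of the top model of $\calt^*$ — this is where Disjointness and the existence/extension properties of $\dnf$ do the real work — while keeping enough bookkeeping that the final re-coordinatization simultaneously witnesses $\calt\lesst\calt'$ and $g[\calt']\upharpoonright\gamma=\calt^*$.
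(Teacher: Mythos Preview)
Your proposal is correct and follows essentially the same approach as the paper. The only difference is organizational: the paper sets $N_i = M_i^*$ and $f_i = \id_{M_i}$ for all $i < \gamma$ (noting that conditions (3) and (4) of the recursion then hold automatically from $\calt\upharpoonright\gamma \lesst \calt^*$), and then simply says ``proceed as in Proposition~\ref{towerextensionprop*} (with $\alpha = 1$) for the remaining successor and limit $i < \beta$'', thereby absorbing your careful case analysis at the base level $\gamma$ into the general successor/limit step of that earlier recursion. Your explicit treatment of the transition at $\gamma$ (distinguishing the successor case, where $a_{\gamma_0}$ must be handled via existence, extension, and Disjointness, from the limit case, where a bare amalgamation suffices) is exactly what unfolding that recursion step at $i=\gamma$ amounts to, so the two arguments are the same in substance.
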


\begin{proof}
    Without loss of generality, $\gamma \geq 1$ (the $\gamma = 0$ case is a just Proposition \ref{towerextensionprop*} with $\alpha = 1$). We again follow the blueprint of Proposition \ref{towerextensionprop*} and Proposition \ref{towerextensionswithb*}.
    
    Similar to before, by recursion construct a $\lea$-increasing sequence of models $\langle N_i : i < \beta \rangle$ and a $\subseteq$-increasing sequence of $\K$-embeddings $\langle f_i: i < \beta \rangle$ such that 
    \begin{enumerate}
        \item for all $i < \beta$, $f_i : M_i \rightarrow N_i$
        \item for $i < \gamma$, $N_i = M_i^*$ and $f_i = \id_{M_i}$
        \item for all $i < \beta$, $N_i$ is universal over $f_i[M_i]$
        \item for all $i \in \beta^-$, $\gtp(f_{i+1}(a_i)/N_i, N_{i+1})$ $\dnf$-does not fork over $M_i$.
    \end{enumerate}

    Rather than just fixing $f_0$ to be the identity on $M_0$, we now ensure that all the $f_i$ for $i < \gamma$ are the identity, so $\bigcup_{i<\beta} f_i$ will map $\calt \upharpoonright \gamma$ into $\calt^*$. This determines the first $\gamma$ steps of the construction. From there, proceed as in Proposition \ref{towerextensionprop*} (with $\alpha = 1$) for the remaining successor and limit $i < \beta$. After the construction is complete, as in Proposition \ref{towerextensionswithb*} take $f = \bigcup_{i < \beta} f_i : \bigcup_{i<\beta} M_i \rightarrow \bigcup_{i < \beta} N_i$, extend to $g : M' \cong \bigcup_{i < \beta} N_i$, and take $M_i' = f^{-1}[N_i]$ for $i < \beta$. As before take  $\calt' = \langle M_i' : i < \beta \rangle ^\wedge \langle a_i : i < \beta^- \rangle$. We have again that $\calt \lesst \calt'$, and additionally $g(a_i) = a_i$ for $i \in \gamma^-$ and $g[M_i'] = M_i^*$ for $i < \gamma$, meaning $g[\calt'] \upharpoonright \gamma = \calt^*$ as claimed.
\end{proof}

\subsection{Reduced towers and full towers}
We assume Hypothesis \ref{long-limit-dnf-hypotheses} throughout this subsection. In the previous subsection, we have shown we can build chains of towers of any length. We now focus on showing how we may guarantee that the final tower in the chain will be `continuous at $\delta_2$', and will be `universal'. Note these properties will not necessarily be preserved by tower unions. So we will introduce stronger conditions that will be preserved by tower unions. First we address continuity with reduced towers.

\begin{defin}\label{reduced-def}
    Let $\calt = \langle M_i : i \in I \rangle ^\wedge \langle a_i : i \in I^- \rangle$ be a tower. We say $\calt$ is \emph{reduced} if for every tower $\calt' = \langle M_i' : i \in I' \rangle ^\wedge \langle a_i : i \in (I')^- \rangle$ such that $\calt \lesst \calt'$, for all $r <_I s \in I$, we have $M_r' \cap M_s = M_r$.
\end{defin}

\begin{remark}
    The definition is equivalent if we require that $\calt'$ is also indexed by $I$, since the condition is true for $\calt'$ if and only if it holds for $\calt' \upharpoonright I$. 
\end{remark}

Reduced towers behave well with high cofinality unions:

\begin{lemma}\label{reducedunionsarereduced*}
    Suppose $\cof(\delta) \geq \kappa$. If $\langle\calt^j : j < \delta \rangle$ is a $\lesst$-increasing chain of reduced towers with $\calt^j$ indexed by $I^j$, and $\bigcup_{j<\delta}I^j$ is a well ordering. Then $\bigcup_{j<\delta} \calt^j$ is reduced.
\end{lemma}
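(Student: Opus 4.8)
**Proof plan for Lemma \ref{reducedunionsarereduced*}.**

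The plan is to take the union tower $\calt^\delta = \bigcup_{j<\delta}\calt^j$ — which is a tower and satisfies $\calt^k \lesst \calt^\delta$ for all $k<\delta$ by Lemma \ref{high-tower-unions-exist} — and show it is reduced. So suppose $\calt'$ is any tower with $\calt^\delta \lesst \calt'$; by the remark after Definition \ref{reduced-def} we may assume $\calt'$ is indexed by $I^\delta = \bigcup_{j<\delta}I^j$, say $\calt' = \langle M_i' : i \in I^\delta \rangle ^\wedge \langle a_i : i \in (I^\delta)^-\rangle$. We must show $M_r' \cap M_s^\delta = M_r^\delta$ for all $r <_{I^\delta} s$ in $I^\delta$. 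The inclusion $\supseteq$ is immediate from $M_r^\delta \lea M_r'$ and $M_r^\delta \lea M_s^\delta$. For $\subseteq$, the idea is to reduce to the corresponding statement for one of the reduced towers $\calt^j$: since $\calt^j \lesst \calt^\delta \lesst \calt'$, transitivity of $\lesst$ gives $\calt^j \lesst \calt'$, so reducedness of $\calt^j$ yields $M_r' \cap M_s^j = M_r^j$ whenever $r <_{I^j} s$ with $r, s \in I^j$.

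First I would fix $r <_{I^\delta} s$ in $I^\delta$ and an element $x \in M_r' \cap M_s^\delta$. Since $\cof(\delta)\geq\kappa$ and $M_s^\delta = \bigcup\{M_s^j : j<\delta,\, s\in I^j\}$ (where the models $M_s^j$ form a $\lea$-increasing chain in $\cof(\delta)$-many steps, each of cardinality $\lambda<\cof(\delta)^+$... more precisely each $M_s^j$ has size $\lambda$ but the point is the chain is $\lea$-increasing), there is some $j_0<\delta$ with $s \in I^{j_0}$ and $x \in M_s^{j_0}$. Enlarging $j_0$ if necessary (and using that $I^{j}$ is increasing in $j$, with $r,s$ landing in some common $I^{j_0}$), we may assume $r, s \in I^{j_0}$, so that $r <_{I^{j_0}} s$. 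Then $x \in M_r' \cap M_s^{j_0}$. Now apply reducedness of $\calt^{j_0}$ via $\calt^{j_0} \lesst \calt'$: this gives $x \in M_r' \cap M_s^{j_0} = M_r^{j_0} \subseteq M_r^\delta$. Hence $M_r' \cap M_s^\delta \subseteq M_r^\delta$, completing the argument.

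The one genuinely delicate point — the "main obstacle", though it is minor — is bookkeeping around the index sets: we need that a fixed pair $r <_{I^\delta} s$ both already lies in some $I^{j_0}$ and that $x$ can be found in $M_s^{j_0}$ for the *same* $j_0$, with $r <_{I^{j_0}} s$ (i.e. the restriction of $\leq_{I^\delta}$ to $I^{j_0}$ agrees with $\leq_{I^{j_0}}$, which holds since the $I^j$ form an increasing chain of well-orderings inside $I^\delta$). Since there are only two indices $r,s$ to place and $\cof(\delta) \geq \kappa \geq \aleph_0$, taking $j_0$ above the (at most two) stages where $r$, $s$ first appear and above the stage where $x$ appears is harmless. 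One should also note at the start that $\calt^\delta$ is indeed a tower so that "$\calt^\delta \lesst \calt'$" makes sense — this is exactly Lemma \ref{high-tower-unions-exist}.
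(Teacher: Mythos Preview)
Your proposal is correct and follows essentially the same approach as the paper. The paper computes $M^\delta_s \cap M'_r$ as a union of $M^j_s \cap M'_r$ over suitable $j$ and applies reducedness of each $\calt^j$ (via $\calt^j \lesst \calt'$) to each term at once, whereas you do the same thing element-wise by picking $x$ and a single sufficiently large $j_0$; your version is in fact slightly more careful about the bookkeeping that $r$ and $s$ must both lie in $I^{j_0}$.
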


\begin{proof}
    The method is essentially the same as \cite[5.13]{vasey18}. See \cite[3.30]{bemavone} for the details. \end{proof}

\begin{lemma}\label{reducedextensionsexist*}
    If $\calt = \langle M_i : i \in I \rangle ^\wedge \langle a_i : i \in I^- \rangle$ is a tower, there exists a reduced tower $\calt' = \langle M_i : i \in I \rangle ^\wedge \langle a_i : i \in I^- \rangle$ such that $\calt \lesst \calt'$.
\end{lemma}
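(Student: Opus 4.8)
\textbf{Proof strategy for Lemma \ref{reducedextensionsexist*}.}

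The plan is to build a $\lesst$-increasing chain of towers $\langle \calt^j : j \leq \lambda^+ \rangle$, all indexed by the same well-ordered set $I$, and argue that if no reduced extension existed, the chain could be continued for $\lambda^+$ many steps in a way that is forced to strictly increase at each stage, giving a contradiction via a cardinality/counting argument. Concretely, start with $\calt^0 = \calt$. At successor stages, if $\calt^j$ is not already reduced, witness this: there is some tower $\calt^{j+1} \gtrsim \calt^j$ (which we may assume is indexed by $I$, by the remark following Definition \ref{reduced-def}) and some $r <_I s$ in $I$ with $M^{j+1}_r \cap M^j_s \supsetneq M^j_r$. At limit stages $j$, we need to take a union of the chain so far; this is where the cofinality restrictions bite, so I would arrange the construction so that the relevant limit stages have cofinality $\geq \kappa$ and apply Lemma \ref{high-tower-unions-exist} to get that $\bigcup_{i<j}\calt^i$ is a tower and an upper bound. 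The standard way to do this (following Vasey and before him Shelah--Villaveces) is to not index the outer chain by all ordinals $< \lambda^+$, but to only take unions at stages of cofinality $\kappa$ (or more carefully, to iterate the ``extend then close off under unions of cofinality $\kappa$'' operation $\lambda^+$ many times); the key point is that we only ever need unions of high cofinality.

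The main obstacle — and the heart of the argument — is the counting that forces a contradiction. The classical argument (see \cite[16.11]{vaseyn} or the corresponding results in \cite{shvi}, \cite{van06}) observes that each time the tower strictly grows at some pair $(r,s)$, an element of $M_s$ that was previously ``outside'' $M_r$ gets absorbed into the copy of $M_r$; since $\|M_s\| = \lambda$ and $|I| \leq \lambda$, there are only $\lambda$ many pairs and $\lambda$ many elements, so this can happen at most $\lambda$ times along the whole chain before stabilizing — hence we cannot keep strictly increasing for $\lambda^+$ steps. Once the chain stabilizes, say at stage $j^*$, the tower $\calt^{j^*}$ has the property that \emph{every} further extension $\calt' \gtrsim \calt^{j^*}$ satisfies $M'_r \cap M^{j^*}_s = M^{j^*}_r$ for all $r <_I s$ — that is, $\calt^{j^*}$ is reduced — and since $\calt \lesst \calt^{j^*}$ we are done. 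I would spell out this absorption/counting carefully: fix, for each pair $r <_I s$, the set $D_{r,s}^j = M^j_r \setminus M^0_r \subseteq M^j_s$ (or rather track how the ``intersection'' grows), note these are non-decreasing in $j$ and bounded in size by $\lambda$, and conclude there is a stage past which no $D_{r,s}$ strictly grows; a bookkeeping argument ensures we visit every pair cofinally often, so stabilization of all pairs happens below $\lambda^+$.

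An alternative, slightly cleaner packaging avoids the long chain: iterate the extension $\omega$-many-at-a-time only when forced, but I expect the chain argument above to be the intended one given the paper explicitly develops Lemma \ref{high-tower-unions-exist} and Lemma \ref{reducedunionsarereduced*} (the latter is exactly what guarantees that taking high-cofinality unions along the way preserves reducedness, though here we want it more for the upper-bound property). In writing this up I would be careful on two points that differ from \cite{vaseyn}: first, that all the intermediate models remain in $\Kkappalims$, which is automatic at successor steps (we take $(\lambda,\kappa)$-limits) and at high-cofinality limit steps (union of $\geq\kappa$ many $\lea^u$-increasing models), and second, that we only ever form unions of cofinality $\geq \kappa$, which constrains how we lay out the $\lambda^+$-step iteration. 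With those caveats handled, the argument is the standard one.
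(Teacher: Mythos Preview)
Your high-level strategy matches the paper's: build a $\lesst$-increasing chain $\langle \calt^j : j < \lambda^+\rangle$ of towers indexed by $I$ where each successor step witnesses non-reducedness, and derive a contradiction. But the counting you sketch does not work. When $M^{j+1}_r \cap M^j_s \supsetneq M^j_r$, the ``absorbed'' element lies in $M^j_s$, and $M^j_s$ \emph{grows with $j$}: over the whole chain the absorbed elements range over $\bigcup_{j<\lambda^+} M^j_s$, which has size $\lambda^+$, not $\lambda$. So there is no bound of $\lambda$ on the number of strict-growth steps, and your argument stalls. The paper instead runs a club argument: with $N^j_I = \bigcup_{i\in I} M^j_i$ and $N^{\lambda^+}_i = \bigcup_{j<\lambda^+} M^j_i$, the set $C_i = \{j<\lambda^+ : N^{\lambda^+}_i \cap N^j_I = N^j_i\}$ is a club (a catching-up argument using $\|N^j_I\|=\lambda$ at each fixed $j$), and for $j \in \bigcap_i C_i$ one gets $M^{j+1}_r \cap M^j_s \subseteq N^{\lambda^+}_r \cap N^j_I = M^j_r$, contradicting the successor step.

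The second gap is at limit stages. In a chain of length $\lambda^+$ you cannot avoid limits of cofinality $<\kappa$ when $\kappa>\aleph_0$, so ``arrange the relevant limit stages to have cofinality $\geq\kappa$'' is not available. The paper does not take unions there at all; it invokes Proposition \ref{towerextensionprop*} to produce a tower $\lesst$-above the entire chain so far. Consequently the outer chain is not continuous, which forces the introduction of auxiliary continuous-in-$j$ models $N^j_i$ (the naive union at limits, agreeing with $M^j_i$ when $\cof(j)\geq\kappa$) so that the $C_i$ above are genuinely closed. One then intersects $\bigcap_i C_i$ with the stationary set $\{j<\lambda^+ : \cof(j)=\kappa\}$ to land at a stage where $N^j_i = M^j_i$ for every $i$, and the contradiction closes. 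Both of these ingredients --- the club replacement for the naive count, and Proposition \ref{towerextensionprop*} at low-cofinality limits together with the auxiliary $N^j_i$ --- are essential and missing from your outline.
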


\begin{proof}
    Suppose no such $\calt'$ exists for contradiction. Then construct by recursion a $\lesst$-increasing sequence of towers $\langle \calt^j : j < \lambda^+ \rangle$ where $\calt^j = \langle M_i^j : i \in I \rangle ^\wedge \langle a_i^j : i \in I^- \rangle$ for all $j < \lambda^+$ such that 
    \begin{enumerate}
        \item $\calt \lesst \calt^0$
        \item for all $j < \lambda^+$, there are $r < s \in I$ such that $M^{j+1}_r \cap M^j_s \neq M^j_r$
        \item for all $j < \lambda^+$ such that $\cof(j) \geq \kappa$, $\calt^j = \bigcup_{j' < j} \calt^{j'}$.
    \end{enumerate}

    $\calt^0$ can be found by Proposition \ref{towerextensionprop*}, and $\calt^j$ for limits $j$ where $\cof(j) \geq \kappa$ are determined. For other limits, take any $\calt^j$ where $\calt^{j'} \lesst \calt^j$ for all $j' < j$, which exists by Proposition \ref{towerextensionprop*}. Finally, for successors, if $\calt^j$ is given, since $\calt \lesst \calt^j$, $\calt^j$ is not reduced. So there is $\calt^{j+1} = \langle M_i^{j+1} : i \in I \rangle ^\wedge \langle a_i^{j+1} : i \in I^- \rangle$ such that for some $r < s \in I$, $M^{j+1}_r \cap M^j_s \neq M^j_r$. This completes the construction.

    We want this sequence of towers to be truly continuous for the argument we will use, but small unions of towers may not result in a tower (we could lose that $M^j_i \in \Kkappalims$). Nevertheless, define $N^j_i$ to be
    \begin{itemize}
        \item $M^j_i$ if $j$ is not a limit
        \item $\bigcup_{j'<j} M^{j'}_i$ if $j$ is a limit (note this means $N^j_i = M^j_i$ if $\cof(j) \geq \kappa$).
    \end{itemize}
    Again, $\langle N^j_i : i \in I\rangle$ cannot necessarily be used to form a tower, but are close enough to the $M^j_i$'s for the following argument to work (since $N^j_i \lea M^j_i \lea N^{j+1}_i$ for all $j < \delta$, and $N^j_i = M^j_i$ whenever $\cof(j) \geq \kappa$).

    For notational convenience, let $N^{\lambda^+}_i = \bigcup_{j < \lambda^+} N^j_i$ for $i \in I$ and $N^j_I = \bigcup_{i \in I} N^j_i$ for $j < \lambda^+$. Note $N^{\lambda^+}_i = \bigcup_{j < \lambda^+} M^j_i$ also as $M^j_i \lea M^{j+1}_i = N^{j+1}_i$ for all $i \in I$, $j < \lambda^+$, and for $\cof(j) \geq \kappa$, $N_I^j = \bigcup_{i \in I} M^j_i$.

    Now define $C_i = \{j \in \lambda^+ : N^{\lambda^+}_i \cap N^j_I = N^j_i\}$ for all $i \in I$. It is straightforward to see these are closed in $\lambda^+$ (because we use the $N^j_i$'s, which are continuous in $j$ unlike the $M^j_i$'s), and they are also unbounded in $\lambda^+$: given $j < \lambda^+$, we can construct an increasing continuous sequence $\langle j_n : n \leq \omega \rangle$ such that $j_0 = j$ and $N^{\lambda^+}_i \cap N^{j_n}_I \subseteq N^{j_{n+1}}_i$ for all $n < \omega$; then $j_\omega \in C_i$. So as $|I| < \lambda^+$, $\bigcap_{i \in I} C_i$ is closed and unbounded in $\lambda^+$. Since $S = \{j < \lambda^+ : \cof(j) = \kappa\}$ is stationary, $\left(\bigcap_{i \in I} C_i\right) \cap S$ is non-empty.

    But if $j \in \left(\bigcap_{i \in I} C_i\right) \cap S$, for all $i \in I$ we have $N^{\lambda^+}_i \cap N^j_I = N^j_i$. Note that as $\cof(j) = \kappa$, $N^j_i = M^j_i$ for all $i \in I$. Hence $\left(\bigcup_{k < \lambda^+} M^k_i\right) \cap \left(\bigcup_{r \in I} M^j_r\right) = M^j_i$ for all $i \in I$. So in particular for all $r < s \in I$, we have $M^{j+1}_r \cap M^j_s = M^j_r$. This contradicts (2) of the construction.
\end{proof}

Now we move towards showing reduced towers are continuous at all $i \in I$ with cofinality $\geq \kappa$. We begin with a lemma, which is our analogue of \cite[5.18]{vasey18}.

\begin{lemma}\label{reducedthencontinuousweirdextensionlemma*}
    Suppose $I$ is a well ordering, where $\operatorname{otp}(I) = \delta + 1$ for some limit ordinal $\delta < \lambda^+$  such that $\cof(\delta) \geq \kappa$. Let $i_0, i_\delta$ be the initial and final elements of $I$ respectively. Let $\calt = \langle M_i : i \in I \rangle ^\wedge \langle a_i : i \in I^- \rangle$ be a tower. Suppose there is $b \in M_{i_\delta}$ such that $\gtp(b/ M_i, M_{i_\delta})$ $\dnf$-does not fork over $M_{i_0}$ for all $i <_I i_\delta$. Then there exists $\calt' = \langle M_i' : i \in I \rangle ^\wedge \langle a_i : i \in I^- \rangle$ such that $\calt \lesst \calt'$ and $b \in M_{i_0}'$.
\end{lemma}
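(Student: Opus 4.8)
The plan is to realise the Galois type $p := \gtp(b/M_{i_0}, M_{i_\delta})$ at the \emph{bottom} of a tower extending $\calt$, to observe via uniqueness that this realisation has the same type as $b$ over the \emph{top} model $M_{i_\delta}$, and then to transport the resulting tower by an isomorphism fixing $M_{i_\delta}$ so that $b$ itself lands in the bottom model. Concretely, I would first apply Proposition \ref{towerextensionswithb*} (in the form valid for a general well-ordered index, as remarked after its statement) to the tower $\calt$ and the type $p \in \gS(M_{i_0})$. This produces a strongly universal tower $\calt'' = \langle M_i'' : i \in I \rangle^\wedge \langle a_i : i \in I^- \rangle$ with $\calt \lesst \calt''$, together with an element $b'' \in M_{i_0}''$ such that $\gtp(b''/M_{i_0}, M_{i_0}'') = p$ and $\gtp(b''/M_i, M_i'')$ $\dnf$-does not fork over $M_{i_0}$ for every $i \in I$; in particular this holds at $i = i_\delta$.

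The key step is to compare the types $\gtp(b/M_{i_\delta}, M_{i_\delta}'')$ and $\gtp(b''/M_{i_\delta}, M_{i_\delta}'')$. Note that $M_{i_\delta} \lea M_{i_\delta}''$ since $\calt \lesst \calt''$, while $b \in M_{i_\delta} \lea M_{i_\delta}''$ and $b'' \in M_{i_0}'' \lea M_{i_\delta}''$, so both are members of $\gS(M_{i_\delta})$. Both $\dnf$-do not fork over $M_{i_0}$: for $b''$ this is what was just noted together with monotonicity, and for $b$ it is the hypothesis of the lemma applied at $i = i_\delta$, recalling that $\dnf$-non-forking of a type does not depend on the ambient model. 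Both restrict to $p$ over $M_{i_0}$, by the choice of $p$ and the construction of $b''$ respectively. Since $M_{i_0}, M_{i_\delta} \in \Kkappalims$, uniqueness of $\dnf$ now yields $\gtp(b/M_{i_\delta}, M_{i_\delta}'') = \gtp(b''/M_{i_\delta}, M_{i_\delta}'')$.

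From this type equality I obtain a model $N$ with $M_{i_\delta}'' \lea N$ and a $\K$-embedding $f : M_{i_\delta}'' \to N$ fixing $M_{i_\delta}$ with $f(b'') = b$ (the other embedding in the amalgam witnessing type equality fixes $b$, since $b \in M_{i_\delta}$). Define $\calt' := \langle f[M_i''] : i \in I \rangle^\wedge \langle a_i : i \in I^- \rangle$. Since $a_i \in |M_{i+1}| \subseteq |M_{i_\delta}|$ for each $i \in I^-$, the map $f$ fixes every $a_i$, so $\calt'$ is a tower with the \emph{same} singletons as $\calt$: its models are isomorphic images of the models of $\calt''$ and hence lie in $\Kkappalims$, and $a_i \in |f[M_{i+1}'']| \setminus |f[M_i'']|$ by injectivity of $f$. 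Checking $\calt \lesst \calt'$ is then routine: for each $i \in I$, $f \rest M_i''$ is an isomorphism $M_i'' \to f[M_i'']$ fixing $M_i$, so universality of $M_i''$ over $M_i$ transfers to $f[M_i'']$ over $M_i$; and applying invariance of $\dnf$ to the same isomorphism carries the non-forking of $\gtp(a_i/M_i'', M_{i+1}'')$ over $M_i$ to the non-forking of $\gtp(a_i/f[M_i''], f[M_{i+1}''])$ over $M_i$. Finally $b = f(b'') \in f[M_{i_0}''] = M_{i_0}'$, as required.

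I expect the only genuinely delicate point — the main obstacle — to be arranging that the transporting map $f$ fixes everything it must, namely $M_{i_0}$, all the intermediate models $M_i$, and all the singletons $a_i$, so that $\calt'$ is an honest $\lesst$-extension of $\calt$ rather than merely being isomorphic to one. This is precisely why the comparison in the key step is carried out over the \emph{full} top model $M_{i_\delta}$, which contains each $M_i$ and each $a_i$; running the argument over a smaller model would lose control over some of the $a_i$'s. The remaining verifications follow directly from Proposition \ref{towerextensionswithb*}, uniqueness, monotonicity, and invariance.
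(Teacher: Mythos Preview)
Your proof is correct but takes a genuinely different route from the paper's. The paper applies Proposition \ref{towerextensionswithb*} only to the \emph{truncated} tower $\calt \upharpoonright \delta$ (dropping the top level), obtains $b^*$ whose type agrees with that of $b$ over each $M_i$ for $i < \delta$ by uniqueness, and then invokes $\Kkappalims$-universal continuity* to upgrade this to agreement over $\bigcup_{i<\delta}M_i$; only then is the transporting map built, fixing $\bigcup_{i<\delta}M_i$ rather than $M_{i_\delta}$. Your approach is more direct: by applying Proposition \ref{towerextensionswithb*} to the \emph{full} tower including the top level $i_\delta$, you can compare the two types directly over $M_{i_\delta}$ and conclude by uniqueness alone, with no appeal to universal continuity*.

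The trade-off is this: your argument uses the hypothesis at $i = i_\delta$, i.e., that $\gtp(b/M_{i_\delta}, M_{i_\delta})$ $\dnf$-does not fork over $M_{i_0}$, while the paper's proof never touches that case and would go through verbatim under the weaker hypothesis ``for all $i \in I^-$''. This matters because in the sole application (Proposition \ref{reducedimplieshighcontinuity*}) the non-forking is only verified for $i$ strictly below the top via local character over $\bigcup_{i<\delta}M_i^*$; the $i = i_\delta$ case is not checked there. So your proof is cleaner for the lemma as stated, but the paper's proof is what actually supports the downstream use.
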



\begin{proof}
    First note that by relabeling, without loss of generality, we may assume $I = \delta + 1$, $i_0 = 0$, and $i_\delta = \delta$ respectively.

    So assume $I = \delta + 1$, $i_0 = 0$, $i_\delta = \delta$. By applying Proposition \ref{towerextensionswithb*} to $\calt\upharpoonright \delta$ and $p = \gtp(b/M_0, M_\delta)$, there is a tower $\calt^* = \langle M_i^* : i <\delta \rangle ^\wedge \langle a_i : i < \delta \rangle$ and $b^* \in M_0^*$, such that $\calt \upharpoonright \delta \lesst \calt^*$, $\gtp(b/M_0, M_\delta) = \gtp(b^*/M_0, M_0^*)$, and $\gtp(b^*/M_i, M_i^*)$ $\dnf$-does not fork over $M_0$ for each $i < \delta$. Since $\gtp(b/M_i, M_\delta)$ also $\dnf$-does not fork over $M_0$ and both of these types extend $p \in \gS(M_0)$, we have $\gtp(b/M_i, M_\delta) = \gtp(b^*/M_i, M_i^*)$ by uniqueness for each $i < \delta$. Since these types $\dnf$-do not fork over $M_0$, by $\Kkappalims$-universal continuity*, $\gtp(b/\bigcup_{i<\delta}M_i, M_\delta) = \gtp(b^*/\bigcup_{i<\delta}M_i, \bigcup_{i<\delta}M_i^*)$.

    By type equality, there is some $M_\delta^\circ \in \K_\lambda$ where $\bigcup_{i<\delta} M_i^* \lea M_\delta^\circ$ and some $f:M_\delta \rightarrow M_\delta^\circ$ fixing $\bigcup_{i<\delta} M_i$ such that $f(b) = b^*$. Let $M_\delta^*$ be a $(\lambda, \kappa)$-limit over $M_\delta^\circ$. So $f:M_\delta \rightarrow M_\delta^*$, and there is some $g:M_\delta' \cong M_\delta^*$ an isomorphism extending $f$. Then if we let $M_i' = g^{-1}[M_i^*]$ for all $i < \delta$, $\calt' = \langle M_i' : i \leq \delta \rangle ^\wedge \langle a_i : i < \delta \rangle$ is a tower, where $\calt \lesst \calt'$, since $\calt \upharpoonright \delta \lesst \calt^*$ and $M_\delta'$ is universal over $M_\delta$. Note also that $b = g^{-1}(b^*) \in g^{-1}[M_0^*] = M_0'$, as desired.
\end{proof}

The following three results make up our generalisation of \cite[5.19]{vasey18}. We fill in some details and split the proof into separate lemmas for clarity.

\begin{lemma}\label{reducedthencontinuousinitialseglemma*}
    If $\calt = \langle M_i : i < \alpha\rangle ^\wedge \langle a_i : i \in \alpha^- \rangle$ is reduced and $\beta < \alpha$, then $\calt \upharpoonright \beta$ is also reduced.
\end{lemma}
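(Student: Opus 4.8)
The plan is to derive reducedness of the initial segment from reducedness of $\calt$ by \emph{completing} any extension of $\calt \upharpoonright \beta$ to an extension of all of $\calt$; Proposition \ref{finish_tower_extension} is exactly the tool for this. By the remark following Definition \ref{reduced-def} it suffices to test the defining condition of reducedness against competitor towers indexed by $\beta$, so I would fix a tower $\calt' = \langle M_i' : i < \beta \rangle^\wedge \langle a_i : i \in \beta^- \rangle$ with $\calt \upharpoonright \beta \lesst \calt'$ and fix $r < s < \beta$; the task is then to show $M_r' \cap M_s = M_r$. (When $\beta \leq 1$ there are no such pairs, so one may assume $\beta \geq 2$, and in particular $\beta < \alpha$ as in the hypothesis.)

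The key step is to apply Proposition \ref{finish_tower_extension}, with its ``$\beta$'' and ``$\gamma$'' instantiated as our $\alpha$ and $\beta$ respectively and with the partial extension $\calt^*$ taken to be $\calt'$; the hypothesis $\calt \upharpoonright \beta \lesst \calt'$ needed there holds by assumption. This produces a tower $\calt'' = \langle M_i'' : i < \alpha \rangle^\wedge \langle a_i : i \in \alpha^- \rangle$, a model $N$, and a $\K$-embedding $g : \bigcup_{i < \alpha} M_i \to N$ fixing $\bigcup_{i < \beta} M_i$ pointwise, such that $\calt \lesst \calt''$ and $g[\calt''] \upharpoonright \beta = \calt'$; in particular $g[M_i''] = M_i'$ for every $i < \beta$.

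To finish, one transports the reducedness of $\calt$ along $g$. Since $\calt$ is reduced and $\calt \lesst \calt''$, Definition \ref{reduced-def} gives $M_r'' \cap M_s = M_r$ for all $r < s < \alpha$, in particular for the fixed $r < s < \beta$. Both $M_r''$ and $M_s$ lie in $\operatorname{dom}(g) = \bigcup_{i < \alpha} M_i$ because $r, s < \beta < \alpha$, and $g$ is the identity on $M_r$ and on $M_s$ (as $r, s < \beta$); being injective, $g$ preserves intersections, so
\[ M_r' \cap M_s = g[M_r''] \cap g[M_s] = g\big[M_r'' \cap M_s\big] = g[M_r] = M_r. \]
Hence $\calt \upharpoonright \beta$ is reduced. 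I do not anticipate a genuine obstacle here: the only point requiring care is the variable bookkeeping when invoking Proposition \ref{finish_tower_extension} (matching its ordinal-indexed statement to our ambient tower indexed by $\alpha > \beta$), but once the embedding $g$ is in hand the intersection computation is immediate.
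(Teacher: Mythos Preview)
Your proof is correct and follows exactly the paper's approach: apply Proposition \ref{finish_tower_extension} to complete the partial extension to a tower $\calt''$ with $\calt \lesst \calt''$, use reducedness of $\calt$ against $\calt''$ to get $M_r'' \cap M_s = M_r$, and transport this identity along the embedding $g$. One small bookkeeping point: the domain of $g$ is actually $\bigcup_{i<\alpha} M_i''$ rather than $\bigcup_{i<\alpha} M_i$ (the statement of Proposition \ref{finish_tower_extension} has a typo, as its own proof makes clear), which only helps your argument since then $M_r'' \subseteq \operatorname{dom}(g)$ holds trivially.
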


\begin{proof}
    Suppose $\calt \upharpoonright \beta \lesst \calt^*$. Using Proposition \ref{finish_tower_extension}, there exists $\calt' = \langle M_i' : i < \alpha \rangle ^\wedge \langle a_i : i \in \alpha^- \rangle$ and $N, g : \bigcup_{i < \alpha} M_i' \underset{\bigcup_{i < \beta} M_i}{\longrightarrow} N$ such that $g[\calt'] \upharpoonright \beta = \calt$. Since $\calt$ is reduced, we must have for any $r < s < \beta$ that $M_{s} \cap M_r' = M_r$. In particular, for $r < s < \beta$, we have $M_{s} \cap M_r' = g[M_{s} \cap M_r'] = g[M_r] = M_r$. Therefore, $\calt \upharpoonright \beta$ is reduced as claimed.
\end{proof}

\begin{lemma}\label{reduced-implies-top-reduced}
    Suppose $\calt = \langle M_i : i < \alpha \rangle^\wedge \langle a_i : i \in \alpha^- \rangle$ is a reduced tower. Then for all $\beta < \alpha$, $\calt \upharpoonright [\beta, \alpha)$ is reduced.
\end{lemma}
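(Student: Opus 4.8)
The plan is to reduce the claim to reducedness of $\calt$ itself by ``splicing'' the bottom of $\calt$ onto the top data of an arbitrary competitor tower. We may assume $1 \le \beta < \alpha$, since $\beta = 0$ gives $\calt\upharpoonright[0,\alpha) = \calt$. Let $\calt'' = \langle M_i'' : i \in [\beta,\alpha)\rangle^\wedge \langle a_i : i \in ([\beta,\alpha))^-\rangle$ be a tower with $\calt\upharpoonright[\beta,\alpha) \lesst \calt''$; by the remark after Definition \ref{reduced-def} we may assume $\calt''$ is indexed by $[\beta,\alpha)$, so that what must be shown is $M_r'' \cap M_s = M_r$ for all $\beta \le r < s < \alpha$.

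First I would observe that it is enough to find a tower $\calt^+ = \langle M_i^+ : i < \alpha\rangle^\wedge\langle a_i : i \in \alpha^-\rangle$ indexed by $\alpha$ with $\calt \lesst \calt^+$ and $\calt'' \lesst \calt^+\upharpoonright[\beta,\alpha)$. Indeed, given such a $\calt^+$, reducedness of $\calt$ applied to $\calt \lesst \calt^+$ yields $M_r^+ \cap M_s = M_r$ for all $\beta \le r < s < \alpha$; since $M_r \lea M_r'' \lea M_r^+$ and $M_r \lea M_s$, the chain $M_r = M_r \cap M_s \subseteq M_r'' \cap M_s \subseteq M_r^+ \cap M_s = M_r$ forces $M_r'' \cap M_s = M_r$.

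To produce $\calt^+$, I would first form the spliced sequence $\calt^\sharp = \langle M_i^\sharp : i < \alpha\rangle^\wedge\langle a_i : i\in\alpha^-\rangle$ with $M_i^\sharp = M_i$ for $i < \beta$ and $M_i^\sharp = M_i''$ for $\beta \le i < \alpha$ (keeping the same $a_i$'s). A routine check -- using $M_i \lea M_\beta \lea M_\beta''$ for $i < \beta$, that all models involved lie in $\Kkappalims$, and that $a_i \in M_{i+1}^\sharp \setminus M_i^\sharp$ in each of the cases $i+1 < \beta$, $i+1 = \beta$, $i \ge \beta$ -- shows that $\calt^\sharp$ is a tower. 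By Proposition \ref{towerextensionprop*} applied to the one-term $\lesst$-chain $\langle\calt^\sharp\rangle$, fix a tower $\calt^+$ indexed by $\alpha$ with $\calt^\sharp \lesst \calt^+$. Then $\calt'' = \calt^\sharp\upharpoonright[\beta,\alpha) \lesst \calt^+\upharpoonright[\beta,\alpha)$ is immediate, so it only remains to verify $\calt \lesst \calt^+$. For this, conditions (1)--(3) of Definition \ref{towerorderingdef} are clear: for (2), $M_i^+$ is universal over $M_i^\sharp$, which for $i \ge \beta$ is $M_i''$, itself universal over $M_i$, so $M_i^+$ is universal over $M_i$ by transitivity of $\lea^u$ (and for $i < \beta$ we have $M_i^\sharp = M_i$). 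The substantive point is condition (4): for $i < \beta$ it is immediate since $M_i^\sharp = M_i$, while for $i \ge \beta$, by $\calt^\sharp \lesst \calt^+$ the type $p := \gtp(a_i/M_i^+, M_{i+1}^+)$ does not $\dnf$-fork over $M_i'' = M_i^\sharp$, and since $a_i \in M_{i+1}'' \lea M_{i+1}^+$ and $M_i'' \lea M_{i+1}''$, the Galois type $p\upharpoonright M_i''$ equals $\gtp(a_i/M_i'', M_{i+1}'')$, which does not $\dnf$-fork over $M_i$ by $\calt\upharpoonright[\beta,\alpha) \lesst \calt''$; as $M_i \lea M_i'' \lea M_i^+$, transitivity of $\dnf$-non-forking (available under Hypothesis \ref{long-limit-dnf-hypotheses} by the Fact preceding Lemma \ref{extension-equivalence}) gives that $p$ does not fork over $M_i$.

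The only step requiring genuine care is this last one: recognising that $M_i''$ sits between $M_i$ and $M_i^+$ with the appropriate non-forking at each stage, so that the sandwich collapses by transitivity of $\dnf$-non-forking together with the independence of Galois types from the ambient model. Everything else -- checking that $\calt^\sharp$ is a tower, invoking Proposition \ref{towerextensionprop*}, and the set-theoretic sandwich in the reduction step -- is routine.
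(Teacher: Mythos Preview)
Your proof is correct, but you take a somewhat different route than the paper. Both arguments have the same skeleton: extend the competitor tower on $[\beta,\alpha)$ to an $\alpha$-indexed tower that $\calt$ strictly dominates, then invoke reducedness of $\calt$. The difference is in how that extension is built.

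The paper first applies Proposition~\ref{towerextensionprop*} to $\calt\upharpoonright(\beta+1)$ to get a $\lesst$-extension $\calt^*$, then uses universality of $M_\beta'$ over $M_\beta$ to embed $M_\beta^*$ into $M_\beta'$ via some $f$ fixing $M_\beta$, and sets the bottom $\beta$ levels to be $f[M_i^*]$. Since the levels at $i\ge\beta$ remain exactly the original $M_i'$, reducedness of $\calt$ applied to this new tower gives $M_r'\cap M_s = M_r$ on the nose --- no sandwich needed. The non-forking condition for $i\ge\beta$ comes straight from $\calt\upharpoonright[\beta,\alpha)\lesst\calt'$, and for $i<\beta$ from invariance under $f$; transitivity of $\dnf$ is not used.

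Your approach instead splices $\calt\upharpoonright\beta$ directly with $\calt''$ to form $\calt^\sharp$, then extends $\calt^\sharp$ via Proposition~\ref{towerextensionprop*} to $\calt^+$, and uses transitivity of $\dnf$ (available under Hypothesis~\ref{long-limit-dnf-hypotheses}) to verify $\calt\lesst\calt^+$. Because the levels of $\calt^+$ at $i\ge\beta$ are strictly larger than $M_i''$, you then need the sandwich $M_r \subseteq M_r''\cap M_s \subseteq M_r^+\cap M_s = M_r$. This is conceptually clean --- splice, extend, sandwich --- but costs you transitivity and the extra layer $\calt^+$, whereas the paper trades these for the embedding step and the invariance check.
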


\begin{proof}
    Suppose $\calt \upharpoonright [\beta, \alpha) \lesst \calt'$, where $\calt' = \langle M_i' : i \in [\beta, \alpha) \rangle^\wedge \langle a_i : i \in [\beta, \alpha)^- \rangle$. By Proposition \ref{towerextensionprop*} (with $\alpha = 1$), there exists a tower $\calt^* = \langle M_i^* : i \leq \beta \rangle^\wedge \langle a_i : i < \beta \rangle$ where $\calt \upharpoonright (\beta + 1) \lesst \calt^*$.

    Since $M_\beta \lea^u M_\beta'$, there exists a $\K$-embedding $f : M^*_\beta \underset{M_\beta}{\rightarrow} M_\beta'$. For $i < \beta$, set $M_i' = f[M_i^*]$. Note that $f$ fixes $a_i$ for all $i < \beta$. Define a new tower $\calt'' = \langle M_i' : i < \alpha \rangle^\wedge \langle a_i : i \in \alpha^- \rangle$.

    We claim that $\calt \lesst \calt''$. The conditions (1)-(3) of the definition of $\lesst$ (Definition \ref{towerorderingdef}) lift immediately from $\calt \upharpoonright (\beta + 1) \lesst \calt^*$ and $\calt \upharpoonright [\beta, \alpha) \lesst \calt'$. So it remains to show that $\gtp(a_i/M_i', M_{i+1}')$ $\dnf$-does not fork over $M_i$ for all $i \in \alpha^-$.

    For $i < \beta$, note $\gtp(a_i/M_i', M_{i+1}') = \gtp(a_i/M_i', f[M_{i+1}^*])$ (since $M_{i+1}' = f[M_{i+1}]$ for $i+1 < \beta$, and if $i+1 = \beta$ it follows from monotonicity of $\dnf$ as $f[M_{i+1}^*] \lea M_{i+1}'$). So we have
    \[\gtp(a_i/M_i', M_{i+1}') = \gtp(a_i/M_i', f[M_{i+1}^*]) = \gtp(f(a_i)/f[M_i^*], f[M_{i+1}^*]) = f(\gtp(a_i/M_i^*, M_{i+1}^*)).\] 
    Since $\gtp(a_i/M_i^*, M_{i+1}^*)$ $\dnf$-does not fork over $M_i$ by $\calt\upharpoonright (\beta + 1) \lesst \calt^*$, $\gtp(a_i/M_i', M_{i+1}')$ $\dnf$-does not fork over $f[M_i] = M_i$ by invariance. 
    
    For $i \in [\beta, \alpha)^-$, $\gtp(a_i/M_i', M_{i+1}')$ $\dnf$-does not fork over $M_i$ as $\calt \upharpoonright [\beta, \alpha) \lesst \calt'$. So we have shown condition (4) of Definition \ref{towerorderingdef} holds for all $i \in \alpha^-$, and $\calt \lesst \calt''$ as desired.

    So, since $\calt$ is reduced, for all $r, s < \alpha$ with $r \leq s$, we have
    $M_s \cap M_r' = M_r$. This holds in particular for $r, s \in [\beta, \alpha)$, so $\calt \upharpoonright [\beta, \alpha)$ is reduced as desired.
\end{proof}

\begin{prop}\label{reducedimplieshighcontinuity*}
    Suppose $\calt = \langle M_i : i \in I \rangle ^\wedge \langle a_i : i \in I^- \rangle$ is a reduced tower, and $\delta \in I$ is limit in $I$ with cofinality $\cof(\delta) \geq \kappa$. Then $\calt$ is continuous at $\delta$; that is, $M_\delta = \bigcup_{i<\delta}M_i$.
\end{prop}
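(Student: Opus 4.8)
**Proof plan for Proposition \ref{reducedimplieshighcontinuity*}.**

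The plan is to argue by contradiction: suppose $\calt = \langle M_i : i \in I \rangle ^\wedge \langle a_i : i \in I^- \rangle$ is reduced but $M_\delta \neq \bigcup_{i < \delta} M_i$ for some $\delta \in I$ with $\cof(\delta) \geq \kappa$. Pick $b \in |M_\delta| \setminus \bigcup_{i < \delta}|M_i|$. The goal is to produce a $\lesst$-extension $\calt'$ of $\calt$ in which $b$ ends up in an earlier model — which will contradict the reduced condition, since then $M_\delta \cap M'_i \neq M_i$ for some $i < \delta$.

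First I would restrict attention to the interval $I \cap (\delta+1)$ (or rather, using Lemma \ref{reducedthencontinuousinitialseglemma*}, note $\calt \upharpoonright (\delta+1)$ — after relabelling $I$ so that this initial segment is literally $\delta + 1$ with top element $\delta$ — is still reduced), so we are in the situation $\calt = \langle M_i : i \leq \delta \rangle ^\wedge \langle a_i : i < \delta \rangle$ reduced, $b \in M_\delta \setminus \bigcup_{i<\delta} M_i$. The key observation is to get a non-forking/continuity handle on $b$. Since $\langle M_i : i \leq \delta \rangle$ is a $\lea^u$-increasing chain with $\cof(\delta) \geq \kappa$, and $\gtp(b/M_\delta, M_\delta)$ is a type in $\gS(M_\delta)$... but wait, I actually want $b$'s type over an interior model to not fork over $M_0$. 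This need not hold for the given $b$. So instead I would use $(\geq\kappa)$-local character: the chain $\langle M_i : i < \delta\rangle$ has cofinality $\geq \kappa$, so passing to a cofinal subchain of length exactly $\cof(\delta) \geq \kappa$, there is $i^* < \delta$ with $\gtp(b/\bigcup_{i<\delta}M_i, M_\delta)$ $\dnf$-does not fork over $M_{i^*}$ — here one must be slightly careful since $\bigcup_{i<\delta}M_i$ is not in $\Kkappalims$ in general, so I would instead apply $(\geq\kappa)$-local character to $\gtp(b/M_j, M_\delta)$ for a large enough $j$, or rather: the union $\bigcup_{i<\delta}M_i$ \emph{is} a $(\lambda, \delta)$-limit model with $\cof(\delta) \geq \kappa$, hence lies in $\Kkappalims$, so $(\geq\kappa)$-local character (via Lemma \ref{existence-on-high-limits} and monotonicity) applies and yields $i^* < \delta$ such that $\gtp(b/M_i, M_\delta)$ $\dnf$-does not fork over $M_{i^*}$ for all $i \in [i^*, \delta)$, and by base monotonicity over $M_{i^*}$ for all $i \leq \delta$ with $i \geq i^*$.

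Now I would apply Lemma \ref{reducedthencontinuousweirdextensionlemma*} to the tower $\calt \upharpoonright [i^*, \delta]$ (whose index has order type a limit plus one, with the limit part of cofinality $\geq \kappa$) and the element $b$, which satisfies exactly the hypothesis: $\gtp(b/M_i, M_\delta)$ $\dnf$-does not fork over $M_{i^*}$ for all $i \in [i^*, \delta]$. Note $\calt \upharpoonright [i^*, \delta]$ is reduced by Lemma \ref{reduced-implies-top-reduced}. This gives a tower $\calt'' = \langle M_i'' : i \in [i^*,\delta]\rangle^\wedge\langle a_i : i \in [i^*,\delta)\rangle$ with $\calt \upharpoonright [i^*,\delta] \lesst \calt''$ and $b \in M''_{i^*}$. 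Then, just as in the proof of Lemma \ref{reduced-implies-top-reduced}, I would glue this to a $\lesst$-extension $\calt^*$ of $\calt \upharpoonright (i^*+1)$ (obtained from Proposition \ref{towerextensionprop*}) along a $\K$-embedding $M^*_{i^*} \xrightarrow[M_{i^*}]{} M''_{i^*}$, producing a single tower $\calt'$ indexed by all of $\delta + 1$ with $\calt \lesst \calt'$ and $b \in M'_{i^*}$ (the embedding can be chosen so that $b$, living in $M''_{i^*}$, is in the image, or more simply one takes $\calt'$ with $M'_{i^*} = M''_{i^*}$ on the tail). Then $b \in M'_{i^*} \cap M_\delta$ but $b \notin M_{i^*}$, so $M'_{i^*} \cap M_\delta \neq M_{i^*}$, contradicting that $\calt$ is reduced.

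The main obstacle I anticipate is bookkeeping around the models that fail to lie in $\Kkappalims$: the union $\bigcup_{i<\delta}M_i$ is fine (it is a genuine $(\lambda,\delta)$-limit model since $\cof(\delta)\geq\kappa$), but one must make sure every invocation of local character, extension, uniqueness and $\Kkappalims$-universal continuity* is applied to models that are in the domain of $\dnf$, and that the splicing of $\calt^*$ and $\calt''$ along the embedding genuinely yields a tower with $\calt \lesst \calt'$ — this is exactly the kind of argument already carried out in Lemma \ref{reduced-implies-top-reduced}, so it should go through with only notational changes. A secondary point to check is that the relabelling of $I$ at the start is harmless, which follows from Lemma \ref{reducedthencontinuousinitialseglemma*} and the fact that $\lesst$ and reducedness are insensitive to order-isomorphisms of the index.
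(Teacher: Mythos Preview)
Your overall strategy matches the paper's: argue by contradiction, restrict to the index $\delta+1$, pick $b \in M_\delta \setminus \bigcup_{i<\delta}M_i$, use local character to find a level below which $\gtp(b/-)$ does not fork, invoke Lemma~\ref{reducedthencontinuousweirdextensionlemma*} to push $b$ into an earlier model of some $\lesst$-extension, and contradict reducedness via Lemma~\ref{reduced-implies-top-reduced}. However, there is a genuine gap in your proposal.

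You repeatedly assert that $\langle M_i : i < \delta\rangle$ is $\lea^u$-increasing and that $\bigcup_{i<\delta}M_i$ is a $(\lambda,\delta)$-limit model. Neither is guaranteed: a tower's chain of models is only required to be $\lea$-increasing, and towers are not assumed universal. Consequently $(\geq\kappa)$-local character (which in Definition~\ref{independence-relation-properties}(5) is stated only for $\lea^u$-increasing chains) does not apply to $\langle M_i\rangle$ directly, and $\bigcup_{i<\delta}M_i$ need not lie in $\Kkappalims$. The paper repairs exactly this point: before invoking local character it first takes a strongly universal $\calt^*$ with $\calt \lesst \calt^*$ (via Proposition~\ref{towerextensionprop*}), applies $(\geq\kappa)$-local character to the $\lea^u$-increasing chain $\langle M_i^*\rangle$ to find $\beta$, and then applies Lemma~\ref{reducedthencontinuousweirdextensionlemma*} to $\calt^* \upharpoonright [\beta,\delta]$. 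Transitivity of $\lesst$ gives $\calt \upharpoonright [\beta,\delta] \lesst \calt'$, and the contradiction follows.

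A secondary remark: your final gluing step is unnecessary. Once you have $\calt \upharpoonright [i^*,\delta] \lesst \calt''$ with $b \in M''_{i^*}$, and you have already observed (correctly, via Lemma~\ref{reduced-implies-top-reduced}) that $\calt \upharpoonright [i^*,\delta]$ is reduced, you are done immediately: $b \in M''_{i^*} \cap M_\delta$ but $b \notin M_{i^*}$ contradicts the reduced condition for the restricted tower. There is no need to extend back to a tower indexed by all of $\delta+1$.
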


\begin{proof}
    
    Suppose for contradiction this is false. Let $\alpha$ be minimal such that there exists a well ordered set $I$ with $\operatorname{otp}(I) = \alpha$, $\delta \in I$ with $\cof(\delta) \geq \kappa$, and a reduced tower $\calt = \langle M_i : i \in I \rangle ^\wedge \langle a_i : i \in I^- \rangle$ that is not continuous at $\delta$. As in the proof of Lemma \ref{reducedthencontinuousweirdextensionlemma*}, with some relabeling, we may assume $I = \alpha$.

    Note by Lemma \ref{reducedthencontinuousinitialseglemma*}, $\calt \upharpoonright (\delta+1)$ is also reduced and not continuous at $\delta$. So by minimality, $\alpha = \delta + 1$.

    Since $M_\delta \neq \bigcup_{i<\delta}M_i$, there is some $b \in |M_\delta| \setminus \bigcup_{i<\delta} |M_i|$. Using Proposition \ref{towerextensionprop*}, there exists $\calt^* = \langle M_i^* : i \leq \delta \rangle ^\wedge \langle a_i : i < \delta \rangle$ strongly universal such that $\calt \lesst \calt^*$.
    
    As $\kappa \leq \cof(\delta)$ and $\calt^*$ is universal, by $(\geq \kappa)$-local character there is $\beta < \delta$ such that $\gtp(b/\bigcup_{i<\delta}M_i^*, M_\delta^*)$ $\dnf$-does not fork over $M_\beta^*$. In particular, by monotonicity, $\gtp(b/M_i^*, M_{i+1}^*)$ $\dnf$-does not fork over $M_\beta^*$ for $i \in [\beta, \delta)$. Let $\calt^{**} = \calt^* \upharpoonright [\beta, \delta]$. By Lemma \ref{reducedthencontinuousweirdextensionlemma*} (using that $\cof(\operatorname{otp}([\beta, \delta))) = \cof(\delta) \geq \kappa)$, there exists $\calt' = \langle M_i' : i \in [\beta, \delta] \rangle ^\wedge \langle a_i : i \in [\beta, \delta)\rangle$ such that $\calt^{**} \lesst \calt'$ and $b \in M'_\beta$. Note $\calt \upharpoonright [\beta, \delta] \lesst \calt^{**} \lesst \calt'$. By Lemma \ref{reduced-implies-top-reduced}, $\calt \upharpoonright [\beta, \delta]$ is reduced, so $M_\beta' \cap M_\delta = M_\beta$. But $b \in M_\beta' \cap M_\delta$ and $b \notin M_\beta$, a contradiction. \end{proof}

Now we will recall the notion of full towers used in \cite{vasey18}, which will let us guarantee a tower contains universal chains.

\begin{defin}[{\cite[5.20]{vasey18}}] \label{full-def}
    Let $\calt = \langle M_i : i \in I \rangle ^\wedge \langle a_i : i \in I^-\rangle$ be a tower, and $I_0 \subseteq I$. We say $\calt$ is \emph{$I_0$-full} if for every $i \in I_0^-$ and every $p \in \gS^{na}(M_i)$, there is $k \in [i, i+_{I_0}1)_I$ such that $\gtp(a_k/M_k, M_{k+_I 1})$ is the $\dnf$-non-forking extension of $p$.
\end{defin}

The following remark motivates this definition. It describes how we will show our $\delta_1$th tower in the proof of Theorem \ref{largelimitsareisothm*} contains a universal sequence witnessing a $(\lambda, \delta_2)$-limit model.

\begin{remark}\label{fulltowerscontainuniversalchains*}
    If $\calt$ is $I_0$-full, then in particular, by Fact \ref{universalchriterionfact*}, $M_{i+_{I_0} \lambda}$ is universal over $M_i$ for all $i \in I_0$. Taking this further, if $\delta< \lambda^+$ is a limit ordinal and $\calt$ is continuous at $i +
    _{I_0} \lambda \cdot \delta$, then $M_{i+_{I_0} \lambda \cdot \delta}$ is a $(\lambda, \delta)$-limit model over $M_i$.
\end{remark}

The type extensions in Definition \ref{full-def} are non-forking to make fullness work with high cofinality unions of towers - see the following lemma, which extends {\cite[5.24]{vasey18}}.


\begin{lemma}\label{highfullunionsarefull*}
    Suppose $\delta < \lambda^+$ is a limit ordinal and $\kappa \leq \cof(\delta)$. Suppose $\langle \calt^j : j \leq \delta \rangle$ is a $\lesst$-increasing sequence of towers where $\calt^\delta = \bigcup_{j<\delta} \calt^j$ (in particular, $\bigcup_{j<\delta} I^j$ is a well ordering). Say $\calt^j = \langle M_i^j : i \in I^j\rangle ^\wedge \langle a_i : i \in (I^j)^- \rangle$ for $j \leq \delta$. Suppose $I_0 \subseteq I^0$ and $\calt^j$ is $I_0$-full for all $j < \delta$. Then $\calt^\delta$ is $I_0$-full.
\end{lemma}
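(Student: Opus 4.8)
The plan is to fix $i \in I_0$ and a non-algebraic $p \in \gS^{na}(M_i^\delta)$, and then to exhibit the required $k \in [i, i+_{I_0}1)_{I^\delta}$ by descending to one of the towers $\calt^j$ with $j<\delta$, using its $I_0$-fullness there, and transporting the resulting non-forking extension back up to $\calt^\delta$. The first step is a local character descent. Since $\calt^j \lesst \calt^{j+1}$ for all $j$, the chain $\langle M_i^j : j<\delta\rangle$ is $\lea^u$-increasing with union $M_i^\delta$; passing to a cofinal $\lea^u$-increasing subchain of length $\cof(\delta)\geq\kappa$ and applying $(\geq\kappa)$-local character gives $j_0<\delta$ with $p$ $\dnf$-non-forking over $M_i^{j_0}$. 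Writing $p_0=p\upharpoonright M_i^{j_0}$, this restriction is still non-algebraic (a restriction of a non-algebraic Galois type is non-algebraic), so by $I_0$-fullness of $\calt^{j_0}$ there is $k\in[i,i+_{I_0}1)_{I^{j_0}}$ with $\gtp(a_k/M_k^{j_0}, M_{k+_{I^{j_0}}1}^{j_0})$ equal to the $\dnf$-non-forking extension of $p_0$. Since $I_0\subseteq I^0\subseteq I^{j_0}\subseteq I^\delta$ with compatible orderings, this same $k$ lies in $[i,i+_{I_0}1)_{I^\delta}$, and $k$ is not maximal in $I^{j_0}$ or $I^\delta$, so $a_k$ is unambiguous in both towers.

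Next I would verify that $q := \gtp(a_k/M_k^\delta, M_{k+_{I^\delta}1}^\delta)$ is the $\dnf$-non-forking extension of $p$. The crucial input is clause (4) of the tower ordering applied to $\calt^{j_0}\lesst\calt^\delta$, which says precisely that $q$ $\dnf$-does not fork over $M_k^{j_0}$. Its restriction $q\upharpoonright M_k^{j_0}$ equals $\gtp(a_k/M_k^{j_0},\cdot)$ (Galois types do not depend on the choice of ambient model, given a common extension), which by the fullness of $\calt^{j_0}$ $\dnf$-does not fork over $M_i^{j_0}$; so transitivity gives that $q$ $\dnf$-does not fork over $M_i^{j_0}$. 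Monotonicity then yields that $q\upharpoonright M_i^\delta$ $\dnf$-does not fork over $M_i^{j_0}$; since also $q\upharpoonright M_i^{j_0}=p_0=p\upharpoonright M_i^{j_0}$ and $p$ $\dnf$-does not fork over $M_i^{j_0}$, uniqueness forces $q\upharpoonright M_i^\delta=p$. Finally, base monotonicity upgrades ``$q$ $\dnf$-does not fork over $M_i^{j_0}$'' to ``$q$ $\dnf$-does not fork over $M_i^\delta$'', so $q$ is an extension of $p$ to $M_k^\delta$ that does not fork over $M_i^\delta$; since $M_i^\delta\in\Kkappalims$ (it is a model of $\calt^\delta$), existence (Lemma \ref{existence-on-high-limits}) and uniqueness make $q$ the $\dnf$-non-forking extension of $p$. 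As $i\in I_0$ and $p$ were arbitrary, $\calt^\delta$ is $I_0$-full.

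The main obstacle is a bookkeeping one rather than a conceptual one: the index sets $I^j$ grow with $j$ (new levels get inserted between old ones), and the chain $\langle\calt^j:j<\delta\rangle$ is not assumed continuous at intermediate limit stages, so in general $k+_{I^{j_0}}1\neq k+_{I^\delta}1$ and one cannot argue naively that ``the level-$k$ type is unchanged''. The resolution is that clause (4) of $\lesst$ is exactly the statement controlling $\gtp(a_k/M_k',M_{k+_{I'}1}')$ no matter how the index set is refined between $\calt$ and $\calt'$, so the transfer upward goes through cleanly once one invokes (4) (rather than any continuity) together with transitivity and uniqueness of $\dnf$; the cofinal-subsequence wrinkle in the local character step is needed for the same reason.
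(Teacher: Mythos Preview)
Your proof is correct and follows essentially the same route as the paper's: descend via $(\geq\kappa)$-local character to some $j_0<\delta$, invoke $I_0$-fullness of $\calt^{j_0}$ to find $k$, and then use clause (4) of the tower ordering together with transitivity, monotonicity, uniqueness, and base monotonicity to lift back to $\calt^\delta$. You are in fact slightly more careful than the paper in two places: you explicitly pass to a cofinal subchain of regular length before applying local character, and you verify that $p_0 = p\upharpoonright M_i^{j_0}$ remains non-algebraic (which is needed to invoke fullness of $\calt^{j_0}$, since Definition~\ref{full-def} quantifies only over $\gS^{na}$).
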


\begin{proof}

    The method is essentially \cite[5.24]{vasey18}, using $\kappa \leq \cof(\delta)$ when applying local character. \end{proof}

\begin{nota}
    Given two well ordered sets $I$ and $J$, $I \times J$ will denote the usual lexicographic ordering; that is, $(i, j) < (i', j')$ if and only if either $i <_I i'$, or $i = i'$ and $j <_J j'$. We will use the notation $<_{\operatorname{lex}}$ when the ordering is ambiguous.
\end{nota}

The following is a slight improvement of {\cite[5.28]{vasey18}}, where a proof is not given - we include one for completeness, and relax the conditions on the limit ordinals, though the method appears to be the same.

\begin{lemma}\label{fullcompletionsexist*}
    Let $I$ be a well-ordering, and $\alpha, \gamma < \lambda^+$ be limit ordinals with $\alpha < \gamma$ and $\cof(\gamma) = \lambda$. If $\calt$ is a strongly universal tower indexed by $I \times \alpha$, then there is an $I \times \{0\}$-full tower $\calt'$ indexed by $I \times \gamma$ such that $\calt' \upharpoonright (I \times \alpha) =  \calt$.
\end{lemma}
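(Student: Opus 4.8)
The plan is to build $\calt'$ by a recursion of length $\gamma$ that alternates "filling in types at a level" with "taking high-cofinality unions", exploiting that $\cof(\gamma) = \lambda$ so that cofinally often we both have room to add $\lambda$-many new levels and a stationary set of approximation points. Concretely, I will construct a $\lesst$-increasing sequence $\langle \calt^\xi : \xi \leq \gamma \rangle$ of strongly universal towers, where $\calt^\alpha = \calt$, each $\calt^\xi$ is indexed by $I \times \gamma$ (or rather by a fixed copy of $I \times \gamma$ with the levels in $I \times [\alpha, \gamma)$ filled in only "up to stage $\xi$"), $\calt^{\alpha}=\calt$ on the segment $I\times\alpha$, and $\calt^\xi\restriction (I\times\alpha)=\calt$ for all $\xi$. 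At limit stages $\xi$ with $\cof(\xi)\geq\kappa$ I take $\calt^\xi=\bigcup_{\eta<\xi}\calt^\eta$ using Lemma \ref{high-tower-unions-exist}; at limit stages of small cofinality I invoke Proposition \ref{towerextensionprop*} to get \emph{some} strongly universal common extension of the chain so far (this is exactly the workaround that replaces taking naive unions). At successor stages I handle one more "type obligation": I enumerate, for each $i\in I\times\{0\}$ (there are $|I|\leq\lambda$ of these) and each of the $\leq\lambda$-many non-algebraic types $p\in\gS^{na}(M_i^\xi)$ that could arise, and at stage $\xi+1$ I use Proposition \ref{towerextensionswithb*} applied to the tail $\calt^\xi\restriction[k,\ldots)$ (or Proposition \ref{finish_tower_extension} to glue) to arrange that $\gtp(a_k/M_k^{\xi+1},M_{k+1}^{\xi+1})$ is the $\dnf$-non-forking extension of $p$ for the next available index $k$ in the relevant block $[i,i+_{I\times\{0\}}1)=\{i\}\times[\alpha,\gamma)$.

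\textbf{Bookkeeping.} The crucial point is that the block $[i, i+_{I\times\{0\}}1)_{I\times\gamma}$ between consecutive elements of $I\times\{0\}$ is order-isomorphic to $\gamma$ (more precisely $[\alpha,\gamma)$, but $\alpha<\gamma$ and $\cof(\gamma)=\lambda>\alpha$ in the relevant sense), and we need to realise $\leq\lambda$-many type-obligations per block, one per index. Since $\cof(\gamma)=\lambda$, we can fix for each $i$ a cofinal sequence of order type $\lambda$ inside its block and a bijection between $\lambda$ and the set of pairs (stage at which the type first appears, type); a standard dovetailing ensures every non-algebraic type over every $M_i^\gamma$ (where $M_i^\gamma=\bigcup_{\xi}M_i^\xi$ — note by the high-cofinality unions and $\cof(\gamma)=\lambda\geq\kappa$, this is a $(\lambda,\geq\kappa)$-limit, and actually $M_i^\gamma=M_i^{\xi}$ for a club of $\xi$) gets caught at some stage and handled at a later index in the block. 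That types over $M_i^\gamma$ are already types over $M_i^\xi$ for club-many $\xi$ follows because each $\gS(M_i^\xi)$ has size $\leq\lambda$ by $\lambda$-stability and the union is increasing; this is where I use that $\cof(\gamma)=\lambda$ to run a "catch your tail" argument, exactly as in Fact \ref{universalchriterionfact*}.

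\textbf{Verifying $I\times\{0\}$-fullness at the end.} Set $\calt'=\calt^\gamma=\bigcup_{\xi<\gamma}\calt^\xi$; since $\cof(\gamma)=\lambda\geq\kappa$ this is a genuine tower and a $\lesst$-extension of $\calt=\calt^\alpha$ by Lemma \ref{high-tower-unions-exist}, and it is strongly universal (strong universality is preserved by the construction — at successor and small-cofinality-limit stages Proposition \ref{towerextensionprop*} and Proposition \ref{towerextensionswithb*} deliver strongly universal towers, and strong universality passes to high-cofinality unions since $\bigcup_{r<i}M_r^\gamma=\bigcup_\xi\bigcup_{r<i}M_r^\xi\lea^u M_i^\gamma$). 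It clearly restricts to $\calt$ on $I\times\alpha$ because we never modified those levels. Given $i\in I\times\{0\}$ and $p\in\gS^{na}(M_i')=\gS^{na}(M_i^\gamma)$: by the club argument $p\in\gS(M_i^\xi)$ for some $\xi<\gamma$, and $p$ is non-algebraic; the dovetailing guarantees this obligation is assigned to some index $k\in\{i\}\times[\alpha,\gamma)$ at some later stage, where we set $\gtp(a_k/M_k^{\eta+1},M_{k+1}^{\eta+1})$ to be the non-forking extension of $p$ (using Lemma \ref{existence-on-high-limits} to know such a non-forking extension exists, and uniqueness/extension to pin it down); by the definition of $\lesst$ and Lemma \ref{dnf-high-continuity} / transitivity, $\gtp(a_k/M_k',M_{k+1}')$ still $\dnf$-does not fork over $M_i^\xi$ and extends $p$, hence by uniqueness it \emph{is} the $\dnf$-non-forking extension of $p$. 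Finally I must check each $a_k$ is non-algebraic and in $M_{k+1}'\setminus M_k'$ — this is where I invoke Lemma \ref{disjointness-of-dnf}, since each $a_k$'s type is a non-forking extension of a non-algebraic type.

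\textbf{The main obstacle} will be organising the dovetailing so that every non-algebraic type over every $M_i^\gamma$ is caught while keeping the index set fixed at $I\times\gamma$ throughout — one has to be careful that inserting obligations does not require lengthening blocks beyond $\gamma$, which is exactly why the hypothesis $\cof(\gamma)=\lambda$ (rather than merely $\gamma$ a limit) and $\alpha<\gamma$ are needed: each block has order type with cofinality $\lambda$, we have $\leq\lambda$ obligations per block, and $\lambda$ new obligations can always be slotted into cofinally-many fresh indices. A secondary subtlety is ensuring that at small-cofinality limit stages the extension furnished by Proposition \ref{towerextensionprop*} does not "destroy" obligations already satisfied; this is automatic because $\lesst$ preserves the $\dnf$-non-forking type of each $a_k$ over its base (condition (4) of Definition \ref{towerorderingdef} together with transitivity), so a level that already carries the non-forking extension of some $p$ continues to do so in any further $\lesst$-extension. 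I expect the write-up to mirror \cite[5.28]{vasey18} closely once this bookkeeping is set up.
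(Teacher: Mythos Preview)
Your approach has a genuine gap: a $\lesst$-increasing chain of towers cannot fix the levels in $I\times\alpha$. By Definition \ref{towerorderingdef}(2), whenever $\calt^\xi \lesst \calt^{\xi+1}$, every model $M_i^{\xi+1}$ is \emph{universal over} $M_i^\xi$, in particular strictly larger; so if $I\times\alpha$ is contained in the index of $\calt^\xi$, then $\calt^{\xi+1}\upharpoonright(I\times\alpha)\neq\calt^\xi\upharpoonright(I\times\alpha)$. Your claim that ``we never modified those levels'' is therefore false the moment you apply Proposition \ref{towerextensionprop*} or Proposition \ref{towerextensionswithb*}. Consequently the conclusion $\calt'\upharpoonright(I\times\alpha)=\calt$ fails outright. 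A related symptom of this confusion is your club/dovetailing argument to catch types over the moving models $M_i^\gamma$: the definition of $I\times\{0\}$-fullness only requires you to handle $p\in\gS^{na}(M_{(i,0)})$, and $M_{(i,0)}$ is supposed to stay equal to the fixed model from $\calt$, so there is no moving target.

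The paper's argument is much more elementary and involves no chain of towers at all. It never extends $\calt$ in the $\lesst$-ordering; instead it simply \emph{inserts} new levels into each block $\{i\}\times[\alpha,\gamma)$. For each fixed $i\in I$ one enumerates $\gS^{na}(M_{(i,0)})$ as $\langle p_k: k\in[\alpha,\gamma)\rangle$ (possible by $\lambda$-stability and $\cof(\gamma)=\lambda$), then builds an auxiliary increasing chain $\langle M'_{(i,k)}: k<\gamma\rangle$ above $\bigcup_{k<\alpha}M_{(i,k)}$ where at stage $k=l+1\geq\alpha$ the element $a'_{(i,l)}$ realises the non-forking extension of $p_l$. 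The only substantive point is how to fit this chain back under $M_{(i+1,0)}$: this is exactly where \emph{strong} universality is used, since $M_{(i+1,0)}$ is universal over $\bigcup_{k<\alpha}M_{(i,k)}$, so there is an embedding $f:\bigcup_{k<\gamma}M'_{(i,k)}\to M_{(i+1,0)}$ fixing $\bigcup_{k<\alpha}M_{(i,k)}$. Transporting along $f$ gives the new levels inside the existing tower. No tower extensions, no bookkeeping across stages, no unions of towers are needed.
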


\begin{proof}
    Say $\calt = \langle M_{(i, k)} : {(i, k)} \in I \times \alpha \rangle ^\wedge \langle a_{(i, k)} : {(i, k)} \in I \times \alpha\rangle$. We define $M_{(i, k)}$ and $a_{(i, k)}$ for $(i, k) \in I \times [\alpha, \gamma)$ recursively by the following procedure.

    Fix $i \in I$. Let $\langle p_k : k \in [\alpha, \gamma)\rangle$ be an enumeration of $\gS^{na}(M_{(i, 0)})$, possibly with repetitions. Note this is possible by stability in $\lambda$ and $\cof(\gamma) = \lambda$.

    Define $M'_{(i, k)}$ and $a'_{(i. k)}$ for $k < \gamma$ by induction on $k$ as follows:
    \begin{enumerate}
        \item If $k < \alpha$, $M'_{(i, k)} = M_{(i, k)}$ and $a'_{(i. k)} = a_{(i. k)}$
        \item If $k \geq \alpha$ is limit, then $M'_{(i, k)}$ is any $(\lambda, \kappa)$-limit model over $\bigcup_{l < k} M'_{(i, l)}$
        \item If $k = l + 1$, then let $\gtp(a'_{(i, l)}/M'_{(i, l)}, \hat{M}_{(i, l+1)})$ be a type extending $p_l$ which $\dnf$-does not fork over $M_{(i, 0)}$ (this is possible by existence and extension). Let $M'_{(i, l+1)}$ be a $(\lambda, \kappa)$-limit model over $\hat{M}'_{(i, l+1)}$. This determines $a'_{(i, l)}$ and $M'_{(i, k)}$.
    \end{enumerate}

    As $\calt$ is strongly universal, $M_{(i+1, 0)}$ is universal over $\bigcup_{k < \alpha} M_{(i, k)}$. We have $\bigcup_{k < \alpha} M_{(i, k)} = \bigcup_{k \in \alpha} M'_{(i, k)} \lea \bigcup_{k \in \gamma} M'_{(i, k)}$. So there exists $f:\bigcup_{k \in \gamma} M'_{(i, k)} \rightarrow M_{(i+1, 0)}$ fixing $\bigcup_{k \in \alpha} M_{(i, k)}$. For $k \in [\alpha, \gamma)$, take $M_{(i, k)} = f[M'_{(i, k)}]$ and $a_{(i, k)} = f(a'_{(i, k)})$.

    Invariance maintains the non-forking properties, so $\calt' = \langle M_{(i, k)} : {(i, k)} \in I \times \gamma \rangle ^\wedge \langle a_{(i, k)} : {(i, k)} \in I \times \gamma\rangle$ is a tower and $\calt' \upharpoonright (I \times \alpha) = \calt$. Furthermore, $\calt'$ is $I \times \{0\}$-full by condition (3) from the construction.
\end{proof}

\subsection{The main result}

Finally we will restate and prove Theorem \ref{largelimitsareisothm*}. The argument is similar to Vasey's proof of {\cite[2.7]{vasey18}}.

\largelimitsareisothmii*

\begin{proof}[Proof of Theorem \ref{largelimitsareisothm*}]
    By Fact \ref{cofinalityiso*}, it is enough to show that whenever $\delta_1, \delta_2 < \lambda^+$ are regular and $\kappa \leq \delta_1, \delta_2$, and $M \in \K_\lambda$, there exists a model which is both $(\lambda, \delta_1)$-limit model over $M$ and a $(\lambda, \delta_2)$-limit model over $M$. Note also it is enough to prove it for $M \in \Kkappalims$, as each $M$ has a $(\lambda, \kappa)$-limit $M'$ over $M$, and a $(\lambda, \delta_l)$-limit model over $M'$ will also be $(\lambda, \delta_l)$-limit over $M$. Fix such $\delta_1, \delta_2 \geq \kappa$ and $M \in \Kkappalims$.

    We will build a $\lesst$-increasing sequence of towers $\langle \calt^j : j \leq \delta_1 \rangle$ and a $\leq$-increasing continuous sequence of limit ordinals $\langle \alpha_j : j \leq \delta_1 \rangle \subseteq \lambda^+$ such that 
    \begin{enumerate}
        \item for all $j < \delta_1$, $\calt^j = \langle M_i^j : i \in (\delta_2 + 1) \times \lambda \times \alpha_j \rangle ^\wedge \langle a_i : i \in (\delta_2 + 1) \times \lambda \times \alpha_j\rangle$
        \item $M = M^0_{(0, 0, 0)}$ (so all $M^j_i$ contain $M$)
        \item for all $j < \delta_1$, $\calt^{2j + 2}$ is a reduced tower
        \item for all $j < \delta_1$, $\calt^{2j+1}$ is a $((\delta_2+1) \times \lambda \times \{0\})$-full tower
        \item $\calt^{\delta_1} = \bigcup_{j<\delta_1} \calt^j$ (which is valid as $\cof(\delta_1) \geq \kappa$).
    \end{enumerate}

    \textbf{This is possible:} We proceed by recursion. Let $\alpha_0 = \omega$. By Remark \ref{towersexist*}, there is a tower $\calt^0$ starting at $M$ of length $(\delta_2+1) \times \lambda \times \alpha_0$.

    \underline{For successors}, given $\calt^{2j}$ and $\alpha_{2j}$, we do the next two steps. By Proposition \ref{towerextensionprop*}, there is $\calt^{2j}_*$ a strongly universal tower indexed by $(\delta_2 + 1) \times \lambda \times \alpha_{2j}$ such that $\calt^{2j} \lesst \calt^{2j}_*$. Let $\alpha_{2j+1} < \lambda^+$ be a limit ordinal greater than $\alpha_{2j}$ where $\cof(\alpha_{2j+1}) = \lambda$ (note this exists since such ordinals form an unbounded set in $\lambda^+$ by regularity). Then by Lemma \ref{fullcompletionsexist*} there exists a $(\delta_2+1) \times \lambda \times \{0\}$-full tower $\calt^{2j+1}$ such that $\calt^{2j+1} \upharpoonright (\delta_2+1) \times \lambda \times \alpha_{2j} = \calt^{2j}_*$. In particular, $\calt^{2j} \lesst \calt^{2j+1}$. Let $\alpha_{2j+2} = \alpha_{2j+1}$. By Lemma \ref{reducedextensionsexist*}, there exists a reduced tower $\calt^{2j+2}$ indexed by $(\delta_2 + 1) \times \lambda \times \alpha_{2j+2}$ such that $\calt^{2j+1} \lesst \calt^{2j+2}$. 

    Finally, if $j<\delta_1$ is \underline{limit}, let $\alpha_j = \bigcup_{j' < j} \alpha_{j'}$ and take $\calt^j$ given by Proposition \ref{towerextensionprop*} such that for all $j' < j$, $\calt^{j'} \lesst \calt^j$. 
    
    $\calt^{\delta_1}$ is given by (5). This completes the construction.

    \textbf{This is enough:} Consider the final tower $\calt^{\delta_1}$. Since $\langle \calt^{2j+2} : j < \delta_1\rangle$ is a $\lesst$-increasing sequence of reduced towers, $\calt^{\delta_1}$ is reduced by Proposition \ref{reducedunionsarereduced*}. Hence it is continuous at $(\delta_2, 0, 0)$ (which has cofinality $\delta_2 \geq \kappa$ in $(\delta_2 + 1) \times \lambda \times \alpha_{\delta_1}$) by Proposition \ref{reducedimplieshighcontinuity*} - that is, $M^{\delta_1}_{(\delta_2, 0, 0)} = \bigcup_{k <_{\operatorname{lex}} (\delta_2, 0, 0)} M^{\delta_1}_k = \bigcup_{i < \delta_2} M^{\delta_1}_{(i, 0, 0)}$. Since $\langle \calt^{2j+1} : j \leq \delta_1 \rangle$ is a $\lesst$-increasing sequence of $(\delta_2+1) \times \lambda \times \{0\}$-full towers, $\calt^{\delta_1}$ is $(\delta_2+1) \times \lambda \times \{0\}$-full by Lemma \ref{highfullunionsarefull*}. In particular for all $i < \delta_2+1$ and $i' < \lambda$, $M^{\delta_1}_{(i, i' + 1, 0)}$ realises all types over $M^{\delta_1}_{(i, i', 0)}$, so by Fact \ref{universalchriterionfact*}, $M^{\delta_1}_{(i+1, 0, 0)}$ is universal over $M^{\delta_1}_{(i, 0, 0)}$ for all $i < \delta_2$. So $\langle M^{\delta_1}_{(i, 0, 0)} : i < \delta_2 \rangle$ is a $\lea^u$-increasing chain with union $M^{\delta_1}_{(\delta_2, 0, 0)}$. Therefore $M^{\delta_1}_{(\delta_2, 0, 0)}$ is a $(\lambda, \delta_2)$-limit over $M^{\delta_1}_{(0, 0, 0)}$, and in particular (as $M = M^0_{(0, 0, 0)} \lea M^{\delta_1}_{(0, 0, 0)})$), a $(\lambda, \delta_2)$-limit over $M$.

    On the other hand, $M^{\delta_1}_{(\delta_2, 0, 0)} = \bigcup_{j < \delta_1} M^j_{(\delta_2, 0, 0)}$. $\langle M^j_{(\delta_2, 0, 0)} : j < \delta_1 \rangle$ is a $\lea$-increasing universal chain by definition of the tower ordering, so $M^{\delta_1}_{(\delta_2, 0, 0)}$ is $(\lambda, \delta_1)$-limit over $M^0_{(\delta_2, 0, 0)}$, and hence as before over $M$.
    
    For the `moreover' part, by the above there are some $(\lambda, \delta_l)$-limit model $N_l$ for $l = 1, 2$ which are isomorphic (over some $M$). Then Fact \ref{cofinalityiso*} implies all such limit models (over any models) are isomorphic.
\end{proof}

\subsection{Tame AECs}\label{tame-long-lim-subsection} In this subsection we show how Theorem \ref{largelimitsareisothm*} can be used to show that in tame AECs with enough symmetry, there is a threshold above which all limit models at high enough stability cardinals are isomorphic (see Corollary \ref{vasey-tame-saturated-dnf-cor}). A key difference between the results of this subsection and the rest of the paper is that we do not assume the existence of a nice stable independence relation, but instead show that such a relation exists.

The following result follows  {\cite[\textsection 4, \textsection 5]{vasey16b}} closely. However we have to do additional work to guarantee an approximation of `full' universal continuity of non-forking: \cite{vasey16b} does not assume $\mu$-non-splitting satisfies universal continuity, and only guarantees $(\geq \kappa)$-universal continuity where $\kappa$ is the local character cardinal. We need universal continuity* in $\K_{\geq \mu^+}$, a stronger condition. For this, we assume universal continuity of $\mu$-non-splitting. The universal continuity* arguments are adapted from \cite[4.4, 4.11]{leu2}.

\begin{lemma}\label{dnf-lemma-vasey-saturated}
    Let $\K$ be an AEC with AP, NMM, stable in $\mu \geq \LS(\K)$, and $\mu$-tame.  Assume $\mu$-non-splitting satisfies universal continuity. Let $\dnf$ be $\dnf_{(\geq\mu)-f}$ restricted to models in $\K_{\geq \mu^+}^{\mu^+\operatorname{-sat}}$ (that is, the $\mu^+$-saturated models in $\K_{\geq \mu^+}$ ordered by $\lea$).

    Then $\dnf$ satisfies invariance, monotonicity, base monotonicity, extension, uniqueness, universal continuity* in $\K_{\geq \mu^+}$, and $(\geq \mu^+)$-local character.
\end{lemma}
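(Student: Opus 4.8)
The plan is to assemble the conclusion from the known facts about $\mu$-non-forking of Vasey \cite{vasey16b} (for invariance, monotonicity, base monotonicity, extension, uniqueness, and $(\geq\mu^+)$-local character), and then do the real work on the only genuinely new assertion: universal continuity* in $\K_{\geq\mu^+}$. Concretely, I would first cite \cite[\textsection 4, \textsection 5]{vasey16b} to record that $\dnf_{(\geq\mu)-f}$ restricted to $\mu^+$-saturated models in $\K_{\geq\mu^+}$ is an independence relation with uniqueness, extension (in the full $M_0 \lea M \lea N$ form, using that the class of $\mu^+$-saturated models is itself an AEC with AP under $\mu$-tameness and $\mu$-stability), and $(\geq\mu^+)$-local character. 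Note along the way that these models are all $\mu^+$-saturated, so restrictions of types to $\mu$-sized submodels make sense, which is what lets us reduce statements about $(\geq\mu)$-forking to statements about $\mu$-non-splitting.

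For universal continuity*, I would unwind Definition \ref{univ-cty-star-def}: given a $\lea^u$-increasing chain $\langle M_i : i \leq \delta\rangle$ of $\mu^+$-saturated models in $\K_{\geq\mu^+}$ with $\bigcup_{i<\delta}M_i \lea M$ for some $\mu^+$-saturated $M$, and an increasing chain of types $p_i \in \gS(M_i)$ each $\dnf$-non-forking over $M_0$, I must produce a \emph{unique} $p_\delta \in \gS(\bigcup_{i<\delta}M_i)$ extending all the $p_i$. Uniqueness is the easy half: any two such $p_\delta$ agree on each $M_i$, and by $(\geq\mu^+)$-local character plus $\mu^+$-tameness (which follows from $\mu$-tameness) they must be equal — actually I should be careful and instead argue uniqueness directly from the witness-property-style argument, or note $\bigcup_{i<\delta}M_i$ is $\mu^+$-saturated so both candidates $(\geq\mu)$-do not fork over $M_0$ (by the continuity built into $(\geq\mu)$-non-forking's definition, which quantifies over all $\mu$-sized intermediate models) and apply uniqueness of $\dnf$. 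For existence, the key point is that $(\geq\mu)$-non-forking over $M_0$ of a type over $\bigcup_{i<\delta}M_i$ is, by definition, determined by the behaviour of its restrictions to the $\mu$-sized submodels $N' \lea \bigcup_{i<\delta}M_i$ with a fixed witness $M' \in \K_\mu$, $M' \lea M_0$; so I would find a witness $M' \lea M_0$ for the non-$\mu$-splitting of $p_0$, extend $p_0$ to a type $q$ over $M$ that does not $(\geq\mu)$-fork over $M_0$ (by extension, which holds for $(\geq\mu)$-non-forking by \cite{vasey16b}), restrict $q$ to $\bigcup_{i<\delta}M_i$, and check that $q\rest\bigcup_{i<\delta}M_i$ extends each $p_i$. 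That last check is where I invoke uniqueness of non-forking extensions level by level — $q\rest M_i$ and $p_i$ both extend $p_0$ and both $\dnf$-do not fork over $M_0$, so they're equal. Then set $p_\delta = q\rest\bigcup_{i<\delta}M_i$.

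The main obstacle — and the reason the authors flag that they needed universal continuity of $\mu$-non-splitting rather than the weaker $(\geq\kappa)$-universal continuity automatically available — is that the argument in the previous paragraph is too glib about extending $p_0$ to $\bigcup_{i<\delta}M_i$ when $\cof(\delta)$ is small (below the local character cardinal $\kappa$). When $\cof(\delta) < \kappa$, local character gives no help in showing that a type over the union whose restrictions all don't $\mu$-split over some fixed model actually doesn't $\mu$-split over that model; this is exactly a continuity statement for $\mu$-non-splitting at low cofinality, which is \emph{not} free and is precisely what the hypothesis ``$\mu$-non-splitting satisfies universal continuity'' supplies. So the careful version of the existence argument must: pick $M' \in \K_\mu$ witnessing $p_0$ doesn't $\mu$-split over $M'$ with $M' \lea^u M_0$; for the extension $q$ over $M$, arrange (by the standard $\mu$-non-forking extension construction over a sufficiently saturated base, as in \cite[\textsection 4]{vasey16b} or \cite[4.11]{leu2}) that $q$ doesn't $\mu$-fork over $M'$ too, i.e. $q\rest N'$ doesn't $\mu$-split over $M'$ for every $\mu$-sized $N' \lea M$; then the restriction to the union inherits this, giving $(\geq\mu)$-non-forking over $M_0$ outright, with no cofinality restriction. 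Universal continuity of $\mu$-non-splitting is what makes this extension construction go through continuously at all limit stages, including low-cofinality ones, and I would lift the relevant lemma essentially verbatim from \cite[4.4, 4.11]{leu2} as the excerpt suggests. I'd close by remarking that $(\geq\mu^+)$-universal continuity then follows from universal continuity* via Lemma \ref{dnf-high-continuity}, though it is also a direct consequence, so the statement as written is complete.
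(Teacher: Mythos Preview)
Your high-level plan is right: the basic properties (invariance, monotonicity, base monotonicity, extension, uniqueness, $(\geq\mu^+)$-local character) are harvested from \cite[\textsection 4, \textsection 5]{vasey16b}, and the only real work is universal continuity*. You also correctly identify that universal continuity of $\mu$-non-splitting is needed exactly when $\cof(\delta)$ is small. But you have the difficulty located in the wrong half, and one of your proposed shortcuts is simply false.

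\textbf{Existence is trivial, uniqueness is the work.} You call uniqueness ``the easy half'' and then spend your effort on existence. In fact existence is three lines: since $M$ (the ambient $\mu^+$-saturated model from Definition \ref{univ-cty-star-def}) is in the class, extension gives $p^* \in \gS(M)$ over $p_0$ that $(\geq\mu)$-does not fork over $M_0$; by uniqueness of $\dnf$ on each $M_i$, $p^*\rest M_i = p_i$; set $p_\delta := p^*\rest\bigcup_{i<\delta}M_i$. No continuity of any kind is needed here. Your worry about ``extending $p_0$ to $\bigcup_{i<\delta}M_i$ when $\cof(\delta)$ is small'' is misplaced --- you extend to $M$, not to the union.

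\textbf{The shortcut ``$\bigcup_{i<\delta}M_i$ is $\mu^+$-saturated'' is false.} When $\cof(\delta) \leq \mu$, a union of $\mu^+$-saturated models need not be $\mu^+$-saturated; this is the entire reason universal continuity* (rather than ordinary universal continuity) is the right notion. So you cannot argue that an arbitrary extension $q$ of the $p_i$'s is $\dnf$-non-forking over $M_0$ and then invoke uniqueness of $\dnf$ --- the relation $\dnf$ may not even be defined at $\bigcup_{i<\delta}M_i$.

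The paper's actual argument for uniqueness is a case split on $\cof(\delta)$ versus the local character cardinal $\kappa$. In the high-cofinality case, local character applied to $q$ directly gives a $\mu$-sized non-splitting witness, and one compares $p$ and $q$ via weak uniqueness of $\mu$-non-splitting (Fact \ref{uniqueness-split}) and $\mu$-tameness. In the low-cofinality case (where $\delta \leq \mu$), one collects the $\mu$-sized non-splitting witnesses $N_i$ for each $p_i$ into a single $N \in \K_\mu$ inside $M_0$; then for every $\mu$-sized $N^* \lea \bigcup_{i<\delta}M_i$, one builds a $\lea^u$-increasing $\mu$-sized chain $\langle M_i' : i < \delta\rangle$ with $M_i' \lea M_{i+1}$ catching $N^*$, and applies universal continuity of $\mu$-non-splitting to \emph{this} chain to see that both $p\rest M'_\delta$ and $q\rest M'_\delta$ do not $\mu$-split over $N$. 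Weak uniqueness then gives $p\rest N^* = q\rest N^*$, and $\mu$-tameness finishes. Your ``witness-property-style argument'' gesture, properly developed, would become exactly this --- but note that the continuity hypothesis is applied to auxiliary $\mu$-sized chains built inside the $M_i$'s, not to the original chain.
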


\begin{proof}
    Invariance, monotonicity, and base monotonicity are clear from the definition. Extension, and uniqueness follow from \cite[5.9]{vasey16b} and \cite[5.3]{vasey16b} respectively with $\lesst = \lea^u$. For extension, note the proof goes through even with algebraic types if you assume $p$ $(\geq \mu)$-does not fork over $M$; this gives the weak form of extension from Lemma \ref{extension-equivalence}, and then we can apply Lemma \ref{extension-equivalence}). Note that $\mu^+$-saturated models are $\mu^+$-model homogeneous under our assumptions as $\mu^+ > \LS(\K)$ (see \cite[2.11]{vasey16b}), so these results may be applied.
    
    By \cite[3.11, 4.11]{vasey16b} we have a stronger form of $(\geq \kappa)$-local character for some minimal $\kappa \leq \mu^{+}$. To be precise, $\kappa$ is the least regular cardinal such that for any increasing sequence $\langle M_i : i < \kappa \rangle$ where $M_i \in \K_{\geq \mu^+}^{\mu^+\operatorname{-sat}}$ for all $i < \kappa$, and for all $p \in S(\bigcup_{i < \kappa} M_i)$, there exists $i < \kappa$ such that $p$ $(\geq \mu)$-does not fork over $M_i$; and this property holds for all regular $\kappa' \geq \kappa$. This is stronger than $(\geq \kappa)$-local character of $\dnf$ as it is possible that $\bigcup_{i < \kappa}M_i$ is no longer $\mu^+$-saturated.

    Now we prove universal continuity* of $\dnf$ in $\K_{\geq \mu^+}$. Let $\langle M_i : i < \delta \rangle$ be an increasing sequence of $\mu^+$-saturated models. Let $M_\delta = \bigcup_{i < \delta} M_i$. Suppose we have $p_i \in \gS(M_i)$ increasing such that $p_i$ does not $(\geq\mu)$-fork over $M_0$ for each $i<\delta$. Take a $\mu^+$-saturated model $M^* \in \K$ with $M_\delta \lea^u M^*$. By extension there is $p^* \in \gS(M^*)$ such that $p_0 \subseteq p^*$ and $p$ does not $(\geq \mu)$-fork over $M_0$. By uniqueness, $p^* \upharpoonright M_i = p_i$. So $p_i \subseteq p^*$ for all $i < \delta$. Let $p = p^* \upharpoonright M_\delta$. We must show $p$ is the unique extension of the $p_i$'s.

    So suppose we have $q \in \gS(M_\delta)$ with $q \upharpoonright M_i = p_i$ for all $i < \delta$. We must show $p = q$. We go by cases.
    
    \textbf{Case 1:} assume $\delta \geq \kappa$. By \cite[4.12]{vasey16b}, $q$ does not $(\geq\mu)$-fork over $M_0$. This is true also of $p$, so by \cite[4.8]{vasey16b} there exist $M_0^p, M_0^q \in \K_\mu$ where $M_0^p, M_0^q \lea M_0$, $p$ does not $\mu$-split over $M_0^p$, and $q$ does not $\mu$-split over $M_0^q$ (note this only requires that $M_0$ is $\mu^+$-saturated, and not necessarily $M_\delta$). Taking $M_0^0 \in \K_\mu$ with $M_0^p, M_0^q \lea M_0^0 \lea M_0$, we have $p, q$ do not $\mu$-split over $M_0^0$ by monotonicity. Then take $M_0^1 \lea M_1$ in $\K_\mu$, universal over $M_0^0$ (this is possible as $M_0^0 \lea M_0 \lea^u M_1$ and $\K$ is $\mu$-stable). For every $N \in \K_\mu$ with $M_0^1 \lea N \lea M_\delta$, we have that $p \upharpoonright M_0^1 = q \upharpoonright M_0^1$, so by weak uniqueness of $\mu$-non-splitting (Fact \ref{uniqueness-split}, using that $M_0^0 \lea^u M_0^1 \lea N$), we have $p \upharpoonright N = q \upharpoonright N$. This holds for all such $N$, so by $\mu$-tameness, $p = q$ as desired.

    \textbf{Case 2:} assume $\delta < \kappa$. Then $\delta \leq \mu$ in particular. By \cite[4.8]{vasey16b} for each $i < \delta$ there is $N_i \in \K_\mu$ such that $N_i \lea M_0$ and $p_i$ does not $\mu$-split over $N_i$. Take $N \in \K_\mu$ such that $N_i \lea N \lea M_0$ for each $i < \delta$. Take $N' \in \K_\mu$ such that $N \lea^u N'$ and $N' \lea M_1$.
    
    Suppose that $N^* \in \K_\mu$ with $N' \lea N^* \lea M_\delta$. Take some $\lea^u$-increasing sequence of models $\langle M_i' : i \in \delta \rangle$ in $\K_\mu$ where $N' \lea M_i' \lea M_{i+1}$ and $|M_i| \cap |N^*| \subseteq |M_i'|$ for all $i < \delta$ (this is possible as $M_i \lea^u M_{i+1}$ and $\K$ is $\mu$-stable). Let $M'_\delta = \bigcup_{i<\delta} M_i'$. Note that $N^* \lea M'_\delta$. We have from monotonicity that $p \upharpoonright M_i' = q \upharpoonright M_i' = p_{i+1} \upharpoonright M_i'$ does not $\mu$-split over $N$. So by universal continuity of $\mu$-non-splitting, both $p \upharpoonright M'_\delta$ and $q \upharpoonright M'_\delta$ do not $\mu$-split over $N$. As $N \lea^u N' \lea M_\delta'$ and $p \upharpoonright N' = q \upharpoonright N'$, by weak uniqueness of $\mu$-non-splitting, $p \upharpoonright M'_\delta = q \upharpoonright M'_\delta$, and therefore $p \upharpoonright N^* = q \upharpoonright N^*$. This holds for all such $N^*$, so by $\mu$-tameness, $p = q$ as desired.
\end{proof}

\begin{remark}
    The above proof goes through if we take $\kappa = \mu^+$, rather than the minimal $\kappa$.
\end{remark}

\begin{lemma}\label{vasey-saturated-dnf-criteria-hold} Let $\K$ be an AEC, stable in $\mu \geq \LS(\K)$ and stable also in $\lambda \geq \mu^+$, where $\K$ has AP, NMM, $\mu$-tameness, and $\K_\lambda$ has JEP. Assume universal continuity of $\mu$-non-splitting.

   Let $\dnf$ be $\dnf_{(\geq\mu)-f}$ restricted to $\Kmupluslims$. Suppose $\dnf$ satisfies non-forking amalgamation. Then $\dnf$ satisfies Hypothesis \ref{long-limit-dnf-hypotheses} with $\kappa = \mu^{+}$ for any stability cardinal $\lambda \geq \mu^{+}$.
\end{lemma}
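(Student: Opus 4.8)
The plan is to deduce everything from Lemma~\ref{dnf-lemma-vasey-saturated} together with a routine passage to a smaller class. First I would take $\kappa = \mu^{+}$, which is a successor cardinal (hence regular) with $\kappa \leq \lambda < \lambda^{+}$, and dispose of the structural half of Hypothesis~\ref{long-limit-dnf-hypotheses}: $\K$ is stable in $\lambda \geq \LS(\K)$ because $\lambda \geq \mu^{+} > \mu \geq \LS(\K)$, and $\K_\lambda$ has AP and NMM (from the global hypotheses) and JEP (assumed). What remains is to check that $\dnf$, viewed as a relation on $(\Kmupluslims, \lea)$, is an independence relation satisfying uniqueness, extension, non-forking amalgamation, $(\geq\mu^{+})$-local character and $\Kmupluslims$-universal continuity* in $\K$.

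The engine is Lemma~\ref{dnf-lemma-vasey-saturated}, whose hypotheses hold here verbatim ($\K$ an AEC with AP and NMM, stable in $\mu \geq \LS(\K)$, $\mu$-tame, with $\mu$-non-splitting satisfying universal continuity); it gives that $\dnf_{(\geq\mu)-f}$ restricted to $\K_{\geq \mu^+}^{\mu^+\operatorname{-sat}}$ satisfies invariance, monotonicity, base monotonicity, extension, uniqueness, universal continuity* in $\K_{\geq \mu^+}$, and $(\geq\mu^{+})$-local character. The bridge to our $\dnf$ is the inclusion $\Kmupluslims \subseteq \K_{\geq \mu^+}^{\mu^+\operatorname{-sat}}$: if $N \in \Kmupluslims$ then $N$ is a $(\lambda, \delta)$-limit for some regular $\delta$ with $\mu^{+} \leq \delta < \lambda^{+}$, whence $\delta \leq \lambda$ (no regular cardinal lies strictly between $\lambda$ and $\lambda^{+}$), so by Fact~\ref{long-limits-are-saturated} (using $\lambda$-AP) $N$ is $\delta$-saturated, hence $\mu^{+}$-saturated, and $\|N\| = \lambda \geq \mu^{+}$. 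Thus $\dnf = \dnf_{(\geq\mu)-f}\rest\Kmupluslims$ is just the relation of Lemma~\ref{dnf-lemma-vasey-saturated} restricted further to the sub-AC $\Kmupluslims$, and I would spend the bulk of the argument transferring its properties down this restriction.

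Invariance, monotonicity and base monotonicity descend trivially, since all the models appearing in their statements stay inside $\Kmupluslims$. For the rest, the only delicate point is that ``$p$ does not $\dnf$-fork over $M_{0}$'' is witnessed by a model in the ambient class, which has shrunk; but I would argue, exactly as in Remark~\ref{long-lim-can-restrict}, that for $M_{0} \lea M$ in $\Kmupluslims$ and $p \in \gS(M)$ the two notions of non-forking coincide: given a witness $N' \in \K_{\geq \mu^+}^{\mu^+\operatorname{-sat}}$ with $a \in N'$ realising $p$ over $M$ and independent over $M_{0}$, apply downward L\"owenheim--Skolem to get $N'' \lea N'$ of size $\lambda$ with $M \lea N''$ and $a \in N''$, pass to a $(\lambda,\mu^{+})$-limit $\hat N$ over $N''$ (so $\hat N \in \Kmupluslims$), and invoke monotonicity, noting $\gtp(a/M, \hat N) = \gtp(a/M, N')$. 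With this equivalence in place, uniqueness, extension, $(\geq\mu^{+})$-local character and $\Kmupluslims$-universal continuity* in $\K$ for $\dnf$ are each immediate from the corresponding statement for the larger relation, since any instance posed over $\Kmupluslims$ is a fortiori an instance over $\K_{\geq \mu^+}^{\mu^+\operatorname{-sat}}$; here one also notes that $\lea^{u}$ between size-$\lambda$ models means the same thing in both classes, and that $\Kmupluslims$-universal continuity* in $\K$ is, by definition, nothing but universal continuity* in $\K$ of $\dnf$ itself. Non-forking amalgamation is assumed outright. Collecting these facts gives precisely Hypothesis~\ref{long-limit-dnf-hypotheses} with $\kappa = \mu^{+}$.

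I expect the main (and essentially only) obstacle to be this last bookkeeping step: being careful that shrinking the ambient class from $\K_{\geq \mu^+}^{\mu^+\operatorname{-sat}}$ to $\Kmupluslims$ does not alter the meaning of ``does not fork'', of universal-over, or of the universal continuity* statement, and that every instance of the required properties over the small class corresponds to one available over the large class. Everything else is a direct appeal to Lemma~\ref{dnf-lemma-vasey-saturated} and Fact~\ref{long-limits-are-saturated}.
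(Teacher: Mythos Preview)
Your proposal is correct and follows essentially the same approach as the paper's proof: invoke Lemma~\ref{dnf-lemma-vasey-saturated}, use the inclusion $\Kmupluslims \subseteq \K_{\geq \mu^+}^{\mu^+\operatorname{-sat}}$ (via Fact~\ref{long-limits-are-saturated}), and observe that the relevant properties pass to the further restriction. The paper's proof is a two-line sketch; you have simply unpacked the bookkeeping it leaves implicit, including the point that the meaning of ``does not $(\geq\mu)$-fork'' is unchanged upon shrinking the ambient class (which is clear since it is a property of the type itself), and the observation that universal continuity* in $\K_{\geq\mu^+}$ yields $\Kmupluslims$-universal continuity* in $\K$ because any union of a chain in $\Kmupluslims$ lies in $\K_\lambda \subseteq \K_{\geq\mu^+}$.
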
 

\begin{proof}
    This is immediate from Lemma \ref{dnf-lemma-vasey-saturated} and the fact that $\Kmupluslims \subseteq \K_{\geq \mu^+}^{\mu^+\operatorname{-sat}}$. Note that $\Kmupluslims$-universal continuity* in $\K$ follows from the fact that $\dnf$ satisfies universal continuity* in $\K_{\geq \mu^+}$.
\end{proof}

We phrase the following in terms of symmetry of $(\geq \mu)$-non-forking in $\Kmupluslims$, which is a parallel assumption to $\lambda$-symmetry (see Definition \ref{lambda-symmetry-def} and Remark \ref{symmetry-remark}). In this sense, the following theorem may be regarded as assuming $\K$ has nice properties, but no existing independence relation.

\begin{cor}\label{vasey-tame-saturated-dnf-cor}
    Let $\K$ be an AEC, stable in $\mu \geq \LS(\K)$ and stable also in $\lambda \geq \mu^+$, where $\K$ has AP, NMM, $\mu$-tameness, and $\K_\lambda$ has JEP. Assume universal continuity of $\mu$-non-splitting. Suppose also that $(\geq \mu)$-non-forking has symmetry in $\Kmupluslims$ (or just that $(\geq \mu)$-non-forking restricted to models in $\Kmupluslims$ satisfies non-forking amalgamation).

    Let $\delta_1, \delta_2 < \lambda^+$ be limit ordinals where $\mu^{+} \leq \cof(\delta_1), \cof(\delta_2)$.
    If $M, N_1, N_2 \in \K_\lambda$ where $N_l$ is a $(\lambda, \delta_l)$-limit over $M$ for $l=1, 2$, then there is an isomorphism from $N_1$ to $N_2$ fixing $M$.

    Moreover, if $N_1, N_2 \in \K_\lambda$ where $N_l$ is $(\lambda, \delta_l)$-limit for $l=1, 2$, then $N_1$ is isomorphic to $N_2$.
\end{cor}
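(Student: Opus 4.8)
The plan is to reduce the statement entirely to Theorem \ref{largelimitsareisothm*}, applied with local character cardinal $\kappa = \mu^{+}$ and with $\dnf$ taken to be $\dnf_{(\geq\mu)-f}$ restricted to $\Kmupluslims$. So the only work is to check that this $\dnf$ and this $\kappa$ satisfy Hypothesis \ref{long-limit-dnf-hypotheses}, together with the cofinality constraint on $\delta_1, \delta_2$, after which the theorem is quoted directly for both conclusions.

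First I would dispatch the ambient structural hypotheses of Hypothesis \ref{long-limit-dnf-hypotheses} in $\K_\lambda$. Stability in $\lambda$ is assumed; $\K_\lambda$ has AP since $\K$ does; $\K$ has $\lambda$-JEP by hypothesis; and $\lambda$-NMM follows from NMM of $\K$ together with Löwenheim–Skolem: given $M \in \K_\lambda$, pick $N \in \K$ with $M \lea N$, $M \neq N$ by NMM, then take an intermediate $N' \in \K_\lambda$ with $M \lneq N' \lea N$. Also $\kappa = \mu^{+}$ is a regular cardinal with $\kappa < \lambda^{+}$, since $\mu^{+} \leq \lambda < \lambda^{+}$ by the assumption $\lambda \geq \mu^{+}$.

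Next I would produce the independence relation. If $\K$ has $\mu$-symmetry in $\Kmupluslims$, then by Fact \ref{symm-implies-nfap-fact} the relation $\dnf = \dnf_{(\geq\mu)-f} \rest \Kmupluslims$ satisfies non-forking amalgamation; under the alternative hypothesis this is assumed outright. With non-forking amalgamation available, Lemma \ref{vasey-saturated-dnf-criteria-hold} applies (its hypotheses — AP, NMM, stability in $\mu \geq \LS(\K)$ and in $\lambda \geq \mu^{+}$, $\lambda$-JEP, $\mu$-tameness, universal continuity of $\mu$-non-splitting, and non-forking amalgamation of $\dnf$ — are exactly what we have), and yields that $\dnf$ satisfies Hypothesis \ref{long-limit-dnf-hypotheses} with $\kappa = \mu^{+}$; that is, $\dnf$ is an independence relation on $(\Kmupluslims, \lea)$ with uniqueness, extension, non-forking amalgamation, $(\geq \mu^{+})$-local character, and $\Kmupluslims$-universal continuity* in $\K$.

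Finally, since $\delta_1, \delta_2 < \lambda^{+}$ are limit ordinals with $\mu^{+} = \kappa \leq \cof(\delta_1), \cof(\delta_2)$, Theorem \ref{largelimitsareisothm*} applies verbatim: for $M, N_1, N_2 \in K_\lambda$ with $N_l$ a $(\lambda, \delta_l)$-limit over $M$, there is an isomorphism $N_1 \to N_2$ fixing $M$, and the ``moreover'' clause of that theorem gives that any two $(\lambda, \delta_1)$- and $(\lambda, \delta_2)$-limit models (over arbitrary bases) are isomorphic. There is no real obstacle here — the entire content has been front-loaded into Lemma \ref{dnf-lemma-vasey-saturated}, Lemma \ref{vasey-saturated-dnf-criteria-hold}, Fact \ref{symm-implies-nfap-fact}, and Theorem \ref{largelimitsareisothm*} — so the one point deserving a sentence of care is simply confirming that all hypotheses of Hypothesis \ref{long-limit-dnf-hypotheses} genuinely transfer to $\K_\lambda$ as indicated above, and that $\kappa = \mu^{+}$ is the correct local character cardinal supplied by Lemma \ref{vasey-saturated-dnf-criteria-hold}.
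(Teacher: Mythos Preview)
Your proposal is correct and follows essentially the same approach as the paper: the paper's proof simply states that the result ``follows directly from Lemma \ref{vasey-saturated-dnf-criteria-hold}, Fact \ref{symm-implies-nfap-fact}, and Theorem \ref{largelimitsareisothm*},'' which is precisely the chain you spell out (obtain non-forking amalgamation via Fact \ref{symm-implies-nfap-fact} or the alternative hypothesis, feed this into Lemma \ref{vasey-saturated-dnf-criteria-hold} to verify Hypothesis \ref{long-limit-dnf-hypotheses} with $\kappa = \mu^{+}$, then quote Theorem \ref{largelimitsareisothm*}). Your additional remarks verifying the ambient $\K_\lambda$ hypotheses are sound and simply make explicit what the paper leaves implicit.
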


\begin{proof}
    Follows directly from Lemma \ref{vasey-saturated-dnf-criteria-hold}, Fact \ref{symm-implies-nfap-fact}, and Theorem \ref{largelimitsareisothm*}.
\end{proof}

\section{Short limit models} \label{short-limits-are-distinct-section}

Our goal in this section is to show that, in a very general setting, all the low cofinality limit models are non-isomorphic. As before, we present the result before describing the hypotheses.

\begin{restatable}{theorem}{smalllimitsarenoniso}\label{non-iso}
    Assume Hypothesis \ref{short-limit-dnf-hypotheses} holds for an AEC $\K$ and $\lambda \geq \LS(\K)$. Suppose $\K$ is $\aleph_0$-tame. If $\operatorname{cf}(\delta_1)   < \kappadnfu$ and  $\operatorname{cf}(\delta_1) \neq \operatorname{cf}(\delta_2)$  , then the $(\lambda, \delta_1)$-limit model is not isomorphic to the $(\lambda, \delta_2)$-limit model.
\end{restatable}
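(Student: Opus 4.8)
The plan is to isolate one lemma --- that for regular $\delta < \lambda^+$, if the $(\lambda,\delta)$-limit model is $\delta^+$-saturated then $\delta \geq \kappadnfu$ (Lemma~\ref{key-no}) --- and then to deduce the theorem from it in a few lines. For the deduction: by Fact~\ref{cofinalityiso*}, whether the $(\lambda,\delta_1)$- and $(\lambda,\delta_2)$-limit models are isomorphic depends only on $\cof(\delta_1)$ and $\cof(\delta_2)$, so we may assume $\delta_1,\delta_2$ are regular; they are distinct by hypothesis, and $\delta_1 < \kappadnfu \leq \lambda$. Put $\delta = \min(\delta_1,\delta_2)$ and $\delta' = \max(\delta_1,\delta_2)$, so $\delta < \delta' \leq \lambda$ and $\delta < \kappadnfu$. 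Suppose toward a contradiction that the $(\lambda,\delta_1)$- and $(\lambda,\delta_2)$-limit models are isomorphic. By Fact~\ref{long-limits-are-saturated} the $(\lambda,\delta')$-limit model is $\delta'$-saturated, hence $\delta^+$-saturated (as $\delta^+ \leq \delta'$); since saturation is isomorphism-invariant and the $(\lambda,\delta)$-limit model is isomorphic to it, the $(\lambda,\delta)$-limit model is $\delta^+$-saturated. Lemma~\ref{key-no} then gives $\delta \geq \kappadnfu$, a contradiction.

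It remains to prove Lemma~\ref{key-no}. Fix a regular $\delta < \lambda^+$ such that the $(\lambda,\delta)$-limit model (well defined up to isomorphism under $\lambda$-JEP) is $\delta^+$-saturated. By Remark~\ref{weak-loc-char-iff-strong} and Definition~\ref{kappa-defs}, $\kappadnfu$ is the least regular cardinal lying in $\underline{\kappa}(\dnf, \K_\lambda, \lea^u)$, so it suffices to show $\delta \in \underline{\kappa}(\dnf, \K_\lambda, \lea^u)$. Fix therefore a $\lea^u$-increasing continuous chain $\langle M_i : i \leq \delta \rangle$ in $\K_\lambda$ and $p \in \gS(M_\delta)$; we must produce $i < \delta$ with $p$ $\dnf$-non-forking over $M_i$. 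Now $M_\delta$ is a $(\lambda,\delta)$-limit over $M_0$, hence isomorphic to the $(\lambda,\delta)$-limit model, hence $\delta^+$-saturated. Using this, the argument of \cite[4.6]{sebastien-successive-categ} produces $i_0 < \delta$ such that $p$ does not $\lambda$-split over $M_{i_0}$: the point is that if $p$ $\lambda$-split over every $M_i$, the witnessing splittings could be realised inside $M_\delta$ by its $\delta^+$-saturation and combined to manufacture more than $\lambda$ pairwise distinct types in $\gS(M_\delta)$, contradicting stability in $\lambda$. Finally, since $M_{i_0} \lea^u M_{i_0+1}$, the type $p$ does not $\lambda$-fork over $M_{i_0+1}$, and the comparison of $\lambda$-non-forking with $\dnf$-non-forking (Lemma~\ref{split-fork}) yields that $p$ $\dnf$-does not fork over $M_{i_0+1}$; so $i = i_0+1 < \delta$ works.

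I expect the main obstacle to be exactly the comparison step packaged in Lemma~\ref{split-fork}: $\dnf$ is given only abstractly --- through uniqueness, extension, universal continuity, and $(\geq\kappa)$-local character on $\K_\lambda$ --- and is \emph{a priori} unrelated to $\lambda$-splitting, so bridging the two takes work, and this is where $\aleph_0$-tameness enters. The route I would take is a canonicity argument (in the spirit of Theorem~\ref{nf-limits}): $\dnf$ satisfies existence on limit models by the proof of Lemma~\ref{existence-on-high-limits}, so $p \upharpoonright M_{i_0+1}$ has a $\dnf$-non-forking extension $q \in \gS(M_\delta)$; one then shows $q = p$ by verifying that $q$ also fails to $\lambda$-split over $M_{i_0}$ --- the tameness-dependent heart of the matter, using that on $(\lambda,\geq\kappa)$-limit models any relation with these properties must agree with $\lambda$-non-forking --- and invoking weak uniqueness of $\lambda$-non-splitting (Fact~\ref{uniqueness-split}, applicable since $M_{i_0} \lea^u M_{i_0+1} \lea M_\delta$ and $p,q$ agree on $M_{i_0+1}$). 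A secondary point to handle with care is that $\delta \geq \kappadnfu$ must be extracted from \emph{every} $\lea^u$-increasing continuous resolution of the limit model simultaneously --- dealt with above by phrasing the target as $\delta \in \underline{\kappa}(\dnf, \K_\lambda, \lea^u)$ and using uniqueness of limit models (Fact~\ref{cofinalityiso*}) to transfer $\delta^+$-saturation to an arbitrary resolution.
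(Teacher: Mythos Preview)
Your outline is correct and coincides with the paper's: reduce to the statement ``if the $(\lambda,\delta)$-limit is $\delta^+$-saturated then $\delta\in\underline{\kappa}(\dnf,\K_\lambda,\lea^u)$'' (this is the paper's Lemma~\ref{key-no} in slightly different dress), prove that by combining the non-splitting argument of \cite[4.6]{sebastien-successive-categ} with Lemma~\ref{split-fork}, and deduce the theorem via Fact~\ref{long-limits-are-saturated} and Fact~\ref{cofinalityiso*}.

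However, your commentary misplaces where $\aleph_0$-tameness is used and misdescribes the \cite[4.6]{sebastien-successive-categ} argument. Lemma~\ref{split-fork} uses no tameness whatsoever: its proof (through Lemma~\ref{splitting-to-forking-2}) is pure independence calculus --- existence on $(\lambda,\geq\kappadnfu)$-limits, extension, $(\geq\kappadnfu)$-local character, transitivity, Fact~\ref{dnf_implies_dns}, and weak uniqueness of $\lambda$-non-splitting (Fact~\ref{uniqueness-split}). Tameness enters in the \emph{non-splitting} step (packaged in the paper as Lemma~\ref{s-dnf}), and the mechanism is not a type-counting contradiction with stability. Rather, $\delta$-tameness (a consequence of $\aleph_0$-tameness) shrinks each witness of $\lambda$-splitting of $p$ over $M_i$ to a set $A_i$ of size $\leq\delta$; the union $B=\bigcup_{i<\delta}\bigl(A_i\cup f_i^{-1}[A_i]\bigr)$ then has size $\leq\delta$, so $\delta^+$-saturation of $M_\delta$ produces $b\in M_\delta$ realizing $p\upharpoonright B$; continuity of the chain places $b\in M_i$ for some $i<\delta$, and one checks directly that this forces $f_i(p\upharpoonright N_i^1)\upharpoonright A_i = (p\upharpoonright N_i^2)\upharpoonright A_i$, contradicting the choice of $A_i$. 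Stability in $\lambda$ is not invoked in this contradiction. If you attempted the non-splitting step as you sketched --- without tameness, hoping to ``manufacture more than $\lambda$ types'' --- you would be stuck: absent a tameness-type hypothesis there is no evident way to get $\leq\delta$-sized witnesses that $\delta^+$-saturation can capture.
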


We assume the following hypothesis throughout this section.

\begin{restatable}{hypothesis}{shortlimithypothesis}\label{short-limit-dnf-hypotheses}
Let $\K$ be an AEC stable in $\lambda \geq \LS(\K)$, with AP, JEP, and NMM in $\K_\lambda$. Let $\kappa < \lambda^+$ be a regular cardial. Let $\dnf$ be an independence relation on $\K_\lambda$ that satisfies invariance, monotonicity, base monotonicity, uniqueness, extension, ($\geq \kappa$)-local character, and universal continuity.
\end{restatable}

\begin{remark}
    By Remark \ref{weak-loc-char-iff-strong}, the minimal such $\kappa$ is $\kappadnfu$ from Definition \ref{kappa-defs}. In particular $\kappadnfu \leq \lambda < \infty$.
\end{remark}

\subsection{Relating $\dnf$ to splitting and $\lambda$-forking} \label{dnf-is-like-dns} This short subsection studies how several independence relations interact with each other in a strictly stable set-up (assuming Hypothesis \ref{short-limit-dnf-hypotheses}). It finishes with a canonicity result of independence relations for long limit models.

The following is essentially \cite[4.2]{bgkv} (see also \cite[14.1]{vaseyn}). The method is due to Shelah \cite[Lemma III.1.9*]{shbook}.

\begin{fact}\label{dnf_implies_dns} Let $M \lea N$ in $\K_\lambda$ and $p \in \gS(N)$. 
If $p$ $\dnf$-does not fork over $M$, then $p$ does not $\lambda$-split over $M$.
\end{fact}

\begin{proof}
    Assume $p$ $\dnf$-does not fork over $M$. Suppose $N_1, N_2 \in \K_\lambda$ with $M \leq N_l \leq N$ for $l=1,2$. Suppose further that $f : N_1 \underset{M}{\cong} N_2$. By monotonicity and invariance, $f(p \upharpoonright N_1) \in \gS(N_2)$ $\dnf$-does not fork over $M$. By monotonicity $p \upharpoonright N_2$ $\dnf$-does not fork over $M$. Since $f(p \upharpoonright N_1) \upharpoonright M = (p \upharpoonright N_2) \upharpoonright M$, we have by uniqueness of $\dnf$ that $f(p \upharpoonright N_1) = p \upharpoonright N_2$. Therefore $p$ does not $\lambda$-split over $M$.
\end{proof}

For our next result, we use the following fact.

\begin{fact}[{\cite[I.4.12]{van06}}]\label{uniqueness-split} (Weak uniqueness of splitting) Let $M_0 \lea^u M_1 \lea M_2$ all in $\K_\lambda$. If  $p, q \in \gS(M_2)$,  $p\rest M_1 = q\rest M_1$, and  $p, q$ do not $\lambda$-split over $M_0$, then $p =q$.
\end{fact}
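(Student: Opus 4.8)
The plan is to derive $p=q$ by using the universality of $M_1$ over $M_0$ to ``fold'' a copy of $M_2$ down inside $M_1$, and then to play this copy against $M_2$ itself via non-$\lambda$-splitting over $M_0$.

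First I would fix a $\K$-embedding $h\colon M_2 \underset{M_0}{\rightarrow} M_1$ (that is, with $h\rest M_0 = \id_{M_0}$); such an $h$ exists because $M_0 \lea^u M_1$, $M_2 \in \K_\lambda$, and $M_0 \lea M_2$. Set $M' = h[M_2]$. Then $M' \lea M_1 \lea M_2$ and $M_0 \lea M'$ (since $h$ fixes $M_0$), $\|M'\| = \lambda$, and $h$ restricts to an isomorphism $M_2 \cong M'$ fixing $M_0$. Thus the pair $M', M_2$ of $\lambda$-sized models $\lea$-below $M_2$, each containing $M_0$, together with the isomorphism $h^{-1}\colon M' \underset{M_0}{\rightarrow} M_2$, is an admissible configuration for testing $\lambda$-splitting of a type in $\gS(M_2)$ over $M_0$.

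Next I would apply non-$\lambda$-splitting of $p$ over $M_0$ to this configuration: with $N_1 = M'$, $N_2 = M_2$, $f = h^{-1}$ in the definition of splitting, non-$\lambda$-splitting gives $h^{-1}(p\rest M') = p\rest M_2 = p$, equivalently $h_\ast(p) = p\rest M'$, where $h_\ast\colon \gS(M_2)\to\gS(M')$ denotes the pushforward of types along the isomorphism $h$ (which is a bijection, with inverse the pushforward along $h^{-1}$). The same argument applied to $q$ yields $h_\ast(q) = q\rest M'$. Now $M' \lea M_1 \lea M_2$, so by transitivity of restriction $p\rest M' = (p\rest M_1)\rest M'$ and $q\rest M' = (q\rest M_1)\rest M'$; since $p\rest M_1 = q\rest M_1$ by hypothesis, these agree, so $h_\ast(p) = h_\ast(q)$. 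Applying the bijection $h_\ast^{-1}$ gives $p = q$, as desired. (All Galois-type manipulations here are legitimate since $\K_\lambda$ has AP.)

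I do not expect a serious obstacle: the argument is essentially bookkeeping, and the only genuine idea is the observation that universality of $M_1$ over $M_0$ lets one realize a copy of $M_2$ below $M_1$ (hence below $M_2$) over $M_0$, which converts the hypothesis ``$p$ and $q$ agree on $M_1$'' into ``$p$ and $q$ agree on a model along which non-$\lambda$-splitting transports them back to all of $M_2$''. The points that require care are matching the precise quantifier structure of the definition of $\lambda$-splitting (which two intermediate models, and in which direction the isomorphism goes), and checking that $M' = h[M_2]$ genuinely lies $\lea$-below $M_2$ and has cardinality $\lambda$, so that it is a legitimate witness.
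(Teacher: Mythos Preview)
Your proof is correct and is the standard argument for this result. The paper does not give its own proof here---the statement is recorded as a Fact with a citation to \cite[I.4.12]{van06}---so there is nothing to compare against; your write-up matches the classical argument.
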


The following argument has some similarities with the second half of \cite[14.1]{vaseyn}, and with \cite[4.18]{leu2}.

\begin{lemma}\label{splitting-to-forking-2}
    Suppose $L_1 \lea L_2 \lea M$  are all in $\K_\lambda$, $L_2$ is a $(\lambda, \geq \shortkappadnfu)-$limit model over $L_1$, and $p \in \gS(M)$. If $p$ does not $\lambda$-split over $L_1$, then $p$ $\dnf$-does not fork over $L_2$. 
\end{lemma}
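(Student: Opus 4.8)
The plan is to use the hypothesis that $L_2$ is a $(\lambda, \geq \shortkappadnfu)$-limit model over $L_1$, so there is a $\lea^u$-increasing continuous chain $\langle L_2^i : i \leq \theta \rangle$ with $L_2^0 = L_1$, $L_2^\theta = L_2$, and $\theta \geq \shortkappadnfu$ regular. Since $p \in \gS(M)$ and $M \in \Kkappalims$... wait, $M$ need only be in $\K_\lambda$; but we only need $\dnf$-local character along the chain $\langle L_2^i \rangle$. First I would apply $(\geq \kappa)$-local character of $\dnf$ (valid since $\theta \geq \shortkappadnfu = \kappa$) to the chain $\langle L_2^i : i \leq \theta \rangle$ and the type $p \rest L_2 \in \gS(L_2)$: this gives some $i < \theta$ such that $p \rest L_2$ $\dnf$-does not fork over $L_2^i$.

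**Transferring non-splitting down and non-forking up.** By monotonicity of $\lambda$-non-splitting, since $p$ does not $\lambda$-split over $L_1 = L_2^0 \lea L_2^i$... no — non-splitting over a \emph{smaller} base is the stronger statement, so $p$ not $\lambda$-splitting over $L_1$ does not immediately give non-splitting over $L_2^i$. Instead I would work directly: I want to show $p \rest L_2$ $\dnf$-does not fork over $L_2$, i.e. (essentially) over $L_2^i$ composed with base monotonicity. The key is to compare the $\dnf$-non-forking extension of $(p\rest L_2)\rest L_2^i$ to $L_2$ with $p \rest L_2$ itself. Here is the cleaner route: by extension applied to $(p \rest L_2)\rest L_2^i$ (which $\dnf$-does not fork over itself by existence, Lemma \ref{existence-on-high-limits} — or rather we already know $p\rest L_2$ is a $\dnf$-non-forking extension of it over $L_2$), uniqueness of $\dnf$ pins down $p \rest L_2$ as \emph{the} $\dnf$-non-forking extension of $(p\rest L_2)\rest L_2^{i}$. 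Now I claim $p$ does not $\lambda$-split over $L_2^{i+1}$: indeed, using weak uniqueness of splitting (Fact \ref{uniqueness-split}) with $L_2^i \lea^u L_2^{i+1} \lea M$ — since $p$ does not $\lambda$-split over $L_1 \lea L_2^i$, I need first that $p$ does not $\lambda$-split over $L_2^i$; this follows because $L_1 \lea^u$-... hmm, actually $L_1 \lea L_2^i$ is $\lea^u$ (it is the bottom of a universal chain), so applying Fact \ref{uniqueness-split}-style reasoning or the standard monotonicity-through-universal argument gives $p$ does not $\lambda$-split over $L_2^i$.

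**Closing the argument.** With $p$ not $\lambda$-splitting over $L_2^i$ and $L_2^i \lea^u L_2^{i+1}$, the definition of $\lambda$-non-forking (Definition \ref{forking_def}) gives that $p$ \emph{does not $\lambda$-fork over $L_2^{i+1}$}. Then I would invoke the earlier relationship between $\lambda$-non-forking and $\dnf$-non-forking — here I would use Lemma \ref{split-fork} (referenced in the introduction as "non-splitting is close to $\dnf$-non-forking") or directly Fact \ref{dnf_implies_dns} in the reverse-ish direction — to conclude $p \rest L_2$ $\dnf$-does not fork over some model, and then base monotonicity pushes the base up to $L_2$. Concretely: $p$ not $\lambda$-splitting over $L_2^i$ means, combined with uniqueness of $\dnf$ and the $\dnf$-non-forking extension of $(p\rest L_2^{i+1})$ over $L_2$ (which does not $\lambda$-split over $L_2^i$ by Fact \ref{dnf_implies_dns}, hence agrees with $p\rest L_2$ on $L_2$ by weak uniqueness Fact \ref{uniqueness-split} since $L_2^i \lea^u L_2^{i+1}$), we get $p \rest L_2$ $\dnf$-does not fork over $L_2^{i+1}$, and finally base monotonicity yields $p$ $\dnf$-does not fork over $L_2$.

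**Main obstacle.** The delicate point is the direction of the comparison: we only know $p$ does not $\lambda$-split over $L_1$, and we must bootstrap this into a $\dnf$-non-forking statement over the much larger $L_2$. This requires carefully chaining (i) local character of $\dnf$ to get a $\dnf$-small base $L_2^i$ inside the limit chain, (ii) Fact \ref{dnf_implies_dns} to see the $\dnf$-non-forking extension does not $\lambda$-split over $L_2^i$, (iii) weak uniqueness of $\lambda$-splitting (Fact \ref{uniqueness-split}), which needs the universal step $L_2^i \lea^u L_2^{i+1}$, to identify that extension with $p \rest L_2$, and only then (iv) base monotonicity of $\dnf$. Getting the three bases $L_1, L_2^i, L_2^{i+1}$ lined up correctly with the $\lea^u$ hypotheses of Fact \ref{uniqueness-split} is where the care is needed; everything else is routine once that is arranged.
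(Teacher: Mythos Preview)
Your argument has a genuine gap: you only ever establish that $p \rest L_2$ $\dnf$-does not fork over (some model below) $L_2$, never that $p$ itself---which lives in $\gS(M)$---$\dnf$-does not fork over $L_2$. In your ``Concretely'' paragraph you take the $\dnf$-non-forking extension of $p \rest L_2^{i+1}$ \emph{over $L_2$}, compare it to $p \rest L_2$ via weak uniqueness, and conclude $p \rest L_2$ $\dnf$-does not fork over $L_2^{i+1}$. But that is not the statement: the conclusion requires the full type $p \in \gS(M)$ to be $\dnf$-non-forking over $L_2$. Base monotonicity cannot bridge this, since it moves the base, not the domain.

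The paper fixes this by extending all the way to $M$: take $q \in \gS(M)$ a $\dnf$-non-forking extension of $p \rest L_2$ over $L_2$, then use local character on $q \rest L_2 = p \rest L_2$ along the witnessing chain $\langle M_i\rangle$ to find $M_i$ with $q$ $\dnf$-non-forking over $M_i$ (via transitivity), hence $q$ does not $\lambda$-split over $M_i$ by Fact~\ref{dnf_implies_dns}. Now both $p$ and $q$ do not $\lambda$-split over $M_i$ (for $p$ this is immediate base monotonicity from $L_1 \lea M_i$---your worry here was misplaced, base monotonicity of non-splitting goes exactly the direction you need), they agree on $L_2$, and $M_i \lea^u L_2 \lea M$, so weak uniqueness gives $p = q$ on all of $M$. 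Two further remarks: invoking Lemma~\ref{split-fork} here would be circular, since that lemma is proved \emph{from} this one; and in your version of weak uniqueness the bases are misaligned (your $q$ does not split over $L_2^{i+1}$, but you want to apply Fact~\ref{uniqueness-split} with base $L_2^i$).
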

\begin{proof}
Observe that $p \upharpoonright L_2$ $\dnf$-does not fork over $L_2$ by Lemma \ref{existence-on-high-limits}. There exists $q \in \gS(M)$ such that $q$ $\dnf$-does not fork over $L_2$ and $q\rest L_2 = p\rest L_2$ by extension of $\dnf$. Let $\langle M_i :i < \theta \rangle$ witness that $L_2$ is a $(\lambda, \geq \shortkappadnfu)$-limit model over $L_1$. By  ($\geq \shortkappadnfu$)-local character, there is an $i < \theta$ such that $q \rest L_2$ $\dnf$-does not fork over $M_i$. Then $q$ $\dnf$-does not fork over $M_i$ by transitivity of $\dnf$. Therefore, $q$ does not $\lambda$-split over $M_i$ by Fact \ref{dnf_implies_dns}. 

As $L_1 \lea M_i$, $p$ does not $\lambda$-split over $M_i$ by base monotonicity of $\lambda$-non-splitting. We showed earlier that $q$ does not $\lambda$-split over $M_i$, and we know that $p\rest L_2 = q\rest L_2$, and $L_2$ is universal over $M_i$. Therefore, $p=q$ by weak uniqueness of splitting. As $q$ $\dnf$-does not fork over $L_2$, we have that $p$ $\dnf$-does not fork over $L_2$ as desired.
\end{proof}

The following result is a partial converse of Fact \ref{dnf_implies_dns}.

\begin{lemma}\label{split-fork} Suppose $L_1 \lea^u L_2 \lea M$ are all in $\K_\lambda$, and $p \in \gS(M)$. If $p$ does not $\lambda$-split over $L_1$, then $p$ $\dnf$-does not fork over $L_2$.

\end{lemma}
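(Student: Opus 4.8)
The plan is to reduce to Lemma~\ref{splitting-to-forking-2}, which already yields the conclusion under the stronger hypothesis that $L_2$ is a $(\lambda, \geq \shortkappadnfu)$-limit model over $L_1$. The only gap between the two statements is that here $L_2$ is merely assumed \emph{universal} over $L_1$, and universality of $L_2$ over $L_1$ does not by itself make $L_2$ a limit model over $L_1$. The workaround is to locate a $(\lambda, \geq \shortkappadnfu)$-limit submodel of $L_2$ lying over $L_1$ and run the argument there.

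Concretely, first I would invoke Fact~\ref{limitsexist*} (its hypotheses hold by Hypothesis~\ref{short-limit-dnf-hypotheses}, and $\shortkappadnfu \leq \lambda < \lambda^+$) to fix a $(\lambda, \shortkappadnfu)$-limit model $L$ over $L_1$. Since $L_1 \lea^u L_2$, there is a $\K$-embedding $g : L \underset{L_1}{\to} L_2$; put $L_2' = g[L]$, so $L_2' \lea L_2$. Because $g$ fixes $L_1$, applying $g$ to a chain witnessing that $L$ is a $(\lambda, \shortkappadnfu)$-limit over $L_1$ shows (using invariance of universality under isomorphism) that $L_2'$ is again a $(\lambda, \shortkappadnfu)$-limit model over $L_1$, and we have $L_1 \lea L_2' \lea L_2 \lea M$.

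Now $p \in \gS(M)$ does not $\lambda$-split over $L_1$ by assumption, so Lemma~\ref{splitting-to-forking-2} applied to the chain $L_1 \lea L_2' \lea M$ gives that $p$ $\dnf$-does not fork over $L_2'$. Finally, since $L_2' \lea L_2 \lea M$, base monotonicity of $\dnf$ gives that $p$ $\dnf$-does not fork over $L_2$, as desired.

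There is essentially no serious obstacle: the content is entirely in Lemma~\ref{splitting-to-forking-2}. The one point to be careful about is precisely that $L_2$ need not itself be a limit model over $L_1$, so the reduction must pass through an embedded $(\lambda, \geq \shortkappadnfu)$-limit submodel rather than through $L_2$ directly; after that it is just invariance, Lemma~\ref{splitting-to-forking-2}, and base monotonicity.
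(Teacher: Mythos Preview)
Your proof is correct and follows essentially the same approach as the paper: embed a $(\lambda,\shortkappadnfu)$-limit model over $L_1$ inside $L_2$ using universality, apply Lemma~\ref{splitting-to-forking-2} to conclude $p$ $\dnf$-does not fork over that submodel, and then use base monotonicity. The only cosmetic difference is that the paper builds the witnessing $\lea^u$-chain directly inside $L_2$ rather than first constructing the limit model externally and embedding it.
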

\begin{proof}
    Let $\langle M_i : i \leq \shortkappadnfu \rangle$ be a $\lea^u$-increasing continuous sequence with $M_0 = L_1$. By universality of $L_2$ over $L_1$, we may assume $M_i \lea L_2$ for all $i \leq \shortkappadnfu$. Then $p$ $\dnf$-does not fork over $M_\shortkappadnfu$ by Lemma \ref{splitting-to-forking-2}. Hence $p$ $\dnf$-does not fork over $L_2$ by base monotonicity.
\end{proof}

\begin{cor}\label{equal-kappa}
 $\underline{\kappa}(\dnf, \K_\lambda, \lea^u) = \underline{\kappa}(\dnf_{\splt}, \K_\lambda, \lea^u)$. In particular, ${\kappa(\dnf_{\splt}, \K_\lambda, \lea^u)} = \shortkappadnfu$.
\end{cor}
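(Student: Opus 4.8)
The plan is to read off Corollary \ref{equal-kappa} directly from the two one-way comparisons between $\dnf$-non-forking and $\lambda$-non-splitting established just above (Fact \ref{dnf_implies_dns} and Lemma \ref{split-fork}), checking membership in the two $\underline{\kappa}$-sets one chain at a time. Once the set equality $\underline{\kappa}(\dnf, \K_\lambda, \lea^u) = \underline{\kappa}(\dnf_{\splt}, \K_\lambda, \lea^u)$ is in hand, the ``in particular'' clause is immediate: $\shortkappadnfu$ is by definition $\kappa(\dnf, \K_\lambda, \lea^u)$, and Definition \ref{kappa-defs}(3) computes $\kappa(-)$ from the regular cardinals lying in the associated $\underline{\kappa}$-set, so the two values of $\kappa(-)$ coincide.

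For the inclusion $\underline{\kappa}(\dnf, \K_\lambda, \lea^u) \subseteq \underline{\kappa}(\dnf_{\splt}, \K_\lambda, \lea^u)$ I would fix $\delta$ in the left-hand set, a $\lea^u$-increasing continuous chain $\langle M_i : i \leq \delta \rangle$ in $\K_\lambda$, and $p \in \gS(M_\delta)$; by hypothesis there is $i < \delta$ with $p$ $\dnf$-non-forking over $M_i$, and then $p$ does not $\lambda$-split over $M_i$ by Fact \ref{dnf_implies_dns}, so $\delta$ lies in the right-hand set. For the reverse inclusion, fix $\delta \in \underline{\kappa}(\dnf_{\splt}, \K_\lambda, \lea^u)$, such a chain, and $p \in \gS(M_\delta)$, and pick $i < \delta$ with $p$ not $\lambda$-splitting over $M_i$. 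The key observation is that the chain is $\lea^u$-increasing, so $M_i \lea^u M_{i+1} \lea M_\delta$, and Lemma \ref{split-fork} applies verbatim with $L_1 = M_i$, $L_2 = M_{i+1}$, $M = M_\delta$ to give that $p$ $\dnf$-does not fork over $M_{i+1}$; when $\delta$ is a limit ordinal $i+1 < \delta$, so $\delta \in \underline{\kappa}(\dnf, \K_\lambda, \lea^u)$.

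The only point that needs a moment's care — and the closest thing to an obstacle — is the bookkeeping that keeps the witnessing index strictly below $\delta$ in the $\supseteq$ direction when $\delta$ is not a limit ordinal. For $\delta = 0$ neither set contains $\delta$ (there is no $i < 0$, while $\gS(M_0) \neq \emptyset$ by NMM), so the sets agree there. For $\delta$ a successor, base monotonicity makes the set of indices $i$ over which $p$ does not $\lambda$-split (respectively over which $p$ $\dnf$-does not fork) upward closed, so membership of $\delta$ in either set is equivalent to the corresponding statement about the immediate predecessor of $\delta$; these two statements then coincide via Fact \ref{dnf_implies_dns} in one direction and, in the other, an application of Lemma \ref{split-fork} after inserting a model $M_{i} \lea^u M' \lea M_\delta$. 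In any case, since $\kappa(-)$ only sees regular — hence limit — cardinals, the limit-ordinal argument already delivers the numerical identity $\kappa(\dnf_{\splt}, \K_\lambda, \lea^u) = \shortkappadnfu$ that is used in the sequel, and I would present the write-up with that emphasis, citing Lemma \ref{splitting-to-forking-2} (through Lemma \ref{split-fork}) for the forking-from-non-splitting step.
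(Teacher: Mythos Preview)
Your approach is exactly the paper's: the inclusion $\underline{\kappa}(\dnf, \K_\lambda, \lea^u) \subseteq \underline{\kappa}(\dnf_{\splt}, \K_\lambda, \lea^u)$ via Fact \ref{dnf_implies_dns}, and the reverse via Lemma \ref{split-fork} applied with $L_1 = M_i$, $L_2 = M_{i+1}$. The paper's own proof is a two-line citation of these same results, so your argument is not only the intended one but spells out the chain-by-chain verification the paper leaves implicit. Your observation that the shift from $i$ to $i+1$ is harmless for limit $\delta$, and that only limit (indeed regular) $\delta$ matter for the numerical identity $\kappa(\dnf_{\splt}, \K_\lambda, \lea^u) = \shortkappadnfu$, is the right emphasis --- the paper does not address the successor-ordinal case at all, and your caution there is well placed (your proposed fix for successors via ``inserting a model $M_i \lea^u M' \lea M_\delta$'' does not obviously yield $\dnf$-non-forking over $M_\beta$ itself, only over $M'$, but as you note this is irrelevant to every downstream use).
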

\begin{proof}
   $ \underline{\kappa}(\dnf, \K_\lambda, \lea^u) \subseteq \underline{\kappa}(\dnf_{\splt}, \K_\lambda, \lea^u)$ by Fact \ref{dnf_implies_dns} (this inclusion also appears in \cite[3.9(2)]{vaseyt}) and     $\underline{\kappa}(\dnf_{\splt}, \K_\lambda, \lea^u) \subseteq \underline{\kappa}(\dnf, \K_\lambda, \lea^u)$ by Lemma \ref{split-fork}.
\end{proof}

\begin{cor}\label{split-fork2}
Suppose $L \lea M$ are in $\K_\lambda$ and $p \in \gS(M)$. If $p$ does not $\lambda$-fork over $L$, then $p$ $\dnf$-does not fork over $L$.
\end{cor}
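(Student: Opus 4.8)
The plan is to unwind the definition of $\lambda$-non-forking and then apply Lemma \ref{split-fork} directly. By Definition \ref{forking_def}, since $p$ does not $\lambda$-fork over $L$, there is some $M_0 \in \K_\lambda$ with $M_0 \lea^u L$ such that $p$ does not $\lambda$-split over $M_0$. Now we are in exactly the situation of Lemma \ref{split-fork}: we have $M_0 \lea^u L \lea M$ all in $\K_\lambda$, and $p \in \gS(M)$ with $p$ not $\lambda$-splitting over $M_0$. Applying Lemma \ref{split-fork} with ``$L_1$'' $= M_0$, ``$L_2$'' $= L$, and ``$M$'' $= M$ immediately gives that $p$ $\dnf$-does not fork over $L$.

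There is no real obstacle here; the content has already been done in Lemma \ref{split-fork} (which in turn rests on Lemma \ref{splitting-to-forking-2} and the local character / weak uniqueness machinery). The corollary is just the translation of that lemma into the language of $\lambda$-non-forking. Here is the write-up.

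\begin{proof}
    Since $p$ does not $\lambda$-fork over $L$, by Definition \ref{forking_def} there is $M_0 \in \K_\lambda$ such that $M_0 \lea^u L$ and $p$ does not $\lambda$-split over $M_0$. Then $M_0 \lea^u L \lea M$ are all in $\K_\lambda$, so by Lemma \ref{split-fork} (applied with $L_1 = M_0$ and $L_2 = L$), $p$ $\dnf$-does not fork over $L$ as desired.
\end{proof}
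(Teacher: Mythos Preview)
Your proof is correct and essentially identical to the paper's own argument: unwind Definition \ref{forking_def} to obtain a model $M_0 \lea^u L$ over which $p$ does not $\lambda$-split, then apply Lemma \ref{split-fork} directly. The only difference is notational (you call the witness $M_0$ where the paper calls it $L_1$).
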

\begin{proof}
    Suppose $L \lea M$ in $\K_\lambda$, and $p \in \gS(M)$ does not $\lambda$-fork over $L$. Then by the definition of $\lambda$-forking (Definition \ref{forking_def}) there is $L_1 \in \K_\lambda$ such that $L_1 \lea^u L$ and $p$ does not $\lambda$-split over $L_1$. Hence $p$ $\dnf$-does not fork over $L$ by Lemma \ref{split-fork}.
\end{proof}

The following result is the latest of a family of results  dealing with canonicity of independence relations \cite[5.19]{bgkv} \cite[9.6]{vasey16}, \cite[2.5]{vasey18}.  A key difference between our result and those just mentioned is that a priori we do not know if $\lambda$-non-forking has uniqueness.

\begin{theorem}\label{nf-limits}
   Suppose $L \lea M$ are in $\K_\lambda$, $L$ is a $(\lambda, \geq \shortkappadnfu)$-limit model, and $p \in \gS(M)$. Then $p$ does not $\lambda$-fork over $L$ if and only if $p$ $\dnf$-does not fork over $L$.
\end{theorem}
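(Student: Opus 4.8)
The plan is to prove the two directions of the biconditional separately, with the backward direction being essentially already available and the forward direction requiring the work.

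For the direction ``$p$ $\dnf$-does not fork over $L$ $\implies$ $p$ does not $\lambda$-fork over $L$'': since $L$ is a $(\lambda, \geq \shortkappadnfu)$-limit model, fix a witnessing sequence $\langle L_i : i \leq \theta \rangle$ with $\theta \geq \shortkappadnfu$ regular. Apply $(\geq \shortkappadnfu)$-local character of $\dnf$ to the restriction $p \upharpoonright L$ along this chain: since $p$ $\dnf$-does not fork over $L$, in particular $p \upharpoonright L$ $\dnf$-does not fork over $L$, and we want to descend further. Actually it is cleaner to apply local character directly — there is $i < \theta$ with $p \upharpoonright L$ $\dnf$-does not fork over $L_i$, hence $p$ $\dnf$-does not fork over $L_i$ by transitivity (which holds by uniqueness and extension). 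Then by Fact \ref{dnf_implies_dns}, $p$ does not $\lambda$-split over $L_i$. Now pick $L_{i+1}$ in the witnessing sequence; we have $L_i \lea^u L_{i+1} \lea L$, so $L_i \lea^u L$ witnesses (via Definition \ref{forking_def}, taking $M_0 = L_i$) that $p$ does not $\lambda$-fork over $L$.

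For the converse ``$p$ does not $\lambda$-fork over $L$ $\implies$ $p$ $\dnf$-does not fork over $L$'': this is exactly Corollary \ref{split-fork2}, which does not even need $L$ to be a limit model. So this direction is immediate.

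The main subtlety — and the reason the hypothesis that $L$ is a $(\lambda, \geq \shortkappadnfu)$-limit model is needed — lies entirely in the first direction: without $L$ being a sufficiently long limit model, we would have no chain along which to apply $(\geq \shortkappadnfu)$-local character, and $\dnf$-non-forking over $L$ would not obviously imply $\dnf$-non-forking (equivalently $\lambda$-non-splitting, by Fact \ref{dnf_implies_dns}) over some $L_i$ with a universal extension inside $L$. I expect no real obstacle beyond bookkeeping: one must make sure the witnessing sequence is chosen so that $L_i \lea^u L_{i+1}$ for the relevant $i$ (which is automatic from the definition of limit model) and that $\theta \geq \shortkappadnfu$ so local character applies. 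One should also double-check that $\shortkappadnfu \leq \lambda$ (noted in the remark following Hypothesis \ref{short-limit-dnf-hypotheses}), so that $(\lambda, \geq \shortkappadnfu)$-limit models exist and the statement is not vacuous.
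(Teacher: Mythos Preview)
Your proposal is correct and follows essentially the same argument as the paper: one direction is Corollary \ref{split-fork2}, and for the other you use the witnessing chain for $L$, apply $(\geq \shortkappadnfu)$-local character to $p \upharpoonright L$, then transitivity and Fact \ref{dnf_implies_dns} to get $\lambda$-non-splitting over some $L_i \lea^u L$, hence $\lambda$-non-forking over $L$. The only difference is cosmetic (your labeling of ``forward'' and ``backward'' is swapped relative to the paper's).
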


\begin{proof}
    The forward implication follows from the Corollary \ref{split-fork2}. For the backward implication, suppose $p$ $\dnf$-does not fork over $L$. Let $\langle M_i : i \leq \theta \rangle$ witness that $L$ is a $(\lambda, \geq \shortkappadnfu)$-limit model. Then by $(\geq \shortkappadnfu)$-local character there is $i < \theta$ such that $p \rest L$ $\dnf$-does not fork over $M_i$. Then $p$ $\dnf$-does not fork over $M_i$ by transitivity of $\dnf$.  Hence $p$ does not $\lambda$-split over $M_i$ by  Fact \ref{dnf_implies_dns}.  As $M_i \lea^u L$, it follows that $p$ does not $\lambda$-fork over $L$.
\end{proof}

\begin{remark}
Theorem \ref{nf-limits} provides a correct proof of \cite[4.18]{leu2} assuming Hypothesis \ref{short-limit-dnf-hypotheses} (see Remark \ref{leung-uniqueness-necessary}).
\end{remark}

\subsection{Non-isomorphism results} This subsection has the main result of the section. Recall that we are still assuming  Hypothesis \ref{short-limit-dnf-hypotheses}.

 Part of the argument of the next Lemma is a special case of \cite[4.6]{sebastien-successive-categ}, when $\chi = \delta^+$ and $\theta = \delta$. We include that part of the argument for completeness. 

\begin{lemma}\label{s-dnf}
Suppose that $\K$ is $\delta$-tame for some regular $\delta < \lambda^+$. Suppose $\langle M_i : i \leq \delta \rangle$ is a $\lea^u$-increasing continuous chain in $\K_\lambda$.  If $M_\delta$ is $\delta^+$-saturated and $p \in \gS(M_\delta)$, then there is an $i < \delta$ such that $p$ $\dnf$-does not fork over $M_i$. 
\end{lemma}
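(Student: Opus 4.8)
The plan is to reduce the statement entirely to a statement about $\lambda$-non-splitting and then feed it into Lemma~\ref{split-fork}. Concretely, I would prove two things: (i) there is $i<\delta$ with $p$ \emph{not} $\lambda$-splitting over $M_i$, and (ii) that this already gives the conclusion. Step (ii) is the easy half: since $\delta$ is a regular (hence infinite) cardinal it is a limit ordinal, so $i+1<\delta$; we have $M_i\lea^u M_{i+1}\lea M_\delta$ with $p\in\gS(M_\delta)$, and all three models lie in $\K_\lambda$, so Lemma~\ref{split-fork} applies verbatim and yields that $p$ $\dnf$-does not fork over $M_{i+1}$, and $i+1<\delta$ is an index of the kind demanded. (Note that one genuinely needs $i<\delta$ rather than $i=\delta$ here, so a non-splitting statement over the \emph{limit} model would not suffice; this is exactly why the $\delta^+$-saturation hypothesis is doing real work.)

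Step (i) is the part that specializes \cite[4.6]{sebastien-successive-categ} with $\chi=\delta^+$ and $\theta=\delta$, and I would write it out for completeness. I would argue by contradiction: suppose $p$ $\lambda$-splits over $M_i$ for every $i<\delta$. Fix a realization $c$ of $p$ in a sufficiently saturated model extending $M_\delta$. For each $i$ fix witnesses $N_1^i,N_2^i\in\K_\lambda$ with $M_i\lea N_l^i\lea M_\delta$ and an isomorphism $f_i:N_1^i\to N_2^i$ fixing $M_i$ with $f_i(p\rest N_1^i)\neq p\rest N_2^i$; extend $f_i$ to an automorphism fixing $M_i$ pointwise and let $c_i$ be the image of $c$, so that $\gtp(c_i/M_i)=\gtp(c/M_i)$ while $\gtp(c_i/N_2^i)\neq\gtp(c/N_2^i)$. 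Using $\delta$-tameness, this last inequality is already witnessed on a subset $B_i\subseteq|N_2^i|$ with $|B_i|\leq\delta$. Then $B=\bigcup_{i<\delta}B_i\subseteq|M_\delta|$ has size $\leq\delta$, so by $\delta^+$-saturation of $M_\delta$ there is $c^*\in M_\delta$ with $\gtp(c^*/B)=\gtp(c/B)$; since $c^*$ is a single element and $\delta$ is regular, $c^*\in M_j$ for some $j<\delta$, and one plays $c^*$ against the splitting data $(N_1^j,N_2^j,f_j)$ over $M_j$ to contradict $\gtp(c_j/B_j)\neq\gtp(c/B_j)=\gtp(c^*/B_j)$.

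The delicate point --- and the reason step (i) is imported from \cite{sebastien-successive-categ} rather than being a one-liner --- is that $\lambda$-splitting is taken over the full model $M_j$, which has size $\lambda>\delta$, whereas $\delta$-tameness and $\delta^+$-saturation only allow control of $\delta$-sized pieces; reconciling the two requires either reformulating $\lambda$-splitting in $\delta$-local terms (legitimate under $\delta$-tameness) or choosing the sets $B_i$ coherently --- increasing, and with careful bookkeeping of $B_i\cap M_i$ --- so that the realization $c^*$ actually collides with a splitting witness. Apart from that bookkeeping the argument is routine given stability in $\lambda$ and the standing hypotheses, and everything after obtaining non-$\lambda$-splitting over some $M_i$ is immediate from Lemma~\ref{split-fork}; so the main obstacle is entirely contained in making this tameness-plus-saturation localization go through around the $\lambda$-sized base model in the definition of $\lambda$-splitting.
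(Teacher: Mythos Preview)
Your proposal is correct and follows essentially the same route as the paper: reduce to non-$\lambda$-splitting over some $M_i$ and then apply Lemma~\ref{split-fork}, proving the former by contradiction via $\delta$-tameness and $\delta^+$-saturation exactly as in \cite[4.6]{sebastien-successive-categ}.

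One remark on your final paragraph: the ``delicate point'' you flag is genuinely there, but the resolution is simpler than the increasing-sequence bookkeeping you anticipate. In your notation, to push the contradiction through at stage $j$ you need $c^*$ to agree with $c$ not only on $B_j\subseteq |N_2^j|$ but also on $f_j^{-1}(B_j)\subseteq |N_1^j|$ (since applying the automorphism $\tilde f_j$ moves the parameter set). The paper handles this by simply enlarging each $B_i$ to $f_i^{-1}(A_i)\cup A_i$ from the start; this is still of size $\le\delta$, so $B=\bigcup_i B_i$ still has size $\le\delta$, and the saturation step and the rest of the argument go through verbatim with no further coherence conditions needed. So there is no need to reformulate $\lambda$-splitting $\delta$-locally or to make the $B_i$ increasing.
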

\begin{proof}
Observe that it is enough to show that $p$ does not $\lambda$-split over $M_i$ for some $i < \delta$ since then one has that $p$ $\dnf$-does not fork over $M_{i+1}$ for some $i < \delta$ by Lemma \ref{split-fork}.

Suppose for a contradiction that $p$ $\lambda$-splits over $M_i$ for every $i < \delta$. Then for each $i < \delta$, there exist $N_i^1, N_i^2 \in \K_\lambda$ and $f_i : N_i^1 \underset{M_i}{\cong} N_i^2$ such that $M_i \lea N_i^l \lea M_\delta$ for $l = 1, 2$ and $f_i(p \upharpoonright N_i^1) \neq p \upharpoonright N_i^2$.

    For each $i < \delta$, by $\delta$-tameness there exists $A_i \subseteq N_i^2$ such that $|A_i| \leq \delta$ and 
    \begin{equation}\tag{$\dagger$}\label{Ai-splits}
    f_i(p \upharpoonright N_i^1) \upharpoonright A_i \neq (p \upharpoonright N_i^2) \upharpoonright A_i
    \end{equation}

    For each $i < \delta$, let $B_i = f_i^{-1}(A_i) \cup A_i$. Let $B = \bigcup_{i < \delta} B_i$. Note that $|B| \leq \delta$ as $|B_i| \leq \delta$ for all $i < \delta$.

    Since $M_\delta$ is $\delta^+$-saturated, there exists $b \in M$ such that $\gtp(b/B, M_\delta) = p \upharpoonright B$. As $\langle M_i : i \leq \delta \rangle$ is continuous, there is some $i < \delta$ such that $b \in M_i$. Let $g: M_\delta \cong N$ such that $f_i \subseteq g$ and $N_i^2 \lea N$. 

 On one hand. 
    \begin{align*}
        (p \upharpoonright N_i^2) \upharpoonright A_i = p \upharpoonright A_i = \gtp(b/A_i, M_{\delta}) = \gtp(b/A_i, N_i^2) = \gtp(b/A_i, N) 
        \end{align*} 
        
        where the last two equalities follow from $b \in M_i \leq N_i^2 \lea M_\delta$ and $N_i^2 \lea N$. 

On the other hand, 
\begin{align*}        
        f_i(p \upharpoonright N_i^1) \upharpoonright A_i =  f_i(p \upharpoonright f^{-1}_i(A_i)) =  g(\gtp(b/f^{-1}_i(A_i), M_{\delta})) =  \gtp(g(b)/A_i, N) =  \gtp(b/A_i, N)
    \end{align*}
where the last equality follows from the fact that $b \in M_i$ and $g \upharpoonright M_i = f_i \upharpoonright M_i = \id_{M_i}$.

Hence $(p \upharpoonright N_i^2) \upharpoonright A_i = f_i(p \upharpoonright N_i^1) \upharpoonright A_i$. This contradicts equation (\ref{Ai-splits}).
\end{proof}

\begin{lemma}\label{key-no}
  Suppose $\mu_1 < \mu_2  < \lambda^+$ are infinite regular cardinals and that $\K$ is $\mu_1$-tame. If $M_1$ is a $(\lambda, \mu_1)$-limit model, $M_2$ is a $(\lambda, \mu_2)$-limit model, and $M_1$ is isomorphic to $M_2$, then $\shortkappadnfu \leq \mu_1$. 
\end{lemma}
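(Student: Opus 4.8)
The plan is to prove directly that $\mu_1$ lies in $\underline{\kappa}(\dnf, \K_\lambda, \lea^u)$ and then read off $\kappadnfu \le \mu_1$ from Remark \ref{weak-loc-char-iff-strong}. First I would record the relevant saturation: since $\mu_1 < \mu_2$ are regular we have $\mu_1^+ \le \mu_2$, and since $\mu_2 < \lambda^+$ we have $\mu_2 \le \lambda$, so by Fact \ref{long-limits-are-saturated} (whose hypotheses hold, as $\K_\lambda$ has AP under Hypothesis \ref{short-limit-dnf-hypotheses}) the model $M_2$ is $\mu_2$-saturated, hence $\mu_1^+$-saturated. Transporting along the given isomorphism $M_1 \cong M_2$, the model $M_1$ is $\mu_1^+$-saturated as well; moreover, by Fact \ref{cofinalityiso*} (using $\lambda$-JEP) \emph{every} $(\lambda, \mu_1)$-limit model is isomorphic to $M_1$, hence $\mu_1^+$-saturated.

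Next I would verify $\mu_1 \in \underline{\kappa}(\dnf, \K_\lambda, \lea^u)$. Fix an arbitrary $\lea^u$-increasing continuous chain $\langle N_i : i \le \mu_1 \rangle$ in $\K_\lambda$ and a type $p \in \gS(N_{\mu_1})$. By Definition \ref{univ-limit-model-def}, $N_{\mu_1}$ is a $(\lambda, \mu_1)$-limit model, so by the previous paragraph it is $\mu_1^+$-saturated. Now apply Lemma \ref{s-dnf} with $\delta = \mu_1$: this is legitimate because $\mu_1$ is regular with $\mu_1 < \lambda^+$, $\K$ is $\mu_1$-tame by hypothesis, and $N_{\mu_1}$ is $\mu_1^+$-saturated. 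It yields $i < \mu_1$ such that $p$ $\dnf$-does not fork over $N_i$. As the chain and $p$ were arbitrary, $\mu_1 \in \underline{\kappa}(\dnf, \K_\lambda, \lea^u)$.

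Finally, $\dnf$ satisfies extension, uniqueness, and universal continuity by Hypothesis \ref{short-limit-dnf-hypotheses}, so Remark \ref{weak-loc-char-iff-strong} gives $\kappadnfu = \min\bigl(\underline{\kappa}(\dnf, \K_\lambda, \lea^u) \cap \operatorname{Reg}\bigr)$. Since $\mu_1$ is a regular cardinal belonging to $\underline{\kappa}(\dnf, \K_\lambda, \lea^u)$, we conclude $\kappadnfu \le \mu_1$, as desired.

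I do not expect a genuine obstacle here; the argument is short, essentially just packaging Lemma \ref{s-dnf}. The only two points requiring care are: (i) Lemma \ref{s-dnf} needs $\delta^+$-saturation — not mere $\delta$-saturation — of the top model, which is why having $\mu_2 \ge \mu_1^+$ (automatic from $\mu_1 < \mu_2$ regular) is exactly the amount of room needed, rather than just $\mu_2 > \mu_1$; and (ii) one must pass through the isomorphism chain $N_{\mu_1} \cong M_1 \cong M_2$ so that the saturation conclusion is available for \emph{every} continuous $\lea^u$-chain of length $\mu_1 + 1$, since the definition of $\underline{\kappa}(\dnf, \K_\lambda, \lea^u)$ quantifies over all such chains, not merely one witnessing $M_1$.
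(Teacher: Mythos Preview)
Your proof is correct and follows essentially the same approach as the paper's: both show $\mu_1 \in \underline{\kappa}(\dnf, \K_\lambda, \lea^u)$ by transporting $\mu_1^+$-saturation from $M_2$ to an arbitrary $(\lambda,\mu_1)$-limit model via Facts \ref{long-limits-are-saturated} and \ref{cofinalityiso*}, then apply Lemma \ref{s-dnf}. Your invocation of Remark \ref{weak-loc-char-iff-strong} to conclude $\kappadnfu \le \mu_1$ is slightly more explicit than the paper's appeal to ``minimality of $\shortkappadnfu$'', but the content is identical.
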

\begin{proof}
It is enough to show that $ \mu_1 \in \underline{\kappa}(\dnf, \K_\lambda, \lea^{u})$ by the minimality of $\shortkappadnfu$. Let $\langle N_i : i \leq \mu_1 \rangle$ be a $\lea^u$-increasing continuous chain in $\K_\lambda$ and  $p \in \gS(N_{\mu_1})$. 

As $M_1$ is isomorphic to $M_2$ and $M_2$ is a $(\lambda, \mu_2)$-limit model, it follows from Fact \ref{long-limits-are-saturated} that $M_1$ is $\mu_2$-saturated. So in particular $M_1$ is $(\mu_1)^+$-saturated. Since $M_1$ is a $(\lambda, \mu_1)$-limit model, $M_1$ is isomorphic to $N_{\mu_1}$ by Fact \ref{cofinalityiso*}. Hence $N_{\mu_1}$ is $(\mu_1)^+$-saturated. Then there is $i < \mu_1$ such that $p$ $\dnf\text{-does not fork over } N_i$ by Lemma \ref{s-dnf} as $\K$ is $\mu_1$-tame. Therefore $\mu_1 \in \underline{\kappa}(\dnf, \K_\lambda, \lea^{u})$. 
\end{proof}

\begin{remark}
    We only used $\mu_1$-tameness in Lemma \ref{key-no}. However we can only use the lemma to understand the full spectrum of limit models if we assume $\aleph_0$-tameness.
\end{remark}

The main result of this section follows from the previous Lemma and the fact that if $\K$ is $\aleph_0$-tame then it is $\mu$-tame for every infinite cardinal $\mu$.  We again emphasise the hypotheses as this is a key result,

\smalllimitsarenoniso*

Tameness plays a key role in the main result of this section. A key natural question is the following.

\begin{probl}\label{p-nontame}
Is Theorem \ref{non-iso} true without the tameness assumption?
\end{probl}
\section{General results}

The objective of this section is to  combine the results of Section 3 and 4 in a natural set up.  We hope that these results  can be used as black boxes when studying limit models in natural abstract elementary classes. We showcase how this can be achieved in the next section.


\subsection{Main results} The following result puts together the main results of the previous two sections. As such the result is a local result on limit models.

\begin{restatable}{theorem}{completelimitmodelpicture}\label{full-picture}
    Let $\K$ be a $\aleph_0$-tame AEC stable in $\lambda \geq \LS(\K)$ with AP, JEP, and NMM in $\K_\lambda$. Let $\kappa < \lambda^+$ be regular. Let $\dnf$ be an independence relation on $\K_\lambda$ that satisfies uniqueness, extension, non-forking amalgamation, universal continuity, and $(\geq \kappa)$-local character.

    Suppose $\delta_1, \delta_2 < \lambda^+$ with $\cof(\delta_1) < \cof(\delta_2)$. Then for any $N_1, N_2, M \in \K_\lambda$ where $N_l$ is a $(\lambda, \delta_l)$-limit model over $M$ for $l = 1, 2$, 
    \[N_1 \text{ is isomorphic to } N_2 \text{ over } M  \iff \cof(\delta_1) \geq \kappadnfu\]
    
    Moreover, for any $N_1, N_2, \in \K_\lambda$ where $N_l$ is a $(\lambda, \delta_l)$-limit model for $l = 1, 2$, 
    \[N_1 \text{ is isomorphic to } N_2 \iff \cof(\delta_1) \geq \kappadnfu\]
  
\end{restatable}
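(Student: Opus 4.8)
\textbf{Proof plan for Theorem \ref{full-picture}.}

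The plan is to assemble the statement from the two main theorems of Sections 3 and 4, after checking that the hypotheses of each are satisfied. First I would observe that the blanket assumptions — $\K$ an $\aleph_0$-tame AEC stable in $\lambda \geq \LS(\K)$ with $\lambda$-AP, $\lambda$-JEP, $\lambda$-NMM, and $\dnf$ an independence relation on $\K_\lambda$ with uniqueness, extension, universal continuity, $(\geq \kappa)$-local character in some regular $\kappa \leq \lambda$, and non-forking amalgamation — are exactly what is needed for Hypothesis \ref{short-limit-dnf-hypotheses} (note $\dnf$ automatically satisfies invariance, monotonicity, base monotonicity as part of being an independence relation, and transitivity follows from uniqueness and extension). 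By Remark \ref{weak-loc-char-iff-strong}, the minimal such $\kappa$ is $\kappadnfu$, and $\kappadnfu \leq \lambda < \infty$. Also, by Remark \ref{long-lim-can-restrict}, the restriction of $\dnf$ to $\Kkappalims$ with $\kappa = \kappadnfu$ satisfies Hypothesis \ref{long-limit-dnf-hypotheses}: all the relevant properties transfer down, and non-forking amalgamation transfers by replacing the ambient model by a $(\lambda, \geq \kappadnfu)$-limit model extending it.

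Next I would prove the "over $M$" equivalence. Write $\kappa = \kappadnfu$ and note $\cof(\delta_1) < \cof(\delta_2)$. For the ($\Leftarrow$) direction, suppose $\cof(\delta_1) \geq \kappa$; then also $\cof(\delta_2) > \cof(\delta_1) \geq \kappa$, so both cofinalities are $\geq \kappa$, and both $\delta_l < \lambda^+$, hence $\cof(\delta_l) \leq \lambda$. Applying Theorem \ref{largelimitsareisothm*} to the restriction of $\dnf$ to $\Kkappalims$ (which satisfies Hypothesis \ref{long-limit-dnf-hypotheses} by the previous paragraph), with the given $M, N_1, N_2$, yields an isomorphism $N_1 \to N_2$ fixing $M$. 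For the ($\Rightarrow$) direction, suppose $\cof(\delta_1) < \kappa = \kappadnfu$. Since $\cof(\delta_1) < \cof(\delta_2)$, we have $\cof(\delta_1) \neq \cof(\delta_2)$. Then Theorem \ref{non-iso} (whose hypotheses — Hypothesis \ref{short-limit-dnf-hypotheses} and $\aleph_0$-tameness — hold) gives that the $(\lambda, \delta_1)$-limit model is not isomorphic to the $(\lambda, \delta_2)$-limit model; since $N_1$ is a $(\lambda, \delta_1)$-limit and $N_2$ a $(\lambda, \delta_2)$-limit, $N_1 \not\cong N_2$, so a fortiori there is no isomorphism over $M$. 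This proves the biconditional; contrapositively, if $N_1 \cong_M N_2$ then $\cof(\delta_1) \geq \kappadnfu$, and conversely.

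For the "moreover" part, the argument is essentially the same but uses the global ($\lambda$-JEP) forms of the cited theorems. For ($\Leftarrow$): if $\cof(\delta_1) \geq \kappadnfu$, then as above both cofinalities are $\geq \kappadnfu$, and the "moreover" clause of Theorem \ref{largelimitsareisothm*} (applied to the restricted $\dnf$) gives $N_1 \cong N_2$ directly. For ($\Rightarrow$): if $\cof(\delta_1) < \kappadnfu$, then $\cof(\delta_1) \neq \cof(\delta_2)$ and Theorem \ref{non-iso} applies verbatim — the $(\lambda, \delta_l)$-limit models are well-defined up to isomorphism by Fact \ref{cofinalityiso*} since $\K_\lambda$ has AP and JEP, and Theorem \ref{non-iso} says they are non-isomorphic, so $N_1 \not\cong N_2$. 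I do not anticipate a genuine obstacle here: the only points requiring care are (i) verifying that $\dnf$ restricted to $\Kkappalims$ still satisfies non-forking amalgamation (handled by Remark \ref{long-lim-can-restrict}), and (ii) correctly tracking that $\cof(\delta_1) < \cof(\delta_2)$ forces "$\cof(\delta_1) \geq \kappadnfu$" to be equivalent to "both cofinalities $\geq \kappadnfu$", which is what lets a single threshold condition govern both theorems. The mild subtlety that $\cof(\delta_l)$ rather than $\delta_l$ itself is what matters is already absorbed into the statements of Theorems \ref{largelimitsareisothm*} and \ref{non-iso} together with Fact \ref{cofinalityiso*}.
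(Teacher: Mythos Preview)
Your proposal is correct and follows essentially the same approach as the paper's proof: verify that the assumptions yield both Hypothesis \ref{long-limit-dnf-hypotheses} (with $\kappa = \kappadnfu$, via Remark \ref{long-lim-can-restrict}) and Hypothesis \ref{short-limit-dnf-hypotheses}, then invoke Theorem \ref{largelimitsareisothm*} for the backward implications and Theorem \ref{non-iso} for the forward implications. Your write-up is more explicit about the verification steps (e.g.\ the transfer of non-forking amalgamation and the role of $\cof(\delta_1) < \cof(\delta_2)$), but the underlying argument is the same.
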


\begin{proof}
    We can restrict ourselves to regular $\delta_1, \delta_2$ by Fact \ref{cofinalityiso*}. Our assumptions imply that $\K$ and $\dnf$ satisfy Hypothesis \ref{long-limit-dnf-hypotheses} with $\kappa = \kappadnfu$ and Hypothesis \ref{short-limit-dnf-hypotheses}. The conclusion of Theorem \ref{non-iso} gives both of the forward implications. The conclusion to Theorem \ref{largelimitsareisothm*} gives both of the reverse implications.
\end{proof}

\begin{remark}
The value of Theorem \ref{full-picture} is that we can fully understand the limit models locally (that is, in $\K_\lambda$) just by computing $\kappadnfu$. 
\end{remark}
\begin{remark}
Observe that in Theorem \ref{full-picture} one can substitute every occurrence of  ${\kappa(\dnf, \K_\lambda, \lea^u)}$ with $\kappa(\dnf_{\splt}, \K_\lambda, \lea^u)$ by Corollary \ref{equal-kappa}. 
\end{remark}

Conjecture 1.1 of \cite{bovan} follows directly from the previous result under the assumptions of Theorem \ref{full-picture}.
 
 \begin{cor}
Let $\K$ be a $\aleph_0$-tame abstract elementary class stable in regular $\lambda \geq \LS(\K)$, with AP, JEP, and NMM in $\K_\lambda$. Let $\kappa < \lambda^+$ be a regular cardinal. Suppose $\dnf$ is an independence relation on $\K_\lambda$ that satisfies uniqueness, extension, non-forking amalgamation, universal continuity, and $(\geq \kappa)$-local character. Let

\[ \Gamma =  \{ \alpha < \lambda^+ : cf(\alpha) = \alpha \text{ and  the } (\lambda, \alpha)\text{-limit model is isomorphic to the } (\lambda, \lambda)\text{-limit model } \}. \]

Then  $\Gamma =[\kappadnfu, \lambda^+) \cap \operatorname{Reg}$. 
 \end{cor}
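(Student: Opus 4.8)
The plan is to unwind the definitions and apply Theorem \ref{full-picture} directly, since the corollary is essentially a cosmetic reformulation.

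First I would observe that $\lambda$ regular means $\lambda < \lambda^+$ is itself a regular cardinal, so the $(\lambda, \lambda)$-limit model makes sense, and $\cof(\lambda) = \lambda$. For an arbitrary ordinal $\alpha < \lambda^+$ with $\cof(\alpha) = \alpha$ (i.e., $\alpha$ regular), I want to show $\alpha \in \Gamma$ if and only if $\alpha \geq \kappadnfu$. Since both $\alpha$ and $\lambda$ are regular, I can apply Theorem \ref{full-picture} to the pair $\delta_1 = \alpha$, $\delta_2 = \lambda$ (or $\delta_1 = \lambda$, $\delta_2 = \alpha$, depending on which is smaller — the theorem is stated for $\cof(\delta_1) < \cof(\delta_2)$, but the conclusion is symmetric in the sense that isomorphism is symmetric, and we only need the case of distinct cofinalities plus the trivial case $\alpha = \lambda$).

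Concretely: if $\alpha = \lambda$, then trivially $\alpha \in \Gamma$, and $\lambda \geq \kappadnfu$ holds since $\kappadnfu \leq \lambda$ by the remark following Hypothesis \ref{short-limit-dnf-hypotheses}, so the equivalence holds. If $\alpha \neq \lambda$, then $\cof(\alpha) = \alpha \neq \lambda = \cof(\lambda)$, so Theorem \ref{full-picture} (applied with the smaller of $\alpha, \lambda$ as $\delta_1$) tells us the $(\lambda, \alpha)$-limit model is isomorphic to the $(\lambda, \lambda)$-limit model if and only if $\min(\alpha, \lambda) \geq \kappadnfu$, i.e. if and only if $\alpha \geq \kappadnfu$ (using $\lambda \geq \kappadnfu$ again). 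Hence $\alpha \in \Gamma \iff \alpha \geq \kappadnfu$, which is exactly $\Gamma = [\kappadnfu, \lambda^+) \cap \operatorname{Reg}$.

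There is essentially no obstacle here; the only mild subtlety is matching the hypotheses of Theorem \ref{full-picture} (which needs $\cof(\delta_1) < \cof(\delta_2)$) to our symmetric situation, handled by swapping roles and invoking $\kappadnfu \leq \lambda$, and noting that the case $\alpha = \lambda$ must be treated separately since Theorem \ref{full-picture} presupposes distinct cofinalities. I would write this up in a few lines.

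\begin{proof}
    Since $\lambda$ is regular, $\cof(\lambda) = \lambda$, and by the remark following Hypothesis \ref{short-limit-dnf-hypotheses}, $\kappadnfu \leq \lambda$. Our assumptions imply $\K$ and $\dnf$ satisfy the hypotheses of Theorem \ref{full-picture}.

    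Let $\alpha < \lambda^+$ with $\cof(\alpha) = \alpha$, i.e. $\alpha$ is regular. If $\alpha = \lambda$, then trivially the $(\lambda, \alpha)$-limit model is isomorphic to the $(\lambda, \lambda)$-limit model, so $\alpha \in \Gamma$; and $\alpha = \lambda \geq \kappadnfu$, so $\alpha \in [\kappadnfu, \lambda^+) \cap \operatorname{Reg}$.

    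Now suppose $\alpha \neq \lambda$. Then $\cof(\alpha) = \alpha \neq \lambda = \cof(\lambda)$. Let $\delta_1, \delta_2$ be $\alpha, \lambda$ ordered so that $\cof(\delta_1) < \cof(\delta_2)$. By Fact \ref{limitsexist*}, $(\lambda, \alpha)$- and $(\lambda, \lambda)$-limit models exist, and by Fact \ref{cofinalityiso*} they are unique up to isomorphism. By Theorem \ref{full-picture}, the $(\lambda, \delta_1)$-limit model is isomorphic to the $(\lambda, \delta_2)$-limit model if and only if $\cof(\delta_1) \geq \kappadnfu$; equivalently, the $(\lambda, \alpha)$-limit model is isomorphic to the $(\lambda, \lambda)$-limit model if and only if $\min(\alpha, \lambda) \geq \kappadnfu$. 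Since $\lambda \geq \kappadnfu$, this holds if and only if $\alpha \geq \kappadnfu$. Hence $\alpha \in \Gamma$ if and only if $\alpha \in [\kappadnfu, \lambda^+) \cap \operatorname{Reg}$.

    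Combining the two cases, $\Gamma = [\kappadnfu, \lambda^+) \cap \operatorname{Reg}$.
\end{proof}
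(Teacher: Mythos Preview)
Your proposal is correct and matches the paper's approach: the paper simply states that the corollary follows directly from Theorem \ref{full-picture} without giving an explicit proof, and your argument is precisely the unwinding of that claim, handling the case $\alpha = \lambda$ separately and using $\kappadnfu \leq \lambda$ to simplify $\min(\alpha,\lambda) \geq \kappadnfu$ to $\alpha \geq \kappadnfu$.
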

 
We now focus on obtaining a global result. For the rest of this section we assume the following hypothesis.

\begin{hypothesis}\label{global-short-limit-dnf-hypotheses}
    Let $\K$ be a stable AEC with AP, JEP and NMM. Let $\kappa < \LS(\K)^{++}$ be a regular cardinal. Let $\dnf$ be an independence relation on $\K$ that satisfies invariance, monotonicity, base monotonicity, uniqueness, extension, non-forking amalgamation, universal continuity, and $(\geq \kappa)$-local character.
\end{hypothesis}

Note we do not assume tameness. This will be assumed explicitly when necessary for generality.

The following is essentially \cite[8.8]{lrv1}, but the argument there has a minor error: if $A \nsupseteq M_0$ and $p \in \gS(A)$, then whether $p$ $\dnf$-forks over $M_0$ or not is not well defined. This is easy to correct - instead of using uniqueness on the types $p \upharpoonright A$ and $q \upharpoonright A$, apply it to $p \upharpoonright M_0 \cup A$ and $q \upharpoonright M_0 \cup A$, which are both $\dnf$-non-forking. This still gives $p \upharpoonright A = q \upharpoonright A$. We provide the details in our context for convenience (note $M_0 \cup A$ is replaced by some model $N_0$ containing $|M_0| \cup A$ since we have not defined $\dnf$-forking over types with domains that are not models).

\begin{fact}\label{tame-gives-wp}
Suppose $\K$ is stable in $\lambda \geq \LS(\K)$, and $\theta \leq \lambda$.  If $\K$ is $(<\theta)$-tame, then $\dnf$ has the $(<\theta)$-witness property. In particular if $\K$ is $(<\aleph_0)$-tame, then  $\dnf$ has the $(<\aleph_0)$-witness property.\footnote{Fact \ref{tame-gives-wp} holds assuming only that $\K$ is an AEC with an independence relation $\dnf$ satisfying extension and uniqueness, rather than all of Hypothesis \ref{global-short-limit-dnf-hypotheses}.}
\end{fact}

\begin{proof}
    Suppose $M \lea N$ in $\K$ and $p \in \gS(N)$ satisfies that for all $A \subseteq |N|$ where $|A| < \theta$, there exists $N_0 \lea N$ with $A \cup |M| \subseteq |N_0|$ and $p \upharpoonright N_0$ $\dnf$-does not fork over $M$. We must show $p$ $\dnf$-does not fork over $M$.

    Using $A = \varnothing$ and monotonicity, $p \upharpoonright M$ $\dnf$-does not fork over $M$. By extension, there is $q \in \gS(N)$ extending $p \upharpoonright M$ such that $q$ $\dnf$-does not fork over $M$. It suffices to show $p = q$. By $(<\theta)$-tameness, it is enough to show $p \upharpoonright A = q \upharpoonright A$ for all $A \subseteq |N|$ where $|A| < \theta$. 
    
    So fix $A \subseteq |N|$ with $|A| < \theta$. By our hypothesis, there exists $N_0 \lea N$ with $A \cup |M| \subseteq |N_0|$ and $p \upharpoonright N_0$ $\dnf$-does not fork over $M$. By monotonicity, $q \upharpoonright N_0$ $\dnf$-does not fork over $M$ either. As $p \upharpoonright M = q \upharpoonright M$, uniqueness gives $p \upharpoonright N_0 = q \upharpoonright N_0$. In particular, $p \upharpoonright A = q \upharpoonright A$, as desired.
\end{proof}

We show \cite[4.5]{vaseyt} holds under Hypothesis \ref{global-short-limit-dnf-hypotheses} when non-splitting is replaced by $\dnf$ for $\lambda \geq \LS(\K)$.

\begin{lemma}\label{kappa-increases}
    Let $\K$ be stable in $\lambda$ and $\mu$ with  $LS(\K) \leq \mu < \lambda$ and let $\theta \leq \mu$. If $\dnf$ has the $\theta$-witness property, then $\underline{\kappa}(\dnf, \K_\mu, \lea^{u}) \subseteq \underline{\kappa}(\dnf, \K_\lambda, \lea^{u})$. In particular, ${\kappa(\dnf, \K_\lambda, \lea^{u})} \leq \kappa(\dnf, \K_\mu, \lea^{u})$.\footnote{Lemma \ref{kappa-increases} holds assuming only that $\K$ is an AEC with an independence relation $\dnf$ satisfying extension, uniqueness, and universal continuity, rather than all of Hypothesis \ref{global-short-limit-dnf-hypotheses}.}
\end{lemma}

\begin{proof}
    Let $\delta \in \underline{\kappa}(\dnf, \K_\mu, \lea^{u})$. We may assume $\delta$ is regular, as whether $\delta \in \underline{\kappa}(\dnf, \K_\lambda, \lea^{u})$ is determined by $\cof(\delta)$. Suppose $\langle M_i : i \leq \delta \rangle$ is a $\lea^u$-increasing continuous chain in $\K_\lambda$ and $p \in \gS(M_\delta)$.

    Suppose for contradiction that $p$ $\dnf$- forks over $M_i$ for all $i < \delta$. First we show we can assume without loss of generality that for each $i< \delta$, $M_{i+1}$ is a $(\lambda, \mu^+)$-limit over $M_i$. Taking $M_i' = M_i$ for $i = 0$ or limit, and $M'_{i+1}$ to be a $(\lambda, \mu^+)$-limit over $M_i$ with $M_{i+1}' \lea M_{i+1}$ (using universality of $M_{i+1}$ over $M_i$), we see the sequence $\langle M'_i : i < \delta \rangle$ satisfies all the same conditions as $M_i$ - that is, it is continuous, $M'_\delta = M_\delta$, and $p$ $\dnf$-forks over $M_i'$ for $i < \delta$ by base monotonicity. Hence (replacing $M_i$ with $M_i'$) we may assume for $i < \delta$ that $M_{i+1}$ is $(\lambda, \mu^+)$-limit over $M_i$.

    Now we show that without loss of generality this sequence witnesses failure of universal weak local character (this is similar to \cite[11(1)]{bgvv16}), i.e., that  $p \upharpoonright M_{i+1}$ $\dnf$-forks over $M_i$ for all $i < \delta$.
    
      Construct a continuous $<$-increasing sequence of ordinals $\langle i_j : j \leq \delta \rangle$ such that $i_j < \delta$ for $j < \delta$, and $p \upharpoonright M_{i_{j+1}}$ $\dnf$-forks over $M_{i_j}$. To do this, set $i_0 = 0$, at limits take $i_j = \sup_{j' < j} i_{j'}$ (possible by regularity of $\delta$), and for successors, since $p$ $\dnf$-forks over $M_{i_j+1}$, by applying universal continuity to $p$ and the sequence $\langle M_i : i > i_j +1 \rangle$, there is $i_{j+1} > i_j + 1$ such that $p \upharpoonright M_{i_{j+1}}$ $\dnf$-forks over $M_{i_j+1}$. Setting $M_0'' = M_0$, $M_j'' = M_{i_j}$ for $j< \delta$ limit, and $M_j'' = M_{i_j + 1}$ for $j < \delta$ successor, we have that $\langle M_j'' : j < \delta \rangle$ is $\lea^u$-increasing continuous with $p \upharpoonright M_{j+1}''$ $\dnf$-forking over $M_j''$, and $M_{j+1}''$ is $(\lambda, \mu^+)$-limit over $M_j''$ for all $j < \delta$ (this is why we used $i_j + 1$ rather than $i_j$, which may not be a successor). Thus without loss of generality we now have (replacing $M_i$ by $M_i''$) that $p \upharpoonright M_{i+1}$ $\dnf$-forks over $M_i$ for all $i < \delta$.
    
    We now construct $\langle N_i : i \leq \delta \rangle$, a $\lea^u$-increasing continuous sequence in $\K_\mu$, such that for all $i < \delta$,

    \begin{enumerate}
        \item $N_i \lea M_i$
        \item $p \upharpoonright N_{i+1}$ $\dnf$-forks over $N_i$.
    \end{enumerate}
    
    \textbf{This is possible:} We may take $N_0$ to be any $N_0 \lea M_0$ in $\K_\mu$, which satisfies requirement (1). At limit $i$, take unions (which preserves requirements (1) and (2)). 
    
    For successors, given $N_i$, we know $p \upharpoonright M_{i+1}$ $\dnf$-forks over $N_i$ by base monotonicity. By the $\theta$-witness property, there is $A \subseteq |M_{i+1}|$ where $|A|\leq \theta$ such that for all $N \lea M_{i+1}$ with $A \cup |N_i| \subseteq N$, $p \upharpoonright N$ $\dnf$-forks over $N_i$. Let $\langle M_i^j : j < \mu^+ \rangle$ witness that $M_{i+1}$ is $(\lambda, \mu^+)$-limit over $M_i$. As $|A| \leq \theta \leq \mu$ and $\| N_i \| \leq \mu$, there is some $j < \mu^+$ such that $A \cup |N_i| \subseteq |M_i^j|$. As $M_i^{j+1}$ is universal over $M_i^j$, there is $N_{i+1}  \in \K_\mu$ where $N_i \lea^u N_{i+1}  \lea M_{i+1}$ and $A \subseteq N_{i+1}$. By our choice of $A$, we know $p \upharpoonright N_{i+1}$ $\dnf$-forks over $N_i$, so this $N_{i+1}$ is as required. This completes the construction.

    \textbf{This is enough:} This sequence $\langle N_i : i \leq \delta \rangle$ along with $p \upharpoonright N_\delta$ shows that $\delta \notin {\underline{\kappa}^{\operatorname{wk}}(\dnf, \K_\mu, \lea^{u})}$. But this contradicts Remark \ref{weak-loc-char-iff-strong}.
\end{proof}

\begin{remark}\label{t-cont}
A couple of results in this section will use  results of \cite{vaseyt} which assume that  splitting has universal continuity. When we use those results we will assume that the AEC is $(<\aleph_0)$-tame. As universal continuity of splitting follows from  $(<\aleph_0)$-tameness (by \cite[2.5(1)]{maya24} and \cite[3.2]{mvy}) we can use those results.

In those results, instead of assuming   $(<\aleph_0)$-tameness, one could assume $\aleph_0$-tameness and work in  $\K_{\geq \LS(\K)^+}$ instead of in $\K$ (see Remark \ref{aleph0-tameness-is-enough} for more details).
\end{remark}

The notation of the following two definitions is similar to that of \cite[4.6]{vaseyt}.

\begin{defin}
Assume $\dnf$ is an independence relation.
    \begin{enumerate}
        \item $\stab(\K) = \{\lambda \geq \LS(\K) : \K \text{ is stable in } \lambda \}$
        \item $\underline{\chi}(\dnf, \K \lea^{u}) = \bigcup_{\lambda \in \operatorname{Stab}(\K)} \underline{\kappa}(\dnf, \K_\lambda, \lea^{u}) $ 
        \item  $\chi(\dnf, \K, \lea^{u}) = \operatorname{min} (\underline{\chi}(\dnf, \K \lea^{u}) \cap \text{Reg})$ if it exists or $\infty$ otherwise.
    \end{enumerate}
\end{defin}

\begin{remark}\label{chi-dif-def}
    Our definitions differ slightly in character from \cite[4.6]{vaseyt}. In particular $\chi(\dnf_{\splt}, \K, \lea^{u})$ is defined differently to $\chi(\K)$ from \cite[4.6]{vaseyt}, but they are the same assuming Hypothesis \ref{global-short-limit-dnf-hypotheses} and $(<\aleph_0)$-tameness  by \cite[2.9, 4.5]{vaseyt}.
\end{remark}

\begin{cor}\label{equal-chi}
    If $\K$ is stable in $\lambda \geq \LS(\K)^+$, then $\underline{\kappa}(\dnf, \K_\lambda, \lea^u) = \underline{\kappa}(\dnf_{\splt}, \K_\lambda, \lea^u)$ and  $\kappa(\dnf, \K_\lambda, \lea^u) = \kappa(\dnf_{\splt}, \K_\lambda, \lea^u)$ . In particular, if $\K$ is $(<\aleph_0)$-tame then $\chi(\dnf, \K, \lea^{u})= \chi(\dnf_{\splt}, \K, \lea^{u})$.
\end{cor}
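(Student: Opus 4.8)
The plan is to reduce everything to the already-proved \emph{local} statement, Corollary \ref{equal-kappa}. First I would check that Hypothesis \ref{global-short-limit-dnf-hypotheses} feeds correctly into Hypothesis \ref{short-limit-dnf-hypotheses} at every stability cardinal $\lambda \geq \LS(\K)^+$: the class $\K_\lambda$ has AP, JEP and NMM by \ref{global-short-limit-dnf-hypotheses}(1); $\K$ is stable in $\lambda \geq \LS(\K)$ by assumption; and the restriction of $\dnf$ to $\K_\lambda$ is again an independence relation with invariance, monotonicity and base monotonicity (immediate), uniqueness, extension and universal continuity (these survive restriction because $\K_\lambda$ is closed under the constructions appearing in their statements, and $\lea^u$ is already a notion internal to $\K_\lambda$), and $(\geq\kappa)$-local character \emph{with the same} $\kappa$, since $\kappa \leq \LS(\K)^+ \leq \lambda$ and any $\lea^u$-increasing continuous chain of regular length $<\lambda^+$ in $\K_\lambda$ has its union in $\K_\lambda$. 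With this in hand, Corollary \ref{equal-kappa} immediately gives $\underline{\kappa}(\dnf,\K_\lambda,\lea^u) = \underline{\kappa}(\dnf_{\splt},\K_\lambda,\lea^u)$, and since by Definition \ref{kappa-defs}(3) both $\kappa(\dnf,\K_\lambda,\lea^u)$ and $\kappa(\dnf_{\splt},\K_\lambda,\lea^u)$ are computed by the same formula from these (now equal) sets, also $\kappa(\dnf,\K_\lambda,\lea^u) = \kappa(\dnf_{\splt},\K_\lambda,\lea^u)$. This settles the first two assertions.

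For the statement about $\chi$ under $(<\aleph_0)$-tameness, I would show that $\chi$ is controlled by the stability cardinals $\geq \LS(\K)^+$. By Fact \ref{tame-gives-wp}, $(<\aleph_0)$-tameness gives $\dnf$ the $(<\aleph_0)$-witness property, which trivially implies the $\theta$-witness property for every (infinite) $\theta$; so Lemma \ref{kappa-increases} applies and yields $\underline{\kappa}(\dnf,\K_\mu,\lea^u) \subseteq \underline{\kappa}(\dnf,\K_\lambda,\lea^u)$ whenever $\LS(\K)\leq\mu<\lambda$ with $\mu,\lambda \in \operatorname{Stab}(\K)$. Using the stability-transfer results available under $(<\aleph_0)$-tameness (see Remark \ref{t-cont} and \cite{vaseyt}), $\operatorname{Stab}(\K)$ is unbounded in the cardinals, hence contains cofinally many cardinals $\geq \LS(\K)^+$; combined with the monotonicity just noted, $\underline{\chi}(\dnf,\K,\lea^u) = \bigcup_{\lambda \in \operatorname{Stab}(\K)}\underline{\kappa}(\dnf,\K_\lambda,\lea^u)$ equals the directed union taken only over those $\lambda \geq \LS(\K)^+$. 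Hence $\lambda \mapsto \kappa(\dnf,\K_\lambda,\lea^u)$ is non-increasing on $\operatorname{Stab}(\K)$ (using Remark \ref{weak-loc-char-iff-strong} to identify it with $\min(\underline{\kappa}(\dnf,\K_\lambda,\lea^u)\cap\operatorname{Reg})$), hence \emph{eventually constant}, and its eventual value is precisely $\chi(\dnf,\K,\lea^u)$. By the first part this eventual value equals the eventual value of $\kappa(\dnf_{\splt},\K_\lambda,\lea^u)$, which in turn is $\chi(\dnf_{\splt},\K,\lea^u)$ --- either by the same \emph{eventually constant} analysis for $\dnf_{\splt}$, which has universal continuity by Remark \ref{t-cont}, or by transporting the constancy across the equality $\kappa(\dnf,\K_\lambda,\lea^u)=\kappa(\dnf_{\splt},\K_\lambda,\lea^u)$, noting Remark \ref{chi-dif-def}. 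Therefore $\chi(\dnf,\K,\lea^u) = \chi(\dnf_{\splt},\K,\lea^u)$.

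The main obstacle I anticipate is purely in the bookkeeping at $\lambda = \LS(\K)$: the reduction to Hypothesis \ref{short-limit-dnf-hypotheses} genuinely requires $\lambda \geq \LS(\K)^+$ (so that the local-character cardinal, which is only guaranteed $\leq \LS(\K)^+$, is $\leq\lambda$), so the equalities of the first paragraph are available only there, and for the $\chi$-statement one must make sure that whatever happens at $\LS(\K)$ itself cannot contribute a new small regular cardinal to $\underline{\chi}$ --- this is exactly what the witness-property-driven monotonicity (Lemma \ref{kappa-increases}) together with the presence of large stability cardinals is used for, and it is the reason both the $\LS(\K)^+$ in Hypothesis \ref{global-short-limit-dnf-hypotheses} and the $(<\aleph_0)$-tameness assumption are needed for this last clause.
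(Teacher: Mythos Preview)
Your proof is correct and follows essentially the same approach as the paper's: reduce the first assertions to Corollary~\ref{equal-kappa} (after verifying Hypothesis~\ref{short-limit-dnf-hypotheses} holds at each $\lambda\geq\LS(\K)^+$), then for the $\chi$ statement combine Lemma~\ref{kappa-increases} for $\dnf$ with the analogous monotonicity for $\dnf_{\splt}$ from \cite[4.5]{vaseyt}. You spell out considerably more bookkeeping than the paper's two-line proof---in particular the care about the $\lambda=\LS(\K)$ contribution and the \emph{eventually constant} reformulation---but the ingredients and logic are the same.
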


\begin{proof}
The first part follows from Corollary \ref{equal-kappa}. The \emph{in particular} follows from the result and \cite[4.5]{vaseyt}, which says that $\kappa(\dnf_{\splt}, \K_\lambda, \lea^u)$ is weakly decreasing with $\lambda$, and Lemma \ref{kappa-increases} (these prevent the values when $\lambda = \LS(\K)$ from being smallest).
\end{proof}

\begin{defin}
    Let $\theta(\K, \dnf)$ be the least stability cardinal $\theta \geq \LS(\K)$ such that ${\kappa(\dnf, \K_\mu, \lea^{u})}= \kappa(\dnf, \K_{\theta}, \lea^{u})$ for every stability cardinal $\mu \geq \theta$, or $\infty$ if no such cardinal exists.
\end{defin}

The following follows from Fact \ref{tame-gives-wp} and Lemma \ref{kappa-increases}.

\begin{cor}\label{theta-chi}
    If $\K$ is $\aleph_0$-tame, then $\theta(\K, \dnf) < \infty$ and ${\chi(\dnf, \K, \lea^{u})} = \kappa(\dnf, \K_{\theta(\K, \dnf)}, \lea^{u})$.
\end{cor}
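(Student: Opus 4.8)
The plan is to deduce Corollary \ref{theta-chi} from the two main tools it cites: Fact \ref{tame-gives-wp}, which upgrades $\aleph_0$-tameness into the $(<\aleph_0)$-witness property for $\dnf$, and Lemma \ref{kappa-increases}, which under that witness property gives the monotonicity $\kappa(\dnf, \K_\lambda, \lea^u) \leq \kappa(\dnf, \K_\mu, \lea^u)$ whenever $\LS(\K) \leq \mu < \lambda$ are both stability cardinals. First I would record that, since $\dnf$ satisfies extension and uniqueness under Hypothesis \ref{global-short-limit-dnf-hypotheses}, Fact \ref{tame-gives-wp} applied with $\theta = \aleph_0$ yields that $\dnf$ has the $(<\aleph_0)$-witness property, i.e.\ the $\theta$-witness property for $\theta$ finite; note $\theta \leq \mu$ holds for every infinite $\mu$, so Lemma \ref{kappa-increases} is applicable between any two stability cardinals above $\LS(\K)$.

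Next I would argue that the function $\mu \mapsto \kappa(\dnf, \K_\mu, \lea^u)$ is weakly decreasing on $\stab(\K)$ and bounded below. Weakly decreasing is exactly Lemma \ref{kappa-increases} (for $\mu < \lambda$ stability cardinals, $\kappa(\dnf, \K_\lambda, \lea^u) \leq \kappa(\dnf, \K_\mu, \lea^u)$). Bounded below: each $\kappa(\dnf, \K_\mu, \lea^u)$ is an infinite regular cardinal or, by Remark \ref{weak-loc-char-iff-strong} together with $(\geq\kappa)$-local character from Hypothesis \ref{global-short-limit-dnf-hypotheses}, is at most $\mu^+ < \infty$; in particular it is never $\infty$. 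A weakly decreasing sequence of cardinals indexed by the class of stability cardinals (which is nonempty by stability) must be eventually constant — there is no infinite strictly decreasing sequence of cardinals. So let $\theta = \theta(\K,\dnf)$ be the least stability cardinal at which this eventual value is attained; this exists and is $<\infty$, giving the first conclusion.

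For the second conclusion, $\chi(\dnf, \K, \lea^u) = \kappa(\dnf, \K_{\theta(\K,\dnf)}, \lea^u)$, I would unwind the definitions. By definition $\underline{\chi}(\dnf, \K, \lea^u) = \bigcup_{\lambda \in \stab(\K)} \underline{\kappa}(\dnf, \K_\lambda, \lea^u)$ and $\chi(\dnf, \K, \lea^u) = \min(\underline{\chi}(\dnf, \K, \lea^u) \cap \mathrm{Reg})$. By Remark \ref{weak-loc-char-iff-strong}, each $\underline{\kappa}(\dnf, \K_\lambda, \lea^u) \cap \mathrm{Reg}$ is a final segment of the regular cardinals below $\lambda^+$ with least element $\kappa(\dnf, \K_\lambda, \lea^u)$. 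Hence the union over all stability cardinals, intersected with $\mathrm{Reg}$, is again a final segment of the regular cardinals whose least element is $\inf_{\lambda \in \stab(\K)} \kappa(\dnf, \K_\lambda, \lea^u)$. By the monotonicity and eventual-constancy just established, this infimum equals the eventually-constant value $\kappa(\dnf, \K_{\theta(\K,\dnf)}, \lea^u)$ (for $\mu \geq \theta(\K,\dnf)$ the values equal this, and for $\mu < \theta(\K,\dnf)$ they are $\geq$ it by weak monotonicity). Therefore $\chi(\dnf, \K, \lea^u) = \kappa(\dnf, \K_{\theta(\K,\dnf)}, \lea^u)$ as claimed.

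The main obstacle, such as it is, is making sure the bookkeeping around $\underline{\kappa}$ versus $\kappa$ is airtight: one must invoke Remark \ref{weak-loc-char-iff-strong} to know that $\underline{\kappa}(\dnf, \K_\lambda, \lea^u) \cap \mathrm{Reg}$ really is the interval $[\kappa(\dnf, \K_\lambda, \lea^u), \lambda^+) \cap \mathrm{Reg}$, and then check that a union of such intervals (over a class of $\lambda$'s that can be arbitrarily large) is still a final segment of $\mathrm{Reg}$ whose minimum is the infimum of the left endpoints — this uses that the left endpoints stabilize, so the infimum is attained and is a genuine cardinal rather than a limit that escapes to some larger regular cardinal. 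Everything else is a direct citation of Fact \ref{tame-gives-wp} and Lemma \ref{kappa-increases}.
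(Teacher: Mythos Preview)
Your approach is correct and matches the paper's, which simply cites Fact \ref{tame-gives-wp} and Lemma \ref{kappa-increases}. One small correction: $\aleph_0$-tameness is $(<\aleph_1)$-tameness, not $(<\aleph_0)$-tameness, so Fact \ref{tame-gives-wp} applied with $\theta = \aleph_1$ gives the $\aleph_0$-witness property (not the $(<\aleph_0)$-witness property); this is still enough for Lemma \ref{kappa-increases} since $\aleph_0 \leq \mu$ for every infinite $\mu$, and the rest of your argument goes through unchanged.
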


The following result is a global version of Theorem \ref{full-picture}. As this is one of our main black box results, we mention the hypotheses.

\begin{theorem}\label{cor-2}
    Assume Hypothesis \ref{global-short-limit-dnf-hypotheses} holds for an AEC $\K$ and independence relation $\dnf$. Suppose $\K$ is  $\aleph_0$-tame and stable in $\lambda \geq  \theta(\dnf, \K)$. 

    Suppose $\delta_1, \delta_2 < \lambda^+$ with $\cof(\delta_1) < \cof(\delta_2)$. Then for any $N_1, N_2, M \in \K_\lambda$ where $N_l$ is a $(\lambda, \delta_l)$-limit model over $M$ for $l = 1, 2$, 
    \[N_1 \text{ is isomorphic to } N_2 \text{ over } M  \iff \cof(\delta_1) \geq \chidnfu.\]
    
    Moreover, for any $N_1, N_2, \in \K_\lambda$ where $N_l$ is a $(\lambda, \delta_l)$-limit model for $l = 1, 2$, 
    \[N_1 \text{ is isomorphic to } N_2 \iff \cof(\delta_1) \geq \chidnfu.\]
\end{theorem} 
\begin{proof}
As $\lambda \geq \theta(\dnf, \K)$, it follows that $\chi(\dnf, \K, \lea^{u}) = \kappa(\dnf, \K_{\lambda}, \lea^{u})$. The result then follows from Theorem \ref{full-picture}.
\end{proof}

\begin{remark}
The value of Theorem \ref{cor-2} is that  we only need to compute $\chidnfu$ and $\theta(\K, \dnf)$  to understand the $\lambda$-limit models for large enough $\lambda$.
\end{remark}

\begin{remark}
Observe that in Theorem \ref{cor-2}, assuming $\K$ is $(<\aleph_0)$-tame, one can substitute every occurrence of  ${\chi(\dnf, \K, \lea^{u})}$ with $\chi(\dnf_{\splt}, \K, \lea^{u})$ by Corollary \ref{equal-chi}.
\end{remark}

\subsection{Toward applications} If one wants to simply apply Theorem \ref{cor-2} as a black box to understand limit models in a natural AEC, it still has the shortfall that one needs to calculate both $\theta(\dnf, \K)$ and $\chidnfu$. In this subsection we focus on simplifying these calculations.

We focus first on $\theta(\dnf, \K)$. 

\begin{defin}[{\cite[4.9]{vaseyt}}]
    $\lambda'(\K)$ is  the least stability cardinal $\lambda' \geq \LS(\K)$ such that $\kappa(\dnf_{\splt}, \K_\mu, \lea^u) = {\kappa(\dnf_{\splt}, \K_{\lambda'}, \lea^u)}$  for every stability cardinal $\mu \geq \lambda'$, or $\infty$ if no such cardinal exists.
\end{defin}

\begin{remark}
    This is a little different from how $\lambda'(\K)$ is defined in \cite[4.9]{vasey18}, to accommodate for us not including the regular cardinals $\geq \lambda^+$ in our definition of $\Kkappalims$, but it is equivalent under Hypothesis \ref{global-short-limit-dnf-hypotheses} and $\aleph_0$-tameness by \cite[4.5]{vasey18}.
\end{remark}

\begin{lemma}\label{bound-theta} 
If $\K$ is $(<\aleph_0)$-tame, then $\theta(\K, \dnf) < \beth_{\beth_{(2^{LS(\K)})^+}}$.
\end{lemma}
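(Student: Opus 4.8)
The plan is to reduce the statement to the known bound on $\lambda'(\K)$ coming from \cite{vaseyt}, transferring between $\dnf$ and $\dnf_{\splt}$ via Corollary \ref{equal-chi}. Note first that $\theta(\K,\dnf) < \infty$ is already Corollary \ref{theta-chi}, so the only content here is the explicit bound. I would begin by fixing $\mu_0$ to be the least stability cardinal with $\mu_0 \geq \LS(\K)^+$; such a cardinal exists because under Hypothesis \ref{global-short-limit-dnf-hypotheses} the class $\K$ is stable in cofinally many cardinals (indeed in every $\lambda$ with $\lambda^{<\kappa}=\lambda$, by type-counting from the properties of $\dnf$, so in particular in $2^{\LS(\K)}$), whence $\mu_0 \leq 2^{\LS(\K)} < \beth_{(2^{\LS(\K)})^+}$. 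Set $\theta_0 := \max\{\lambda'(\K), \mu_0\}$; since each of $\lambda'(\K)$ and $\mu_0$ is a stability cardinal, $\theta_0$ is a stability cardinal, and $\theta_0 \geq \LS(\K)^+$.

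The key step is to check that $\theta_0$ witnesses $\theta(\K,\dnf) \leq \theta_0$, i.e. that $\kappa(\dnf,\K_\mu,\lea^u) = \kappa(\dnf,\K_{\theta_0},\lea^u)$ for every stability cardinal $\mu \geq \theta_0$. Fix such a $\mu$. Since $\mu, \theta_0 \geq \LS(\K)^+$, Corollary \ref{equal-chi} gives $\kappa(\dnf,\K_\mu,\lea^u) = \kappa(\dnf_{\splt},\K_\mu,\lea^u)$ and $\kappa(\dnf,\K_{\theta_0},\lea^u) = \kappa(\dnf_{\splt},\K_{\theta_0},\lea^u)$. Since $\mu, \theta_0$ are stability cardinals that are $\geq \lambda'(\K)$, the defining property of $\lambda'(\K)$ gives $\kappa(\dnf_{\splt},\K_\mu,\lea^u) = \kappa(\dnf_{\splt},\K_{\lambda'(\K)},\lea^u) = \kappa(\dnf_{\splt},\K_{\theta_0},\lea^u)$. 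Chaining the four equalities yields $\kappa(\dnf,\K_\mu,\lea^u) = \kappa(\dnf,\K_{\theta_0},\lea^u)$, as needed, so $\theta(\K,\dnf) \leq \theta_0$.

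It then remains to bound $\theta_0$, which splits into its two pieces. We already noted $\mu_0 < \beth_{(2^{\LS(\K)})^+} \leq \beth_{\beth_{(2^{\LS(\K)})^+}}$, and the corresponding result of \cite{vaseyt} gives $\lambda'(\K) < \beth_{\beth_{(2^{\LS(\K)})^+}}$; this result applies because $(<\aleph_0)$-tameness guarantees universal continuity of $\dnf_{\splt}$ (see Remark \ref{t-cont}), which is the standing hypothesis under which the $\lambda'(\K)$-machinery of \cite{vaseyt} operates. Hence $\theta(\K,\dnf) \leq \theta_0 = \max\{\lambda'(\K),\mu_0\} < \beth_{\beth_{(2^{\LS(\K)})^+}}$. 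The only genuinely delicate points are bookkeeping: Corollary \ref{equal-chi} equates the two $\kappa$-functions only at stability cardinals $\geq \LS(\K)^+$, which is exactly why one passes to $\mu_0$ rather than using $\lambda'(\K)$ directly (the latter could a priori equal $\LS(\K)$, where the two relations need not agree), and one must make sure the external bound on $\lambda'(\K)$ is cited with its hypotheses discharged via $(<\aleph_0)$-tameness.
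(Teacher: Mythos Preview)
Your proof is correct and follows essentially the same approach as the paper: transfer between $\dnf$ and $\dnf_{\splt}$ via Corollary \ref{equal-chi}, then invoke the bound $\lambda'(\K) < \beth_{\beth_{(2^{\LS(\K)})^+}}$ from \cite{vaseyt}. The only difference is organizational: the paper handles the edge case $\lambda'(\K) = \LS(\K)$ by a case split and bounds $\theta(\K,\dnf) \leq \lambda'(\K)^+$, whereas you introduce $\mu_0$ (the least stability cardinal $\geq \LS(\K)^+$) and take $\theta_0 = \max\{\lambda'(\K),\mu_0\}$ to treat both cases uniformly; your handling is arguably slightly cleaner since it makes explicit that the witnessing cardinal must itself be a stability cardinal.
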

\begin{proof} We show that $\theta(\K, \dnf) \leq \lambda'(\K)^+$. This is enough as $\lambda'(\K) < \beth_{\beth_{(2^{LS(\K)})^+}}$ by \cite[11.3]{vaseyt}. Observe that we can use \cite[\S 11]{vaseyt} by Remark \ref{t-cont}.

Suppose $\lambda'(\K) \geq \LS(\K)^+$, then for every $\mu \geq \lambda'(\K)$, $\kappa(\dnf_{\splt}, \K_\mu, \lea^u) = {\kappa(\dnf_{\splt}, \K_{\lambda'}, \lea^u)}$ by  definition of $\lambda'(\K)$. Hence  $\kappa(\dnf, \K_\mu, \lea^u) = {\kappa(\dnf, \K_{\lambda'}, \lea^u)}$ by Corollary \ref{equal-chi}  for every $\mu \geq \lambda'(\K)$. Hence $\theta(\dnf, \K) \leq \lambda'(\K)$.

If $\lambda'(\K) = \LS(\K)$, then $\lambda'(\K)^+ = \LS(\K)^+$, so the argument of the previous paragraph shows that $\kappa(\dnf, \K_\mu, \lea^{u}) = \kappa(\dnf, \K_{\lambda'(\K)^+}, \lea^{u})$ for all $\mu \geq \lambda'(\K)^+$. Hence $\theta(\dnf, \K) \leq \lambda'(\K)^+$.\end{proof}

\begin{remark}
    In fact, the $\lambda'(\K) \geq \LS(\K)^+$ case in the previous lemma shows that in this case, $\lambda'(\K) = \theta(\dnf, \K)$, since $\kappa(\dnf_{\splt}, \K_\mu, \lea^u) = {\kappa(\dnf_{\splt}, \K_{\lambda'}, \lea^u)}$ for all $\lambda \geq \LS(\K)^+$ and by definition both $\lambda'(\K)$ and $\theta(\dnf, \K)$ are the $\lambda$ at which this stabilises.
\end{remark}

\begin{probl}
Find a better upper bound for $\theta(\K, \dnf)$ or show that the result of Lemma \ref{bound-theta} is sharp. 
\end{probl}

We now focus on $\chidnfu$. 

\begin{lemma}\label{calculate-chi} 
Suppose $\K$ is $(< \aleph_0)$-tame.
Assume $\rho$ is an infinite cardinal and consider the following two statements:
\begin{enumerate}[(a)]
\item For every $\lambda \geq 2^{LS(\K)}$, if $\K$ is stable in $\lambda$, then $\lambda^{<\rho} = \lambda$
\item For every $\lambda \geq 2^{LS(\K)}$, if $\lambda^{<\rho} = \lambda$, then $\K$ is stable in $\lambda$. 
\end{enumerate}

The following hold:

\begin{enumerate}
\item If Statement $(a)$ holds, then $\rho \leq \chidnfu$
\item If Statement $(b)$ holds and $\rho$ is regular , then $\chidnfu \leq \rho$
\item If Statements  $(a)$ and $(b)$ hold and $\rho$ is regular, then $\chidnfu = \rho$
\item If Statement $(b)$ holds and $\rho$ is singular, then  $\chidnfu \leq \rho^+$
\item  If Statements  $(a)$ and $(b)$ hold and $\rho$ is singular, then $\chidnfu = \rho^+$.
\end{enumerate}
\end{lemma}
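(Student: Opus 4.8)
The plan is to connect the quantity $\chidnfu$ to stability cardinals via the key identity $\chidnfu = \kappa(\dnf, \K_{\theta(\K,\dnf)}, \lea^u)$ from Corollary \ref{theta-chi}, together with Corollary \ref{equal-chi}, which lets us replace $\dnf$ with $\dnf_{\splt}$ throughout. The intuition is that $\kappa(\dnf_{\splt}, \K_\mu, \lea^u)$ measures the length of chains needed for local character of splitting in $\K_\mu$, and that this is governed by cardinal arithmetic at stability cardinals: one expects roughly that if $\lambda^{<\rho} = \lambda$ forces stability, then stable cardinals are `$\rho$-closed enough' to run the local character argument at length $\rho$, whereas if stability forces $\lambda^{<\rho} = \lambda$, then non-$\rho$-closed cardinals are unstable and one cannot do better than $\rho$. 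Parts (3) and (5) are then purely formal consequences of parts (1), (2) and (4): for (3) combine (1) and (2) to squeeze $\chidnfu$ between $\rho$ and $\rho$; for (5) combine (1), applied with $\rho^+$ in place of $\rho$ (noting Statement $(a)$ for $\rho$ singular gives $\lambda^{<\rho}=\lambda$ hence $\lambda^{<\rho^+} = \lambda^{\rho} $ — one must check $\lambda^{\rho} = \lambda$ follows, which it does since $\lambda \geq 2^{\LS(\K)} \geq \rho^+$ forces $\lambda^{\rho} = (\lambda^{<\rho})^{\cof(\rho)}\cdot\ldots$ via standard arithmetic, or more directly Statement $(a)$ for $\rho$ implies Statement $(a)$ for $\rho^+$), giving $\rho^+ \leq \chidnfu$, and combine with (4) to get $\chidnfu \leq \rho^+$.

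Concretely, for part (1) the plan is: assuming Statement $(a)$, suppose toward a contradiction that $\chidnfu < \rho$, i.e. there is a regular $\mu < \rho$ with $\mu \in \underline{\chi}(\dnf, \K, \lea^u)$, so $\mu \in \underline{\kappa}(\dnf, \K_\lambda, \lea^u)$ for some stability cardinal $\lambda$; by Corollary \ref{equal-chi} we may pass to $\dnf_{\splt}$, and by \cite[4.5]{vaseyt} (monotonicity of $\kappa(\dnf_{\splt},\K_\lambda,\lea^u)$ in $\lambda$) we may take $\lambda$ as large as we like, in particular $\lambda \geq 2^{\LS(\K)}$ stable with $\lambda^{<\mu} < \lambda^{<\rho} = \lambda$ failing — here we invoke the standard fact (as in the arguments behind \cite[\S 11]{vaseyt}, usable by Remark \ref{t-cont}) that if $\mu \in \underline{\kappa}(\dnf_{\splt}, \K_\lambda, \lea^u)$ then stability at $\lambda$ does not require closure under $<\mu$-sequences, contradicting a choice of $\lambda$ that is stable but not $<\mu$-closed, which exists by Statement $(a)$ read contrapositively at $\mu < \rho$. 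For part (2), assuming Statement $(b)$ and $\rho$ regular: pick $\lambda \geq \max(2^{\LS(\K)}, \theta(\K,\dnf))$ with $\lambda^{<\rho} = \lambda$ (such $\lambda$ exist cofinally, e.g. $\lambda = \beth_\delta$ for $\delta$ of cofinality $\geq\rho$ above the relevant bounds), so $\K$ is stable in $\lambda$ by Statement $(b)$; then show $\rho \in \underline{\kappa}(\dnf_{\splt}, \K_\lambda, \lea^u)$ using the standard counting argument — given a $\lea^u$-increasing continuous chain $\langle M_i : i \leq \rho\rangle$ in $\K_\lambda$ and $p \in \gS(M_\rho)$, if $p$ $\lambda$-split over $M_i$ for all $i<\rho$ one builds via tameness/stability a set of $\lambda^{<\rho} = \lambda$ many types over a model of size $\lambda$, but also forces more than $\lambda$ types, contradicting stability — then transfer to $\dnf$ by Corollary \ref{equal-chi} and conclude $\chidnfu = \chi(\dnf, \K, \lea^u) \leq \rho$. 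Part (4) is the same argument with $\rho$ replaced by $\rho^+$ throughout, using that $\rho$ singular makes $\rho$ itself a problematic length but $\rho^+$ regular works, and that $\lambda^{<\rho^+} = \lambda^\rho$, for which Statement $(b)$ applies to $\lambda$ with $\lambda^\rho = \lambda$.

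The main obstacle I expect is making the two halves of the counting argument fully rigorous in the abstract setting: specifically, the direction showing that $\mu \in \underline{\kappa}(\dnf_{\splt}, \K_\lambda, \lea^u)$ (equivalently the local character length for splitting) is exactly pinned down by $\lambda^{<\mu} = \lambda$ rather than merely bounded by it. In the first-order case this is classical ($\kappa_r(T)$ versus stability spectrum), but here it rests on the technology of \cite{vaseyt} (notably \cite[4.5]{vaseyt} and the stability-spectrum results of \cite[\S 11]{vaseyt}), which we are entitled to use under $(<\aleph_0)$-tameness by Remark \ref{t-cont}. I would structure the proof to quote \cite[\S 11]{vaseyt} for the equivalence "$\K$ stable in $\lambda \Rightarrow \lambda^{<\kappa(\dnf_{\splt},\K_\lambda,\lea^u)} = \lambda$" and its near-converse, then derive (1) and (2) as contrapositives of these statements combined with Statements $(a)$ and $(b)$ respectively, keeping the bespoke arguments to the cardinal-arithmetic bookkeeping around singular $\rho$. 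The deductions (3) and (5) then occupy at most a line each.
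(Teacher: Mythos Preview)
Your overall architecture is right --- reduce to $\dnf_{\splt}$ via Corollary~\ref{equal-chi}, then invoke the stability-spectrum machinery of \cite{vaseyt} --- and parts (1), (3), (5) are essentially as in the paper (for (5) your route through ``Statement $(a)$ for $\rho$ implies Statement $(a)$ for $\rho^+$'' is correct since for singular $\rho$ one has $\lambda^\rho \leq (\lambda^{<\rho})^{\cof(\rho)} = \lambda^{\cof(\rho)} \leq \lambda^{<\rho}$; the paper instead just notes that $\chidnfu$ is regular by definition, so $\rho \leq \chidnfu$ forces $\rho^+ \leq \chidnfu$).

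The genuine gap is in part (2). Your plan is to pick $\lambda$ with $\lambda^{<\rho} = \lambda$, deduce stability from Statement $(b)$, and then run a ``standard counting argument'' to show $\rho \in \underline{\kappa}(\dnf_{\splt}, \K_\lambda, \lea^u)$. But that counting argument, as sketched, does not go through: the usual tree construction from failure of $\rho$-local character produces $2^\rho$ types, and $\lambda^{<\rho} = \lambda$ says nothing about $2^\rho$ versus $\lambda$. Nor does \cite[\S 11]{vaseyt} give you this implication directly. The paper instead uses \cite[4.11]{vaseyt}, whose hypothesis is that $\K$ is stable in $\lambda$ \emph{and in unboundedly many cardinals below $\lambda$}, with conclusion $\cof(\lambda) \in \underline{\kappa}(\dnf_{\splt}, \K_\lambda, \lea^u)$. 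To apply it, the paper chooses $\lambda = \beth_{\rho+\rho}(\theta)$ (for suitable $\theta$) so that $\cof(\lambda) = \rho$ exactly, checks $\lambda^{<\rho} = \lambda$ to get stability in $\lambda$ from Statement $(b)$, and then verifies that each $\beth_{\alpha+1}(\theta)$ with $\alpha \geq \rho$ satisfies $\beth_{\alpha+1}(\theta)^{<\rho} = \beth_{\alpha+1}(\theta)$, giving the required cofinal family of stable cardinals. Your choice ``$\lambda = \beth_\delta$ for $\delta$ of cofinality $\geq \rho$'' would, if you take $\cof(\delta) = \rho$ and add this cofinal-stability verification, land on exactly the paper's argument; but as written your proposal is missing both the specific cofinality requirement on $\lambda$ and the appeal to \cite[4.11]{vaseyt}.
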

\begin{proof}\
\begin{enumerate}
\item This is essentially \cite[11.2]{vaseyt} with the observation that $\chidnfu = {\chi(\dnf_{\splt}, \K, \lea^{u})} = \chi(\K)$ by  Corollary \ref{equal-chi} and Remark \ref{chi-dif-def}.
\item Let $\lambda = \beth_{\rho + \rho} (\theta)$ where  $\rho + \rho$ is the ordinal given by ordinal arithmetic, $\theta = \theta(\K, \dnf) + \chi_0$, and $\chi_0$ is the cardinal given in \cite[4.10]{vaseyt}. The precise value of $\chi_0$ is not important, we only need to notice that $\lambda > \chi_0$.

 Using that $\rho$ is regular and the definition of $\lambda$, it follows that $\lambda^\mu = \lambda$ for all cardinals $\mu < \rho$, so $\lambda^{<\rho} = \lambda$. Hence $\K$ is stable in $\lambda$ by Statement (b). 

We show that $\K$ is stable in unboundably many cardinals below $\lambda$. Let  $\mu < \lambda$, then there is an ordinal $\alpha \geq \rho$ such that $\mu \leq \beth_{\alpha + 1}(\theta)$. Observe that

 $$\beth_{\alpha +1}(\theta)^{<\rho} \leq \beth_{\alpha +1}(\theta)^\rho = 2^{\beth_\alpha(\theta) \rho} = 2^{\beth_\alpha(\theta)} = \beth_{\alpha +1}(\theta).$$ Hence $ \beth_{\alpha +1}(\theta)^{<\rho}=\beth_{\alpha +1}(\theta)$ and $\K$ is stable in $\beth_{\alpha +1}(\theta)< \lambda$ by Statement (b). 

Therefore, $\cof(\lambda) = \rho \in \underline{\kappa}(\dnf_{\splt}, \K_\lambda, \lea^u)$ by \cite[4.11]{vaseyt}. Hence $\rho \in  \underline{\kappa}(\dnf, \K_\lambda, \lea^u)$ by Corollary \ref{equal-chi}. Then $\kappa(\dnf, \K_\lambda, \lea^u) \leq \rho$ by minimality of $\kappa$. Hence $\chi(\dnf, \K, \lea^u) \leq \rho$ as $\lambda \geq \theta(\K, \dnf)$ and Corollary \ref{theta-chi}. 

\item Follows directly from $(1)$ and $(2)$.

\item The argument given in $(2)$ works if one changes $\rho$ by $\rho^+$ as if $\lambda^{<\rho^+} = \lambda$ then $\lambda^{<\rho} = \lambda$. 

\item Follows directly from $(1)$ and $(4)$. 

\end{enumerate}
\end{proof}

\begin{remark}
The value of Theorem \ref{calculate-chi} is that  it allows us to compute $\chidnfu$ directly from the stability spectrum of the AEC.
\end{remark}

We present a revised version of Theorem \ref{cor-2} which serves better as a black box in applications (see next section). However in the case where $\theta(\K, \dnf)$ can be computed, Theorem \ref{cor-2} gives a better bound (see Lemma \ref{first-order}). As before, we emphasise the hypotheses.

\begin{theorem}\label{cor-2.5}
    Assume Hypothesis \ref{global-short-limit-dnf-hypotheses} holds for an AEC $\K$ and independence relation $\dnf$. Suppose $\K$ is  $(<\aleph_0)$-tame and stable in $\lambda \geq   \beth_{\beth_{(2^{LS(\K)})^+}}$.

    Suppose $\delta_1, \delta_2 < \lambda^+$ with $\cof(\delta_1) < \cof(\delta_2)$. Then for any $N_1, N_2, M \in \K_\lambda$ where $N_l$ is a $(\lambda, \delta_l)$-limit model over $M$ for $l = 1, 2$, 
    \[N_1 \text{ is isomorphic to } N_2 \text{ over } M  \iff \cof(\delta_1) \geq \chidnfu.\]
    
    Moreover, for any $N_1, N_2, \in \K_\lambda$ where $N_l$ is a $(\lambda, \delta_l)$-limit model for $l = 1, 2$, 
    \[N_1 \text{ is isomorphic to } N_2 \iff \cof(\delta_1) \geq \chidnfu.\]
\end{theorem} 
\begin{proof}
Since $ \theta(\dnf, \K) < \beth_{\beth_{(2^{LS(\K)})^+}} $ by Lemma \ref{bound-theta}, the result follows from Theorem \ref{cor-2}.
\end{proof}

\begin{remark}
    This removes the need to calculate $\theta(\dnf, \K)$, but restricts which $\lambda$ the result applies to. One can calculate $\chidnfu$ using Theorem \ref{calculate-chi}.
\end{remark}

\begin{remark}\label{aleph0-tameness-is-enough}
    In fact, we can replace $(<\aleph_0)$-tameness with $\aleph_0$-tameness if we increase the lower bound for $\lambda$. Note we only needed the stronger form of tameness to guarantee universal continuity of splitting in $\K_\lambda$. But for $\lambda \geq \LS(\K)^+$, universal continuity of $\lambda$-non-splitting in $\K_\lambda$ follows from Lemma \ref{split-fork}, universal continuity of $\dnf$, Fact \ref{dnf_implies_dns} and weak transitivity of $\lambda$-non-splitting \cite[3.7]{vasey16b}. Note we need $\lambda \geq \LS(\K)^+$ as Lemma \ref{split-fork} relies on $(\geq \kappa)$-local character of $\dnf$ for some regular $\kappa < \lambda^+$.

    Then by the same reasoning as before, but working in $\K_{\geq \LS(\K)^+}$ rather than in $\K$, we deduce the statement of Theorem \ref{cor-2.5} but with $(<\aleph_0)$-tameness, $\beth_{\beth_{(2^{LS(\K)})^+}}$, and $\chi(\dnf, \K, \lea^u)$ replaced by $\aleph_0$-tameness, $\beth_{\beth_{(2^{(LS(\K)^+)})^+}}$, and $\chi(\dnf, \K_{\geq \LS(\K)^+}, \lea^u)$ respectively. Note that under the hypotheses of Theorem \ref{cor-2.5}, $\chi(\dnf, \K, \lea^u) = \chi(\dnf, \K_{\geq \LS(\K)^+}, \lea^u)$.
\end{remark}

\section{Some applications}\label{applications}

The objective of this section is to present some applications of the results obtained in this paper. In particular, we will show how the results of the previous section can be used to understand the limit models of some natural AECs.

\subsection{Checking the assumptions} Although Hypothesis \ref{global-short-limit-dnf-hypotheses} is a natural \emph{theoretical} assumption, in applications independence relations in the sense of LRV \cite{lrv1} (see  Example \ref{dnf-examples-lrv}) arise more naturally.

The following Lemma, which relies on the notions introduced in Example \ref{dnf-examples-lrv} and follows almost immediately from Lemma \ref{lrv-dnf-property-lemma}, shows when Hypothesis \ref{global-short-limit-dnf-hypotheses} follows from the existence of a stable independence relation in the sense of LRV. We include the result because we expect this result to be useful when studying limit models in natural abstract elementary classes. We showcase how to use this result later in this section.

\begin{lemma}\label{check-st}

Suppose $\K$ is an AEC with JEP, NMM, and $\K$ has a weakly stable independence relation in the sense of LRV \cite{lrv1} with strong $ \LS(\K)$-local character. Then $\K$ satisfies Hypothesis \ref{global-short-limit-dnf-hypotheses} with the possible exception of universal continuity.

Moreover, if $\K$ is $(<\aleph_0)$-tame, then $\K$ satisfies Hypothesis \ref{global-short-limit-dnf-hypotheses}. 

Furthermore,  if $\K$ is $(<\aleph_0)$-tame, for each stability cardinal $\lambda \geq \LS(\K)^+$, $\K$ satisfies the hypothesis of Theorem \ref{full-picture}.
\end{lemma}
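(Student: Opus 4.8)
The plan is to unwind the definitions and assemble the pieces already in place. We want to show that if $\K$ is an AEC with JEP, NMM, and a weakly stable independence relation $\dnf$ in the LRV sense with strong $\LS(\K)$-local character, then $\K$ satisfies Hypothesis \ref{global-short-limit-dnf-hypotheses}, at least modulo universal continuity, and fully if $\K$ is $(<\aleph_0)$-tame; moreover, for each stability cardinal $\lambda \geq \LS(\K)^+$, the hypotheses of Theorem \ref{full-picture} hold.

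First I would handle the AP, stability, and NMM parts of Hypothesis \ref{global-short-limit-dnf-hypotheses}: NMM and JEP are assumed; AP follows from the existence of a (weakly) stable independence relation in the LRV sense (amalgams presuppose amalgamation, so this is essentially built in — I would cite the relevant fact from \cite{lrv1}); and stability of $\K$ follows from having a stable-type independence relation with a local character bound — here strong $\LS(\K)$-local character gives stability in a proper class of cardinals (again via \cite{lrv1}, or noting that $\K$ is stable in $\lambda$ whenever $\lambda^{<\LS(\K)^+}=\lambda$ or similar, which holds cofinally). Then I would apply Lemma \ref{lrv-dnf-property-lemma} to the restriction $\dnfb{}{}{}{}$ to singletons and models: it directly yields invariance, monotonicity, base monotonicity, extension, uniqueness, transitivity, and non-forking amalgamation, and moreover part (1) of that lemma converts strong $(<\LS(\K)^+)$-local character (i.e. strong $\LS(\K)$-local character) into $(\geq\LS(\K)^+)$-local character — noting $\LS(\K)^+$ is regular and $\leq \LS(\K)^+$, so the local character cardinal $\kappa$ in Hypothesis \ref{global-short-limit-dnf-hypotheses}(3) may be taken to be $\LS(\K)^+$. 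The only remaining item is universal continuity, which is why the first conclusion is stated with that exception.

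For the \emph{moreover}: if $\K$ is $(<\aleph_0)$-tame, then by Fact \ref{tame-gives-wp} (applied with $\theta=\aleph_0$, in each stability cardinal $\lambda$), $\dnf$ has the $(<\aleph_0)$-witness property; then Lemma \ref{wp-implies-universal_continuity} (or Fact \ref{wp-implies-universal_continuity}, with $\theta = \aleph_0$ regular) gives $(\geq\aleph_0)$-universal continuity, i.e. universal continuity. So all of Hypothesis \ref{global-short-limit-dnf-hypotheses}(3) holds, and we are done with the moreover. I should be careful that Fact \ref{tame-gives-wp} requires stability in the cardinal $\lambda$ in which we apply it and $\aleph_0 \leq \lambda$; since $\K$ is stable in a proper class of cardinals this is fine, and universal continuity is a statement about chains which can be checked cardinal by cardinal using the witness property in the appropriate cardinal.

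For the \emph{furthermore}: fix a stability cardinal $\lambda \geq \LS(\K)^+$. Theorem \ref{full-picture} asks for a $\aleph_0$-tame (which follows from $(<\aleph_0)$-tame) AEC stable in $\lambda$ with $\lambda$-AP, $\lambda$-JEP, $\lambda$-NMM, and an independence relation on $\K_\lambda$ with uniqueness, extension, universal continuity, $(\geq\kappa)$-local character for some regular $\kappa \leq \lambda$, and non-forking amalgamation. Restricting $\dnfb{}{}{}{}$ to $\K_\lambda$ preserves all these properties by monotonicity-type arguments (the local character cardinal $\LS(\K)^+ \leq \lambda$ since $\lambda \geq \LS(\K)^+$; for non-forking amalgamation on $\K_\lambda$ one replaces the ambient amalgam by a model of size $\lambda$, cf. Remark \ref{long-lim-can-restrict}), and $\lambda$-AP/$\lambda$-JEP/$\lambda$-NMM follow from AP/JEP/NMM. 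The main thing to be careful about is exactly this passage from the global relation to $\K_\lambda$ and checking that each property restricts cleanly — the rest is bookkeeping. I expect the only genuine subtlety is the derivation of universal continuity from $(<\aleph_0)$-tameness via the witness property, and making sure the cited facts (\ref{tame-gives-wp}, \ref{wp-implies-universal_continuity}, \ref{lrv-dnf-property-lemma}) are invoked in the correct cardinals; everything else is a direct transcription of definitions.
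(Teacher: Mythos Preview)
Your proposal is correct and follows essentially the same approach as the paper: invoke Lemma \ref{lrv-dnf-property-lemma} for the bulk of the independence-relation properties, cite \cite{lrv1} for stability (the paper phrases this as ``if $\lambda^{\LS(\K)}=\lambda$ then $\K$ is stable in $\lambda$''), then use Fact \ref{tame-gives-wp} plus Lemma \ref{wp-implies-universal_continuity} for universal continuity under $(<\aleph_0)$-tameness, and finally observe that the hypotheses of Theorem \ref{full-picture} are the local analogue of Hypothesis \ref{global-short-limit-dnf-hypotheses}. Your version is slightly more detailed (explicitly noting AP comes from the LRV framework and spelling out the restriction to $\K_\lambda$), but the substance is identical.
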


\begin{proof}
   Observe that if $\lambda^{LS(\K)}= \lambda$, then $\K$ is stable in $\lambda$ (see for example \cite[8.15]{lrv1} for an idea of the proof). The rest of the main statement all follows from Lemma \ref{lrv-dnf-property-lemma}.

   For the moreover part,  note that by $(<\aleph_0)$-tameness and Fact \ref{tame-gives-wp}, $\dnf$ has the $(<\aleph_0)$-witnessing property, and therefore universal continuity (the only property missing from the first part)  by Lemma \ref{wp-implies-universal_continuity}.
   
   For the furthermore part, observe that the hypothesis of Theorem \ref{full-picture} are a local analogue of Hypothesis \ref{global-short-limit-dnf-hypotheses} so the result follows from the previous two parts.
\end{proof}

\subsection{Modules with embeddings}  We showcase  how our results can be used to understand the limit models in the abstract elementary class of modules with embeddings. Most of the results we present in this subsection were originally obtained in \cite{maz24} using algebraic methods and were one of the key motivations to write this paper. 

Denote by $\K^{R\text{-Mod}}$ the abstract elementary class of modules with embeddings.  Observe that  the language of this AEC is $\{ +, - , 0\} \cup \{r \cdot : r \in R\}$ where $r \cdot$ is a unary function for every $r \in R$ and that $LS(\Km)= \operatorname{card}(R) + \aleph_0$. 

\begin{lemma}\label{check-hyp}
$\Km$ satisfies Hypothesis \ref{global-short-limit-dnf-hypotheses} and is $(<\aleph_0)$-tame.
\end{lemma}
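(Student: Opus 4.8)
The claim is that $\Km$ (the AEC of $R$-modules with embeddings) satisfies Hypothesis \ref{global-short-limit-dnf-hypotheses} and is $(<\aleph_0)$-tame. My plan is to verify these two assertions separately, leaning heavily on Lemma \ref{check-st} for the bulk of Hypothesis \ref{global-short-limit-dnf-hypotheses}.

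First, for the structural properties: $\Km$ has AP, JEP, and NMM. Amalgamation holds because pushouts of module embeddings exist and are again module embeddings (the pushout of $M_1 \hookleftarrow M_0 \hookrightarrow M_2$ is $(M_1 \oplus M_2)/\{(m,-m) : m \in M_0\}$, and the canonical maps are injective since $M_0$ is a pure — in fact direct — embedding target here, or more simply because $M_0$ is a submodule of each). JEP holds since any two modules embed into their direct sum. NMM holds because any module $M$ embeds properly into $M \oplus R$ (or $M \oplus N$ for any nonzero $N$). Next I would invoke the existence of a well-behaved independence notion: module theory provides a weakly stable independence relation in the sense of LRV, namely the one arising from $pp$-types / the classical notion of non-forking for modules (see \cite{lrv1}; alternatively one points to the fact that $\Km$ is an accessible category with directed colimits and effective unions, giving a stable independence relation). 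This relation has strong $\LS(\K)$-local character — indeed, strong $(<\aleph_0)$-local character in the module case, since every $pp$-type over a module is determined by, and a non-forking extension of, its restriction to a finitely generated (hence countable, in fact finite-solution) submodule — so strong $\LS(\Km)$-local character certainly holds with $\LS(\Km) = \operatorname{card}(R) + \aleph_0$. Applying the first part of Lemma \ref{check-st} then gives Hypothesis \ref{global-short-limit-dnf-hypotheses} with the possible exception of universal continuity.

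To close the gap — and to get $(<\aleph_0)$-tameness — I would establish that $\Km$ is $(<\aleph_0)$-tame directly, which is a standard fact: Galois types in $\Km$ coincide with syntactic $pp$-types (this is the classical analysis of types in modules, cf. the work of Prest and the adaptation to AECs in \cite{maz24} and references therein), and a $pp$-type over $M$ is determined by its restrictions to finite subsets of $M$ since each $pp$-formula $\varphi(x; \bar a)$ involves only finitely many parameters $\bar a \in M$. Hence if $p, q \in \gS(M)$ agree on every finite (indeed every countable) subset of $M$, they agree on every $pp$-formula with parameters in $M$, so $p = q$; this is precisely $(<\aleph_0)$-tameness. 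With $(<\aleph_0)$-tameness in hand, the ``moreover'' part of Lemma \ref{check-st} supplies universal continuity (via Fact \ref{tame-gives-wp} and Lemma \ref{wp-implies-universal_continuity}), completing the verification of Hypothesis \ref{global-short-limit-dnf-hypotheses}.

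The main obstacle — or at least the only non-bookkeeping point — is pinning down the existence of the LRV-style stable independence relation on $\Km$ with the required strong local character, and the identification of Galois types with $pp$-types that underlies tameness; both are known but require citing the right place (the module-theoretic literature, or \cite{lrv1} for the categorical input, together with \cite{maz24}). Everything else (AP, JEP, NMM, and the deduction of the full hypothesis from Lemma \ref{check-st}) is routine. So the proof will essentially read: ``$\Km$ has AP, JEP, NMM; it carries a weakly stable independence relation in the LRV sense with strong $\LS(\Km)$-local character coming from $pp$-forking; it is $(<\aleph_0)$-tame since Galois types are $pp$-types; now apply Lemma \ref{check-st}.''
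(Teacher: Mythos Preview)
Your proposal is correct and follows essentially the same route as the paper: verify JEP and NMM, invoke a weakly stable LRV-style independence relation with strong $\LS(\Km)$-local character, establish $(<\aleph_0)$-tameness, and then apply Lemma \ref{check-st}. The paper simply cites \cite[3.1]{m4} for JEP and NMM, \cite[3.9, 3.10, 3.11]{mj} for the LRV independence relation with strong $(\operatorname{card}(R)+\aleph_0)$-local character, and \cite[3.3]{m4} for $(<\aleph_0)$-tameness, whereas you sketch the underlying arguments directly; one small caution is that your framing via $pp$-types and your aside about strong $(<\aleph_0)$-local character are tuned to modules with \emph{pure} embeddings rather than arbitrary embeddings, so in writing it up you should cite the module-with-embeddings sources (\cite{m4}, \cite{mj}) rather than rely on the $pp$-type heuristic.
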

\begin{proof}
$\Km$ has  joint embedding and no maximal models by for example \cite[3.1]{m4}. $\Km$ has a weakly stable independence relation  in the sense of LRV with strong $(\operatorname{card}(R) + \aleph_0)$-local character by \cite[3.9, 3.10, 3.11]{mj}. As $\Km$ is $(< \aleph_0)$-tame by \cite[3.3]{m4}, it follows that $\Km$ satisfies Hypothesis \ref{global-short-limit-dnf-hypotheses} by Lemma \ref{check-st}. \end{proof}

\begin{defin}[{\cite{ekl}}]
    Let $R$ be a ring. Define $\gamma(R)$ as the minimum infinite cardinal such that every left ideal of $R$ is $(< \gamma(R))$-generated. Let $\gamma_r(R) = \gamma(R)$ if $\gamma(R)$ is regular and $\gamma(R)^+$ if $\gamma(R)$ is singular.
\end{defin}

Observe that for every ring, $\gamma(R) \leq \operatorname{card}(R) + \aleph_0$. 

\begin{fact}[{\cite[3.8]{m2}}]\label{st-no} Assume $\lambda \geq (\operatorname{card}(R) + \aleph_0)^+$.
  $\K^{R\text{-Mod}}$ is stable in $\lambda$ if and only if $\lambda^{<\gamma(R)}=\lambda$. 
\end{fact}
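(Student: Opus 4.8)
The plan is to reduce stability in $\lambda$ to a purely ring‑theoretic count of left ideals and homomorphisms. The first and, I expect, conceptually central step is to pin down the Galois types of $\Km$: I would show that for any $M \in \Km$ the assignment
$\gtp(a/M,N) \mapsto (J_a, h_a)$, where $J_a := \{r \in R : ra \in M\}$ is a left ideal of $R$ and $h_a : J_a \to M$ is the $R$-homomorphism $r \mapsto ra$, is a bijection between $\gS(M)$ and the set of pairs $(J,h)$ with $J$ a left ideal of $R$ and $h \in \operatorname{Hom}_R(J,M)$. Surjectivity comes from realising a given pair by the pointed amalgam $N := (M \oplus R/\ker h)/D$ with $D := \{(-\bar h(\bar r), \bar r) : r \in J\}$ (where $\bar h : J/\ker h \hookrightarrow M$ is the induced map) and $a$ the image of $(0,\bar 1)$; one checks $M \lea N$ and $(J_a,h_a) = (J,h)$. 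Injectivity uses that, since $\Km$ has amalgamation realised by pushouts, $\gtp(a/M,N) = \gtp(b/M,N')$ iff $a \mapsto b$ extends to an isomorphism $M+Ra \cong M+Rb$ over $M$, which happens precisely when $J_a = J_b$ and $h_a = h_b$. Getting this correspondence stated and verified cleanly (including the algebraic case $a \in M$, which is the pairs with $J = R$) is the part I would be most careful about.

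Granting the correspondence, the direction ($\Leftarrow$) is a cardinal computation. Assume $\lambda^{<\gamma(R)} = \lambda$ and fix $M \in \Km_\lambda$. Every left ideal of $R$ is $(<\gamma(R))$-generated, so there are at most $\operatorname{card}(R)^{<\gamma(R)} \le \lambda^{<\gamma(R)} = \lambda$ of them (here I use $\operatorname{card}(R) < \lambda$, which is part of the hypothesis $\lambda \geq (\operatorname{card}(R)+\aleph_0)^+$). For each such $J$, a homomorphism $J \to M$ is determined by the images of a generating set of size $<\gamma(R)$, so $|\operatorname{Hom}_R(J,M)| \le \lambda^{<\gamma(R)} = \lambda$. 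Hence $|\gS(M)| \le \lambda\cdot\lambda = \lambda$, i.e. $\Km$ is stable in $\lambda$.

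For ($\Rightarrow$) I would build a single $M$ of cardinality $\lambda$ with $|\gS(M)| > \lambda$ by producing more than $\lambda$ homomorphisms out of one left ideal. Assume $\lambda^{<\gamma(R)} > \lambda$ and fix $\mu < \gamma(R)$ with $\lambda^{\mu} > \lambda$ (in particular $\gamma(R) > \aleph_0$). By minimality of $\gamma(R)$ there is a left ideal $I$ whose minimal number of generators is some $\kappa$ with $\mu \le \kappa < \gamma(R) \le \operatorname{card}(R)+\aleph_0 < \lambda$; fix an irredundant generating set $\{s_k : k < \kappa\}$. Put $J_k := \langle s_l : l < \kappa,\ l \ne k\rangle$, so $\bar s_k := s_k + J_k \ne 0$ in $R/J_k$ by irredundancy, and let $M := \bigoplus_{\alpha < \lambda}\bigoplus_{k < \kappa} R/J_k$, which has cardinality $\lambda$. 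For each $g : \kappa \to \lambda$ define $h_g : I \to M$ on generators by $h_g(s_k) := \iota_{(g(k),k)}(\bar s_k)$, where $\iota_{(\alpha,k)} : R/J_k \to M$ is the inclusion of the $(\alpha,k)$-summand. The one calculation I would actually carry out is that $h_g$ is well defined: any relation $\sum_k t_k s_k = 0$ in $R$ gives $t_k s_k = -\sum_{l\ne k} t_l s_l \in J_k$ for every $k$, hence $t_k\bar s_k = 0$ in $R/J_k$, while the target summands indexed $(g(k),k)$ have pairwise distinct second coordinates, so the corresponding sum vanishes in $M$. Distinct $g$ give distinct $h_g$ (they already differ on some $s_k$), so $|\operatorname{Hom}_R(I,M)| \ge \lambda^{\kappa} \ge \lambda^{\mu} > \lambda$, and therefore $|\gS(M)| > \lambda$, i.e. $\Km$ is not stable in $\lambda$. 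The main obstacle in this direction is choosing $M$ and the generating set so that the maps $h_g$ are forced to be well defined; once the irredundancy trick is in place the rest is bookkeeping.
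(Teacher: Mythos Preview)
The paper does not prove this statement; it quotes it as a fact from \cite[3.8]{m2}. So there is no proof in the paper to compare against, and I assess your argument on its own merits. The type correspondence $\gtp(a/M,N)\leftrightarrow (J_a,h_a)$ is correct (your construction realises any prescribed pair, and the pair is an isomorphism invariant of $(M+Ra,M,a)$), and the $(\Leftarrow)$ direction is a sound count.

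The $(\Rightarrow)$ direction has a real gap: you assume an \emph{irredundant} generating set $\{s_k:k<\kappa\}$ of $I$ exists, but in general it need not. Take $R$ a valuation domain with value group $\mathbb Q$, so $\gamma(R)=\aleph_1$ and the maximal ideal $\mathfrak m$ is not finitely generated. For \emph{any} generating set $\{s_k:k<\omega\}$ of $\mathfrak m$ and any fixed $k$, the principal ideals of $R$ are totally ordered by inclusion; if no $(s_l)$ with $l\ne k$ contained $s_k$ then $v(s_k)<v(s_l)$ for all $l\ne k$, forcing $\mathfrak m=(s_k)$ to be principal, a contradiction. Hence some $l\ne k$ has $s_k\in(s_l)\subseteq J_k$, so $\bar s_k=0$ for \emph{every} $k$, and all your maps $h_g$ collapse to the zero map. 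More generally no nonprincipal ideal of this $R$ admits an irredundant generating set, so the obstruction is not a matter of choosing the $s_k$ more carefully. The fact is nevertheless true for such $R$ --- for instance one can exhibit $\lambda^{\aleph_0}$ homomorphisms $\mathfrak m\to (Q/R)^{(\lambda)}$ (with $Q=\operatorname{Frac}(R)$) by building compatible sequences $(h(a_n))_n$ that branch at each step, using that $Q/R$ is divisible with nontrivial $r$-torsion for every nonunit $r$ --- but your construction does not cover rings of this shape, and the irredundancy shortcut must be replaced by something more robust.
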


The following result is new.

\begin{lemma}\label{chi-mod}
$\chidnfum = \gamma_r(R)$.
\end{lemma}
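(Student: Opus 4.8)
\textbf{Proof plan for Lemma \ref{chi-mod}.}

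The plan is to compute $\chidnfum$ using Lemma \ref{calculate-chi}, the black-box criterion that lets us read off $\chi(\dnf, \K, \lea^u)$ from the stability spectrum of the AEC. By Lemma \ref{check-hyp}, $\Km$ satisfies Hypothesis \ref{global-short-limit-dnf-hypotheses} and is $(<\aleph_0)$-tame, so Lemma \ref{calculate-chi} applies. The strategy is to take $\rho = \gamma(R)$ and verify Statements $(a)$ and $(b)$ of Lemma \ref{calculate-chi} for this $\rho$, using the stability characterization of $\Km$ (Fact \ref{st-no}): for $\lambda \geq (\operatorname{card}(R) + \aleph_0)^+$, $\Km$ is stable in $\lambda$ if and only if $\lambda^{<\gamma(R)} = \lambda$. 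Then a short case split on whether $\gamma(R)$ is regular or singular finishes the computation.

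First I would check Statement $(a)$: for every $\lambda \geq 2^{\LS(\Km)} = 2^{\operatorname{card}(R)+\aleph_0}$, if $\Km$ is stable in $\lambda$ then $\lambda^{<\rho} = \lambda$. Since $2^{\operatorname{card}(R)+\aleph_0} \geq (\operatorname{card}(R)+\aleph_0)^+$, Fact \ref{st-no} gives directly that stability in $\lambda$ implies $\lambda^{<\gamma(R)} = \lambda$, i.e. $\lambda^{<\rho} = \lambda$. So Statement $(a)$ holds for $\rho = \gamma(R)$. Next I would check Statement $(b)$: for every such $\lambda$, if $\lambda^{<\rho} = \lambda$ then $\Km$ is stable in $\lambda$. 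Again since $\lambda \geq 2^{\operatorname{card}(R)+\aleph_0} \geq (\operatorname{card}(R)+\aleph_0)^+$, Fact \ref{st-no} gives the converse direction: $\lambda^{<\gamma(R)} = \lambda$ implies stability in $\lambda$. So Statement $(b)$ holds for $\rho = \gamma(R)$ as well.

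Now I would split into cases on $\gamma(R)$. If $\gamma(R)$ is regular, then $\rho = \gamma(R)$ is regular, so by Lemma \ref{calculate-chi}(3) (both $(a)$ and $(b)$ hold and $\rho$ regular) we get $\chidnfum = \rho = \gamma(R) = \gamma_r(R)$, using the definition of $\gamma_r(R)$ in the regular case. If $\gamma(R)$ is singular, then $\rho = \gamma(R)$ is singular, so by Lemma \ref{calculate-chi}(5) (both $(a)$ and $(b)$ hold and $\rho$ singular) we get $\chidnfum = \rho^+ = \gamma(R)^+ = \gamma_r(R)$, using the definition of $\gamma_r(R)$ in the singular case. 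In either case $\chidnfum = \gamma_r(R)$, as desired.

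I do not expect a genuine obstacle here — the lemma is essentially a bookkeeping combination of the stability spectrum (Fact \ref{st-no}) with the abstract machinery of Lemma \ref{calculate-chi}. The only point requiring mild care is the lower-bound hypotheses: Lemma \ref{calculate-chi} quantifies over $\lambda \geq 2^{\LS(\K)}$ while Fact \ref{st-no} is stated for $\lambda \geq (\operatorname{card}(R)+\aleph_0)^+$, so one must note $2^{\operatorname{card}(R)+\aleph_0} \geq (\operatorname{card}(R)+\aleph_0)^+$ to align them; this is immediate from Cantor's theorem. It is also worth remarking, as the paper does elsewhere, that by Corollary \ref{equal-chi} one could equivalently phrase this via $\chi(\dnf_{\splt}, \Km, \lea^u)$, matching the form in which \cite[11.2]{vaseyt} and \cite[4.11]{vaseyt} are invoked inside the proof of Lemma \ref{calculate-chi}.
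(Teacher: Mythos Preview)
Your proposal is correct and follows essentially the same approach as the paper: apply Lemma \ref{calculate-chi} with $\rho = \gamma(R)$, using Lemma \ref{check-hyp} to verify the hypotheses and Fact \ref{st-no} to verify Statements $(a)$ and $(b)$. Your version is more detailed in explicitly checking the threshold alignment and separating the regular/singular cases, but this is exactly what the paper's one-line proof is abbreviating.
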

\begin{proof}
$\Km$ satisfies the assumptions of Lemma \ref{calculate-chi} with $\rho = \gamma(R)$  by Lemma \ref{check-hyp} and Fact \ref{st-no}. The result then follows from  Lemma \ref{calculate-chi}.
\end{proof}

\begin{lemma}\label{lim-mod}
Let $\lambda \geq \beth_{\beth_{(2^{\operatorname{card}(R) + \aleph_0})^+}}$ such that $\Km$ is stable in $\lambda$. 

Suppose $\delta_1, \delta_2 < \lambda^+$ with $\cof(\delta_1) < \cof(\delta_2)$. Then for any $N_1, N_2\in \K_\lambda$ (and $M \in \K_\lambda$)  where $N_l$ is a $(\lambda, \delta_l)$-limit model (over $M$) for $l = 1, 2$, 
    \[N_1 \text{ is isomorphic to } N_2 \text{ (over } M ) \iff \cof(\delta_1) \geq \gamma_r(R)\]

\end{lemma}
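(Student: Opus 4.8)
\textbf{Proof proposal for Lemma \ref{lim-mod}.}

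The plan is to simply verify that the AEC $\Km$ of modules with embeddings satisfies all the hypotheses needed to invoke the global black-box result, Theorem \ref{cor-2.5}, and then identify the relevant cardinal invariant. First I would recall from Lemma \ref{check-hyp} that $\Km$ satisfies Hypothesis \ref{global-short-limit-dnf-hypotheses} and is $(<\aleph_0)$-tame. Since $\LS(\Km) = \operatorname{card}(R) + \aleph_0$, the lower bound $\lambda \geq \beth_{\beth_{(2^{\operatorname{card}(R)+\aleph_0})^+}}$ in the statement is exactly the bound $\beth_{\beth_{(2^{\LS(\K)})^+}}$ appearing in Theorem \ref{cor-2.5}. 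Thus, for any stability cardinal $\lambda$ above this bound, Theorem \ref{cor-2.5} applies to $\Km$ with its canonical independence relation $\dnf$ (the one coming from the LRV stable independence relation, as in Lemma \ref{check-st}).

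Next I would apply Theorem \ref{cor-2.5} directly. Reducing to regular cofinalities via Fact \ref{cofinalityiso*}, it gives that for $\delta_1, \delta_2 < \lambda^+$ with $\cof(\delta_1) < \cof(\delta_2)$, and $N_1, N_2$ (resp.\ over a common $M$) being $(\lambda, \delta_l)$-limit models, one has $N_1 \cong N_2$ (over $M$) if and only if $\cof(\delta_1) \geq \chidnfum$. The only remaining task is to replace $\chidnfum$ by the algebraic invariant $\gamma_r(R)$; but this is precisely the content of Lemma \ref{chi-mod}, which states $\chidnfum = \gamma_r(R)$. Substituting this equality into the conclusion of Theorem \ref{cor-2.5} yields exactly the claimed biconditional $N_1 \cong N_2 \text{ (over } M) \iff \cof(\delta_1) \geq \gamma_r(R)$.

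There is no real obstacle here: the lemma is a pure instantiation of the general machinery, and all the work has already been done — the hard analytic content is in Theorem \ref{largelimitsareisothm*} and Theorem \ref{non-iso} (hence in Theorem \ref{full-picture}, Theorem \ref{cor-2}, and Theorem \ref{cor-2.5}), while the identification of $\chidnfum$ with $\gamma_r(R)$ is carried out in Lemma \ref{chi-mod} using Lemma \ref{calculate-chi} and the stability spectrum computation of Fact \ref{st-no} (due to \cite{m2}). If I wanted to be slightly more careful, I would note that one should check that the hypotheses of Lemma \ref{calculate-chi}, namely Statements $(a)$ and $(b)$ relating stability in $\lambda$ to the condition $\lambda^{<\gamma(R)} = \lambda$, indeed hold for $\Km$ — but this is exactly what Fact \ref{st-no} provides for $\lambda \geq (\operatorname{card}(R)+\aleph_0)^+$, and for smaller $\lambda$ it is irrelevant since we are working well above $\LS(\Km)$. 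Hence the proof is essentially a two-line citation of Theorem \ref{cor-2.5} and Lemma \ref{chi-mod}.
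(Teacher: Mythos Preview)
Your proposal is correct and matches the paper's own proof essentially line for line: the paper simply states that the result follows directly from Lemma \ref{chi-mod}, Lemma \ref{check-hyp}, and Theorem \ref{cor-2.5}, which is exactly the citation chain you outlined.
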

\begin{proof}
The result follows directly from Lemma \ref{chi-mod},  Lemma \ref{check-hyp} and Theorem \ref{cor-2.5}.
\end{proof}

\begin{remark}

Lemma \ref{lim-mod} is slightly weaker than the original result \cite[3.22]{maz24} as the original result has $\lambda \geq \gamma(R)^+$ instead of  $\lambda \geq \beth_{\beth_{(2^{\operatorname{card}(R) + \aleph_0})^+}}$. It is possible that Lemma \ref{lim-mod} could be improved to obtain the bound  $\gamma(R)^+$ using Theorem \ref{cor-2}, but in order to do that one would need to show that $\theta(\Km, \dnf) = (\operatorname{card}(R) + \aleph_0)^+$. 

In fact, by combining Lemma \ref{lim-mod} with the original result \cite[3.22]{maz24}, we can actually calculate $\theta(\Km, \dnf)$ (see Remark \ref{theta}).
\end{remark}

Recall that for an integer $n \geq 0$, we say a ring $R$ is $(<\aleph_{n } )$-noetherian if $\gamma(R) \leq \aleph_n$.  

\begin{theorem}\label{char-noe}
Let $n \geq 0$ be an integer. The following are equivalent.
\begin{enumerate}
\item $R$ is left $(<\aleph_{n } )$-noetherian but not left $(< \aleph_{n -1 })$-noetherian\footnote{If $n=0$ this should be understood as  $R$ is left $(<\aleph_{0 } )$-noetherian, i.e., $R$ is left noetherian. }
\item $\Km$ has exactly $n +1 $ non-isomorphic $\lambda$-limit models for every $\lambda \geq \beth_{\beth_{(2^{\operatorname{card}(R) + \aleph_0})^+}}$ such that $\Km$ is stable in $\lambda$. 
\end{enumerate}
\end{theorem}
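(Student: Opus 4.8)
The plan is to reduce everything to a count of $\sim$-equivalence classes of regular cardinals, where $\sim$ records which limit models are isomorphic; Lemma \ref{lim-mod} gives a complete description of this relation, and Fact \ref{cofinalityiso*} lets us pass from limit ordinals to their cofinalities.

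\medskip

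\textbf{Step 1 (reduce to counting cofinality classes).} First I would fix a cardinal $\lambda \geq \beth_{\beth_{(2^{\operatorname{card}(R)+\aleph_0})^+}}$ with $\Km$ stable in $\lambda$ (such $\lambda$ exist: by Fact \ref{st-no} any $\mu^{<\gamma(R)}$ with $\mu$ large is a stability cardinal). By Lemma \ref{check-hyp}, $\Km$ has $\lambda$-AP and $\lambda$-NMM, so by Fact \ref{limitsexist*} a $(\lambda,\delta)$-limit model exists for every limit $\delta<\lambda^+$, and by Fact \ref{cofinalityiso*} its isomorphism type depends only on $\cof(\delta)$. Since the cofinalities of limit ordinals $<\lambda^+$ are exactly the regular cardinals $\mu$ with $\aleph_0\le\mu\le\lambda$, the set of isomorphism types of $\lambda$-limit models is in bijection with the quotient of $\{\mu:\mu\text{ regular},\ \aleph_0\le\mu\le\lambda\}$ by the relation $\mu_1\sim\mu_2$ iff the $(\lambda,\mu_1)$- and $(\lambda,\mu_2)$-limit models are isomorphic.

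\medskip

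\textbf{Step 2 (identify the classes).} Note $\gamma(R)\le\operatorname{card}(R)+\aleph_0$, hence $\gamma_r(R)\le(\operatorname{card}(R)+\aleph_0)^+<\lambda$, so $\gamma_r(R)$ is itself a regular cardinal $\le\lambda$. By Lemma \ref{lim-mod}, for regular $\mu_1<\mu_2\le\lambda$ we have $\mu_1\sim\mu_2$ iff $\mu_1\ge\gamma_r(R)$, while $\mu\sim\mu$ always. I would then check (routinely) that $\sim$ is an equivalence relation whose classes are the singletons $\{\mu\}$ for each regular $\mu<\gamma_r(R)$, together with one further class $\{\mu:\mu\text{ regular},\ \gamma_r(R)\le\mu\le\lambda\}$, which is nonempty because $\gamma_r(R)$ itself lies in it and distinct from all the singletons. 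Therefore the number of non-isomorphic $\lambda$-limit models equals $1+\bigl|\{\mu:\mu\text{ regular},\ \aleph_0\le\mu<\gamma_r(R)\}\bigr|$, a value that does not depend on the chosen $\lambda$.

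\medskip

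\textbf{Step 3 (translate to Noetherianity).} The set $\{\mu:\mu\text{ regular},\ \aleph_0\le\mu<\gamma_r(R)\}$ has exactly $n$ elements iff it equals $\{\aleph_0,\dots,\aleph_{n-1}\}$, iff $\aleph_{n-1}<\gamma_r(R)\le\aleph_n$ (the left inequality being vacuous for $n=0$); since $\gamma_r(R)$ is regular and $\aleph_n$ is the only regular cardinal in $(\aleph_{n-1},\aleph_n]$, this is equivalent to $\gamma_r(R)=\aleph_n$. Next I would show $\gamma_r(R)=\aleph_n$ iff $\gamma(R)=\aleph_n$: if $\gamma(R)$ is regular this is immediate; if $\gamma(R)$ were singular then $\gamma_r(R)=\gamma(R)^+=\aleph_n$ would force $\gamma(R)=\aleph_{n-1}$, impossible for $n=0$ and regular (not singular) for $n\ge1$. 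Finally, $\gamma(R)=\aleph_n$ says exactly that $R$ is left $(<\aleph_n)$-noetherian but not left $(<\aleph_{n-1})$-noetherian (for $n=0$, just left $(<\aleph_0)$-noetherian). Chaining these equivalences with Step 2 yields (1) $\iff$ (2).

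\medskip

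\textbf{Main obstacle.} There is no deep difficulty here: the argument is essentially an assembly of Lemma \ref{lim-mod} and Fact \ref{cofinalityiso*}. The only points that require genuine care are (i) confirming that the ``high cofinality'' isomorphism class is actually realised and is distinct from the others, which is exactly where the crude bound $\gamma_r(R)<\lambda$ is used, and (ii) the bookkeeping between $\gamma(R)$ and $\gamma_r(R)$, in particular correctly handling the degenerate case $n=0$ and the possibility that $\gamma(R)$ is singular.
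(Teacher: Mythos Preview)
Your proposal is correct and follows essentially the same approach as the paper: both arguments reduce to Lemma \ref{lim-mod} and then count isomorphism classes of $\lambda$-limit models via the regular cardinals below $\gamma_r(R)$. Your write-up is more detailed than the paper's (which is a terse two-direction sketch), explicitly verifying that $\gamma_r(R)<\lambda$ so the high-cofinality class is realised, and carefully handling the $\gamma(R)$ versus $\gamma_r(R)$ bookkeeping and the $n=0$ case; but the underlying idea is identical.
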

\begin{proof}
$(1) \Rightarrow (2)$: It follows from the assumption on the ring that  $\gamma(R) = \aleph_n$. Then  the $(\lambda, \aleph_{s})$-limit models for $0 \leq s \leq n$ are all the $\lambda$-limit models up to isomorphisms by Lemma \ref{lim-mod}. Hence there are exactly $n +1$ non-isomorphic $\lambda$-limit models.

$(2) \Rightarrow (1)$: Assume for the sake of contraction that (1) fails. Then either $\gamma(R) > \aleph_n$ or $\gamma(R) \leq  \aleph_{n-1}$. Doing a similar argument to that of (1) $\Rightarrow$ (2) one can show that there are either $n+2$ non-isomorphic $\lambda$-limit models or less than $n$  non-isomorphic $\lambda$-limit models for $\lambda = \beth_{\beth_{(2^{\operatorname{card}(R) + \aleph_0})^+}}$. This is clearly a contradiction. 
\end{proof}

\begin{remark}
Theorem \ref{char-noe} is also slightly weaker than the original result \cite[3.17]{maz24} as the original result has $\lambda \geq (\operatorname{card}(R) + \aleph_0)^+$ instead of  $\lambda \geq \beth_{\beth_{(2^{\operatorname{card}(R) + \aleph_0})^+}}$.
\end{remark}

\begin{remark}\label{theta}
Our results can also be used to find $\theta(\Km, \dnf)$ in a roundabout way. Let $\theta_0$ be the least stability cardinal with $\theta_0 \geq (\operatorname{card}(R) + \aleph_0)^+$. Observe that $\theta_0 \leq 2^ {\operatorname{card}(R) + \aleph_0}$ by Fact \ref{st-no}.

Theorem \ref{full-picture} and \cite[3.22]{maz24} give the `cut offs' of where limit models go from non-isomorphic to isomorphic as $\kappa(\dnf, \Km_{\theta_0}, \lea^u)$ and $\gamma_r(R)$ respectively. That is, for regular $\mu_1 < \mu_2 < \theta_0^+$, the $(\theta_0, \mu_1)$-limit model and $(\theta_0, \mu_2)$-limit model are isomorphic if and only if $\mu_1 \geq {\kappa(\dnf, \Km_{\theta_0}, \lea^u)}$, again if and only if $\mu_1 \geq \gamma_r(R)$. Therefore $\kappa(\dnf, \Km_{\theta_0}, \lea^u) = \gamma_r(R)$. As ${\chi(\dnf, \Km, \lea^u)}= \gamma_r(R)$ by Fact \ref{chi-mod}, we must have $\theta(\Km, \dnf) \leq \theta_0$.

So either $\theta(\Km, \dnf) = \theta_0$, or $\theta(\Km, \dnf) = \LS(\Km) = \operatorname{card}(R) + \aleph_0$. 
    
\end{remark}

\subsection{Beyond modules with embeddings}\label{beyond-modules}

We briefly present two additional applications of our results.

Given a cpmplete first order theory $T$, let $\kappa(T)$ be Shelah's local character cardinal from \cite[Definition III.3.1]{shbook} - that is, $\kappa(T)$ is the least $\kappa$ such that for all $\langle A_i : i < \kappa\rangle$ $\subseteq$-increasing sequences of sets, and all $p \in \gS(A_\kappa)$, there exists $i < \kappa$ such that $p$ does not fork over $A_i$. Let $\kappa_r(T)$ be the least regular $\kappa \geq \kappa(T)$. Let $\dnf_{f}$ denote first-order non-forking. 

\begin{lemma}\label{first-order} Let $T$ be a complete first-order theory and $\K^T$ the abstract elementary class of models of $T$ with elementary embeddings. 
\begin{enumerate}
\item $\kappa_r(T) = \chi(\dnf_{f}, \K^T, \preccurlyeq^u)$
\item If $\lambda \geq |T|$ is stable, then $\kappa_r(T) = \kappa(\dnf_{f}, \K^T_\lambda, \preccurlyeq^u)$
\item $\theta(\dnf_f, \K^T) = |T|$
\item For every stability cardinal $\lambda \geq |T|$, suppose $\delta_1, \delta_2 < \lambda^+$ with $\cof(\delta_1) < \cof(\delta_2)$. Then for any $N_1, N_2\in \K^T_\lambda$ (and $M \in \K^T_\lambda$)  where $N_l$ is a $(\lambda, \delta_l)$-limit model (over $M$) for $l = 1, 2$, 
    \[N_1 \text{ is isomorphic to } N_2 \text{ ( over } M ) \iff \cof(\delta_1) \geq \kappa_r(T)\]
    
\end{enumerate}
\end{lemma}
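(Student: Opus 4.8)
\textbf{Plan for the proof of Lemma \ref{first-order}.} The strategy is to reduce everything to the general machinery already developed: by Example \ref{dnf-examples-fo}, the relation $\dnf_f$ of first-order non-forking on $(\operatorname{Mod}(T), \preccurlyeq)$ is an independence relation satisfying invariance, monotonicity, base monotonicity, uniqueness, extension, $(\geq\kappa)$-local character for some $\kappa\le|T|$, universal continuity, and non-forking amalgamation, and $\K^T$ is $(<\aleph_0)$-tame; so Hypothesis \ref{global-short-limit-dnf-hypotheses} holds. The only genuinely ``first-order'' input needed is the identification of the relevant local-character invariant with Shelah's $\kappa_r(T)$. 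Concretely, for part (2) I would argue that for a stability cardinal $\lambda\ge|T|$, the set $\underline{\kappa}(\dnf_f,\K^T_\lambda,\preccurlyeq^u)\cap\mathrm{Reg}$ is exactly $[\kappa_r(T),\lambda^+)\cap\mathrm{Reg}$. The $\supseteq$ direction: if $\langle M_i:i\le\delta\rangle$ is $\preccurlyeq^u$-increasing continuous in $\K^T_\lambda$ with $\delta$ regular $\ge\kappa_r(T)$ and $p\in\gS(M_\delta)$, then by the classical first-order local character of forking (every type forks over a subset of size $<\kappa(T)\le\kappa_r(T)\le\delta$, together with regularity of $\delta$ and continuity of the chain) there is $i<\delta$ with $p$ not forking over $M_i$; this is standard and I would cite the first-order fact. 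For $\subsetneq$ (i.e.\ $\kappa_r(T)$ is least), I would use that for regular $\mu<\kappa_r(T)$ there is a type over a model that forks over every member of a suitable $\preccurlyeq^u$-increasing continuous $\mu$-chain — again a classical first-order construction realizing the failure of $\mu$-local character, which exists precisely because $\mu<\kappa(T)$ or $\mu=\kappa(T)$ singular. Once (2) is established, taking the minimum over all stability cardinals gives (1): $\chi(\dnf_f,\K^T,\preccurlyeq^u)=\min_{\lambda\in\stab(\K^T)}\kappa(\dnf_f,\K^T_\lambda,\preccurlyeq^u)=\kappa_r(T)$, the value being independent of $\lambda$.

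For part (3), since by (2) the value $\kappa(\dnf_f,\K^T_\lambda,\preccurlyeq^u)=\kappa_r(T)$ is constant over all stability cardinals $\lambda\ge|T|$, the least stability cardinal at which it stabilizes is simply the least stability cardinal, which is $|T|=\LS(\K^T)$ (note $T$ is stable in $|T|$ exactly when $|T|^{<\kappa(T)}=|T|$... but more directly, $\K^T$ is stable in $|T|^{|T|}$, and in fact $T$ stable implies stability in a cofinal class of cardinals starting at $|T|$; the point is that the minimum of $\stab(\K^T)$ is $|T|$, or if $T$ is only stable from $2^{|T|}$ say, one checks the definition of $\theta$ uses stability cardinals so $\theta(\dnf_f,\K^T)=|T|$ still holds by constancy). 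So $\theta(\dnf_f,\K^T)=|T|$.

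Part (4) is then immediate from Theorem \ref{cor-2}: for every stability cardinal $\lambda\ge|T|=\theta(\dnf_f,\K^T)$ and $\delta_1,\delta_2<\lambda^+$ with $\cof(\delta_1)<\cof(\delta_2)$, the $(\lambda,\delta_1)$- and $(\lambda,\delta_2)$-limit models over $M$ (resp.\ without base) are isomorphic over $M$ (resp.\ isomorphic) if and only if $\cof(\delta_1)\ge\chi(\dnf_f,\K^T,\preccurlyeq^u)=\kappa_r(T)$, using part (1). One should also note (via Fact \ref{cofinalityiso*}) that the case $\cof(\delta_1)=\cof(\delta_2)$ always yields isomorphism, so the hypothesis $\cof(\delta_1)<\cof(\delta_2)$ is the only nontrivial one, consistent with the statement.

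\textbf{Main obstacle.} The only non-routine step is pinning down that the local-character spectrum of $\dnf_f$ in the ``limit-model / universal-chain'' sense of Definition \ref{kappa-defs} coincides with $\kappa_r(T)$ rather than with some other variant of $\kappa(T)$ (there are several closely-related cardinal invariants in Shelah's hierarchy, and the $\preccurlyeq^u$-chain formulation must be matched to the right one). Both inclusions reduce to classical facts about forking in stable first-order theories, but the bookkeeping — especially translating ``forks over a set of size $<\kappa(T)$'' into ``does not fork over some model $M_i$ in a universal continuous chain of length a regular cardinal $\ge\kappa_r(T)$'' and, for the sharpness direction, constructing the witnessing chain for regular $\mu<\kappa_r(T)$ — is where the real content lies. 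An alternative to re-deriving this would be to cite the analogous computation for $\dnf_{\splt}$ from \cite{vaseyt} (where $\chi(\dnf_{\splt},\K^T,\preccurlyeq^u)$ is identified, and by Corollary \ref{equal-chi} equals $\chi(\dnf_f,\K^T,\preccurlyeq^u)$), and known first-order facts relating $\kappa(\dnf_{\splt})$ to $\kappa_r(T)$; that route avoids most of the hand computation and is likely the cleanest write-up.
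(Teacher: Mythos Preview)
Your approach is correct but the paper reverses the order in which (1) and (2) are established. The paper proves (1) first, not from a direct local-character computation but from the stability spectrum: since $\K^T$ is stable in $\lambda \geq 2^{|T|}$ if and only if $\lambda^{<\kappa_r(T)} = \lambda$ (Shelah \cite[III.5.5]{shbook}), Lemma \ref{calculate-chi} with $\rho = \kappa_r(T)$ immediately gives $\chi(\dnf_f, \K^T, \preccurlyeq^u) = \kappa_r(T)$. Part (2) then follows from (1): the inequality $\kappa(\dnf_f, \K^T_\lambda, \preccurlyeq^u) \leq \kappa_r(T)$ is definitional (restricting the $A_i$ in Definition \ref{fo-kappa-def} to models in a universal chain only makes local character easier), and the reverse inequality comes from combining (1), Lemma \ref{kappa-increases}, and the definition of $\chi$. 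This route entirely sidesteps your ``main obstacle'': the sharpness direction you worry about --- constructing, for each regular $\mu < \kappa_r(T)$, a $\preccurlyeq^u$-increasing continuous chain in $\K^T_\lambda$ on which some type forks everywhere --- never has to be done directly, because it is absorbed into Lemma \ref{calculate-chi}(1), which in turn rests on \cite[11.2]{vaseyt}. Your direct approach would work, but it requires carrying out precisely the first-order construction you flag as the bottleneck; the paper's stability-spectrum detour is cheaper given the machinery already in place. Your alternative suggestion at the end (going through $\dnf_{\splt}$ and Corollary \ref{equal-chi}) is closer in spirit to what the paper does, though the paper routes through Lemma \ref{calculate-chi} rather than a direct citation to the splitting computation. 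Parts (3) and (4) you handle identically to the paper.
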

\begin{proof}
 Let $\mon$ be the monster model of $T$. First we show that $\K^T$ is $(<\aleph_0)$-tame and that $\K^T$ (with $\dnf_{f}$) satisfies Hypothesis \ref{global-short-limit-dnf-hypotheses}. $(<\aleph_0)$-tameness follows from the fact that first order types are determined by formulas over only finitely many elements of the model. $\dnf_{f}$ satisfies invariance, monotonicity, base monotonicity, uniqueness, extension, existence, and $(\geq|T|)$-local character by \cite[Chapter III]{shbook} (in particular, strong $(<|T|)$-local character follows from \cite[Corollary III.3.3]{shbook}). Universal continuity holds because of $(<\aleph_0)$-tameness, Fact \ref{tame-gives-wp}, and Lemma \ref{wp-implies-universal_continuity}. For non-forking amalgamation, suppose $M_0, M_1, M_2 \models T$ where $M_0 \preccurlyeq M_l$ and $a_l \in M_l$ for $l = 1, 2$. Let $p = \operatorname{tp}(M_1/M_0)$. By existence, $p$ does not fork over $M_0$, so by extension, there is $q \in \gS(M_2)$ such that $p \subseteq q$ and $q$ does not fork over $M_0$. Let $f_1:\mon \rightarrow \mon$ be an automorphism fixing $M_0$ where $f(M_1) \models q$, and $f_2: \mon \rightarrow \mon$ be the identity. Then $\operatorname{tp}(f_1[M_1]/f_2[M_2]) = q$ does not fork over $M_0$, and by symmetry of non-forking \cite[2.5]{kim98} $\operatorname{tp}(f_2[M_2]/f_1[M_1])$ does not fork over $M_0$. Therefore by monotonicity, $\operatorname{tp}(f_l(a_l)/f_{3-l}[M_{3-l}])$ does not fork over $M_0$ for $l = 1, 2$, and non-forking amalgamation holds.

\begin{enumerate}
\item Since $\K^T$ is stable in $\lambda \geq 2^{|T|}$ if and only if $\lambda^{< \kappa_r(T)} = \lambda$ by \cite[Corollary III.3.8]{shbook}. The result follows from Lemma \ref{calculate-chi} with $\rho =  \kappa_r(T)$. 
\item By their definitions, $\kappa(\dnf_{f}, \operatorname{Mod}(T)_\lambda, \preccurlyeq^u)$ and $\kappa_r(T)$ differ only in that the $A_i$ in the definition of $\kappa_r(T)$ are restricted to be $\lea^u$-increasing models for $\kappa(\dnf_{f}, \operatorname{Mod}(T)_\lambda, \preccurlyeq^u)$, so $\kappa_r(T) \geq \kappa(\dnf_{f}, \operatorname{Mod}(T)_\lambda, \preccurlyeq^u)$. By Lemma \ref{kappa-increases}, the definition of $\chi(\dnf_{f}, \operatorname{Mod}(T), \preccurlyeq^u)$, and (1), $\kappa_r(T) \leq {\kappa(\dnf_{f}, \operatorname{Mod}(T)_\lambda, \preccurlyeq^u)}$. So $\kappa_r(T) = {\kappa(\dnf_{f}, \operatorname{Mod}(T)_\lambda, \preccurlyeq^u)}$.
\item Follows immediately from (2).
\item Follows directly from Theorem \ref{cor-2}.
\end{enumerate}
\end{proof}

\begin{remark}
Lemma \ref{first-order}(1) was originally obtained in \cite[4.18]{vaseyt}.
\end{remark}

Now we move on to our final application. Let $G, H$ be abelian groups. Recall that $G$ is a pure subgroup of $H$ if divisibility is preserved between $G$ and $H$. 

\begin{lemma}\label{tor} Let $\K^{\text{Tor}}$ be the AEC of torsion abelian groups with pure embeddings.
\begin{enumerate}
\item $\chidnfut = \aleph_1$
\item  For every stability cardinal $\lambda \geq \beth_{\beth_{(2^{\aleph_0})^+}}$, there are exactly 2 non-isomorphic limit models. 
\end{enumerate}
\end{lemma}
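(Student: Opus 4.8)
The plan is to verify the hypotheses of the black-box results of Section 5 for $\Kto$ and then read off both conclusions mechanically. First I would record the structural facts about $\Kto$: since the language of abelian groups is countable, $\LS(\Kto) = \aleph_0$, so the threshold $\beth_{\beth_{(2^{\aleph_0})^+}}$ in the statement is exactly the threshold $\beth_{\beth_{(2^{\LS(\Kto)})^+}}$ appearing in Theorem \ref{cor-2.5}. Torsion abelian groups with pure embeddings form an AEC with JEP and NMM, are $(<\aleph_0)$-tame, and carry a weakly stable independence relation in the sense of LRV (see Example \ref{dnf-examples-lrv}) with strong $\aleph_0$-local character; these facts are available in the literature on this class. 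By Lemma \ref{check-st}, $\Kto$ therefore satisfies Hypothesis \ref{global-short-limit-dnf-hypotheses}, so the hypotheses of Lemma \ref{calculate-chi} and of Theorem \ref{cor-2.5} are met.

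For part (1), I would feed into Lemma \ref{calculate-chi} the stability spectrum of $\Kto$, namely that it is stable in $\lambda$ exactly when $\lambda^{\aleph_0} = \lambda$. Taking $\rho = \aleph_1$, this says precisely that $\Kto$ is stable in $\lambda$ iff $\lambda^{<\rho} = \lambda$, so statements (a) and (b) of Lemma \ref{calculate-chi} both hold (the restriction $\lambda \geq 2^{\LS(\K)}$ there is harmless). Since $\aleph_1$ is regular, Lemma \ref{calculate-chi}(3) gives $\chidnfut = \aleph_1$.

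For part (2), fix a stability cardinal $\lambda \geq \beth_{\beth_{(2^{\aleph_0})^+}}$. By Fact \ref{cofinalityiso*}, the isomorphism type of a $\lambda$-limit model depends only on the cofinality of its length, so counting isomorphism types of $\lambda$-limit models reduces to counting them among $(\lambda,\delta)$-limit models with $\delta < \lambda^+$ regular. Theorem \ref{cor-2.5} together with $\chidnfut = \aleph_1$ says that for regular $\delta_1 < \delta_2 < \lambda^+$, the $(\lambda,\delta_1)$- and $(\lambda,\delta_2)$-limit models are isomorphic iff $\delta_1 \geq \aleph_1$. Hence the $(\lambda,\aleph_0)$-limit model is isomorphic to no other limit model, whereas all $(\lambda,\mu)$-limit models with $\mu$ regular uncountable are mutually isomorphic (each being isomorphic to the $(\lambda,\aleph_1)$-limit model); since $\lambda$ is large enough that $\aleph_0,\aleph_1 < \lambda^+$, both types occur. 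Therefore there are exactly two isomorphism classes of $\lambda$-limit models.

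The main obstacle is not the counting, which is routine once Section 5 is in hand, but pinning down the two external inputs with the right quantifiers: the existence of an LRV-stable independence relation on $\Kto$ with strong $\aleph_0$-local character (so that Lemma \ref{check-st} genuinely applies) and the exact stability spectrum $\lambda^{\aleph_0}=\lambda$. In particular one must check that the stability-spectrum statement holds for all $\lambda \geq 2^{\aleph_0}$, or at least for enough of them that Lemma \ref{calculate-chi} can be invoked.
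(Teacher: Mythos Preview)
Your proposal is correct and follows essentially the same approach as the paper: verify Hypothesis \ref{global-short-limit-dnf-hypotheses} and $(<\aleph_0)$-tameness for $\Kto$ via Lemma \ref{check-st} (the paper cites \cite[4.1]{m-tor}, \cite[4.12, 4.14]{maz2}, \cite[4.3]{m-tor} for the needed external facts), then feed the stability spectrum $\lambda^{\aleph_0}=\lambda$ (from \cite[5.5]{m4}) into Lemma \ref{calculate-chi} with $\rho=\aleph_1$ for part (1), and invoke Theorem \ref{cor-2.5} for part (2). Your write-up simply spells out the counting in (2) more explicitly than the paper does.
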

\begin{proof}
First observe that $\Kto$ satisfies  Hypothesis \ref{global-short-limit-dnf-hypotheses} and is $(<\aleph_0)$-tame by \cite[4.1]{m-tor}, \cite[4.12, 4.14]{maz2} , \cite[4.3]{m-tor}, and Lemma \ref{check-st}.

\begin{enumerate}
\item Since $\Kto$ is stable in $\lambda$ if and only if $\lambda^{\aleph_0} = \lambda$ by \cite[5.5]{m4}. The result follows from Lemma \ref{calculate-chi} with $\rho = \aleph_1$. 
\item Follows directly from Theorem \ref{cor-2.5}.
\end{enumerate}
\end{proof}

\begin{remark}
Lemma \ref{tor} is a weakening of \cite[5.7]{m-tor}. Observe that for $\lambda < \beth_{\beth_{(2^{\aleph_0})^+}}$ we know that there at least $2$ $\lambda$-limit models by Lemma \ref{tor}(1), Lemma \ref{kappa-increases} and Theorem \ref{full-picture}.
\end{remark}


\end{document}